\numberwithin{equation}{section}
\newtheorem{theorem}{Theorem}[section]
\newtheorem{lemma}[theorem]{Lemma}
\newtheorem{corollary}[theorem]{Corollary}
\newtheorem{conjecture}[theorem]{Conjecture}
\newtheorem{proposition}[theorem]{Proposition}
\theoremstyle{definition}
\newtheorem{definition}[theorem]{Definition}
\newtheorem*{rem}{Remark}
\newcommand{\Extend}[5]{\ext@arrow0099{\arrowfill@#1#2#3}{#4}{#5}}
\DeclareMathOperator{\dist}{dist}
\DeclareMathOperator{\Ric}{Ric}
\DeclareMathOperator{\Div}{div}
\DeclareMathOperator{\area}{area}
\DeclareMathOperator{\osc}{osc}
\DeclareMathOperator{\Rm}{Rm}
\newcommand{\M}{\mathbf M}
\newcommand{\vol}{\operatorname{Vol}}
\newcommand{\R}{\mathbb R}
\newcommand{\bd}{\partial}
\newcommand{\bh}{\overline B}
\newcommand{\del}{\nabla}
\newcommand{\areah}{\overline{\operatorname{Area}}}
\newcommand{\volh}{\overline{\operatorname{Vol}}}
\newcommand{\eps}{\varepsilon}
\newcommand{\inv}{^{-1}}
\newcommand{\N}{\mathbb N}
\newcommand{\supp}{\operatorname{supp}}
\newcommand{\ind}{\operatorname{ind}}
\newcommand{\eval}{\bigg\vert}
\newcommand{\la}{\langle}
\newcommand{\ra}{\rangle}
\newcommand{\grad}{\nabla}
\begin{document}

\title[CMCs in Asymptotically Flat and Hyperbolic Manifolds]{Existence of Constant Mean Curvature Surfaces in Asymptotically Flat and Asymptotically Hyperbolic Manifolds}
\author{Liam Mazurowski}
\address[Liam Mazurowski]{Department of Mathematics, Lehigh University, 17 Memorial Dr E, Bethlehem, PA 18015, United States of America}
\email{lim624@lehigh.edu}

\author{Jintian Zhu}
\address[Jintian Zhu]{Institute for Theoretical Sciences, Westlake University, 600 Dunyu Road, Hangzhou, Zhejiang 310030, People's Republic of China}
\email{zhujintian@westlake.edu.cn}

\begin{abstract}
We prove the existence of compact surfaces with prescribed constant mean curvature in asymptotically flat and asymptotically hyperbolic manifolds. More precisely, let $(M^3,g)$ be an asymptotically flat manifold with scalar curvature $R\ge 0$. Then, for each constant $c>0$, there exists a compact, almost-embedded, free boundary constant mean curvature surface $\Sigma \subset M$ with mean curvature $c$. Likewise, let $(M^3,g)$ be an asymptotically hyperbolic manifold with scalar curvature $R\ge -6$. Then, for each constant $c > 2$, there exists a compact, almost-embedded, free boundary constant mean curvature surface $\Sigma \subset M$ with mean curvature $c$.  The proof combines min-max theory with the following fact about inverse mean curvature flow which is of independent interest: for any $T$ the inverse mean curvature flow emerging out of a point $p$ far enough out in an asymptotically flat (or asymptotically hyperbolic) end will remain smooth for all times $t\in (-\infty,T]$. 
\end{abstract}

\maketitle

\section{Introduction} 
In the early 1980s, Almgren \cite{almgren1965theory}, Pitts \cite{pitts2014existence}, and Schoen-Simon \cite{schoen1981regularity} developed a min-max theory for finding critical points of the area functional in a Riemannian manifold. Their combined work implies that any closed Riemannian manifold of dimension between 3 and 7 contains a closed minimal hypersurface. Given this, it is natural to ask whether closed manifolds also need to contain closed hypersurfaces with non-zero constant mean curvature (CMC). It is not hard to see that there should always exist closed CMC hypersurfaces with both very small and very large mean curvature in the presence of certain non-degeneracy conditions. Indeed, CMC hypersurfaces with mean curvature close to zero can be obtained by perturbing non-degenerate minimal hypersurfaces while, by work of R. Ye \cite{ye1991foliation}, CMC hypersurfaces with very large mean curvature can be obtained by perturbing geodesic spheres near non-degenerate critical points of the scalar curvature. 

It is a deep theorem of X. Zhou and J. Zhu \cite{zhou2019min} that closed manifolds $M$ of suitable dimension actually contain closed CMC hypersurfaces of any given prescribed mean curvature. Their proof is based on the fact that a smooth hypersurface $\Sigma = \bd \Omega$ in $M$ has constant mean curvature $c$ precisely when $\Omega$ is a critical point of the functional 
\[
A^c(\Omega) = \operatorname{Area}(\bd \Omega) - c\vol(\Omega). 
\]
They then develop a variant of the Almgren-Pitts min-max theory capable of finding smooth critical points of the $A^c$ functional. Finally, they use this min-max theory to show that a closed manifold $M$ contains a closed hypersurface of constant mean curvature $c$ for every choice of $c$. 

It is tempting to think that a similar theorem may be true for certain classes of complete, non-compact manifolds. In particular, X. Zhou \cite[Page 2711]{zhou2022mean} has conjectured the following: 
\begin{conjecture}[X. Zhou]\label{Conj: Zhou}
    Any asymptotically flat manifold of low dimension contains at least one closed CMC hypersurface for
any prescribed curvature.
\end{conjecture}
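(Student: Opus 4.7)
The plan is to use the paper's IMCF barrier construction to localize the problem, run a Zhou-Zhu-type min-max for the $A^c$ functional on an exhaustion, and modify the min-max class so that the critical point is a closed CMC surface rather than the free-boundary one produced by the paper's main theorem.

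Fix $c > 0$. Using the IMCF fact stated in the abstract, build an exhaustion $\Omega_k^* \nearrow M$ by compact barrier domains whose boundaries $\Sigma_k^* = \bd \Omega_k^*$ are IMCF leaves with outward mean curvature $H_{\Sigma_k^*} < c/2$. Run the $A^c$ min-max on $\Omega_k^*$, obtaining an almost-embedded CMC-$c$ critical surface $\Sigma_k$. The strict inequality $H_{\Sigma_k^*} < c$ together with the strong maximum principle rules out interior tangential contact of $\Sigma_k$ with $\Sigma_k^*$, so any intersection must be transverse. Using as a test sweep-out the IMCF emerging from a fixed interior point $p_0$ gives a uniform upper bound $L_k \le C(c)$ on the widths, and curvature estimates for almost-embedded CMC surfaces then yield a subsequential smooth limit $\Sigma_\infty$ on compact subsets of $M$.

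The main obstacle is that the naive $A^c$-min-max in $\Omega_k^*$ actually \emph{prefers} hemispherical caps over closed spheres: in the nearly Euclidean end, a hemispherical cap of radius $2/c$ orthogonal to a large leaf $\Sigma_k^*$ is a free-boundary CMC-$c$ critical point with $A^c = \frac{8\pi}{3c^2}$, strictly less than the closed sphere's value $\frac{16\pi}{3c^2}$. So the one-parameter min-max width is realized by caps drifting to infinity, and the limit $\Sigma_\infty$ turns out to be empty. To force a genuinely closed critical surface I would restrict the min-max to sweep-outs whose Caccioppoli-set boundaries stay disjoint from $\Sigma_k^*$, and work with a higher-parameter (loop or 2-parameter) class linking a fixed interior point $p_0$. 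Such a class cannot be realized by caps near the barrier (they are homotopically trivial relative to $p_0$), but is realized by round spheres enclosing $p_0$, shifting the width to approximately $\frac{16\pi}{3c^2}$ and forcing the critical points to be closed CMC-$c$ surfaces in the interior.

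Making the linking-constrained min-max rigorous inside the Zhou-Zhu Caccioppoli framework is the main technical difficulty I anticipate. One must identify the correct cohomology class, verify its non-triviality and the exclusion of cap configurations, and adapt the regularity theory and compactness of almost-embedded CMC surfaces to this constrained setting. A further subtle point is proving a uniform lower bound $L_k \ge \epsilon(c) > 0$ on the new widths, needed to ensure that $\Sigma_\infty$ does not degenerate to a point near $p_0$. Once these are in place, $\Sigma_\infty$ is a non-empty closed almost-embedded CMC-$c$ hypersurface, resolving the conjecture.
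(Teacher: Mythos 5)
Your proposal attempts to resolve Conjecture~\ref{Conj: Zhou} in full generality, but the paper does not do this: Theorem~\ref{main-theorem-flat} resolves the conjecture in dimension three only under the additional hypothesis that $M$ has non-negative scalar curvature, and this hypothesis is used in an essential way. The paper's strategy is to show the strict inequality $\omega_c(M) < \omega_c(\R^3)$, and the only ingredient that produces strictness is the Geroch monotonicity of the Hawking mass along IMCF (Lemma~\ref{Lem: Hawking mass limit}, Lemma~\ref{Lem: sub-Euclidean ratio}), which relies on $R \ge 0$ via Shi's sub-Euclidean isoperimetric estimate. Your proposal makes no use of scalar curvature anywhere, so it is attempting a stronger statement than what is proved.

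The central gap is in the ``linking-constrained'' min-max. You correctly identify that the naive one-parameter width is realized by caps drifting to infinity with value $\frac{8\pi}{3c^2}$, and you propose a constrained class forcing the width up to approximately $\frac{16\pi}{3c^2}$. But $\frac{16\pi}{3c^2}$ is \emph{exactly} $\omega_c(\R^3)$, and this is precisely the critical threshold at which the min-max sequence can escape to infinity: families of near-round balls of radius $\approx 2/c$ centered at points diverging in the end realize this value, so without a strict gap below $\omega_c(\R^3)$ the limit $\Sigma_\infty$ can be empty. Your linking constraint does not preclude such families, since they enclose $p_0$ for small radii and then detach at a large radius, passing through the same critical value. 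So the key difficulty you flag (showing $L_k \ge \epsilon(c)>0$ and compactness of $\Sigma_\infty$) is not merely a technical loose end---it is exactly where the scalar curvature hypothesis does its work in the paper, and your proposal has no substitute for it. There is also a smaller misreading: Theorem~\ref{main-theorem-IMCF} produces IMCF leaves foliating an annular neighborhood $D_{c^{-1}e^{t/2}}(p)\setminus\{p\}\subset\{u\le t\}\subset D_{ce^{t/2}}(p)$ of a point $p$ far out in the end (with the base point receding as $T\to\infty$), not an exhaustion of $M$; the paper uses these leaves as a sweep-out witnessing the width drop, not as barriers, and its barriers are large coordinate spheres.
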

\begin{rem}
 We point out that it is necessary to assume the asymptotically flat manifold to have no boundary in X. Zhou's conjecture. Indeed, Brendle \cite{brendle2013constant} has shown that the coordinate spheres are the only closed CMC surfaces in the Schwarzschild manifolds, which implies that there do not exist closed CMC surfaces with very large mean curvature when $M$ is Schwarzschild.
\end{rem}
Heuristically speaking, a manifold $(M^3,g)$ is called asymptotically flat if $M$ has one end diffeomorphic to $\R^3$ minus a ball and the metric $g$ on $M$ approaches the Euclidean metric sufficiently rapidly at infinity. Asymptotically flat manifolds are important in the study of general relativity. Assuming the metric on $M$ approaches the Euclidean metric at a fast enough rate, Huisken and Yau \cite{huisken1996definition} proved that the end of $M$ can be foliated by constant mean curvature spheres by perturbing coordinate spheres around infinity when the total mass is positive.  CMC foliations near infinity in asymptotically flat manifolds have also been studied by Eichmair-Koerber \cite{eichmair2022foliations}, Huang \cite{huang2010foliations}, Metzger \cite{metzger2007foliations}, Nerz \cite{nerz2015foliations}, and others under weaker assumptions on the asymptotics of the metric. The existence of these foliations shows that asymptotically flat manifolds contain closed surfaces of constant mean curvature $c$ for every sufficiently small $c > 0$.  

Motivated in part by X. Zhou's conjecture on the existence of CMCs for any prescribed curvature, the first named author \cite{mazurowski2022prescribed} adapted the $A^c$ min-max theory to work in certain complete, non-compact manifolds. Let $(M^3,g)$ be an asymptotically flat manifold. Given a constant $c > 0$, define the {\it min-max value} 
\[
\omega_c(M) = \inf_{\{\Omega_t\}_{t\in [0,1]}} \left[\sup_{t\in [0,1]} A^c(\Omega_t)\right],
\]
where the infimum is taken over all {\it continuously varying} families of open sets $\{\Omega_t\}_{t\in[0,1]}$ that satisfy $\Omega_0 = \emptyset$ and $A^c(\Omega_1) < 0$.  By choosing $\{\Omega_t\}_{t\in [0,1]}$ to be a family of concentric balls far out in the end, it is easy to see that the inequality $$\omega_c(M) \le \omega_c(\R^3)$$ 
holds for any asymptotically flat manifold $M$.  

When this inequality is strict, the methods of \cite{mazurowski2022prescribed} give the existence of a closed CMC surface in $M$ with prescribed curvature $c$. 

\begin{theorem}[\cite{mazurowski2022prescribed}]\label{Thm: closed}
Let $(M^3,g)$ be a complete, asymptotically flat manifold with no boundary. Fix a constant $c > 0$ and assume that $\omega_c(M) < \omega_c(\R^3)$. Then there exists a closed, almost-embedded surface $\Sigma$ in $M$ with constant mean curvature $c$. 
\end{theorem}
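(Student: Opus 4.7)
The plan is to apply the Zhou--Zhu $A^c$ min--max construction on an exhaustion of $M$ by compact domains, and then use the strict inequality $\omega_c(M) < \omega_c(\R^3)$ as a concentration-compactness condition that rules out loss of mass at infinity. The analogy is with critical Yamabe-type problems, where a strict energy gap prevents the min--max critical point from degenerating into a ``bubble at infinity''. Here, the only possible bubble is a round sphere of mean curvature $c$ drifting off in an asymptotically flat end, and such a bubble contributes exactly $\omega_c(\R^3)$ to the $A^c$-value.

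First I would set up the exhaustion. Fix asymptotic coordinates on the end of $M$ and let $B_R$ denote the coordinate ball of radius $R$. Take a minimizing sequence of continuous sweepouts $\{\Omega_t^{(i)}\}_{t\in[0,1]}$ with $\sup_t A^c(\Omega_t^{(i)}) \to \omega_c(M)$; by a diagonal argument each may be assumed compactly supported in some $B_{R_i}$. For each $i$, form a closed manifold $\tilde M_i$ by smoothly capping off $M$ outside $B_{2R_i}$, and transport $\{\Omega_t^{(i)}\}$ into $\tilde M_i$ without altering $A^c$-values. Applying the Zhou--Zhu $A^c$ min--max theorem to the associated homotopy class on $\tilde M_i$ then produces an almost-embedded CMC surface $\Sigma_i \subset \tilde M_i$ of mean curvature $c$ with $A^c(\Sigma_i) \le \omega_c(M) + o(1)$. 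A careful choice of cap keeps the min--max value of $\tilde M_i$ close to $\omega_c(M)$.

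The crucial step is to show that, for all $i$ sufficiently large, $\Sigma_i$ is contained in a compact subset of $M$ independent of $i$, and in particular avoids the capped region. If not, a nontrivial portion of $\Sigma_i$ drifts into the far end of $M$. Using the asymptotic flatness and uniform area bounds, I would translate the escaping components back to a bounded region of $\R^3$ and extract a subsequential smooth limit, giving a nontrivial almost-embedded CMC surface in $\R^3$ of mean curvature $c$. I would then build a competing sweepout in $\R^3$ by transplanting the escaping portion of $\{\Omega_t^{(i)}\}$ via the asymptotic chart; since the metric is $C^2$-close to Euclidean far out, this transplant changes $A^c$-values by $o(1)$. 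The resulting sweepout in $\R^3$ has max $A^c$-value at most $\omega_c(M) + o(1)$, yielding $\omega_c(\R^3) \le \omega_c(M)$ and contradicting the hypothesis.

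Once escape to infinity is excluded, the sequence $\Sigma_i$ is confined to a fixed ball in $M$ with uniformly bounded area and mean curvature. Standard regularity and compactness for almost-embedded CMC surfaces then yield a smooth, almost-embedded limit $\Sigma \subset M$ of mean curvature $c$, the desired surface. The main obstacle is the ``no bubbling at infinity'' argument of the third paragraph: one must quantitatively match the $A^c$-profile of a hypothetically escaping min--max sequence with a genuine Euclidean sweepout, keeping careful track of how $A^c$ splits between the compact part of $M$ and the asymptotic end, so that the strict gap $\omega_c(M) < \omega_c(\R^3)$ can be cleanly exploited.
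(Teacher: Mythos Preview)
Your overall strategy---solve approximate compact problems and use the strict gap $\omega_c(M)<\omega_c(\R^3)$ to prevent drift to infinity---is the right one, and matches the paper's philosophy. But two steps, as written, do not work, and the paper (following \cite{mazurowski2022prescribed}; see the detailed hyperbolic analogue in Section~4) handles them differently.

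First, capping off $M$ to a closed manifold $\tilde M_i$ is the wrong localization. You note that ``a careful choice of cap keeps the min-max value of $\tilde M_i$ close to $\omega_c(M)$,'' but there is no mechanism offered for this, and indeed a cap can create new sweepouts that lower the value or new obstructions that raise it. The paper instead replaces the constant $c$ by a cutoff prescribing function $\zeta_R$ which equals $c$ on $B_R$ and decreases to the mean curvature of large coordinate spheres outside. This makes the coordinate spheres genuine barriers for $A^{\zeta_R}$, so the min-max can be run on the fixed compact $B_{R+1}$ with its own boundary, yielding a surface $\Sigma_R$ with $A^{\zeta_R}(\Omega_R)=\omega_R$ and index at most one; one then checks $\omega_R\to\omega_c(M)$.

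Second, your contradiction step conflates two different objects. The sweepouts $\{\Omega^{(i)}_t\}$ were chosen at the outset to be compactly supported, so there is no ``escaping portion of the sweepout'' to transplant into $\R^3$; what escapes is the min-max \emph{surface} $\Sigma_i$, which is not a slice of any sweepout you control. The paper's argument stays with the surfaces: using the index-one bound and (in dimension three) curvature estimates for stable CMC surfaces that require no area bound, one first confines $\Sigma_R$ to a thin annulus $B_{R+1}\setminus B_{R-d}$, then takes pointed limits at infinity to obtain a lamination in the model space whose leaves are classified (round CMC-$c$ spheres, or in the hyperbolic case also a horosphere). Each nonempty limit forces $\liminf A^{\zeta_R}(\Omega_R)\ge\omega_c(\R^3)$, contradicting $\omega_R\to\omega_c(M)<\omega_c(\R^3)$. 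Your appeal to ``uniform area bounds'' is also unjustified; the paper avoids needing them by working with pointed limits and three-dimensional stability estimates.
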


\subsection{Main theorem and its proof}In this paper, we investigate the existence of compact CMC surfaces with prescribed mean curvature in asymptotically flat $3$-manifolds under the extra assumption that $M$ has non-negative scalar curvature. This is a very natural geometric assumption which, from the physical point of view, represents the fact that matter density should be non-negative. Our main theorem is the following:

\begin{theorem}
\label{main-theorem-flat}
Let $(M^3,g)$ be a complete, asymptotically flat manifold, possibly with non-empty boundary.   Assume that $M$ has non-negative scalar curvature. Then, for every constant $c > 0$, there exists a compact, almost-embedded, free boundary, constant mean curvature surface $\Sigma$ in $M$ with mean curvature $c$. 
\end{theorem}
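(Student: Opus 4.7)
The strategy is to combine a free-boundary version of the $A^c$ min-max theory from Theorem \ref{Thm: closed} with an IMCF-based sweepout. The central task is to verify the strict min-max inequality $\omega_c(M) < \omega_c(\R^3)$ under the scalar curvature hypothesis $R \geq 0$; once this is established, the min-max machinery produces the desired compact free-boundary CMC.

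First, I would upgrade the min-max existence result of \cite{mazurowski2022prescribed} to permit $M$ to have nonempty boundary: competitor regions are allowed to meet $\partial M$, and the critical points produced are almost-embedded CMC surfaces meeting $\partial M$ orthogonally. Under strict inequality $\omega_c(M) < \omega_c(\R^3)$, the same Almgren--Pitts-style arguments give a compact almost-embedded free-boundary CMC surface of mean curvature $c$.

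Next, I would produce the strict inequality via an IMCF sweepout. By the paper's IMCF result, for any point $p$ far out in the asymptotic end and any time horizon $T$, the inverse mean curvature flow emerging from $p$ is smooth on $(-\infty,T]$. Choosing $T$ large enough that $A^c(\Omega_T) < 0$ (possible since the enclosed volume eventually dominates), and prepending a family collapsing to $\{p\}$, yields an admissible continuous sweepout $\{\Omega_t\}$. Since $R \geq 0$, Geroch's monotonicity gives that the Hawking mass
\[
m_H(\Sigma) = \sqrt{\frac{|\Sigma|}{16\pi}}\Bigl(1 - \frac{1}{16\pi}\int_\Sigma H^2\Bigr)
\]
is non-decreasing along the flow; as $t \to -\infty$ the surfaces collapse to a point in an asymptotically Euclidean region and $m_H \to 0$, so $\int_{\Sigma_t} H^2 \leq 16\pi$ throughout. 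Combining with the Cauchy--Schwarz estimates $|\Sigma_t|^2 \leq \bigl(\int_{\Sigma_t} H\bigr)\bigl(\int_{\Sigma_t} H^{-1}\bigr)$ and $\bigl(\int_{\Sigma_t} H\bigr)^2 \leq |\Sigma_t|\int_{\Sigma_t} H^2 \leq 16\pi |\Sigma_t|$, I can bound $\frac{d}{dt} A^c(\Omega_t) = |\Sigma_t| - c\int_{\Sigma_t} H^{-1}$ by its value along the round-sphere sweepout in $\R^3$. Integrating yields $\sup_t A^c(\Omega_t) \leq \omega_c(\R^3)$, with strict inequality unless equality holds along the entire flow in both Cauchy--Schwarz (forcing $H$ constant on each $\Sigma_t$) and Geroch's formula (forcing $m_H \equiv 0$) --- a rigidity condition that ultimately identifies $(M,g)$ with a portion of Euclidean space.

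In this remaining rigid case, positive mass theorem rigidity (with appropriate adjustments when $\partial M \neq \emptyset$) forces $(M,g)$ to be Euclidean $\R^3$, and coordinate spheres of radius $2/c$ supply the desired CMC directly. The main obstacle is the careful quantitative exploitation of Geroch monotonicity and Cauchy--Schwarz to propagate the pointwise gap to a strict gap in the maximum of $A^c$ along the sweepout; rigorously setting up the $t \to -\infty$ limit of the IMCF (surfaces collapsing to a point) as a continuous sweepout compatible with the framework of \cite{mazurowski2022prescribed} is a secondary technical hurdle, as is a careful treatment of the rigid case when $\partial M$ is nonempty.
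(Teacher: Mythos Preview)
Your overall strategy coincides with the paper's: upgrade the min-max theorem of \cite{mazurowski2022prescribed} to the free-boundary setting (the paper does this as Theorem~\ref{theorem-min-max}), and then verify the strict inequality $\omega_c(M)<\omega_c(\R^3)$ using the smooth IMCF from a point far in the end together with Geroch monotonicity. Your differential inequality for $A^c(\Omega_t)$ via Cauchy--Schwarz is equivalent to the paper's route through the isoperimetric ratio: the paper shows (Lemma~\ref{Lem: sub-Euclidean ratio}, following Shi \cite{shi2016isoperimetric}) that $\area(\partial U_t)/\vol(U_t)^{2/3}<(36\pi)^{1/3}$ along the flow, and then quotes \cite[Proposition~4.29]{mazurowski2022prescribed} to convert this into $\sup_t A^c<\omega_c(\R^3)$. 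Both arguments rest on the same H\"older/Cauchy--Schwarz step combined with $\int_{\Sigma_t}H^2<16\pi$.

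The genuine gap is in your treatment of the equality case. Rigidity in Geroch and in Cauchy--Schwarz only tells you that the region swept by the flow, namely $\{u\le t^*\}$ for the relevant finite $t^*$, is a Euclidean ball. This is a bounded neighborhood of $p$ in the end; it does not give $m_{ADM}=0$, so the positive mass theorem is not available, and there is no mechanism to upgrade this local flatness to the statement that $(M,g)$ is globally $\R^3$. (With $\partial M\neq\emptyset$ the positive-mass rigidity statement is even further from what you need.) The paper avoids this entirely by a case split made \emph{before} running the flow: if the end of $M$ is exactly Euclidean, a coordinate sphere of radius $2/c$ in the end is already the desired CMC; otherwise one can choose the basepoint $p$ to be a \emph{non-flat} point arbitrarily far out. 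Since the swept region contains $p$, Geroch rigidity is then violated from the start, so $m_h(\Sigma_t)>0$ for every $t$ and the strict inequality $\int_{\Sigma_t}H^2<16\pi$ holds along the entire flow, yielding $\omega_c(M)<\omega_c(\R^3)$ directly. Replacing your positive-mass step with this case split on the end closes the gap.
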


When $M$ has no boundary, the CMC surface $\Sigma$ produced in Theorem \ref{main-theorem-flat} also has no boundary, so Theorem \ref{main-theorem-flat} gives an affirmative answer to X. Zhou's Conjecture \ref{Conj: Zhou} in dimension three under the assumption of non-negative scalar curvature. 
Since it is customary in the literature to work with asymptotically flat manifolds with outermost minimal boundary, we feel it is worth emphasizing that there are many examples of complete, boundary-free, asymptotically flat manifolds with non-negative scalar curvature, which do not contain any closed minimal surfaces. Such metrics can be constructed, for example, by taking a metric $\bar g$ on $\mathbb S^3$, whose Yamabe quotient satisfies $$Y(\mathbb S^3,[\bar g]) >  2^{-2/3}Y(\mathbb S^3,[g_{\text{round}}]),$$ and then letting 
$$(M,g) = (\mathbb S^3\setminus \{p\}, \Gamma^4 \bar g),$$ where $\Gamma$ is a Green's function for the conformal Laplacian with pole at $p$. It follows from the methods in \cite{bray2004classification} that such a metric cannot contain any closed minimal surfaces. 

Let us briefly discuss the proof of Theorem \ref{main-theorem-flat}. To handle the possible non-empty boundary, we use the free boundary min-max theory of A. Sun, Z. Wang, and X. Zhou \cite{sun2024multiplicity} and establish the following modified version of Theorem \ref{Thm: closed}.
\begin{theorem}
\label{theorem-min-max}
Let $(M^3,g)$ be a complete, asymptotically flat manifold, possibly with boundary. Fix a constant $c > 0$ and assume that $\omega_c(M) < \omega_c(\R^3)$. Then there exists a compact, almost-embedded, free boundary constant mean curvature surface $\Sigma$ in $M$ with mean curvature $c$. 
\end{theorem}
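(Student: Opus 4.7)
The plan is to carry out the argument that proves Theorem \ref{Thm: closed} in \cite{mazurowski2022prescribed}, but with the free boundary $A^c$ min-max theory of Sun-Wang-Zhou \cite{sun2024multiplicity} substituted for its closed counterpart. Two ingredients are needed: (a) a min-max existence theorem producing free boundary CMC critical points of $A^c$ in compact manifolds-with-boundary, and (b) a mechanism, driven by the strict inequality $\omega_c(M)<\omega_c(\R^3)$, that prevents the critical sets from escaping into the asymptotically flat end during a compact exhaustion.

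First I would fix an exhaustion of $M$ by compact submanifolds $M_1\subset M_2\subset\cdots\subset M$, where each $M_k$ has boundary $\bd M_k = \bd M \cup S_k$, and $S_k$ is a large coordinate sphere in the asymptotically flat end. Using the Euclidean asymptotics of $g$, I can arrange that each $S_k$ is strictly mean-convex with respect to the inward normal of $M_k$. Set
\[
\omega_c(M_k) = \inf_{\{\Omega_t\}_{t\in[0,1]}} \sup_{t\in[0,1]} A^c(\Omega_t),
\]
where the infimum ranges over continuously varying families of open sets in $M_k$ with $\Omega_0 = \emptyset$ and $A^c(\Omega_1)<0$. Applying (a suitable free boundary adaptation of the $A^c$ min-max theorem, following) \cite{sun2024multiplicity} and \cite{mazurowski2022prescribed} to each $M_k$ produces an almost-embedded, free boundary surface $\Sigma_k = \bd \Omega_k \cap \mathrm{int}(M_k)$ with constant mean curvature $c$, whose free boundary lies on $\bd M \cup S_k$ and meets it orthogonally. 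A standard density argument shows $\omega_c(M_k)\to\omega_c(M)$ as $k\to\infty$, since any nearly optimal sweepout for $\omega_c(M)$ has compact support in $M$ (up to negligible tails) and is therefore supported in some $M_k$.

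The main obstacle is ruling out, for all sufficiently large $k$, that $\Sigma_k$ has any free boundary component lying on the artificial sphere $S_k$. Here the hypothesis $\omega_c(M)<\omega_c(\R^3)$ is essential. Suppose, for contradiction, that along some subsequence a definite portion of $\Sigma_k$ drifts into the end. Since the metric $g$ converges to the Euclidean metric at the relevant scales, a concentration-compactness analysis shows that the drifting portion must produce an $A^c$-critical configuration in an asymptotically Euclidean region, and translating to a standard $\R^3$ picture one obtains $\omega_c(M_k) \geq \omega_c(\R^3) - o(1)$. Combined with $\omega_c(M_k)\to\omega_c(M)<\omega_c(\R^3)$, this is the desired contradiction. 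The delicate point, and what I expect to be the hardest step, is making this concentration analysis rigorous in the free boundary setting: the competitor sweepouts used in comparison must still respect the free boundary condition, but this is arranged automatically because they can be taken to be supported deep in the end, well away from $\bd M$.

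Once $\Sigma_k$ is confined to a compact region of $M$ independent of $k$, uniform area and mean curvature bounds together with the curvature estimates for almost-embedded free boundary CMC surfaces yield subsequential smooth convergence (possibly with multiplicity) of $\Sigma_k$ to a limit $\Sigma\subset M$ of constant mean curvature $c$ with free boundary on $\bd M$. This $\Sigma$ is the desired surface. Nonemptiness and noncollapsing of $\Sigma$ follow from the fact that its $A^c$-area equals the strictly positive limit $\omega_c(M)$, so that $\Sigma$ cannot be trivial or escape to infinity.
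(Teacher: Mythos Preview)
Your overall architecture---approximate compact problems plus a no-escape argument driven by $\omega_c(M)<\omega_c(\R^3)$---matches the paper's, but you are missing the device that makes the argument run, and this causes a genuine gap in your concentration step. The paper (following \cite{mazurowski2022prescribed}) does \emph{not} apply the $A^c$ min-max directly on the truncations $M_k$. Instead it replaces the constant $c$ by a cut-off prescribing function $\zeta_R$ equal to $c$ on $B_R$ and decreasing outside $B_R$ to a value below the mean curvature of the large coordinate spheres. This turns the spheres $S_r$ near $S_{R+1}$ into honest barriers for $A^{\zeta_R}$, so the min-max surface $\Sigma_R$ produced by \cite{sun2024multiplicity} has \emph{no} free boundary on the artificial outer sphere; only its potential free boundary on $\bd M$ survives. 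The drifting analysis is then performed on surfaces that are closed out in the end, and the pointed limits are closed configurations in the model space, whose $A^c$ values can legitimately be compared with $\omega_c(\R^3)$.

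In your scheme $\Sigma_k$ is allowed to meet $S_k$ orthogonally, and this is exactly where your lower bound $\omega_c(M_k)\ge \omega_c(\R^3)-o(1)$ fails. The mean curvature of $S_k$ is of order $2/r_k\to 0$, far below $c$, so $S_k$ is not a barrier for CMC-$c$ surfaces; after rescaling near a boundary point of $\Sigma_k$ on $S_k$ the ambient becomes a Euclidean half-space with free boundary on a hyperplane. The model free-boundary critical point there is the half-ball of radius $2/c$, with $A^c$ value $\tfrac12\omega_c(\R^3)$, not $\omega_c(\R^3)$. Hence your contradiction evaporates whenever $\tfrac12\omega_c(\R^3)<\omega_c(M)<\omega_c(\R^3)$. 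The cut-off is precisely what excludes these half-space limits. Note also that even after the barrier step the paper's no-escape argument is finer than a one-line concentration bound: it takes pointed limits to obtain a lamination of the model space, classifies the possible leaves (round spheres versus the flat leaf coming from the barrier level), and rules out each case separately to conclude that some component of $\Sigma_R$ stays in $B_R$ and therefore has constant mean curvature $c$.
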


In light of Theorem \ref{theorem-min-max}, in order to prove Theorem \ref{main-theorem-flat}, it suffices to verify the hypothesis $\omega_c(M) < \omega_c(\R^3)$  always holds provided $M$ has non-negative scalar curvature and does not have a Euclidean end. We accomplish this using inverse mean curvature flow.  

Inverse mean curvature flow is the motion of surfaces with speed equal to the reciprocal of the mean curvature. When the ambient manifold is the Euclidean space,  it was shown by Gerhard \cite{gerhardt1990flow}, and independently Urbas \cite{urbas1990expansion}, that inverse mean curvature flow starting from a smooth, strictly mean convex, star-shaped surface remains smooth for all time and approaches a round sphere after renormalization as time goes to infinity. Huisken and Ilmanen \cite{huisken2008higher} later extended this result to $C^1$, weakly mean convex, star-shaped initial surfaces in Euclidean space.

In a general asymptotically flat manifold $M$, the inverse mean curvature flow starting from an initial surface $\Sigma$ may encounter singularities in finite time. To circumvent this, Huisken and Ilmanen \cite{huisken2001inverse} developed a theory of weak inverse mean curvature flow. The weak inverse mean curvature flow starting from any surface $\Sigma$ is guaranteed to exist for all time, but is allowed to instantaneously {\it jump over sets of positive volume}.  The regularity results of Huisken and Ilmanen \cite{huisken2008higher} imply that weak inverse mean curvature flow starting from any initial surface in Euclidean space becomes smooth after a long enough time. In the same spirit, Y. Shi and the second named author \cite{shi2021regularity} proved an analogous regularity result for weak inverse mean curvature flow in asymptotically hyperbolic manifolds. 

Importantly for us, Huisken and Ilmanen \cite{huisken2001inverse} also proved that given any point $p\in M$, there exists a weak inverse mean curvature flow emerging from $p$ at time $-\infty$. Denote this weak inverse mean curvature flow by $\Sigma_t = \bd \Omega_t$ where $t\in \R$.  Y. Shi \cite{shi2016isoperimetric} proved that if $M$ has non-negative scalar curvature and $M$ is not flat at $p$, then the sets $\Omega_t$ have a better-than-Euclidean isoperimetric ratio: 
\[
\frac{\operatorname{Area}(\Sigma_t)}{\vol(\Omega_t)^{2/3}} < (36\pi)^{1/3}.
\]
Using this, it is straightforward to show that 
\[
\sup_{t\in \R} A^c(\Omega_t) < \omega_c(\R^3). 
\]
Hence, if $\Omega_t$ was continuous  as a function of $t$, we could re-parameterize to obtain a family witnessing that $\omega_c(M) < \omega_c(\R^3)$. However, since weak inverse mean curvature flow may instantaneously  jump over sets of positive volume, the family of sets $\Omega_t$ will not vary continuously with $t$ in general. 

To get around this, we prove that if $p$ is very far out in the end of $M$, then the weak inverse mean curvature flow $\Sigma_t = \bd \Omega_t$ emerging from $p$ is actually smooth for $t\in (-\infty, T(p))$ where $T(p)\to \infty$ as $p$ goes to infinity. Actually, we have the following:
\begin{theorem}\label{main-theorem-IMCF}
Fix an asymptotically flat metric on $\R^3$ minus a ball. Given any constant $T$ there is a constant $\rho_1=\rho_1(T)>0$ such that for each point $p$ in $\mathbb R^3\setminus \bar D_{\rho_1}$ the weak solution $u$ for inverse mean curvature flow emerging from $p$ is smooth up to the moment $T$. That is, we can find a smooth map $\Phi:\mathbb S^2\times (-\infty,T]\to \mathbb R^3\setminus\{p\}$ such that
    \begin{itemize}
    \item the map $\Phi_t=\Phi(\cdot,t):\mathbb S^2\to \mathbb R^3\setminus\{p\}$ is a smooth embedding and we have $\Phi_t(\mathbb S^2)=\partial\{u<t\}$;
    \item the smooth map $\Phi$ satisfies
    $$
    \frac{\partial}{\partial t}\Phi(y,t)=H(y,t)^{-1}\nu(y,t)\mbox{ at all }(y,t)\in \mathbb S^2\times (-\infty,T],
    $$
    where $H(y,t)$ is the mean curvature of the surface $\Phi_t(\mathbb S^2)$ at the point $x=\Phi_t(y)$ with respect to unit normal vector $\nu(y,t)$.
    \end{itemize}
    Moreover, we can guarantee
    $$
   D_{c^{-1}e^{t/2}}(p)\setminus\{p\} \subset\Phi(\mathbb S^2\times(-\infty,T])\subset D_{ce^{t/2}}(p)\setminus\{p\}
    $$
    for some constant $c>1$ independent of $t\in (-\infty,T]$. 
 \end{theorem}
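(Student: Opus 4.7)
The plan is to construct $\Phi$ by perturbing off the explicit Euclidean expanding-sphere solution from $p$, and then identify the resulting smooth classical flow with the Huisken--Ilmanen weak IMCF from $p$ by uniqueness. In asymptotic coordinates, write $g_{ij}=\delta_{ij}+h_{ij}$ with $|\partial^\alpha h_{ij}(x)|=O(|x|^{-\tau-|\alpha|})$. For $|p|=\lambda$ large enough that $D_{2e^{T/2}}(p)$ lies well inside the end, one has $\|g-\delta\|_{C^k(D_{2e^{T/2}}(p))}=O(\lambda^{-\tau})$, so the ball containing the flow up to time $T$ carries a metric that is an arbitrarily small $C^k$-perturbation of the Euclidean one. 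The Euclidean model is the explicit smooth flow $\Phi^0(\omega,t)=p+e^{t/2}\omega$ of concentric coordinate spheres.

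The key device is a parabolic rescaling $\tilde F(\omega,t):=e^{-t/2}(F(\omega,t)-p)$, which turns $\Phi^0$ into the stationary unit sphere $\tilde F^0\equiv\omega$ and re-expresses the ambient metric as $\tilde g_t(y):=g(p+e^{t/2}y)$, converging to $\delta$ in $C^k$ on every annulus in $\mathbb R^3\setminus\{0\}$ uniformly in $t\in(-\infty,T]$ as $\lambda\to\infty$. The rescaled IMCF equation is a uniformly parabolic quasilinear equation on $\mathbb S^2$; a direct calculation (write $\tilde F=(1+\phi)\omega$ and linearize) identifies its linearization at $\omega$ with $\tfrac14\Delta_{\mathbb S^2}$, which has a one-dimensional kernel of constants---corresponding to the radial-scaling symmetry $\tilde F\mapsto r\tilde F$, i.e., a time-reparametrization of the original flow---and strictly negative spectrum on the orthogonal complement. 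For each small $\epsilon>0$ I would solve the classical IMCF in $(M,g)$ starting from $\partial D_\epsilon(p)$ at time $t_\epsilon:=2\log\epsilon$; after rescaling this is the initial-value problem $\tilde F^\epsilon(\cdot,t_\epsilon)=\omega$ for a small perturbation of a stable equilibrium with 1D kernel. Stability of the non-scaling modes, combined with the smallness of the metric perturbation $\tilde g_t-\delta=O(\lambda^{-\tau})$, provides uniform-in-$\epsilon$ $C^{2,\alpha}$ bounds on the component of $\tilde F^\epsilon-\omega$ orthogonal to constants, while the scaling component is controlled crudely; the scaling drift only produces a time-reparametrization, leaving smoothness and strict mean-convexity intact.

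Sending $\epsilon\to 0$ and applying standard interior parabolic regularity, the flows $\tilde F^\epsilon$ converge (after a scaling reparametrization if needed) smoothly to a map $\tilde F:\mathbb S^2\times(-\infty,T]\to\mathbb R^3$ with $\tilde F(\cdot,t)$ contained in a uniform annulus $\{c^{-1}\le|y|\le c\}$ for some $c>1$ depending on $T$ and on the metric. This yields $\Phi(\omega,t)=p+e^{t/2}\tilde F(\omega,t)$ satisfying the IMCF equation classically, strictly mean-convex, and with the claimed annular containment. Finally, $u(x):=t$ whenever $x\in\Phi_t(\mathbb S^2)$ is a smooth, proper solution of the level-set weak IMCF equation with $p$ as a strict minimum, so the Huisken--Ilmanen comparison principle identifies it with the weak IMCF emerging from $p$, giving $\Phi_t(\mathbb S^2)=\partial\{u<t\}$.

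The hard part will be the uniformity of the $C^{2,\alpha}$ estimate on $\tilde F^\epsilon$ over the semi-infinite interval $[t_\epsilon,T]$ as $\epsilon\to 0$: a naive Gronwall argument for the unrescaled IMCF yields errors of order $(T-t_\epsilon)\lambda^{-\tau}$, which diverge. The parabolic rescaling resolves this for the modes orthogonal to the 1D scaling kernel by converting the equation to a dissipative one on the compact base $\mathbb S^2$, but the residual scaling drift a priori grows with $T-t_\epsilon$. Its only effect, however, is a reparametrization of the emerging flow, which can be absorbed into the eccentricity constant $c$ or into the choice of normalization of $u$. Making this cancellation rigorous---via careful quantitative analysis of the nonlinear rescaled equation on $\mathbb S^2$ with the $t$-varying near-Euclidean metric $\tilde g_t$, combined with gauge-fixing of the scaling mode---is the technical heart of the argument.
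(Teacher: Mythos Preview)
Your approach is genuinely different from the paper's, and the linearization you identify is correct: after the parabolic rescaling the Euclidean model becomes the stationary unit sphere, and the linearized operator is $\tfrac14\Delta_{\mathbb S^2}$ with a one-dimensional kernel of constants. However, there is a real gap at precisely the point you flag as ``the technical heart of the argument'': you have not established the uniform-in-$\epsilon$ nonlinear $C^{2,\alpha}$ estimate on $[t_\epsilon,T]$. Linear spectral stability on the orthogonal complement of constants, together with $O(\lambda^{-\tau})$ forcing, only bounds the solution \emph{so long as it remains in a regime where the linearization at $\omega$ is valid}. Closing that bootstrap over a time interval of length $T-2\log\epsilon\to\infty$ is not automatic: the neutral scaling mode can drift by an amount of order $(T-t_\epsilon)\lambda^{-\tau}$, and once $\tilde F$ strays from a fixed annulus the quadratic and higher nonlinearities are no longer negligible, so the Duhamel argument for the transverse modes loses its footing. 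Saying the drift is ``only a reparametrization'' does not by itself rescue the estimate, because the reparametrized flow solves the rescaled equation with a \emph{different} initial time and hence a different family $\tilde g_t$; one must actually implement a gauge-fixing (e.g.\ normalize area to $4\pi e^t$ at each step) and redo the estimates in that gauge, which you have not done. Without this, the existence of the smooth flow from $\partial D_\epsilon(p)$ on all of $[t_\epsilon,T]$, uniformly in $\epsilon$, is unproved.

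The paper sidesteps this infinite-time difficulty entirely by working in the opposite direction: it takes the Huisken--Ilmanen weak solution as given and upgrades its regularity slice by slice. The annulus estimate (Corollary~\ref{Cor: annulus}) and the Hawking-mass lower bound (Lemma~\ref{Lem: Hawking mass limit}) give, for each level $\Sigma_t^+$, scale-invariant control on area, eccentricity, and $\int H^2$. The decisive input you are missing is a \emph{rigidity} substitute for your spectral gap: any surface with these bounds and bounded $\|\mathcal B\|$ is, by De Lellis--M\"uller, $C^{1,\beta}$-close to a round sphere and therefore quantitatively star-shaped (Lemma~\ref{Lem: star-shape}). Star-shapedness in turn yields a curvature bound via Huisken--Ilmanen/Heidusch, and the two estimates feed back into each other through the ``box argument'' (Lemma~\ref{Lem: star-shape all time}), which is exactly the continuity/bootstrap step your sketch lacks. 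Because this argument runs for a \emph{fixed} finite time $s_0$ from each slice $\Sigma_t^+$ (after rescaling) and is then iterated over $t$, it never confronts a semi-infinite time interval. Your perturbative framework could in principle be completed along similar lines, but as written it does not supply the mechanism---monotone quantity, rigidity, or explicit gauge---that prevents the neutral mode from destroying the bootstrap.
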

Now let us explain how to improve the regularity of the weak inverse mean curvature flow emerging from $p$. From the uniqueness of weak inverse mean curvature flow it suffices to construct smooth inverse mean curvature flow starting from any $\Sigma_t$ with $t\in(-\infty,T]$ for a definite amount of time. Since each surface $\Sigma_t$ is only $C^{1,1}$ and weakly mean convex,  as in \cite{huisken2001inverse} we construct a family of smooth mean convex approximation surfaces $\Sigma_{t,\epsilon}$ of $\Sigma_t$ by mean curvature flow and start a smooth inverse mean curvature flow from $\Sigma_{t,\epsilon}$. Our goal is to establish uniform a priori estimates for these smooth inverse mean curvature flows. 

In the case when the ambient manifold is  Euclidean space, Huisken and Ilmanen \cite{huisken2008higher} showed that, from a star-shaped initial surface, the starshape will be preserved along the flow, and this further guarantees that the mean curvature along the flow can be bounded from below by $c\min\{\sqrt t,1\}$. Using this mean curvature lower bound (and also the natural upper bound from the comparison principle), one can derive interior curvature estimates, and then all higher-order derivative estimates for curvature from Krylov's regularity theory.

In our case, things are more complicated since the ambient space is non-Euclidean. The first difficulty comes from the fact that the support function $\langle x-p,\nu\rangle$ does not have a well-behaved evolution equation along the flow in the presence of error terms, so the preservation of star-shape cannot be derived directly from the comparison principle. The second difficulty also appears in our way to obtain the lower bound for mean curvature, where we have to use the Michael-Simon-Sobolev inequality for the Stampacchia iteration procedure, which already needs curvature estimates a priori due to the non-Euclidean ambient manifold.

To overcome these difficulties, we use the {\it box argument} from the previous joint work \cite{shi2021regularity} of the second named author with Y. Shi. At the first singular time of the smooth inverse mean curvature flows with initial surface $\Sigma_{t,\epsilon}$, it follows from the work \cite{huisken2001inverse} that either the star-shape fails to be kept or the curvature blows up. Therefore, in order to obtain a smooth flow for a definite amount of time, it suffices to find a box region
$$\mathcal R=\{\Sigma:\inf\langle \nu,\partial_r\rangle\geq \iota\mbox{ and }\|\mathcal B\|\leq B\}$$
such that the smooth inverse mean curvature flows with initial surface $\Sigma_{t,\epsilon}$ stay in the box region $\mathcal R$ for a definite amount of time, where $\langle \nu,\partial_r\rangle$ denotes the radial length of the normal vector $\nu$ and $\|\mathcal B\|$ denotes the norm of the second fundamental form.

With the above idea in mind, let us go through the whole argument. According to \cite{huisken2001inverse} there is a weak inverse mean curvature flow $\Sigma_t$ emerging from any point $p$ (see also Lemma \ref{Lem: weak solution}). From area comparison arguments (see Lemma \ref{Lem: outer ball} and Lemma \ref{Lem: inner ball}) we can show
$$\Sigma_t\subset \{e^{t/2}/c\leq |x-p| \leq ce^{t/2}\}$$
for all $t\in (-\infty,T]$. Up to scaling it suffices to construct a smooth inverse mean curvature flow from $\Sigma_0$ for a definite amount of time where $\Sigma_0$ is the $0$-slice of a weak inverse mean curvature flow $\Sigma_t$ emerging from $p$ in an almost Euclidean ambient manifold satisfying $\Sigma_0\subset \{c^{-1}\leq |x|\leq c\}$. As was mentioned above, we construct smooth mean convex approximation surfaces $\Sigma_{0,\epsilon}$ using the mean curvature flow (see Lemma \ref{Lem: MCF}), and we start smooth inverse mean curvature flows $\Sigma_{0,\epsilon,s}$. 

\begin{figure}[htbp]
\centering
\includegraphics[width=10cm]{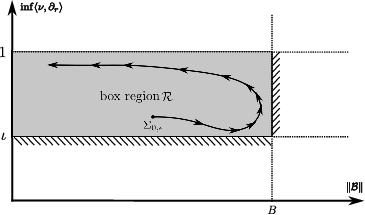}
\caption{The box argument}
\label{Fig: box}
\end{figure}
Given any constant $s_0>0$, we want to guarantee that $\Sigma_{0,\epsilon,s}$ stays in the box region $\mathcal R$ for all $s\in[0,s_0]$ after choosing $\iota$ and $B$ appropriately, as shown in Figure \ref{Fig: box}. 

For the initial surface $\Sigma_{0,\epsilon}$, which is a $C^{1,\beta}$- and $W^{2,p}$-approximation of $\Sigma_0$, we have a uniform area upper bound, a uniform almost non-negative Hawking mass lower bound (from Lemma \ref{Lem: Hawking mass limit} combined with the Geroch monotonicity), a uniform curvature estimate (from \cite{huisken2001inverse} and also \cite{heidusch2001regularitat}), and the annulus neighborhood estimate $\Sigma_{0,\epsilon}\subset \{c^{-1}\leq |x|\leq c\}$. As $p$ goes to infinity, these properties imply that $\Sigma_{0,\epsilon}$ is $C^{1,\beta}$-close to a round sphere and so $\Sigma_{0,\epsilon}$ satisfies the starshape estimate $\langle \nu,\partial_r\rangle\geq \iota(c)$ when $p$ is far out in the end (see Lemma \ref{Lem: star-shape}). We emphasize that only the annulus neighborhood estimate affects the lower bound for $\iota$, while other estimates determine how far out we need to put $p$ in the end.

Now we focus on the flow $\Sigma_{0,\epsilon,s}$ with $s\in [0,s_0]$. Clearly, all surfaces $\Sigma_{0,\epsilon,s}$ satisfy a uniform area upper bound, an almost non-negative Hawking mass lower bound, and the annulus neighborhood estimate. (Notice that we can assume that all surfaces $\Sigma_{0,\epsilon,s}$ satisfy the same estimates as those of $\Sigma_{0,\epsilon}$ a priori except the curvature estimate.)  According to the works \cite{huisken2001inverse} and \cite{heidusch2001regularitat},
if we have the starshape estimate
$$\langle \nu,\partial_r\rangle\geq \iota/4\mbox{ for all }s\in[0,s'],$$
then we have a unifrom curvature estimate $\|\mathcal B\|<B$ for all $s\in [0,s']$, where $B$ is a universal constant depending only on the value of $\iota/4$. Similar to $\Sigma_{0,\epsilon}$, this curvature estimate, combined with the area, Hawking mass, and the annulus neighborhood estimates, yields the improved starshape estimate $\langle \nu,\partial_r\rangle\geq \iota$ after we put $p$ further out in the end.

With above choice of $\iota$ and $B$ we set the box region $\mathcal R$. Notice that we already guarantee the initial surface $\Sigma_{0,\epsilon}$ to lie in the box region $\mathcal R$. We claim that the surface $\Sigma_{0,\epsilon,s}$ lies in the box region $\mathcal R$ up to the time $s_0$. Otherwise, the flow $\Sigma_{0,\epsilon,s}$ will pass through the $B$-edge or the $\iota$-edge of the box region $\mathcal R$ at some time $s<s_0$. If the flow passes through the $B$-edge, we obtain a contradiction to the previous curvature estimate. If the flow passes through the $\iota$-edge, we obtain a contradiction to the improved starshape estimate.

Based on the box argument, we obtain uniform starshape and curvature estimates for smooth inverse mean curvature flows $\Sigma_{0,\epsilon,s}$ for all $s\in [0,s_0]$. Then we can follow the work \cite{huisken2008higher} to obtain positive lower bound for mean curvature, and Krylov's regularity theory gives uniform estimates on all derivatives of curvature. Letting $\epsilon$ go to zero, then we obtain a smooth inverse mean curvature flow with initial surface $\Sigma_0$ for a definite amount of time. This further yields the smoothness of the weak inverse mean curvature flow from the uniqueness.

\subsection{Generalizations on asymptotically hyperbolic $3$-manifolds}
We can also prove an analogy of Theorem \ref{main-theorem-flat} on asymptotically hyperbolic manifolds. Here the natural geometric assumption is that the scalar curvature is at least $-6$. 

\begin{theorem}
\label{main-theorem-hyperbolic}
Let $(M^3,g)$ be a complete, asymptotically hyperbolic manifold, possibly with non-empty boundary. Assume that $M$ has scalar curvature at least $-6$. Then, for every constant $c > 2$, there exists a compact, almost-embedded, free boundary constant mean curvature surface $\Sigma$ in $M$ with mean curvature $c$. 
\end{theorem}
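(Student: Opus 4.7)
The plan is to mirror the proof of Theorem \ref{main-theorem-flat} step for step, replacing the Euclidean model by the hyperbolic one throughout. The first step is to establish the asymptotically hyperbolic analog of Theorem \ref{theorem-min-max}: if $(M^3,g)$ is asymptotically hyperbolic (possibly with boundary), if $c > 2$, and if $\omega_c(M) < \omega_c(\mathbb{H}^3)$, then there exists a compact, almost-embedded, free boundary CMC surface in $M$ with mean curvature $c$. Here $\omega_c(M)$ is defined exactly as before, via continuously varying sweepouts $\{\Omega_t\}_{t\in[0,1]}$ with $\Omega_0 = \emptyset$ and $A^c(\Omega_1) < 0$. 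The threshold $c > 2$ is necessary because coordinate spheres far out in a hyperbolic end have mean curvature approaching $2$, so one cannot drive $A^c$ negative using concentric balls at infinity unless $c > 2$. Apart from this change, the adaptation follows the blueprint of \cite{mazurowski2022prescribed} combined with the free boundary refinements of Sun--Wang--Zhou \cite{sun2024multiplicity}.

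The second step is to construct an optimal sweepout via inverse mean curvature flow. By the asymptotically hyperbolic version of Theorem \ref{main-theorem-IMCF} (announced in the abstract and provable by the same box argument, using that after rescaling a small neighborhood in the hyperbolic end is almost Euclidean, combined with the regularity machinery of \cite{shi2021regularity}), for any prescribed $T$ we can choose $p$ far enough out in the end so that the weak IMCF $\Sigma_t = \bd \Omega_t$ emerging from $p$ is smooth on $(-\infty, T]$ and lies in a controlled hyperbolic annular shell around $p$. This furnishes a continuously varying family of smoothly bounded regions $\{\Omega_t\}_{t \in (-\infty, T]}$ with $\Omega_t \to \{p\}$ as $t \to -\infty$ and $\Omega_T$ a large hyperbolic ball for $T$ large.

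The final and most delicate step is to show that this family strictly beats the hyperbolic min-max value. The right monotone quantity is the renormalized (hyperbolic) Hawking mass
\[
m_H^{\mathrm{hyp}}(\Sigma) \;=\; \sqrt{\frac{\area(\Sigma)}{16\pi}}\left(1 + \frac{1}{16\pi}\int_\Sigma (H^2 - 4)\,dA\right),
\]
which by the Geroch-type identity is non-decreasing along smooth IMCF whenever $R \geq -6$, with strict increase as soon as the flow sweeps through a non-hyperbolic region. Combining this with the vanishing of $m_H^{\mathrm{hyp}}(\Sigma_t)$ as $t \to -\infty$ (the hyperbolic analog of Lemma \ref{Lem: Hawking mass limit}) and the fact that, since $(M,g)$ is not isometric to $\mathbb{H}^3$, the flow eventually passes through a non-hyperbolic point, one obtains the strict isoperimetric inequality $\area(\Sigma_t) < A_{\mathbb{H}^3}(\vol(\Omega_t))$ for all sufficiently large $t$, where $A_{\mathbb{H}^3}$ is the isoperimetric profile of $\mathbb{H}^3$. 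For $c > 2$ and $T$ sufficiently large this yields $\sup_t A^c(\Omega_t) < \omega_c(\mathbb{H}^3)$, and reparametrizing gives a sweepout realizing $\omega_c(M) < \omega_c(\mathbb{H}^3)$; the hyperbolic analog of Theorem \ref{theorem-min-max} then produces the CMC surface. I expect the main obstacle to be this last quantitative step: because the hyperbolic isoperimetric profile grows exponentially in the enclosed volume rather than polynomially as in the flat case, converting the strict Geroch gain into a uniform gap below $\omega_c(\mathbb{H}^3)$ demands careful bookkeeping in the large-$t$ regime, where both $\area(\Sigma_t)$ and $\vol(\Omega_t)$ become exponentially large.
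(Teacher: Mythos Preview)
Your approach is essentially the same as the paper's: reduce to the min-max criterion $\omega_c(M) < \omega_c(\mathbb{H}^3)$ (Theorem \ref{asy-hyp-min-max}), then produce a sweepout by smooth IMCF from a point far out in the end (Theorem \ref{main-theorem-IMCF-AH}) and verify the strict hyperbolic isoperimetric inequality via the modified Hawking mass and Geroch monotonicity (Proposition \ref{Prop: sweep-out AH}).

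Two small corrections are worth flagging. First, your Hawking mass has a sign error: it should read $\sqrt{\area/16\pi}\bigl(1 - \frac{1}{16\pi}\int_\Sigma(H^2-4)\bigr)$, which vanishes on hyperbolic geodesic spheres. Second, the difficulty you anticipate in the large-$t$ regime does not actually arise. Once $m_h^*(\Sigma_t) > 0$ for all $t$ (which follows from Geroch monotonicity with rigidity, provided the base point $p$ itself is chosen non-hyperbolic), one gets the differential inequality $A'(V) < \bigl((16\pi + 4A)/A\bigr)^{1/2}$, while the hyperbolic profile satisfies $\mathcal{I}_{hyp}'(V) = \bigl((16\pi + 4\mathcal{I}_{hyp})/\mathcal{I}_{hyp}\bigr)^{1/2}$. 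ODE comparison from the common initial value $0$ yields $\area(\Sigma_t) < \mathcal{I}_{hyp}(\vol(\Omega_t))$ for \emph{all} $t \in (-\infty,T]$, not just large $t$, and $\sup_t A^c(\Omega_t) < \omega_c(\mathbb{H}^3)$ follows immediately with no exponential bookkeeping. Finally, note that the case where the end is exactly hyperbolic must be disposed of separately (trivially, by taking a round sphere in the end), since then no non-hyperbolic base point is available arbitrarily far out; your hypothesis ``$(M,g)$ not isometric to $\mathbb{H}^3$'' is not by itself enough to run the IMCF argument.
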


\begin{rem}
Note that in hyperbolic space there do not exist closed CMC surfaces with mean curvature less than or equal to $2$. Hence the restriction to constants $c > 2$ in Theorem \ref{main-theorem-hyperbolic} is necessary. 
\end{rem}

To prove Theorem \ref{main-theorem-hyperbolic} we follow the same line of reasoning as in the asymptotically flat case. Correspondingly, we can establish the following theorems:
\begin{theorem}
\label{asy-hyp-min-max}
Let $(M^3,g)$ be a complete, asymptotically hyperbolic manifold, possibly with boundary. Fix a constant $c > 2$ and assume that $\omega_c(M) < \omega_c(\mathbb H^3)$. Then there is a compact, almost-embedded, free boundary constant mean curvature surface $\Sigma$ in $M$ with mean curvature $c$. 
\end{theorem}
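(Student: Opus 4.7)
The plan is to mirror the proof of Theorem \ref{theorem-min-max}, adapting each step to the asymptotically hyperbolic setting. First I would exhaust $(M,g)$ by a nested family of compact Lipschitz domains $\{M_i\}_{i=1}^\infty$, each containing any non-compact components of $\partial M$ and with smooth auxiliary boundary chosen transverse to the radial coordinate in the hyperbolic end. On each $M_i$, which is a compact manifold with boundary, the free boundary $A^c$ min-max theory of Sun-Wang-Zhou \cite{sun2024multiplicity} applies and produces a smooth, almost-embedded, free boundary CMC surface $\Sigma_i\subset M_i$ with mean curvature $c$ realizing the corresponding min-max width $\omega_c(M_i)$. By truncating an almost optimal sweepout for $\omega_c(M)$ to $M_i$ one checks $\omega_c(M_i)\le \omega_c(M)+o(1)$, so the widths are uniformly bounded, and in particular strictly below $\omega_c(\mathbb H^3)$ for $i$ large by the hypothesis.

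The next step is to extract a subsequential limit of $\{\Sigma_i\}$ confined to a fixed compact region of $M$. Here the hypothesis $c>2$ is essential: in any asymptotically hyperbolic end the large coordinate spheres have mean curvatures converging to $2$ from above, so there is some $R_0$ such that every coordinate sphere $S_r$ with $r\ge R_0$ has mean curvature strictly less than $c$ with respect to its inward normal. Using this one-parameter family of spheres as one-sided CMC barriers and invoking the maximum principle in the almost-embedded category, one concludes that $\Sigma_i\subset \{r\le R_0+1\}$ for all large $i$; otherwise the outermost component of $\Sigma_i$ outside $\{r=R_0\}$ could be compared with a coordinate sphere from the foliation and a contradiction with the barrier inequality would be obtained.

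With the $\Sigma_i$ confined to a fixed compact set and with area uniformly bounded (from the bound on $A^c$), standard compactness and regularity theory for almost-embedded CMC surfaces with free boundary produces a varifold subsequential limit $\Sigma\subset M$. Nontriviality of $\Sigma$ is then forced by the strict inequality $\omega_c(M)<\omega_c(\mathbb H^3)$: if $\Sigma$ were empty, the min-max sequence on $M_i$ would have to concentrate at infinity, but any sweepout supported only in the asymptotically hyperbolic end of $M$ satisfies $\sup A^c\ge \omega_c(\mathbb H^3)-o(1)$ by comparison with the reference hyperbolic end, contradicting the strict gap. The limit $\Sigma$ is therefore a compact, almost-embedded, free boundary CMC $c$ surface in $M$.

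The main obstacle will be the confinement step in the second paragraph, namely verifying that the coordinate sphere foliation genuinely yields a one-sided CMC barrier in the almost-embedded category and interacts correctly with the free boundary condition along the auxiliary boundary $\partial M_i\setminus \partial M$. Once this barrier is in hand, together with the standard truncation and concentration-compactness arguments, the remainder of the proof is a faithful adaptation of the arguments in \cite{mazurowski2022prescribed} and \cite{sun2024multiplicity}, with the role of $c>0$ played by $c>2$ and the Euclidean reference end replaced by the hyperbolic one.
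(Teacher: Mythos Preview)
Your confinement argument in the second paragraph is the genuine gap, and unfortunately it cannot be repaired as stated. The direction of the maximum principle is wrong. At the outermost point of $\Sigma_i=\partial\Omega_i$ (with $\Omega_i$ compact), the outward unit normal of $\Omega_i$ coincides with the outward normal of the coordinate sphere $S_r$ touching $\Sigma_i$ there. Since $\Sigma_i$ lies inside $S_r$ near that point, the comparison principle gives $c=H_{\Sigma_i}\ge H_{S_r}\approx 2\coth r$, which is \emph{satisfied}, not violated, for all large $r$ because $c>2$. In other words, coordinate spheres with mean curvature close to $2$ are \emph{not} outer barriers for CMC $c$ surfaces when $c>2$; indeed, a geodesic sphere of radius $\operatorname{arctanh}(2/c)$ can be placed arbitrarily far out in the end, so no pointwise barrier can confine the $\Sigma_i$.

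This is exactly why the paper does not run the $A^c$ min-max directly on an exhaustion. Instead it replaces $c$ by a cut-off prescribing function $\zeta_R$ which equals $c$ on $B_R$ and decreases to $2$ on $M\setminus B_{R+1}$. Since $H(S_r)=2+4e^{-2r}+O(e^{-3r})$, one can arrange $\operatorname{div}(N)\ge\zeta_R$ outside $B_{R+1-6\eps_R}$, so the spheres \emph{are} barriers for $A^{\zeta_R}$ (Proposition \ref{barrier}), and the Sun--Wang--Zhou min-max produces a surface $\Sigma_R=\partial\Omega_R$ with prescribed mean curvature $\zeta_R$ confined to $B_{R+1}$. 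The hypothesis $\omega_c(M)<\omega_c(\mathbb H^3)$ is then used not as a compactness device for a limit but to rule out, via a blow-up classification of pointed limits in $\mathbb H^3$ (horospheres versus round CMC spheres, using de Lima's diameter bound and the do Carmo--Lawson moving planes argument), the possibility that $\Sigma_R$ sits entirely in the transition annulus $B_{R+1}\setminus B_R$ for all large $R$. Once some component of $\Sigma_R$ lies in $B_R$, it has constant mean curvature $c$ and the theorem follows. Your overall architecture (approximate problems, then prevent drift) is right in spirit, but the barrier has to be built for a modified functional, and the drift is excluded by an energy/classification argument rather than a maximum principle.
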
 

\begin{theorem}\label{main-theorem-IMCF-AH}
     Given any constant $T$ there is a constant $\rho_1=\rho_1(T)>0$ such that for each point $p$ in $\mathbb R^3\setminus \bar D_{\rho_1}$ the weak solution $u$ for inverse mean curvature flow emerging from $p$ is smooth up to the moment $T$. That is, we can find a smooth map $\Phi:\mathbb S^2\times (-\infty,T]\to (\mathbb R^3\setminus\{p\},\hat g)$ such that
    \begin{itemize}
    \item the map $\Phi_t=\Phi(\cdot,t):\mathbb S^2\to \mathbb R^3\setminus\{p\}$ is a smooth embedding and we have $\Phi_t(\mathbb S^2)=\partial\{u<t\}$;
    \item the smooth map $\Phi$ satisfies
    $$
    \frac{\partial}{\partial t}\Phi(y,t)=H(y,t)^{-1}\nu(y,t)\mbox{ at all }(y,t)\in \mathbb S^2\times (-\infty,T],
    $$
    where $H(y,t)$ is the mean curvature of the surface $\Phi_t(\mathbb S^2)$ at the point $x=\Phi_t(y)$ with respect to unit normal vector $\nu(y,t)$.
    \end{itemize}
   Moreover, given any constant $r_0>0$ we can choose $T$ large enough such that $\Phi_t(T)$ encloses $D^h_{r_0}(p)$.
   \end{theorem}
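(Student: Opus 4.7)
The plan is to adapt the box argument used in the asymptotically flat case (Theorem \ref{main-theorem-IMCF}) to the hyperbolic setting, replacing comparisons against Euclidean geodesic spheres by comparisons against hyperbolic geodesic spheres centered at $p$. The three pillars of that proof transplant readily: existence of weak IMCF $\Sigma_t = \partial\{u<t\}$ from $p$ (Huisken-Ilmanen), an annulus localization controlling the slices, and a continuity (box) argument combining a starshape estimate, a curvature estimate, and Geroch-type monotonicity.

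The first step is the annulus localization. In $\mathbb{H}^3$, a geodesic sphere of radius $r$ has area $4\pi\sinh^2 r$ and mean curvature $2\coth r$, so the exact smooth IMCF from a point is a family of concentric geodesic spheres with hyperbolic radius $r(t) = t/2 + O(1)$ for large $t$. Area comparison arguments against these spheres (the hyperbolic analogues of Lemma \ref{Lem: outer ball} and Lemma \ref{Lem: inner ball}), carried out in AH coordinates in which the ambient metric is $C^k$-close to $\mathbb{H}^3$ far out in the end, should yield
\[
\{\dist_h(p,\cdot) \le t/2 - C\} \subset \{u\le t\} \subset \{\dist_h(p,\cdot) \le t/2 + C\}
\]
for every $t \in (-\infty, T]$, with $C$ independent of $t$ once $p$ is chosen deep enough in the end.

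Next, to upgrade regularity, it suffices (by uniqueness of weak IMCF) to produce a smooth IMCF from the slice $\Sigma_{t_0}$ on some definite time interval $[0,s_0]$, uniformly in $t_0 \le T$. Construct smooth mean-convex approximations $\Sigma_{t_0,\epsilon}$ via mean curvature flow, start smooth IMCFs $\Sigma_{t_0,\epsilon,s}$, and run the box argument. Define the box
\[
\mathcal{R} = \{\Sigma : \inf \langle \nu, \partial_r^h \rangle \ge \iota \text{ and } \|\mathcal{B}\| \le B\},
\]
where $\partial_r^h$ is the hyperbolic radial unit field based at $p$. The three key inputs are: a uniform area upper bound (from the exponential area growth of IMCF); an almost-nonnegative lower bound on the hyperbolic Hawking mass
\[
m_H(\Sigma) = \sqrt{\tfrac{|\Sigma|}{16\pi}}\left(1 - \tfrac{1}{16\pi}\int_\Sigma (H^2 - 4)\right),
\]
which is Geroch-monotone along smooth IMCF provided $R \ge -6$; and the annulus localization. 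Together these should force each $\Sigma_{t_0,\epsilon,s} \in \mathcal{R}$ to be $C^{1,\beta}$-close to a hyperbolic geodesic sphere as $p$ is pushed deeper into the end, which yields the improved starshape estimate $\langle \nu, \partial_r^h\rangle \ge \iota$; and conditional on this starshape estimate, the Huisken-Ilmanen/Heidusch local regularity theory gives $\|\mathcal{B}\| \le B$. A continuity argument then shows that the flow cannot exit $\mathcal{R}$ before $s_0$: exit through the $B$-edge contradicts the conditional curvature estimate, while exit through the $\iota$-edge contradicts the improved starshape estimate. Krylov regularity upgrades these to uniform higher-order bounds, and letting $\epsilon \to 0$ produces a smooth IMCF from $\Sigma_{t_0}$ on $[0,s_0]$ which, by uniqueness of weak solutions, coincides with $u$.

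The main obstacle is the hyperbolic version of the starshape lemma (the analogue of Lemma \ref{Lem: star-shape}). In the AF setting the argument relies on rigidity of round Euclidean spheres under a sharp mass-area inequality; here one needs the parallel rigidity statement for the hyperbolic Hawking mass, namely that a surface of controlled area and curvature with nearly-vanishing hyperbolic Hawking mass, sitting in a fixed hyperbolic annulus around $p$, must be $C^{1,\beta}$-close to a hyperbolic geodesic sphere. This is where the hypothesis $R \ge -6$ is decisively used, mirroring its role in Theorem \ref{main-theorem-hyperbolic}. The final assertion of the theorem is then an immediate consequence of the annulus localization: $\{u \le T\} \supset \{\dist_h(p,\cdot) \le T/2 - C\}$, so choosing $T > 2(r_0 + C)$ guarantees that $\Phi_T(\mathbb{S}^2)$ encloses $D^h_{r_0}(p)$.
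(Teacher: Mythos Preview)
Your outline captures the spirit of the box argument, but there is a genuine structural gap: you treat all $t\in(-\infty,T]$ uniformly, whereas hyperbolic geometry is not scale-invariant, and this breaks the argument at both ends.

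Concretely, your annulus localization $\dist_h(p,\cdot)\approx t/2+O(1)$ is only valid for large $t$; as $t\to-\infty$ the surfaces have hyperbolic radius $\sim e^{t/2}$, not $t/2$, so the formula becomes nonsensical. Likewise, a single box $\{\|\mathcal B\|\le B\}$ cannot hold uniformly in $t_0$: for very negative $t_0$ the curvature of $\Sigma_{t_0}$ scales like $e^{-t_0/2}$. In the asymptotically flat case this is cured by the rescaling $\tilde g=e^{-t}\hat g$, but there is no analogous rescaling preserving the hyperbolic structure. The paper resolves this by a two-scale decomposition: first it proves (Corollary~\ref{Cor: smoothness small scale}) that there is a fixed $T_s$ such that the weak flow is smooth for all $t\le T_s$, by blowing up at $p$ and reducing to the already-proved asymptotically flat result (Proposition~\ref{Prop: smoothness approximate Euclidean}); then it runs the box argument only on the compact interval $[T_s,T]$, where the surfaces stay at bounded hyperbolic distance from $p$ and no rescaling is needed. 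You are missing this small-scale step entirely.

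For the large-scale step, the paper's starshape rigidity is also different from what you propose, and more elementary: instead of a hyperbolic-sphere rigidity statement, it passes to the Poincar\'e ball model $\mathcal X_p:(\mathbb B^3,O)\to(\mathbb R^3,p)$, observes that the Willmore energy is conformally invariant so that the hyperbolic Hawking mass bound translates to a Euclidean one, and then invokes the Euclidean De~Lellis--M\"uller rigidity exactly as in the flat case. Finally, your claim that $R\ge -6$ is ``decisively used'' here is incorrect: Theorem~\ref{main-theorem-IMCF-AH} carries no scalar curvature hypothesis. The Geroch error term involves $R+6$, and the asymptotically hyperbolic assumption alone gives $|R+6|=o(1)$ far out in the end, which is all that is needed for the $o(1)$ Hawking mass lower bound. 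The hypothesis $R\ge-6$ enters only later, in Proposition~\ref{Prop: sweep-out AH}, to obtain strict positivity of $m_h^*$ via the equality case of Geroch monotonicity.
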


\subsection{Organization of the rest of this paper} In Section 2, we apply the box method to prove Theorem \ref{main-theorem-IMCF}. Using this we construct a continuously varying family of open sets $\{U_t\}_{t\in[0,1]}$ in any non-Euclidean asymptotically flat manifolds with nonnegative scalar curvature such that the isoperimetric ratios of $U_t$ are less than $(36\pi)^{1/3}$, which is the main result of Proposition \ref{Prop: sweep-out}. In Section 3, we establish counterparts of above results in the setting of asymptotically hyperbolic manifolds, which are formulated by Theorem \ref{asy-hyp-min-max} and Proposition \ref{Prop: sweep-out AH}.
In Section 4, we generalize the min-max method in \cite{mazurowski2022prescribed} to asymptotically flat and hyperbolic manifolds with boundary, establishing Theorem \ref{theorem-min-max} and Theorem \ref{asy-hyp-min-max}. Combined with the use of previously constructed family $\{U_t\}_{t\in[0,1]}$, we finally prove Theorem \ref{main-theorem-flat} and Theorem \ref{main-theorem-hyperbolic}.

\subsection{Acknowledgments} The first-named author would like to thank Xin Zhou for introducing him to the problem and for many insightful discussions. He acknowledges the support of an AMS-Simons travel grant. The second-named author would like to thank Dr. Yuchen Bi for many helpful conversations. He was partially supported by National Key R\&D Program of China 2023YFA1009900 and NSFC grant 12401072 as well as the start-up fund from Westlake University.

\section{Sweep-out with sub-Euclidean isoperimetric ratio and smooth inverse mean curvature flows in an asymptotically flat end}\label{Sec: sweep-out AF}

In this section, a complete Riemannian $3$-manifold $(M^3,g)$ (possibly with compact boundary) is called an asymptotically flat manifold, if there is a compact subset $K$ of $M$ such that 
\begin{itemize}
    \item[(c1)] $M\setminus K$ is diffeomorphic to $\mathbb R^3\setminus \bar D_1$, where $$D_1=\{x\in \mathbb R^3: |x|<1\};$$
    \item[(c2)] the metric $g$ satisfies
$$\sum_{k=0}^3|x|^k|\partial^k\left(g_{ij}-\delta_{ij}\right)|=o(1)\mbox{ as }x\to\infty,$$
where $g_{ij}$ are components in the metric expression $$g=g_{ij}\mathrm dx_i\otimes \mathrm dx_j$$ with respect to the Euclidean coordinate chart $(x_i)$.
\end{itemize}
For convenience, we call $M\setminus K$ an asymptotically flat end.

Our goal of this section is to show the following

\begin{proposition}\label{Prop: sweep-out}
    Assume that $(M^3,g)$ is an asymptotically flat manifold with non-negative scalar curvature, possibly with compact boundary, which does not have Euclidean end. Then given any constant $v>1$, there exists a continuously varying family of open sets, denoted by $\{U_t \}_{t\in[0,1]}$, satisfying $\vol(U_0) = \emptyset$, $\vol(U_1 ) = v$, and also
$$
\area(\partial U_t)<(36\pi)^{\frac{1}{3}} \vol(U_t )^{\frac{2}{3}}$$
for all $t \in (0,1]$.
\end{proposition}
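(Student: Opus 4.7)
The plan is to take $\{U_\tau\}_{\tau\in[0,1]}$ to be a reparametrization of the sub-level sets of a smooth inverse mean curvature flow emerging from a suitably chosen point $p$ far out in the end. Theorem \ref{main-theorem-IMCF} is the source of smoothness: given any $T$, we may place $p$ so far out in the end that the weak IMCF emerging from $p$, written $\Sigma_t = \partial \Omega_t$, is smooth on the whole interval $(-\infty, T]$. Since $|\Sigma_t| = e^{t-t_0}|\Sigma_{t_0}|$ along smooth IMCF, the enclosed volume $\vol(\Omega_t)$ grows exponentially in $t$, and by choosing $T$ large we can arrange $\vol(\Omega_T) > v$.

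The crucial ingredient is the strict Euclidean isoperimetric bound along this flow, which I would extract from Y.~Shi's theorem cited in the introduction: if $M$ has non-negative scalar curvature and is not flat at $p$, then the sets $\Omega_t$ produced by the IMCF emerging from $p$ satisfy
$$\area(\partial\Omega_t) < (36\pi)^{1/3}\vol(\Omega_t)^{2/3}$$
for every finite $t$. The assumption that $M$ has no Euclidean end is used to select $p$ arbitrarily far out in the end and simultaneously at a point of non-flatness; otherwise the metric would be flat on a nonempty open subset of the end, which together with the asymptotically flat structure would force the whole end to be Euclidean. Thus the hypotheses of Theorem \ref{main-theorem-IMCF} and of Shi's theorem can be met at the same point $p$.

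With the smooth flow and the strict bound in hand, the reparametrization is routine. Let $t^{\ast}\in(-\infty,T]$ be the unique time with $\vol(\Omega_{t^{\ast}}) = v$, which exists by the intermediate-value theorem since the volume is continuous, tends to $0$ as $t\to-\infty$, and exceeds $v$ at $T$. Choose any continuous strictly increasing bijection $\phi\colon(0,1]\to(-\infty,t^{\ast}]$ with $\phi(1) = t^{\ast}$, and set $U_0 = \emptyset$ together with $U_\tau = \Omega_{\phi(\tau)}$ for $\tau\in(0,1]$. Smoothness of the flow gives continuous variation on $(0,1]$, while continuity at $\tau = 0$ follows because $\Omega_{\phi(\tau)}$ collapses to the single point $p$ as $\phi(\tau)\to-\infty$, so both its volume and boundary area vanish in the limit. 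The strict isoperimetric bound at each $\tau\in(0,1]$ is exactly the one furnished by Shi's theorem. The main subtlety I anticipate is the joint compatibility of the two inputs: Theorem \ref{main-theorem-IMCF} forces $p$ to infinity, while Shi's theorem requires $p$ to be non-flat, and the no-Euclidean-end hypothesis is tailored exactly to resolve this tension.
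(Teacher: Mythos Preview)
Your proposal is correct and follows essentially the same route as the paper's proof: pick a non-flat point $p$ far out in the end (using the non-Euclidean-end hypothesis), invoke Theorem~\ref{main-theorem-IMCF} for smoothness of the IMCF up to a large time $T$, use the Shi-type strict isoperimetric inequality along the flow, and reparametrize. The only cosmetic difference is that the paper packages the isoperimetric step as its own Lemma~\ref{Lem: sub-Euclidean ratio} (derived via Geroch monotonicity and Lemma~\ref{Lem: Hawking mass limit}) rather than citing Shi directly, and it obtains $\vol(U_T)\ge v$ from the ball enclosure $D_{c^{-1}e^{T/2}}(p)\subset U_T$ in Theorem~\ref{main-theorem-IMCF} rather than from exponential area growth.
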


The proof is based on the construction of smooth inverse mean curvature flows in asymptotically flat ends, where we will adopt the existence of weak solutions from \cite{huisken2001inverse} and improve the regularity as in \cite{shi2021regularity}.


\subsection{Existence of weak solutions for inverse mean curvature flow from a point}
For our purpose, we always extend the asymptotically flat end $(M\setminus K,g)$ to a new complete Riemannian manifold $(\mathbb R^3,\hat g)$ such that 
$$\hat g=g\mbox{ outside }\bar D_1.$$
In the following, we always use $B_r(p)$ to denote the $\hat g$-geodesic $r$-ball centered at the point $p$, and we use $D_r(p)$ to denote the Euclidean $r$-ball centered at the point $p$. If $p$ is the origin, we omit the symbol $p$.

Let $\Omega$ be an open subset of $\mathbb R^3$. A locally Lipschitz function $u$ is called a weak solution for inverse mean curvature flow in $\Omega$, if for any locally Lipschitz function $v$ in $\Omega$ with $\{v\neq u\}$ precompact in $\Omega$, we always have $J_u^K(u)\leq J_u^K(v)$, where $K$ denotes the compact closure of $\{v\neq u\}$ in $\Omega$, and we use $J_u^K(\cdot)$ to denote the functional
$$
J_u^K(\phi)=\int_{K}|\nabla \phi|+\phi|\nabla u|\,\mathrm d\mu_M.
$$

\begin{lemma}\label{Lem: weak solution}
    For any point $p\in \mathbb R^3$ there is a locally Lipschitz function $u$ in $\mathbb R^3\setminus\{p\}$ such that
    \begin{itemize}
        \item $u(x)\to-\infty$ as $x\to p$ and $u(x)\to+\infty$ as $x\to \infty$;
        \item $u$ is a weak solution for inverse mean curvature flow.
    \end{itemize}
\end{lemma}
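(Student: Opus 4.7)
The plan is to realize $u$ as a limit of renormalized Huisken-Ilmanen weak flows emerging from shrinking geodesic balls around $p$. For each small $\epsilon>0$, apply the Huisken-Ilmanen existence theorem on $(\mathbb R^3,\hat g)$ with initial surface $\partial B_\epsilon(p)$, producing a proper locally Lipschitz weak IMCF solution $u_\epsilon$ on $\mathbb R^3 \setminus B_\epsilon(p)$ with $u_\epsilon = 0$ on $\partial B_\epsilon(p)$. Motivated by the Euclidean model, in which a round sphere of radius $r_0$ evolves into a sphere of radius $r_0 e^{t/2}$, I would renormalize by setting $\tilde u_\epsilon := u_\epsilon + 2\log\epsilon$, so that $\tilde u_\epsilon \equiv 2\log\epsilon$ on $\partial B_\epsilon(p)$ and, in the Euclidean model, $\tilde u_\epsilon(x) = 2\log|x-p|$ outside the initial ball.

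The next step is to establish uniform local Lipschitz bounds for $\{\tilde u_\epsilon\}$ on compact subsets of $\mathbb R^3\setminus\{p\}$, passing to $\epsilon$ small enough that the compact set lies outside $B_\epsilon(p)$. Since $\hat g$ is smooth near $p$ and asymptotically Euclidean in the end, geodesic spheres centered at $p$ have mean curvature close to $2/r$, which supplies both inner and outer barriers. The comparison principle for weak IMCF applied with these barriers then yields inclusions
\[
B_{c^{-1}e^{t/2}}(p) \,\subset\, \{\tilde u_\epsilon < t\} \,\subset\, B_{c\,e^{t/2}}(p)
\]
for some constant $c>1$ independent of $\epsilon$, in the spirit of Lemmas \ref{Lem: outer ball} and \ref{Lem: inner ball} stated later. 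Combined with the Huisken-Ilmanen gradient bound on $|\nabla u_\epsilon|$ in terms of the mean curvature of the level sets, these ball inclusions give a uniform Lipschitz constant on each annulus $\{\delta \leq d_{\hat g}(x,p) \leq R\}$ for all sufficiently small $\epsilon$.

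With these estimates, Arzel\`a-Ascoli and a diagonal argument produce a subsequential limit $u$ converging locally uniformly on $\mathbb R^3\setminus\{p\}$, and the Huisken-Ilmanen compactness theorem for weak solutions ensures that $u$ is itself a weak IMCF. Passing the two-sided ball inclusions to the limit gives analogous inclusions for $\{u<t\}$, from which $u(x)\to -\infty$ as $x\to p$ and $u(x)\to +\infty$ as $x\to\infty$ follow at once.

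The main obstacle is producing barriers with mean curvature estimates uniform in $\epsilon$, so that the renormalization $+2\log\epsilon$ exactly cancels the $\epsilon$-dependence in the inner and outer inclusions. This reduces to a perturbative computation of mean curvature on small geodesic spheres around $p$, which uses only smoothness of $\hat g$ near $p$; once the barriers are in place, the remaining steps are standard applications of the Huisken-Ilmanen framework, as carried out in Section 6 of \cite{huisken2001inverse}.
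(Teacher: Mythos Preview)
Your overall scheme---running weak IMCF from $\partial B_\epsilon(p)$, renormalizing, and passing to a limit via the Huisken--Ilmanen compactness theorem---is exactly the paper's approach (which in turn follows \cite[Lemma~8.1]{huisken2001inverse}). The gap is in how you obtain the two-sided ball inclusions
\[
B_{c^{-1}e^{t/2}}(p)\subset\{\tilde u_\epsilon<t\}\subset B_{c\,e^{t/2}}(p).
\]
These need not hold for all $t$ and general $p\in\mathbb R^3$. First, the extended metric $\hat g$ is arbitrary on the compact part of $\mathbb R^3$, so geodesic spheres centered at $p$ can fail to be smooth or to have mean curvature comparable to $2/r$ at intermediate radii; they do not furnish global barriers. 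Second, and more seriously, weak IMCF can jump: after a jump the sublevel set $\{u_\epsilon\le t\}$ may swallow a region of large diameter even though its boundary area remains $A_\epsilon e^t$, so no outer ball inclusion of the asserted form follows from barrier comparison. The later Lemmas~\ref{Lem: outer ball} and~\ref{Lem: inner ball} you invoke are proved by area arguments rather than barriers, and only for $p$ far out in the end and $t\le T$.

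The paper sidesteps all of this. For local compactness it normalizes abstractly by $c_\epsilon:=\min_{\partial B_{\sigma_0}(p)}u_\epsilon$, so that $\mathring u_\epsilon=u_\epsilon-c_\epsilon$ is automatically bounded on $\partial B_{\sigma_0}(p)$ via the gradient estimate; your explicit shift $+2\log\epsilon$ would instead require the two-sided estimate $c_\epsilon=-2\log\epsilon+O(1)$, which you have not justified. For $u\to-\infty$ as $x\to p$ the paper uses exponential area growth together with the outer-minimizing property of $\Sigma_t^+$ to force the level sets into small balls around $p$; for $u\to+\infty$ it compares with the subsolution $w=\log|x|$ centered at the \emph{origin} (where $\hat g$ is asymptotically Euclidean), not at $p$. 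With these replacements your outline goes through.
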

\begin{proof}
This follows from \cite[Lemma 8.1]{huisken2001inverse}, where the argument is somewhat brief, and so we include the details for completeness. 
The proof will be divided into three steps. \vspace{2mm}

    {\it Step 1. For each $\epsilon>0$ there is a locally Lipschitz function $u_\epsilon$ such that $\{u_\epsilon<0\}=B_{\epsilon}(p)$ and $u_\epsilon$ is a weak solution for inverse mean curvature flow in $\mathbb R^3\setminus \bar B_{\epsilon}(p)$ with $u_\epsilon\to+\infty$ as $|x|\to+\infty$.} \vspace{2mm}
    
    From \cite[Theorem 3.1]{huisken2001inverse} it suffices to find a locally Lipschitz function $w$ such that for some large positive constant $r_0$, we have the following:
    \begin{itemize}
    \item the function $w$ is smooth;
    \item $w(x)\to +\infty$ as $|x|\to+\infty$;
    \item $w$ is a weak subsolution for inverse mean curvature flow in $\mathbb R^3\setminus \bar D_{r_0}$. That is, we have $J^K_w(w)\leq J^K_w(v)$ for any locally Lipschitz function $v\leq w$ with $\{v\neq w\}$ precompact in $\mathbb R^3\setminus\bar D_{r_0}$.
    \end{itemize}
    
Take 
$$w=\max\{\log|x|,0\}.$$ 
It is easy to verify
$$
 \Div\left(\frac{\nabla w}{|\nabla w|}\right)\geq |\nabla w|
$$
and $|\nabla w|\neq 0$ outside ball $D_{r_0}$ with $r_0$ large enough. 
The first two properties are obvious, and we just need to verify the third one. Take a locally Lipschitz function $v\leq w$ as desired. Then we can compute
    \[
    \begin{split}
    J_{w}^K(v)-J_{w}^K(w)&=\int_K|\nabla v|-|\nabla w|+(v-w)|\nabla w|\,\mathrm d\mu\\
    &\geq \int_K|\nabla v|-|\nabla w|+(v-w)\Div\left(\frac{\nabla w}{|\nabla w|}\right)\,\mathrm d\mu\\
    &=\int_K|\nabla v|-\left\langle\nabla v,\frac{\nabla  w}{|\nabla w|}\right\rangle\,\mathrm d\mu\geq 0.
    \end{split}
    \]

    {\it Step 2. There are $\epsilon_i\to 0$ and $c_i\to +\infty$ as $i\to\infty$ such that the functions $u_{\epsilon_i}-c_i$ converge locally uniformly to a locally Lipschitz function $u$ in $\mathbb R^3\setminus\{p\}$.}\vspace{2mm}
    
From the gradient estimate \cite[Theorem 3.1]{huisken2001inverse} there exist a constant $\sigma_0>0$ and a dimensional constant $C$ such that we have
    \begin{equation}\label{Eq: gradient estimate}
    |\nabla u_\epsilon|(x)\leq \frac{C}{\dist(x,p)}
    \end{equation}
    at all points $x$ satisfying $2\epsilon\leq \dist(x,p)\leq 2\sigma_0$. Here and in the sequel, we always use $C$ to denote a dimensional constant, which may be different from line to line. For each $\epsilon>0$ we take 
    $$
   c_\epsilon= \min_{\partial B_{\sigma_0}(p)} u_\epsilon
    $$
 and define 
 \begin{equation}\label{Eq: mathring u epsilon}
 \mathring u_\epsilon=u_\epsilon-c_\epsilon.\end{equation} From the gradient estimate above and diameter bound of $\partial B_{\sigma_0}$, we have 
  \begin{equation}\label{Eq: mathring u bound}
      0\leq \mathring u_\epsilon\leq C\mbox{ on }\partial B_{\sigma_0}(p).
  \end{equation}
On each compact subset of $\mathbb R^3\setminus\{p\}$, the functions $\mathring u_\epsilon$ are uniformly bounded and equicontinuous  due to the the gradient estimate \cite[Theorem 3.1]{huisken2001inverse}. Passing to a subsequence, we can find $\epsilon_i\to 0$ as $i\to\infty$ such that the functions $\mathring u_{\epsilon_i}$ converge locally uniformly to a locally Lipschitz function $u$ in $\mathbb R^3\setminus\{p\}$. 

Denote $c_i=c_{\epsilon_i}$ for short. Now we show $c_i\to+\infty $ as $i\to\infty$. Denote
$$
N_{\epsilon,t}=\partial\{u_\epsilon<t\}\mbox{ and }N^+_{\epsilon,t}=\partial\{u_\epsilon\leq t\}.
$$
The exponential area growth for weak inverse mean curvature flow from \cite[Lemma 1.6 (i)]{huisken2001inverse} yields
    $$\area(N_{\epsilon,t})=A_\epsilon e^t\mbox{ for all }t>0,$$
where $A_\epsilon$ denotes the boundary area of the strictly minimizing hull of $B_\epsilon(p)$, and so we have 
$$A_\epsilon\leq \area(\partial B_\epsilon(p))=O(\epsilon^{2})\mbox{ as }\epsilon\to 0.$$
    On the other hand, we know from the proof of \cite[Lemma 4.2]{huisken2001inverse}  that  for any $t>0$ the sets $\{u_\epsilon>t\}$ and $\{u_\epsilon<t\}$ cannot have non-empty precompact component in $\mathbb R^3\setminus B_\epsilon(p)$. As a consequence, for any $r>\epsilon$ we have
\begin{equation}\label{Eq: interior upper bound}
u_\epsilon\leq \max_{\partial B_r(p)} u_\epsilon \mbox{ in }B_r(p)
\end{equation}   
and 
\begin{equation}\label{Eq: exterior lower bound}
u_\epsilon\geq \min_{\partial B_r(p)} u_\epsilon\mbox{ outside }B_r(p).
\end{equation}

Combining \eqref{Eq: interior upper bound} and \eqref{Eq: mathring u bound}, we obtain $u_\epsilon\leq c_\epsilon+C$ in $B_{\sigma_0}(p)$. In other words, we have the inclusion $B_{\sigma_0}(p) \subset \{u_\epsilon\leq c_\epsilon+C\}$. Since the metric $\hat g$ is equivalent to the Euclidean metric, namely we have $\hat C^{-1}g_{euc}\leq \hat g\leq \hat Cg_{euc}$ for some constant $\hat C>1$, it follows from the Euclidean isoperimetric inequality that we have 
\begin{equation}\label{Eq: area lower bound}
\area(N_{\epsilon,c_\epsilon+C})\geq \delta_0
\end{equation}
for some constant $\delta_0=\delta_0(\hat C,\sigma_0)>0$. As a consequence, we obtain
$$
c_\epsilon\geq \log\frac{\delta_0}{A_\epsilon}-C,
$$
which implies $c_\epsilon\to+\infty$ as $\epsilon\to 0$. \vspace{2mm}

    {\it Step 3. The function $u$ satisfies all the desired properties.} \vspace{2mm}
    
    First we prove $u(x)\to+\infty$ as $|x|\to +\infty$, which follows from a comparison argument. Let $w$ be the subsolution outside $D_{r_0}$, and $r_0$ the constant from Step 1. From \eqref{Eq: mathring u bound} and the gradient estimate \cite[Theorem 3.1]{huisken2001inverse} we know that $\mathring u_\epsilon$ is uniformly bounded in each compact subset of $\mathbb R^3\setminus\{p\}$. In particular, we must have $\mathring u_\epsilon> \Lambda$ on $\partial D_{r_0}$ for some constant $\Lambda$ independent of $\epsilon$. 
    
    Denote 
    $$w_{r_0}=\log\frac{|x|}{r_0}+\Lambda.$$
  Then we claim $\mathring u_\epsilon\geq w_{r_0}$ outside $\bar D_{r_0}$.  
 Suppose by contradiction that there is a point $x_0\in \mathbb R^3\setminus \bar D_{r_0}$ with $\mathring u_{\epsilon}(x_0)<w_{r_0}(x_0)$. Fix a constant $t_0>w_{r_0}(x_0)$. Then the function 
 $$w_{r_0,t_0}:=\min\{w_{r_0},t_0\}$$ is still a weak subsolution for inverse mean curvature flow in $\mathbb R^3\setminus \bar D_{r_0}$, and the set $\{w_{r_0,t_0}>\mathring u_{\epsilon}\}$ is precompact in $\mathbb R^3\setminus \bar D_{r_0}$. By \cite[Theorem 2.2]{huisken2001inverse} (where the comparison principal even holds for a supersolution $u$ and a subsolution $v$) we obtain $\mathring u_\epsilon\geq w_{r_0,t_0}$ in $\mathbb R^3\setminus \bar D_{r_0}$, which leads to a contradiction. 

 Passing to the limit, we obtain $u\geq w_{r_0}$ in $\mathbb R^3\setminus  \bar D_{r_0}$, and in particular we have $u(x)\to+\infty$ as $|x|\to+\infty$.

    Next we prove $u(x)\to-\infty$ as $x\to p$. Denote
    \begin{equation}\label{Eq: mathring N epsilon}
        \mathring N_{\epsilon,t}=\partial\{\mathring u_\epsilon<t\}\mbox{ and }\mathring N_{\epsilon,t}=\partial\{\mathring u_\epsilon\leq t\}.
    \end{equation}
    For our purpose, we take $T$ to be a large positive constant and consider the surface 
    $\mathring N^+_{\epsilon,-T}$.
     It follows from the exponential area growth and \eqref{Eq: area lower bound} that 
    $$\area(\mathring N^+_{\epsilon,-T})=\area(\mathring N_{\epsilon,C})\cdot e^{-T-C}\geq \delta_0\cdot e^{-T-C}.$$
    At the very beginning, we may pick  $\sigma_0$ small enough a priori such that $\area(\partial B_{\rho}(p))\leq 5 \pi\rho^2$ for each $\rho\in (0,\sigma_0)$. Since the surface  $\mathring N^+_{\epsilon,-T}$ is strictly outer-minimizing, it cannot be contained in ball $B_{\rho_0}(p)$ with
    $$
    \rho_0=\sqrt\frac{\delta_0\cdot e^{-T-C}}{5\pi}.
    $$
    In particular, we obtain 
    $$\min_{\partial B_{\rho_0}(p)} \mathring u_\epsilon\leq -T,$$
    and so we can derive the following estimate
   $$
    \max_{\partial B_{\rho_0}(p)} \mathring u_\epsilon\leq C-T.
  $$
    From \eqref{Eq: interior upper bound} we see
     \begin{equation}\label{Eq: interior upper bound from area}
    \mathring u_{\epsilon}\leq C-T \mbox{ in } B_{\rho_0}(p)\setminus B_\epsilon(p).
      \end{equation}
       Passing to the limit and letting $T\to+\infty$, we see $u(x)\to-\infty$ as $x\to p$.
     
    The second property is a direct consequence of \cite[Theorem 2.1]{huisken2001inverse}, and we omit the details.
\end{proof}

In the following, the function $u$ from Lemma \ref{Lem: weak solution} will be called the weak solution for inverse mean curvature flow emerging from $p$. For convenience, we shall always assume the weak solution for inverse mean curvature flow emerging from $p$ to satisfy the normalization condition
\begin{equation}\label{Eq: normalization}
\min_{\partial D_1(p)} u=0.
\end{equation}
Moreover, we denote
$$\Sigma_t=\partial\{u<t\}\mbox{ and }\Sigma_t^+=\partial\{u\leq t\}.$$

\begin{lemma}\label{Lem: Hawking mass limit}
Denote
\begin{equation}\label{Eq: Hawking mass}
m_h(\Sigma_t)=\frac{\area(\Sigma_t)^{\frac{1}{2}}}{(16\pi)^{\frac{3}{2}}}\left(16\pi-\int_{\Sigma_t}H^2\right).
\end{equation}
Then the limit
\begin{equation*}
\lim_{t\to-\infty} m_h(\Sigma_t)
\end{equation*}
exists and is non-negative.
\end{lemma}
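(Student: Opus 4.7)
The plan is to show, via a parabolic rescaling at the point $p$, that in fact $m_h(\Sigma_t)\to 0$ as $t\to -\infty$. This yields both the existence of the limit and its non-negativity in one stroke, and bypasses any appeal to Geroch monotonicity (which in this section is used separately, downstream of this lemma).

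First I would rescale around $p$. In normal coordinates at $p$, set $\Phi_t(y):=p+e^{t/2}y$ and $\hat g^t:=e^{-t}\Phi_t^*\hat g$. As $t\to -\infty$, $\hat g^t$ converges smoothly on compact sets to the Euclidean metric $g_{\mathrm{euc}}$, and by the scale-invariance of the IMCF equation (up to a time-shift), the function $\tilde u_t(y):=u(\Phi_t(y))-t$ is a weak solution of inverse mean curvature flow in $(\R^3,\hat g^t)$. Its $0$-level set is precisely $\tilde\Sigma_t:=\Phi_t^{-1}(\Sigma_t)$. By applying the Huisken--Ilmanen gradient and compactness estimates to $\tilde u_t$ in the near-Euclidean metric $\hat g^t$, together with the interior $C^{1,\alpha}\cap W^{2,p}$ regularity theory for weak IMCF (Huisken--Ilmanen, Heidusch), one obtains uniform-in-$t$ control of $\tilde\Sigma_t$ in $C^{1,\alpha}_{\mathrm{loc}}\cap W^{2,p}_{\mathrm{loc}}$. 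Passing to a subsequence gives a limit surface, and the uniqueness of the weak IMCF emerging from a point in Euclidean space (namely the family of expanding spheres $\{|y|=e^{s/2}\}$, whose $0$-slice is the unit sphere $S$) identifies every subsequential limit with $S$, giving full sequential convergence $\tilde\Sigma_t\to S$.

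Next I would use the length-scaling of the Hawking mass. Under $g\mapsto \lambda^2 g$, the factor $\sqrt{\mathrm{Area}}$ scales by $\lambda$ while $\int H^2\,d\mu$ is dimensionless, so $m_h$ scales linearly in $\lambda$. With $\lambda=e^{-t/2}$ this gives the identity
\[
m_h(\Sigma_t,\hat g)\;=\;e^{t/2}\,m_h(\tilde\Sigma_t,\hat g^t).
\]
The $W^{2,p}$-convergence $\tilde\Sigma_t\to S$ (with $p>2$), combined with the smooth convergence $\hat g^t\to g_{\mathrm{euc}}$, is strong enough to pass to the limit in both $\mathrm{Area}$ and $\int H^2$. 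Since the Euclidean unit sphere satisfies $\int_S H^2=16\pi$ and hence $m_h(S,g_{\mathrm{euc}})=0$, the right-hand factor converges to $0$. In particular $m_h(\tilde\Sigma_t,\hat g^t)$ is bounded, so the identity yields $m_h(\Sigma_t,\hat g)=O(e^{t/2})\to 0$ as $t\to-\infty$, proving both assertions of the lemma.

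The main technical obstacle is the rigorous justification of the blow-down convergence $\tilde\Sigma_t\to S$ in a topology strong enough to control $\int H^2$. Uniform-in-$t$ $C^{1,\alpha}$ estimates follow from the scale-invariance of Huisken--Ilmanen's a priori bounds once $\hat g^t$ is close to Euclidean, but upgrading to $W^{2,p}$ convergence (needed to pass to the limit in $\int H^2$) requires care near possible jump times of the weak flow; the key observation is that on sufficiently small scales around $p$ no such jumps can occur, because any strict minimizing hull jump would produce a comparably large flat piece, contradicting the near-round geometry forced by the Euclidean blow-down. Once this technical point is settled, the scaling identity immediately delivers $m_h(\Sigma_t)\to 0$.
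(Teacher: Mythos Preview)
Your rescaling approach is correct and genuinely different from the paper's proof. The paper argues in two steps: first it uses the Geroch monotonicity formula (with an integrable scalar curvature error term, controlled via the exponential area growth $\area(\Sigma_\tau)=A_0e^\tau$) to show the limit exists; then it proves non-negativity by running Geroch monotonicity on the approximating flows $\mathring u_\epsilon$ from small geodesic balls, using $m_h(\partial B_\epsilon(p))\to 0$ and the upper semi-continuity of the Hawking mass under $C^{1,\beta}\cap W^{2,p}$ convergence of the level sets. Your scaling identity $m_h(\Sigma_t,\hat g)=e^{t/2}m_h(\tilde\Sigma_t,\hat g^t)$ instead gives the sharper conclusion $m_h(\Sigma_t)\to 0$ directly, avoiding Geroch monotonicity entirely.

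However, you are making the argument harder than it needs to be. You do not need $\tilde\Sigma_t\to S$ in any topology, let alone $W^{2,p}$; you only need $m_h(\tilde\Sigma_t,\hat g^t)$ to stay \emph{bounded}, since then the factor $e^{t/2}$ already forces the limit to be $0$. Boundedness is immediate once you observe three facts already established in (or trivially extracted from) the proof of Lemma~\ref{Lem: weak solution}: (i) the exponential area growth gives $\area_{\hat g^t}(\tilde\Sigma_t)=e^{-t}\area_{\hat g}(\Sigma_t)=A_0$; (ii) the gradient estimate $H=|\nabla u|\le C/\dist(\cdot,p)$ holds in a fixed neighborhood of $p$; and (iii) the inner radius estimate $\dist(\Sigma_t,p)\ge c\,e^{t/2}$, which is exactly the content of \eqref{Eq: interior upper bound from area} after passing $\epsilon\to 0$. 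Combining (ii) and (iii) gives $H\le C c^{-1}e^{-t/2}$ on $\Sigma_t$, hence $\int_{\Sigma_t}H^2\le C^2c^{-2}A_0$ uniformly in $t$, which is the bound on $\int_{\tilde\Sigma_t}\tilde H^2$ (a scale-invariant quantity). Thus $|m_h(\tilde\Sigma_t,\hat g^t)|\le C'$ and you are done. The discussion of jump times and ``flat pieces'' in your last paragraph is a red herring: nothing about the regularity of the flow between levels is needed, only the coarse pointwise mean curvature bound on the individual level sets.
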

\begin{proof}
Since $\{u<0\}$ is contained in $D_1(p)\setminus \{p\}$, for each $t<0$ the surface $\Sigma_t$ must be connected, and so its Euler number satisfies $\chi(\Sigma_t)\leq 2$. Then it follows from Geroch monotonicity \cite[Formula 5.8]{huisken2001inverse} that for any $s<t<0$ we have
$$
m_h(\Sigma_t)\geq m_h(\Sigma_s)+\frac{1}{(16\pi)^{\frac{3}{2}}}\int_s^t\left[\area(\Sigma_\tau)^{\frac{1}{2}}\int_{\Sigma_\tau}R(\hat g)\,\mathrm d\sigma_\tau\right]\,\mathrm d\tau.
$$
Notice that the scalar curvature $R(\hat g)$ is uniformly bounded, i.e. $|R(\hat g)|\leq \Lambda$ in $\mathbb R^3$ for some constant $\Lambda>0$. From the exponential area growth we see $\area(\Sigma_\tau)=A_0e^\tau$ for some constant $A_0>0$. Therefore, we have
$$
m_h(\Sigma_t)\geq m_h(\Sigma_s)-\frac{2\Lambda A_0^\frac{3}{2}}{3\cdot(16\pi)^{\frac{3}{2}}} e^{\frac{3}{2}t}.
$$
First taking limit superior of RHS as $s\to -\infty$, and then taking limit inferior on both sides as $t\to-\infty$, we conclude that the Hawking mass limit exists.

Next we show the nonnegativity of this limit. From the simple fact that every Riemannian manifold is infinitesimally Euclidean, we have
$$
\lim_{\epsilon\to 0}m_h(\partial B_\epsilon(p))=0.
$$
Denote $\mathring u_\epsilon$ to be the function in \eqref{Eq: mathring u epsilon} and use the notation in \eqref{Eq: mathring N epsilon}. Clearly, it follows from \eqref{Eq: exterior lower bound} that we have $\{\mathring u_\epsilon< 0\} \subset B_{\sigma_0}(p)$. Then the strictly outer-minimizing property of $\mathring N^+_{\epsilon,-t}$ yields 
$$\area(\mathring N_{\epsilon,-t})=\area(\mathring N^+_{\epsilon,-t})\leq \area(\partial B_{\sigma_0}(p))\mbox{ for all } t>0.$$ From the Geroch monotonicity again we can derive
\[
\begin{split}
m_h(\mathring N_{\epsilon,-t})&\geq -(16\pi)^{-\frac{3}{2}}\int_0^{c_{\epsilon}-t}\left[\area(N_{\epsilon,\tau})^{\frac{1}{2}}\int_{N_{\epsilon,\tau}}R(\hat g)\,\mathrm d\sigma_\tau\right]\,\mathrm d\tau\\
&\geq -\frac{2\Lambda}{3}(16\pi)^{-\frac{3}{2}}\area(\mathring N_{\epsilon,-t/2})^{\frac{3}{2}}e^{-\frac{3t}{4}}\\
&\geq O(e^{-\frac{3t}{4}})\mbox{ as }t\to +\infty.
\end{split}
\]
From \eqref{Eq: interior upper bound from area} we know that, for each $t>0$, the distances of surfaces $\mathring N_{\epsilon,-t}$ to the point $p$ are uniformly bounded from below by a positive constant independent of $\epsilon$. Then
it follows from the gradient estimate \eqref{Eq: gradient estimate} as well as \cite[Theorem 1.3]{huisken2001inverse} that the surfaces $\mathring N_{\epsilon,-t}$ satisfy uniform $C^{1,\beta}$-estimates and $W^{2,p}$-estimates independent of $\epsilon$. As a result, for almost every $t>0$, the surfaces $\mathring N_{\epsilon,-t}$ converge to the surface $N_{-t}$ in $C^{1,\beta'}$-sense and $W^{2,p'}$-sense up to a subsequence. From the upper semi-continuity of the Hawking mass (see \cite{huisken2001inverse}) we obtain $m_h(N_{-t})\geq O(e^{-\frac{3t}{4}})$ as $t\to+\infty$. The proof is completed by letting $t\to+\infty$.
\end{proof}

\subsection{Regularity theory}

The goal of this subsection is to show Theorem \ref{main-theorem-IMCF}. The proof will be presented at the end of this subsection. Before that, we have to establish a series of preliminary estimates.

\subsubsection{Annulus neighborhood estimate} In the following, we fix a constant $\hat C=\hat C(\mathbb R^3,\hat g)>1$ such that
$$
\hat C^{-1}g_{euc}\leq \hat g\leq \hat Cg_{euc}\mbox{ in }\mathbb R^3.
$$

\begin{lemma}\label{Lem: outer ball}
There is a constant $\bar\alpha=\bar\alpha(\hat C)>0$ such that when $\rho_1$ is chosen large enough, we have $\{u\leq t\}\subset D_{\bar\alpha\hat Ce^ {t/2}}(p)$ for all $t\in(-\infty, T]$, where 
$$D_{\bar\alpha\hat Ce^ {t/2}}(p)=\{x\in \mathbb R^3:|x-p|<\bar\alpha\hat C e^{t/2}\}.$$
\end{lemma}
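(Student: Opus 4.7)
The plan is to compare $u$ from below with a radial subsolution whose sublevel sets are Euclidean balls of the correct size, and to read off the containment. The natural candidate is $\underline u(x)=2\log(|x-p|/\beta)$ for a constant $\beta=\beta(\hat C)>0$ to be chosen, since in the Euclidean metric this is an exact IMCF solution centered at $p$ with $\{\underline u\leq t\}=D_{\beta e^{t/2}}(p)$.

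First I would verify that a small additive modification $\underline u-\varphi$ is a strict smooth weak subsolution of IMCF in the metric $\hat g$ on the annulus $\{\delta<|x-p|<R\}$, where $R$ is a large constant (to be fixed in terms of $T$ and $\hat C$) and $\delta>0$ is a small fixed radius. In Euclidean space one has the pointwise identity
$$
\Div_{\mathrm{Eucl}}\!\left(\frac{\nabla\underline u}{|\nabla\underline u|}\right) = |\nabla\underline u| = \frac{2}{|x-p|},
$$
and the decay conditions in (c2) yield
$$
\Div_{\hat g}\!\left(\frac{\nabla_{\hat g}\underline u}{|\nabla_{\hat g}\underline u|_{\hat g}}\right) - |\nabla_{\hat g}\underline u|_{\hat g} = O\!\left(\frac{\eta(\rho_1)}{|x-p|}\right),
$$
where $\eta(\rho_1)\to 0$ as $\rho_1\to\infty$ provided $\rho_1\gg R$; a bump $\varphi=\eta(\rho_1)^{1/2}\psi(|x-p|)$ with $\psi$ a fixed radial cutoff then tips this into a strict subsolution.

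Next, I would apply the Huisken--Ilmanen comparison principle \cite[Theorem 2.2]{huisken2001inverse} to $\underline u-\varphi$ and the smooth approximants $\mathring u_\epsilon$ from Lemma \ref{Lem: weak solution} on this annulus. On the outer boundary $\partial D_R(p)$ the crude lower bound $\mathring u_\epsilon\geq\log(|x|/r_0)+\Lambda$ from Step 3 of Lemma \ref{Lem: weak solution} exceeds $2\log(R/\beta)$ once $\rho_1$ is taken large enough, which is what forces $\rho_1=\rho_1(T)$; on the inner boundary $\partial B_\delta(p)$, the gradient estimate \eqref{Eq: gradient estimate} together with the almost-Euclidean structure near $p$ yields a uniform-in-$\epsilon$ lower bound $\mathring u_\epsilon\geq 2\log\delta - C(\hat C)$, which exceeds $\underline u|_{\partial B_\delta(p)} = 2\log(\delta/\beta)$ once $\beta$ is chosen large enough in terms of $\hat C$. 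Passing $\epsilon\to 0$ via the local uniform convergence from Step 2 of Lemma \ref{Lem: weak solution} yields $\underline u-\varphi\leq u$ on the annulus, and since $\|\varphi\|_\infty\to 0$ this translates to $\{u\leq t\}\subset D_{2\beta e^{t/2}}(p)$ for $t\leq T$; setting $\bar\alpha = 2\beta/\hat C$ is the conclusion.

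The main obstacle is that the Euclidean subsolution inequality for $\underline u=2\log|x-p|$ is a strict equality, so the $\hat g$-errors could a priori destroy subsolutionality; the perturbation $\varphi$ is precisely what restores strict inequality once $\rho_1$ is taken large relative to $T$. The boundary matching and the limit procedure are routine given the estimates in Lemma \ref{Lem: weak solution}, though the bookkeeping must be done carefully to ensure $\rho_1$ depends only on $T$, with $\bar\alpha$ absorbing the $\hat C$-dependence.
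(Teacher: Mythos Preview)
Your comparison-principle approach is natural, but as written it does not prove the lemma for all $t\in(-\infty,T]$; it only handles $t$ bounded from below. The point is that you work on an annulus $\{\delta<|x-p|<R\}$ with a fixed inner radius $\delta>0$. Even granting $u\geq \underline u$ on that annulus, you only obtain $\{u\leq t\}\cap\{\delta<|x-p|<R\}\subset D_{\beta e^{t/2}}(p)$; once $\beta e^{t/2}<\delta$ (i.e.\ $t<2\log(\delta/\beta)$), the sublevel set lives entirely inside $D_\delta(p)$, where your comparison says nothing, and there is no reason it should already be contained in the much smaller ball $D_{\beta e^{t/2}}(p)$. To close this you would have to send $\delta\to 0$, but then your inner boundary inequality $\mathring u_\epsilon\geq 2\log\delta - C(\hat C)$ must hold with the precise coefficient $2$ in front of $\log\delta$, uniformly as $\delta\to0$. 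The gradient estimate \eqref{Eq: gradient estimate} only yields $\mathring u_\epsilon\geq C\log\delta - C'$ for some dimensional constant $C$ that is a priori larger than $2$; getting the coefficient down to $2$ is essentially the content of the lemma itself, so the argument becomes circular. Equivalently, applying Huisken--Ilmanen comparison on the full punctured ball would require knowing that $\{\underline u>u\}$ stays away from $p$, i.e.\ that $u(x)\geq 2\log|x-p|-2\log\beta$ near $p$, which is again the conclusion.

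The paper's proof avoids this entirely by an area argument rather than a pointwise barrier. One combines the gradient estimate (yielding a uniform oscillation bound $\osc_{\partial D_\rho(p)}u\leq C_0$ on all spheres up to radius $\bar\alpha\hat C e^{T/2}$), the normalization $\min_{\partial D_1(p)}u=0$ and outer-minimizing property (yielding $\area(\Sigma_0^+)\leq 4\pi\hat C$), and exponential area growth (yielding $\area(\Sigma_{t+C_0}^+)\leq 4\pi\hat C e^{t+C_0}$). If $\{u\leq t\}$ escaped $D_{\bar\alpha\hat C e^{t/2}}(p)$, then by the oscillation bound and the lack of precompact components one would have $D_{\bar\alpha\hat C e^{t/2}}(p)\subset\{u\leq t+C_0\}$, forcing $\area(\Sigma_{t+C_0}^+)\geq 4\pi\hat C^{-1}(\bar\alpha\hat C)^2 e^t$ by the isoperimetric inequality. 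For $\bar\alpha>e^{C_0/2}$ this contradicts the area upper bound, and the argument is uniform in $t$ precisely because both the area bound and the isoperimetric inequality scale correctly with $e^{t/2}$.
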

\begin{proof}
Since $(\mathbb R^3,\hat g)$ is asymptotically flat, according to \cite[Theorem 3.1]{huisken2001inverse}, no matter which $\bar \alpha>0$ is chosen, we can take $\rho_1$ large enough such that the gradient estimate $|\nabla u|(x)\leq C\dist(x,p)^{-1}$ holds in $ D_{\bar\alpha\hat Ce^ {T/2}}(p)$ for a dimensional constant $C$. As a consequence, there is a constant $C_0$, depending only on $\hat C$, such that we have
$$\osc_{\partial D_\rho(p)}u\leq C_0 \mbox{ for all } \rho\in(0,\bar\alpha\hat Ce^{T/2}].$$ 

In the following, we show that if $\bar\alpha$ is chosen to be large enough, then we have $\{u\leq t\}\subset D_{\bar\alpha\hat Ce^{t/2}}(p)$ for all $t\in (-\infty,T]$. Due to the normalization condition \eqref{Eq: normalization}, the estimate \eqref{Eq: exterior lower bound}, the exponential area growth of $\Sigma_t^+$, and the strictly outer-minimizing property of $\Sigma_t^+$, we conclude that
$$\area(\Sigma_{0}^+)=\lim_{t\to 0^-}\area(\Sigma_{t}^+)\leq \area(\partial D_1(p))\leq 4\pi \hat C.$$
Moreover, the exponential area growth yields 
$$\area (\Sigma_{t+C_0}^+)\leq 4\pi \hat C e^{t+C_0}\mbox{ for all }t\in (-\infty,T].$$
 Suppose by contradiction that the set $\Sigma_{t}^+\setminus D_{\bar\alpha\hat Ce^ {t/2}}(p)$ is non-empty for some $t\in(-\infty,T]$. Then there exists a point $q\in \partial D_{\bar\alpha\hat Ce^ {t/2}}(p)$ with $u(q)\leq t$, and so we have 
$$\max_{\partial D_{\bar\alpha\hat Ce^ {t/2}}(p)}u\leq t+C_0.$$ It follows from \eqref{Eq: interior upper bound} that we have 
$$D_{\bar\alpha\hat Ce^ {t/2}}(p)\subset \{u\leq t+C_0\}.$$ 
Then the Euclidean isoperimetric inequality yields 
$$\area(\Sigma_{t+C_0}^+)\geq \hat C^{-1}4\pi \left(\bar\alpha\hat Ce^{t/2}\right)^2 ,$$ 
which cannot happen when $\bar\alpha>e^{C_0/2}$. 

To sum up, if we take $\bar\alpha=e^{C_0/2}+1$ and also take $\rho_1$ large enough, then we can guarantee $\{u\leq t\}\subset D_{\bar\alpha\hat Ce^{t/2}}(p)$ for all $t\in(-\infty,T]$.
\end{proof}

\begin{lemma}\label{Lem: inner ball}
There is a constant $\underline\alpha=\underline\alpha(\hat C)>0$ such that when $\rho_1$ is chosen large enough we have $\{u\leq t\}\supset D_{\underline\alpha\hat C^{-1}e^ {t/2}}(p)\setminus\{p\}$ for all $t\in(-\infty,T]$.
\end{lemma}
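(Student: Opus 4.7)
The plan is to mirror the contradiction argument of Lemma \ref{Lem: outer ball}, but in the reverse direction, pairing the oscillation bound coming from the Huisken--Ilmanen gradient estimate with an area \emph{lower} bound for the level sets.

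Two ingredients need to be assembled. First, once $\rho_1=\rho_1(T)$ is chosen large enough, the gradient estimate $|\nabla u|(x)\leq C\dist(x,p)^{-1}$ from \cite[Theorem 3.1]{huisken2001inverse} applies throughout $D_{\bar\alpha\hat Ce^{T/2}}(p)$ and yields an oscillation bound $\osc_{\partial D_\rho(p)}u\leq C_0=C_0(\hat C)$ on every Euclidean sphere in that region. Second, the exponential area growth $\area(\Sigma_t^+)=A_0e^t$ combines with the area lower bound \eqref{Eq: area lower bound} extracted in Step~2 of Lemma \ref{Lem: weak solution} to produce, after passage to the limit $\epsilon\to 0$, a constant $\delta_1=\delta_1(\hat C)>0$ with $A_0\geq \delta_1$, so that $\area(\Sigma_t^+)\geq \delta_1 e^t$ for every $t\in(-\infty,T]$. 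Moreover, for $\rho_1$ large the outer estimate of Lemma \ref{Lem: outer ball} confines the entire flow to a near-Euclidean region around $p$ and forces the level sets $\Sigma_t^+$ to be connected throughout $(-\infty,T]$.

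Now suppose for contradiction that $D_r(p)\setminus\{p\}\not\subset\{u\leq t\}$ for some $t\in(-\infty,T]$ with $r=\underline\alpha\hat C^{-1}e^{t/2}$, and pick $q\in D_r(p)\setminus\{p\}$ with $u(q)>t$. The oscillation bound applied on the Euclidean sphere $\partial D_{|q-p|}(p)$ through $q$ gives $u>t-C_0$ on that sphere, so $\Sigma_{t-C_0}^+$ is disjoint from $\partial D_{|q-p|}(p)$; since $u(p)=-\infty$ and $\Sigma_{t-C_0}^+$ is connected, this forces $\{u\leq t-C_0\}\subset D_{|q-p|}(p)\subset D_r(p)$. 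Applying the strictly outer-minimizing property of $\Sigma_{t-C_0}^+$ with the competitor $\partial D_{|q-p|}(p)$ and invoking the area lower bound yields
\[
\delta_1 e^{t-C_0}\leq \area(\Sigma_{t-C_0}^+)\leq \area_{\hat g}(\partial D_{|q-p|}(p))\leq 4\pi\hat C r^2 = 4\pi \underline\alpha^2\hat C^{-1}e^t,
\]
and choosing $\underline\alpha^2<\delta_1\hat C e^{-C_0}/(4\pi)$, which depends only on $\hat C$, delivers the contradiction.

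The main technical obstacle I anticipate is extracting the uniform lower bound $A_0\geq \delta_1(\hat C)$ under the normalization $\min_{\partial D_1(p)} u=0$: the inequality \eqref{Eq: area lower bound} was stated for a fixed $\epsilon$-approximation, and one needs to check that, after the shift $c_\epsilon$ and passage to the weak limit, the constant $A_0=\lim_{\epsilon\to 0}A_\epsilon e^{c_\epsilon}$ is bounded below by a positive constant independent of the choice of $p$ far out in the end. Once this is established, the connectedness of the level sets (coming from the closeness of the flow to the Euclidean IMCF emerging from a point in the asymptotic region) closes the argument.
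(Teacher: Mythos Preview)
Your approach is essentially the paper's: oscillation bound from the gradient estimate, an area lower bound for the level sets, and the strictly outer-minimizing property, combined to pin down the inner radius. The contradiction framing versus the paper's direct framing is cosmetic.

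The technical obstacle you flag, however, dissolves without any passage through the $\epsilon$-approximations. From the normalization $\min_{\partial D_1(p)}u=0$ and the oscillation bound you already have, $\max_{\partial D_1(p)}u\leq C_0$; since $\{u>t\}$ has no precompact components (the same fact behind \eqref{Eq: interior upper bound}, which persists for the limiting weak solution $u$), this forces $D_1(p)\setminus\{p\}\subset\{u\leq C_0\}$, and the Euclidean isoperimetric inequality then gives $\area(\Sigma_{C_0}^+)\geq 4\pi\hat C^{-1}$ directly. Exponential area growth propagates this to all $t$, so you may take $\delta_1=4\pi\hat C^{-1}e^{-C_0}$. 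The same non-precompactness fact also supplies the containment $\{u\leq t-C_0\}\subset D_{|q-p|}(p)$ in your contradiction step, so no separate connectedness argument via closeness to Euclidean IMCF is needed.
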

\begin{proof}
As before, no matter which $\underline\alpha>0$ is chosen, we can always guarantee $\osc_{\partial D_\rho(p)}u\leq C_0$ for all $\rho\in(0,\underline\alpha\hat C^{-1}e^{T/2}]$ by taking $\rho_1$ large enough. From \eqref{Eq: interior upper bound} and the Euclidean isoperimetric inequality, we can derive $\area(\Sigma_{C_0}^+)\geq 4\pi\hat C^{-1}$, and so we have
 $$\area(\Sigma^+_{t-C_0}) \geq 4\pi\hat C^{-1}e^{t-2C_0}\mbox{ for all }t\in(-\infty,T].$$ 
Denote $r=\hat C^{-1}e^{t/2-C_0}$. From the fact $\area(\partial D_r(p))\leq 4\pi\hat C^{-1}e^{t-2C_0}$
and also the strictly outer-minimizing property of $\Sigma^+_{t-C_0}$, we know that there is a point $q\in \partial D_r(p)$ such that $u(q)\leq t-C_0$, and so  we have $\max_{\partial D_r(p)}u\leq t$. This implies 
$D_{r}(p)\setminus\{p\}\subset \{u\leq t\}$ whenever $t\in(-\infty,T]$. The proof is now completed by taking $\underline \alpha=e^{-C_0}$.
\end{proof}
\begin{corollary}\label{Cor: annulus}
    There is a universal constant $c$ such that we have
    \begin{equation}\label{Eq: annulus control}
D_{c^{-1}e^{t/2}}(p)\setminus\{p\} \subset\{u\leq t\}\subset D_{ce^{t/2}}(p)\setminus\{p\}
\end{equation}
when $\rho_1$ is taken large enough.
\end{corollary}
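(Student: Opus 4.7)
The corollary is essentially a packaging of Lemma \ref{Lem: outer ball} and Lemma \ref{Lem: inner ball}, so the plan is very short. Both lemmas already provide two-sided control on $\{u \leq t\}$ by Euclidean balls of radii proportional to $e^{t/2}$, with constants depending only on $\hat C$ (the ellipticity constant between $\hat g$ and the Euclidean metric). Since $\hat C$ is fixed once and for all from the asymptotically flat structure, the constants $\bar\alpha(\hat C)$ and $\underline\alpha(\hat C)$ are themselves universal in the sense required by the corollary.

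The concrete step is: choose $\rho_1$ large enough so that both Lemma \ref{Lem: outer ball} and Lemma \ref{Lem: inner ball} hold simultaneously (take the maximum of the two required thresholds); then set
\[
c = \max\bigl\{\bar\alpha(\hat C)\,\hat C,\; \hat C/\underline\alpha(\hat C)\bigr\}.
\]
The outer inclusion $\{u \leq t\} \subset D_{ce^{t/2}}(p)\setminus\{p\}$ follows from Lemma \ref{Lem: outer ball} together with the fact that $p \notin \{u \leq t\}$ (since $u(x) \to -\infty$ only as $x \to p$ but $u$ is not defined at $p$; more precisely, the flow starts from $p$ at $-\infty$ and $\{u \leq t\}$ for finite $t$ sits in $\mathbb R^3\setminus\{p\}$). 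The inner inclusion $D_{c^{-1} e^{t/2}}(p)\setminus\{p\} \subset \{u \leq t\}$ is exactly the content of Lemma \ref{Lem: inner ball} with the constant absorbed into $c^{-1} \leq \underline\alpha \hat C^{-1}$.

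There is no substantive obstacle here — the only thing to double-check is that the two thresholds on $\rho_1$ from the previous two lemmas can be chosen compatibly (they can, since each lemma asserts that \emph{any} sufficiently large $\rho_1$ works), and that the deletion of the single point $\{p\}$ from both sides is consistent, which it is because $u$ is defined on $\mathbb R^3\setminus\{p\}$ and $\{u \leq t\}$ is by definition a subset of this domain. Thus the proof reduces to a one-line combination of the two preceding lemmas.
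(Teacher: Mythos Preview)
Your proposal is correct and takes essentially the same approach as the paper's own proof, which is the one-line instruction to take $c=\max\{\bar\alpha \hat C,\underline\alpha^{-1}\hat C\}$ and invoke Lemma~\ref{Lem: outer ball} and Lemma~\ref{Lem: inner ball}. Your additional remarks about compatible thresholds for $\rho_1$ and the exclusion of $\{p\}$ are fine and simply make explicit what the paper leaves implicit.
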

\begin{proof}
Take
$
c=\max\{\bar\alpha \hat C,\underline\alpha^{-1}\hat C\}
$ and use Lemma \ref{Lem: outer ball} and Lemma \ref{Lem: inner ball}. 
\end{proof}

\subsubsection{Star-shapeness estimate for a class of almost-round surfaces} In the following, we define the eccentricity with respect to $p$ for a closed surface $\Sigma$ enclosing the point $p$ by
$$
\theta(\Sigma,p)=\frac{\bar r(\Sigma,p)}{\underline r(\Sigma,p)},
$$
where
\begin{equation}\label{Eq: outer radius}
\bar r(\Sigma,p)=\inf\{r>0:\Sigma\mbox{ is contained in } D_r(p)\}
\end{equation}
and
\begin{equation}\label{Eq: inner radius}
\underline r(\Sigma,p)=\sup\{r>0:\Sigma\mbox{ encloses }D_r(p)\}.
\end{equation}

\begin{lemma}\label{Lem: star-shape}
Given constants $\theta_0>1$, $A_0>0$ and $B_0>0$ we can find constants $m_0<0$ and $\rho_1>0$ such that for each point $p\in \mathbb R^3\setminus \bar D_{\rho_1}$ if $\Sigma$ is a connected closed surface satisfying the following properties:
\begin{itemize}
\item $\Sigma$ encloses the point $p$ and the eccentricity of $\Sigma$ with respect to $p$ satisfies $\theta(\Sigma,p)\leq \theta_0$;
\item $\Sigma$ satisfies $\area(\Sigma)\leq A_0$;
\item the second fundamental form of $\Sigma$ satisfies 
$$ \|\mathcal B_\Sigma\|_{\mathcal L^\infty}\leq B_0\cdot \area(\Sigma)^{-\frac{1}{2}};$$
\item the Hawking mass of $\Sigma$ satisfies $m_h(\Sigma)\geq m_0\cdot \area(\Sigma)^\frac{1}{2}$,
\end{itemize}
then we have
\begin{equation}\label{Eq: star}
\hat g(x-p,\nu(x))\geq \left[1+\left(\theta_0-1\right)^2\right]^{-\frac{1}{2}}\cdot |x-p|_{\hat g}
\end{equation}
for all $x\in \Sigma$.
\end{lemma}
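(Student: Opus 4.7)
The plan is to argue by contradiction via rescaling and Willmore rigidity. Fix $\theta_0, A_0, B_0$ and assume that no admissible pair $(m_0,\rho_1)$ exists. Extract $m_0^{(n)}\nearrow 0$, $p_n\in\mathbb R^3$ with $|p_n|\to\infty$, and connected closed surfaces $\Sigma_n$ enclosing $p_n$ satisfying all four hypotheses with the constant $m_0^{(n)}$, yet violating the star-shape inequality at some $x_n\in\Sigma_n$. Set $\lambda_n := \operatorname{Area}(\Sigma_n)^{1/2}\le\sqrt{A_0}$ and rescale via $\psi_n(y) = p_n + \lambda_n y$, with rescaled metric $\tilde g_n := \psi_n^*\hat g/\lambda_n^2$ and rescaled surface $\tilde\Sigma_n := \psi_n^{-1}(\Sigma_n)$. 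Since $\lambda_n$ is bounded and $|p_n|\to\infty$, asymptotic flatness forces $\tilde g_n\to\delta_{\mathrm{Euc}}$ in $C^3_{\mathrm{loc}}$. In the rescaled picture, $\tilde\Sigma_n$ has $\tilde g_n$-area one, scale-invariant curvature bound $\|\mathcal B_{\tilde\Sigma_n}\|_{L^\infty(\tilde g_n)}\le B_0$, Willmore energy $\int \tilde H^2\,d\tilde\sigma\le 16\pi - (16\pi)^{3/2}m_0^{(n)}\to 16\pi$, and (since eccentricity is defined via Euclidean balls and hence scale invariant) $\theta(\tilde\Sigma_n,0)\le\theta_0$; the area and eccentricity hypotheses further confine $\tilde\Sigma_n$ to a fixed Euclidean annulus $\{c^{-1}\le|y|\le c\}$ with $c=c(\theta_0,A_0)$.

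Standard graphical compactness for surfaces with $L^\infty$-bounded second fundamental form in $C^3$-convergent ambient metrics then produces, after passing to a subsequence, a limit $\tilde\Sigma_n\to\tilde\Sigma_\infty$ in $C^{1,\alpha}$ (and weakly-$*$ in $W^{2,\infty}$), where $\tilde\Sigma_\infty$ is a connected $C^{1,1}$ closed surface in $(\mathbb R^3,\delta_{\mathrm{Euc}})$ enclosing the origin with area one and eccentricity $\le\theta_0$. Lower semicontinuity of the Willmore functional (together with $\tilde g_n\to\delta_{\mathrm{Euc}}$) gives $\int_{\tilde\Sigma_\infty}H_\infty^2\,d\sigma\le 16\pi$, and the classical Willmore inequality in Euclidean $\mathbb R^3$ yields equality and forces $\tilde\Sigma_\infty$ to be a round Euclidean sphere; connectedness of the limit is automatic, as a splitting would force Willmore $\ge 32\pi$.

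Write $\tilde\Sigma_\infty$ as a sphere of radius $r$ centered at $q$ with $d=|q|<r$ and eccentricity $\theta_\infty=(r+d)/(r-d)\le\theta_0$. For $\tilde x = q + r\hat n$ with outward unit normal $\hat n$, direct minimisation of $f(t) := (r+dt)/\sqrt{r^2+2drt+d^2}$ over $t = \langle q,\hat n\rangle/d \in[-1,1]$ gives
\[
\frac{\langle \tilde x,\tilde\nu_\infty\rangle_{\delta}}{|\tilde x|_{\delta}} \;\ge\; \sqrt{1-(d/r)^2} \;=\; \frac{2\sqrt{\theta_\infty}}{\theta_\infty+1}.
\]
The algebraic identity $4\theta(1+(\theta-1)^2)-(\theta+1)^2 = (\theta-1)^2(4\theta-1)$, strictly positive for $\theta>1$, strengthens this to $\tfrac{2\sqrt{\theta_\infty}}{\theta_\infty+1} > [1+(\theta_0-1)^2]^{-1/2}$ for every $\theta_\infty\in[1,\theta_0]$. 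The scale invariance of $\hat g(x-p,\nu)/|x-p|_{\hat g}$ under $\psi_n$ converts the assumed failure at $x_n$ into $\tilde g_n(\tilde x_n,\tilde\nu_n)/|\tilde x_n|_{\tilde g_n} < [1+(\theta_0-1)^2]^{-1/2}$; passing to the limit using the $C^{1,\alpha}$-convergence of surfaces and unit normals (together with $\tilde g_n\to\delta_{\mathrm{Euc}}$) contradicts the strict sphere inequality above. The main technical obstacle is justifying the passage to the Willmore limit under only an $L^\infty$ curvature bound in non-Euclidean ambient metrics; this rests on $C^3_{\mathrm{loc}}$-convergence $\tilde g_n\to\delta_{\mathrm{Euc}}$ combined with the annulus confinement, which together prevent loss of area and concentration of curvature in the limit.
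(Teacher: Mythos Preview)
Your argument is correct and follows the same overall architecture as the paper's proof: argue by contradiction, rescale by the area radius so that the ambient metrics converge to Euclidean and the surfaces have normalized area with uniformly bounded second fundamental form, pass to a $C^{1,\alpha}$ limit, identify the limit as a round sphere enclosing the origin with eccentricity at most $\theta_0$, and finish with an elementary strict star-shape estimate on such spheres.

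The one substantive difference is in how the limit is identified as a round sphere. The paper first transfers the Hawking-mass bound to the Euclidean Willmore energy $\int \bar H_i^2 \le 16\pi + o(1)$, invokes the Marques--Neves resolution of the Willmore conjecture to force the topological type to be $\mathbb S^2$, then uses Gauss--Bonnet to convert this into $\int \|\mathring{\bar{\mathcal B}}\|^2 = o(1)$, and finally applies the De~Lellis--M\"uller quantitative rigidity theorem to conclude $C^0$-closeness to a unit sphere along the sequence. You instead pass to the $C^{1,\alpha}$ (weak-$*$ $W^{2,\infty}$) limit first and appeal directly to lower semicontinuity of the Willmore functional together with the rigidity case of the classical Willmore inequality in $\mathbb R^3$. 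Your route is more elementary in that it avoids both Marques--Neves and De~Lellis--M\"uller; on the other hand, the paper's route yields quantitative $C^0$-closeness of the \emph{approximating} surfaces to spheres rather than only a statement about the limit, which is sometimes useful (though not needed here). Your algebraic verification that $2\sqrt{\theta}/(\theta+1) > [1+(\theta_0-1)^2]^{-1/2}$ for $\theta\in[1,\theta_0]$ plays the same role as the paper's sharper bound $[1+((\theta_0-1)/2)^2]^{-1/2}$ obtained from ``elementary plane geometry''; either suffices for the strict contradiction.
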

\begin{proof}
We argue by contradiction. Suppose that, for any pair of constants $(m_0,\rho_1)$, there exists an exceptional surface $\Sigma_{m_0,\rho_1}$ enclosing some point $p\in \mathbb R^3\setminus \bar D_{\rho_1}$, which satisfies all properties but \eqref{Eq: star}. Namely we have
$$
\hat g(q-p,\nu(q))< \ \left[1+\left(\theta_0-1\right)^2\right]^{-\frac{1}{2}}\cdot |q-p|_{\hat g}
$$
at some point $q\in \Sigma$. For later use, we take a sequence $(m^i_0,\rho_1^i)$ such that $m^i_0\to 0^-$ and $\rho^i_1\to+\infty$ as $i\to\infty$, and we denote the corresponding surface $\Sigma_{m_0^i,\rho_1^i}$ by $\Sigma_i$, which encloses some point $p_i\in \mathbb R^3\setminus\bar D_{\rho^i_1}$ and satisfies
$$
\hat g(q_i-p_i,\nu(q_i))<  \left[1+\left(\theta_0-1\right)^2\right]^{-\frac{1}{2}}\cdot |q_i-p_i|_{\hat g}\mbox{ at some }q_i\in \Sigma_i.$$ 

For convenience, let us define the area radius $\sigma_i$ by 
$$\sigma_i=\sqrt{\frac{\area(\Sigma_i)}{4\pi}}.$$
We shall consider the convergence of the pointed manifolds $(\mathbb R^3,p_i,\sigma_i^{-2}\hat g,\Sigma_i)$ as $i\to\infty$. In the following, we always use the normalized coordinate chart
$$
\phi_i:\mathbb R^3\to \mathbb R^3,\,x\mapsto\sigma_ix+p_i.
$$
It is easy to check that the metric
$\tilde g_i:=\phi_i^*(\sigma_i^{-2}\hat g)$ can be written as
$$\tilde g_i=\tilde g_{i,kl}\mathrm dx_k\otimes \mathrm dx_l,\mbox{ where }\tilde g_{i,kl}(x)=\hat g_{kl}(\sigma_i x+p_i).$$
Recall that we have $\area(\Sigma_i)\leq A_0$. The area radii $\sigma_i$ are uniformly bounded from above, and so the metrics $g_i$ converge to the Euclidean metric locally in $C^2$-sense. Denote $\tilde \Sigma_i=\sigma_i^{-1}(\Sigma_i-p_i)$. Clearly, we have $\area_{\tilde g_i}(\tilde \Sigma_i)=4\pi$ and also
$$\|\mathcal B_{\tilde \Sigma_i}\|_{\mathcal L^\infty}=\sigma_i \|\mathcal B_{\Sigma_i}\|_{\mathcal L^\infty}\leq \frac{B_0}{\sqrt{4\pi}}.$$
In particular, surfaces $\tilde\Sigma_i$ converge to a limit surface $\tilde \Sigma_\infty$ in $C^{1,\beta}$-sense up to a subsequence. Denote $q$ to be the limit of the points $q_i$  in $\tilde\Sigma_\infty$ up to a subsequence. Then the $C^{1,\beta}$-convergence of $\tilde \Sigma_i$ yields
\begin{equation}\label{Eq: non star-shape}
g_{euc}(q,\nu(q))\leq  \left[1+\left(\theta_0-1\right)^2\right]^{-\frac{1}{2}} |q|.
\end{equation}

On the other hand, we know from the Hawking mass condition that
$$
\int_{\tilde \Sigma_i}\tilde H_i^2\,\mathrm d\tilde\sigma_i\leq 16\pi-(16\pi)^{\frac{3}{2}} m_{0,i}=16\pi+o(1)\mbox{ as }i\to 0.
$$
Denote $\bar H_i$ and $\mathrm d\bar\sigma_i$ to be the mean curvature and the area element of $\tilde \Sigma_i$ with respect to the Euclidean metric. From the facts 
$$|\bar H_i-\tilde H_i|=(1+\|\mathcal B_{\tilde \Sigma_i}\|)\cdot o(1)\mbox{ and }\mathrm d\bar\sigma_i=(1+o(1))\mathrm d\tilde \sigma_i,$$
we know
$$
\int_{\tilde\Sigma_i}\bar H_i^2\,\mathrm d\bar\sigma_i\leq 16\pi+o(1)\mbox{ as }i\to\infty.
$$
Combined with the resolution of the Willmore conjecture (see \cite[Theorem A]{Marques2014Willmore}), we see that each surface $\tilde\Sigma_i$ is a topological sphere for large $i$. Using the Gauss equation, we obtain
$$
\int_{\tilde \Sigma_i}\| \mathring{ \bar {\mathcal B}}_{\tilde \Sigma_i}\|^2\,\mathrm d\bar \sigma_i=\frac{1}{2}\left(\int_{\tilde \Sigma_i}\bar H_i^2\,\mathrm d\bar\sigma_i-4\pi\chi(\tilde\Sigma_i)\right)=o(1)\mbox{ as }i\to\infty.
$$
It follows from \cite[Theorem 1.1]{de2005optimal} that surfaces $\tilde \Sigma_i$ converge to a unit sphere in $C^0$-sense up to a subsequence. In particular, we conclude that the limit surface $\tilde \Sigma_\infty$ is a unit sphere in $\mathbb R^3$. Since the $C^0$-convergence guarantees the convergence of eccentricity, we obtain
$\theta(\tilde \Sigma_\infty)\leq \theta_0$. Then we can derive from the elementary plane geometry that
$$
g_{euc}(x,\nu(x))\geq \left[1+\left(\frac{\theta_0-1}{2}\right)^2\right]^{-\frac{1}{2}} |x|\mbox{ for all }x\in \tilde \Sigma_{\infty}.
$$
This leads to a contradiction to \eqref{Eq: non star-shape}.
\end{proof}

\subsubsection{Smoothing surfaces with mean curvature flow} In the following, we use $N$ to denote the diffeomorphism type of $\Sigma_t^+$.
\begin{lemma}\label{Lem: MCF}
The constant $\rho_1$ can be taken large enough such that for each $N_t^+$ with $t\in(-\infty,T]$ there is a mean curvature flow $\Psi:N\times (0,\epsilon_0)\to \mathbb R^3$ such that $\Sigma_{t,\epsilon}:=\Psi(N,\epsilon)$ converge to $\Sigma_t^+$ as $\epsilon\to0^+$ in $C^{1,\beta}$-sense and $W^{2,p}$-sense for all $\beta\in (0,1)$ and all $p\geq 1$. Moreover, $\Sigma_{t,\epsilon}$ is mean-convex and the second fundamental form $\mathcal B_\epsilon$ of $\Sigma_{t,\epsilon}$ satisfies
\begin{equation}\label{Eq: curvature bound approximation}
\|\mathcal B_\epsilon\|_{\mathcal L^\infty}\leq \|\mathcal B_{0}\|_{\mathcal L^\infty}\cdot e^{-C\epsilon-\frac{c_0^2}{\beta}\epsilon^{\beta}}
\end{equation}
for some constants $C$ and $c_0$ independent of $\epsilon$, where $\mathcal B_{0}$ denotes the second fundamental form of $\Sigma_t^+$. In particular, $\mathcal B_\epsilon$ are uniformly bounded.
\end{lemma}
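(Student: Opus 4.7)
The plan is to regularize the $C^{1,1}$ weakly mean convex surface $\Sigma_t^+$ by short-time mean curvature flow. By the Huisken--Ilmanen regularity theory \cite{huisken2001inverse} sharpened by Heidusch \cite{heidusch2001regularitat}, $\Sigma_t^+$ is $C^{1,\beta}$ and $W^{2,p}$ for every $\beta \in (0,1)$ and $p \geq 1$, with uniform bounds, and its weak mean curvature is non-negative. First I would produce smooth approximating surfaces $\Sigma_t^{(\delta)}$ converging to $\Sigma_t^+$ in $C^{1,\beta}$ and $W^{2,p}$, for instance by writing $\Sigma_t^+$ locally as a normal graph over tangent planes and convolving the graph functions with a standard mollifier. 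The $W^{2,p}$ bound (for $p$ large) gives uniformly bounded $L^\infty$-second fundamental form on each $\Sigma_t^{(\delta)}$; a slight inward normal perturbation of the smoothed graph ensures strict mean convexity of the approximations.

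\textbf{Short-time MCF with uniform estimates.} Running smooth MCF from each $\Sigma_t^{(\delta)}$ yields flows $\Psi^{(\delta)} : N \times [0, \epsilon_0^{(\delta)}) \to \mathbb{R}^3$. To obtain a uniform existence time and curvature bound, I would use the evolution equation
\begin{equation*}
(\partial_\epsilon - \Delta)|\mathcal{B}|^2 = -2|\nabla \mathcal{B}|^2 + 2|\mathcal{B}|^4 + \mathcal{R}(\hat g, \mathcal{B}),
\end{equation*}
where $\mathcal{R}(\hat g, \mathcal{B})$ collects lower-order ambient curvature terms involving $\Rm(\hat g)$ and its first derivative. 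Applying the maximum principle to $f(\epsilon) := \sup_{\Sigma^{(\delta)}_\epsilon} |\mathcal{B}|^2$ yields an ODI $f' \leq 2 f^2 + C f$, which produces a uniform bound on a definite interval $[0, \epsilon_0]$ independent of $\delta$. Preservation of mean convexity follows from applying the strong maximum principle to $(\partial_\epsilon - \Delta) H = |\mathcal{B}|^2 H + \Ric(\nu, \nu) H$: weak non-negativity of $H$ at $\epsilon = 0$ is promoted to strict positivity for $\epsilon > 0$. Passing to the limit $\delta \to 0$ via Arzel\`a--Ascoli, using Ecker--Huisken interior estimates to upgrade to smooth convergence on compact subsets of $(0, \epsilon_0)$, produces the desired flow $\Psi$ on $N \times (0, \epsilon_0)$. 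The convergence $\Sigma_{t,\epsilon} \to \Sigma_t^+$ in $C^{1,\beta}$ and $W^{2,p}$ as $\epsilon \to 0^+$ follows from the uniform curvature bound together with the $H$-bounded normal displacement of points over time $\epsilon$.

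\textbf{Main obstacle.} The hard part is capturing the sharp quantitative form \eqref{Eq: curvature bound approximation}, whose exponent contains the H\"older-type correction $c_0^2 \epsilon^\beta/\beta$ in addition to the linear $C\epsilon$. The crude ODI derived above only produces a weak bound of the shape $\|\mathcal{B}_\epsilon\|_{L^\infty} \leq \|\mathcal{B}_0\|_{L^\infty}(1 - C\epsilon)^{-1}$, which is insufficient. Refining to \eqref{Eq: curvature bound approximation} should require testing a localized maximum principle against a barrier built from the $C^{1,\beta}$-modulus of continuity of the initial tangent planes, so that the quartic reaction $|\mathcal{B}|^4$ is effectively controlled on an $\epsilon^\beta$-scale rather than an $\epsilon$-scale. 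This is reminiscent of the pseudolocality estimates of Ecker--Huisken type adapted to initial data of limited regularity; tracing carefully the dependence on the quantitative $C^{1,\beta}$ convergence rate of $\Sigma_t^{(\delta)} \to \Sigma_t^+$, and exploiting that on scales $\lesssim \epsilon^{1/2}$ the flow is essentially tangent-plane preserving, should extract the exponent in the stated form. Once \eqref{Eq: curvature bound approximation} is in hand, Krylov--Safonov and Schauder theory upgrade the limit flow to smooth with uniform higher-order curvature estimates, completing the construction.
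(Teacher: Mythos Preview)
Your outline is close in spirit to the paper, but the ``main obstacle'' you flag is precisely the part that requires a concrete mechanism, and your suggestion (a localized pointwise barrier tuned to the $C^{1,\beta}$ modulus) is not how the exponent $c_0^2\epsilon^\beta/\beta$ arises. The paper's route is as follows. First, rather than the crude ODI $f'\le 2f^2+Cf$, one invokes Chen--Yin pseudolocality \cite{Chen2007Pseudolocality} for mean curvature flow to obtain a uniform existence time together with $\|\mathcal B^i_\epsilon\|^2\le \alpha\epsilon^{-1}$ for every $\alpha>0$. Writing the flows as graphs in Gaussian adapted coordinates (as in \cite{Ilmanen2019PlanarNetFlow}) and applying interior parabolic Schauder theory against the $C^{1,\beta}$ initial data then sharpens this to $\|\mathcal B^i_\epsilon\|\le c_0\,\epsilon^{(\beta-1)/2}$. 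This blow-up rate is the key input: in the $L^p$ evolution inequality
\[
\frac{d}{d\epsilon}\int\|\mathcal B\|^p\;\le\; p\int\|\mathcal B\|^{p+2}+Cp\int\bigl(\|\mathcal B\|^{p-1}+\|\mathcal B\|^p\bigr),
\]
one absorbs the quartic term via $\|\mathcal B\|^{p+2}\le c_0^2\,\epsilon^{\beta-1}\|\mathcal B\|^p$, and Gronwall produces the factor $\exp\bigl(\int_0^\epsilon c_0^2\tau^{\beta-1}\,d\tau\bigr)=\exp\bigl(c_0^2\epsilon^\beta/\beta\bigr)$. Sending $p\to\infty$ yields \eqref{Eq: curvature bound approximation}. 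So the $\epsilon^\beta/\beta$ is not coming from a pointwise barrier at all, but from integrating the Schauder blow-up rate in an $L^p$ Gronwall; your pointwise ODI cannot see this because it lacks any a priori control on $f$ itself to linearize $f^2$.

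A second, smaller gap: your claim that a slight inward normal perturbation of the mollified surfaces ensures strict mean convexity is not justified---mollification destroys the sign of $H$, and an $\epsilon$-normal push only shifts $H$ by $O(\epsilon\|\mathcal B\|^2)$, which need not dominate the mollification error. The paper instead runs an $L^2$ estimate on $H_-=\min\{H,0\}$ along the approximating flows, obtaining $\|H_{\epsilon,-}^i\|_{L^2}\le\|H_{0,-}^i\|_{L^2}\cdot e^{C\epsilon+c_0^2\epsilon^\beta/\beta}$; since $\Sigma_i\to\Sigma_t^+$ in $W^{2,p}$ and $H\ge0$ on $\Sigma_t^+$, the right-hand side vanishes in the limit, giving $H_\epsilon\ge0$. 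Strict positivity is then deduced not from the strong maximum principle (which would require $\Ric(\nu,\nu)\ge0$) but from the existence of a mean-convex foliation of concentric spheres in the region, which rules out a minimal slice.
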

\begin{proof}
The proof here follows the argument in \cite[Lemma 2.6]{Huisken2008HigherRegularity} with slight modifications due to the non-Euclidean ambient space. 

Recall that $\Sigma_t^+$ is $C^1$ with bounded mean curvature. Then the standard regularity results of Allard and Calderon-Zygmund yield that $\Sigma_t^+$ is of $C^{1,\beta}$ and $W^{2,p}$ for all $\beta\in (0,1)$ and $p\geq 1$. Through mollification we can find a sequence of smooth surfaces $\Sigma_i$ converging to $\Sigma_t^+$ as $i\to\infty$ in $C^{1,\beta}$-sense and $W^{2,p}$-sense for any $\beta\in (0,1)$ and $p\geq 1$. Due to the pseudolocality theorem \cite[Theorem 7.3]{Chen2007Pseudolocality}, the mean curvature flow $\Psi_i:N\times [0,\epsilon_0)\to (\mathbb R^3,\hat g)$ with initial data $\Sigma_i$ exists on a uniform time interval, and for each $\alpha>0$ there is a constant $0<\epsilon_\alpha<\epsilon_0$ such that the surface $\Psi_i(N,\epsilon)$ satisfies
$\|\mathcal B_\epsilon^i\|^2\leq\alpha\epsilon^{-1}$ when $\epsilon<\epsilon_\alpha$. In particular, this implies \begin{equation}\label{Eq: Hausdorff distance}
    d_{\mathcal H}(\Psi_i(N,\epsilon),\Sigma_i)\leq 2\sqrt\alpha \epsilon^{\frac{1}{2}}.
\end{equation}
 Using the argument from \cite[Section 9]{Ilmanen2019PlanarNetFlow}, we conclude that the flows $\Psi_i$ can be written as graphs with uniformly bounded gradient in Gaussian adapted coordinates. Then it follows from the interior parabolic Schauder regularity theory that we have the improved curvature estimates
\begin{equation}\label{Eq: curvature estimate MCF}
\|\nabla^k \mathcal B^i_{\epsilon}\|\leq c_k\epsilon^{\frac{\beta-k-1}{2}},\,k\in \mathbb N,
\end{equation}
where each $c_k$ is a universal constant independent of $\epsilon$ and $i$. As a result, the flows $\Psi_i$ converge to a mean curvature flow $\Psi:N\times (0,\epsilon_0)\to\mathbb R^3$, where the slice $\Sigma_{t,\epsilon}:=\Psi(N,\epsilon)$ satisfies the curvature estimate \eqref{Eq: curvature estimate MCF} as well.

Next we establish uniform estimates for $\Sigma_{t,\epsilon}$ independent of $\epsilon$. Using the evolution equation (vi) from Lemma \ref{Lem: evolution equation MCF} for approximation mean curvature flows $\Psi_i$, we have
$$
\frac{\partial}{\partial \epsilon}\|\mathcal B^i_\epsilon\|^2\leq \Delta\|\mathcal B^i_\epsilon\|^2-2\|\nabla \mathcal B^i_\epsilon\|^2+2\|\mathcal B^i_\epsilon\|^4+C(\|\mathcal B^i_\epsilon\|+\|\mathcal B^i_\epsilon\|^2),
$$
where the last two terms come from the contractions of ambient curvatures and $\mathcal B^i_\epsilon$. Here and in the sequel, we always use $C$ to denote a universal constant independent of $i$ and $\epsilon$. From integration by parts, for any $p> 2$ we have
$$
\frac{\partial}{\partial t}\int_{\Sigma_{i,\epsilon}}\|\mathcal B^i_\epsilon\|^p\,\mathrm d\sigma^i_\epsilon\leq p\int_{\Sigma_{i,\epsilon}}\|\mathcal B_\epsilon^i\|^{p+2}\,\mathrm d\sigma^i_\epsilon+Cp\int_{\Sigma_{i,\epsilon}}\left(\|\mathcal B_\epsilon^i\|^{p-1}+\|\mathcal B_\epsilon^i\|^{p}\right)\,\mathrm d\sigma^i_\epsilon,
$$
where we denote $\Sigma_{i,\epsilon}:=\Psi_i(N,\epsilon)$ for short. Using the previous improved curvature estimate $\|\mathcal B_\epsilon^i\|\leq c_0^2\epsilon^{\beta-1}$ and applying the H\"older inequality to the last second term, we can derive
\begin{equation}\label{Eq: Lp curvature estimate}
\|\mathcal B^i_\epsilon\|_{\mathcal L^p}\leq \left(\frac{1}{p}(Cp)^{\frac{1}{p}}\epsilon+\|\mathcal B^i_0\|_{\mathcal L^p}\right)\cdot e^{C\epsilon+\frac{c_0^2}{\beta}\epsilon^{\beta}}.
\end{equation}
Since surfaces $\Sigma_i$ converge to $\Sigma_t^+$ in $W^{2,p}$-sense, by letting $i\to\infty$ we see that $\Sigma_{t,\epsilon}$ has uniform $W^{2,p}$-bounds for all $p>2$. Denote the $W^{2,p}$-limit of some sequence $\Sigma_{t,\epsilon_l}$ with $\epsilon_l\to 0$ by $\Sigma$.
After passing \eqref{Eq: Hausdorff distance} to the limit as $i\to\infty$ and using the $C^{1,\beta}$-convergence of $\Sigma_i$, we obtain $\Sigma=\Sigma_t^+$ from the uniqueness of Hausdorff limit. This yields that $\Sigma_{t,\epsilon}$ converge to $\Sigma_t^+$ in $W^{2,p}$-sense and also in $C^{1,\beta}$-sense.
To see \eqref{Eq: curvature bound approximation}, we let $i\to\infty$ and $p\to+\infty$ in $\mathcal L^p$-curvature estimate \eqref{Eq: Lp curvature estimate}.

It remains to show the mean-convexity of $\Sigma_{t,\epsilon}$. For this purpose, we use the evolution equation of the mean curvature (see evolution equation (v) from Lemma \ref{Lem: evolution equation MCF}), i.e.
$$
\frac{\partial}{\partial\epsilon} H^i_\epsilon=\Delta H^i_\epsilon+(\Ric(\nu)+\|\mathcal B^i_\epsilon\|^2) H^i_\epsilon.
$$
Denote $H^i_{\epsilon,-}=\min\{H^i_\epsilon,0\}$. From integration by parts, we have
$$
\frac{\partial}{\partial \epsilon}\int_{\Sigma_{i,\epsilon}}|H^i_{\epsilon,-}|^2\,\mathrm d\sigma^i_\epsilon\leq 2\int_{\Sigma_{i,\epsilon}}(C+\|\mathcal B^i_\epsilon\|^2)|H^i_{\epsilon,-}|^2\,\mathrm d\sigma^i_\epsilon.
$$
Similar to the previous discussion, we can derive
$$
\|H^i_{\epsilon,-}\|_{\mathcal L^2}\leq \|H^i_{0,-}\|_{\mathcal L^2}\cdot e^{C\epsilon+\frac{c_0^2}{\beta}\epsilon^{\beta}}.
$$
Since the mean curvature $H$ of $N_t^+$ satisfies $H\geq 0$, after taking $i\to\infty$ we conclude that the mean curvature $H_\epsilon$ of $\Sigma_{t,\epsilon}$ satisfies $H_\epsilon\geq 0$. Since the constant $\rho_1$ can be taken large enough to guarantee that the region $D_{ce^{T/2}}(p)\setminus\{p\}$ is foliated by mean-convex concentric spheres, it follows that $\Sigma_{t,\epsilon}$ cannot be minimal for any $\epsilon$, and thus each surface $\Sigma_{t,\epsilon}$ has to be mean-convex.
\end{proof}

\subsubsection{Smooth inverse mean curvature flow from $\Sigma_t^+$}
As a preparation, we collect estimates for surface $N_t^+$ which will be used later. It follows from the annulus neighborhood estimate \eqref{Eq: annulus control} that $\{u\leq t\}$ is contained in $D_{2ce^{T/2}}(p)\setminus\{p\}$ for each $t<T$. Therefore, $\Sigma_t^+$ must be a connected surface enclosing point $p$ (recall that $\{u\leq t\}$ cannot have precompact components). Furthermore, the annulus neighborhood estimate \eqref{Eq: annulus control} yields
\begin{equation}\label{Eq: eccentricity}
\Sigma_t^+\subset \bar D_{ce^{t/2}}(p)\setminus  D_{c^{-1}e^{t/2}}(p).
\end{equation}
The outer-minimizing property of $\Sigma_t^+$ then yields 
\begin{equation} \label{Eq: area upper bound}
\area(\Sigma_t^+)\leq 4\pi c^2 e^t(1+o(1)).
\end{equation}
From the Geroch monotonicity and Lemma \ref{Lem: Hawking mass limit} we see 
$$
m_h(\Sigma_t)\geq \area(\Sigma_t)^{\frac{3}{2}}\cdot o(1).
$$
Using the facts $\area(\Sigma_t^+)=\area(\Sigma_t)$ and $m_h(\Sigma_t^+)\geq m_h(\Sigma_t)$, we can derive
\begin{equation}\label{Eq: Hawking mass lower bound}
m_h(\Sigma_t^+)\geq \area(\Sigma_t^+)^{\frac{3}{2}} \cdot o(1).
\end{equation}
Up to scaling, it follows from \cite[Theorem 3.1 and Theorem 1.3]{huisken2001inverse}, \cite[Theorem 5.5]{heidusch2001regularitat}, and the annulus neighborhood estimate \eqref{Eq: annulus control} that there exists a constant $\Lambda$ depending only on the dimension and $c$ such that
\begin{equation}\label{Eq: curvature bound}
\|\mathcal B_{\Sigma_t^+}\|_{\mathcal L^\infty}\leq \Lambda\cdot \area(\Sigma_t^+)^{-\frac{1}{2}}
\end{equation}
when $\rho_1$ is taken large enough.

\begin{lemma}\label{Lem: smooth IMCF}
For any constant $s_0>0$ the constant $\rho_1$ can be taken large enough such that for each $\Sigma_t^+$ with $t\in(-\infty,T]$ there is a smooth inverse mean curvature flow $\Phi^+: N\times (0,s_0 ]\to \mathbb R^3$ such that
$$d_{\mathcal H}(\Phi^+(N,s),\Sigma_t^+)\to 0\mbox{ as }s\to 0^+,$$
and that $\Phi^+(N,s)$ encloses $D_{c^{-1}e^{\frac{t}{2}+\frac{s}{3}}}(p)$.
\end{lemma}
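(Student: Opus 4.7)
\medskip

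\noindent\textbf{Proof proposal.} My plan is to execute the box argument sketched in the introduction, so that the smooth inverse mean curvature flows starting from mean-convex approximations of $\Sigma_t^+$ admit uniform estimates on a definite time interval. By scaling, it suffices to treat $t=0$, and throughout I will freely enlarge $\rho_1$. First I invoke Lemma \ref{Lem: MCF} to produce, for every small $\epsilon>0$, a smooth strictly mean-convex surface $\Sigma_{0,\epsilon}$ which converges to $\Sigma_0^+$ in $C^{1,\beta}\cap W^{2,p}$ and enjoys the uniform curvature bound \eqref{Eq: curvature bound approximation}. From the convergence together with \eqref{Eq: eccentricity}, \eqref{Eq: area upper bound}, \eqref{Eq: Hawking mass lower bound} (using upper semi-continuity of Hawking mass), and \eqref{Eq: curvature bound} I conclude that $\Sigma_{0,\epsilon}$ also satisfies the eccentricity bound $\theta(\Sigma_{0,\epsilon},p)\leq 2c^2$, an area upper bound $\leq 5\pi c^2$, the curvature bound $\|\mathcal B_{0,\epsilon}\|\leq 2\Lambda\,\area(\Sigma_{0,\epsilon})^{-1/2}$ and an almost non-negative Hawking mass. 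Then Lemma \ref{Lem: star-shape} produces a constant $\iota=\iota(c)>0$ such that the initial star-shape estimate $\hat g(x-p,\nu)\geq 2\iota\,|x-p|_{\hat g}$ holds, provided $\rho_1$ is large enough.

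Next I run the classical smooth IMCF from each $\Sigma_{0,\epsilon}$, denoting the evolving surfaces by $\Sigma_{0,\epsilon,s}$. The work of Huisken--Ilmanen and Heidusch shows that the flow exists smoothly until either the star-shape degenerates or the curvature blows up. I set $B$ to be the universal curvature constant produced by \cite{huisken2001inverse,heidusch2001regularitat} out of the star-shape lower bound $\iota$ (and the annulus neighborhood scale), and define the box region
\[
\mathcal R=\Bigl\{\Sigma:\ \hat g(x-p,\nu)\geq \iota\,|x-p|_{\hat g}\ \text{on }\Sigma,\ \|\mathcal B_\Sigma\|\leq B\,\area(\Sigma)^{-1/2}\Bigr\}.
\]
Along the smooth IMCF the area evolves by $\area(\Sigma_{0,\epsilon,s})=\area(\Sigma_{0,\epsilon})e^{s}$, the Hawking mass is monotone by Geroch (using that each slice is a topological sphere, which follows from Willmore and the almost vanishing of traceless second fundamental form as in Lemma \ref{Lem: MCF}), and the annulus neighborhood persists by comparison with concentric spheres after enlarging $\rho_1$ depending on $s_0$. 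Hence every slice $\Sigma_{0,\epsilon,s}$ with $s\in[0,s_0]$ satisfies the same hypotheses required by Lemma \ref{Lem: star-shape}, uniformly in $\epsilon$.

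Now I close the bootstrap. Suppose for contradiction that there exists a first time $s^*\in(0,s_0]$ at which $\Sigma_{0,\epsilon,s^*}$ leaves $\overline{\mathcal R}$. If the curvature edge is hit, then the star-shape bound $\geq\iota/2$ persisted up to $s^*$ (by choosing $\iota$ slightly smaller if needed), so the Huisken--Ilmanen/Heidusch curvature estimate gives $\|\mathcal B\|\leq B/2\,\area^{-1/2}$ at $s^*$, a contradiction. If instead the star-shape edge is hit, the curvature bound persisted up to $s^*$, so Lemma \ref{Lem: star-shape} applies to $\Sigma_{0,\epsilon,s^*}$ and yields the improved bound $\geq [1+(2c^2-1)^2]^{-1/2}\geq 2\iota$, again a contradiction. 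Hence $\Sigma_{0,\epsilon,s}\in\mathcal R$ for all $s\in[0,s_0]$. This is the main technical step; the delicate point is that in each sub-case of the dichotomy I use the lemma that governs the \emph{other} edge, which forces me to calibrate the constants $\iota$ and $B$ simultaneously and to first fix the universal curvature constant $B=B(\iota)$ before applying Lemma \ref{Lem: star-shape} to get the improved $\iota$.

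With uniform star-shape and curvature in hand, I follow the strategy of \cite{huisken2008higher}: the star-shape lower bound yields a lower bound $H\geq c\min\{\sqrt s,1\}$ on the mean curvature via a Stampacchia iteration (the Michael--Simon--Sobolev inequality applies since $\|\mathcal B\|$ is uniformly bounded), and then Krylov's regularity theory upgrades this to uniform $C^{k}$ bounds on every compact sub-interval $[\delta,s_0]$ for all $\delta>0$. Letting $\epsilon\to 0$ along a subsequence produces a smooth IMCF $\Phi^+:N\times(0,s_0]\to\mathbb R^3$, and the $C^{1,\beta}\cap W^{2,p}$ convergence of the initial data together with short-time Hausdorff control along smooth IMCF gives $d_{\mathcal H}(\Phi^+(N,s),\Sigma_0^+)\to 0$ as $s\to 0^+$. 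Finally, the required inclusion $\Phi^+(N,s)\supset D_{c^{-1}e^{t/2+s/3}}(p)$ follows by combining the initial enclosure from Lemma \ref{Lem: inner ball} with the fact that under IMCF the area grows like $e^s$ while the ambient metric is almost Euclidean, so that the Euclidean isoperimetric inequality pushes the enclosed ball out by at least the factor $e^{s/3}$ (a weaker exponent than the sharp $e^{s/2}$, leaving slack to absorb the asymptotic error terms when $\rho_1$ is large).
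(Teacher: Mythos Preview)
Your approach is the paper's: run the box argument on the mean-convex approximations $\Sigma_{t,\epsilon,s}$, feed the resulting uniform star-shape and curvature bounds into the Huisken--Ilmanen mean-curvature lower bound and Krylov, and pass to the limit as $\epsilon\to 0$. The skeleton is right; two points need tightening.

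First, in your $\iota$-edge case you evaluate Lemma~\ref{Lem: star-shape} at $\Sigma_{0,\epsilon,s^*}$ without having established that the flow actually reaches $s^*$. Membership in the box only gives $\|\mathcal B\|\le B$; it does not by itself prevent $H\to 0$, which would terminate the smooth flow. The paper closes this by invoking the mean-curvature lower bound (your Stampacchia step, the paper's Lemma~\ref{Lem: star-shape to curvature}) \emph{inside} the box argument: since star-shape $\ge\iota$ holds for $s<s^*$, one gets $H\ge \tilde c\min\{\sqrt s,1\}$ there, and then Krylov forces $s^*<\tau$, so $\Sigma_{0,\epsilon,s^*}$ exists and the contradiction goes through. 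Your presentation puts the $H$-lower-bound after the box argument, which is logically backwards.

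Second, the final enclosure $\Phi^+(N,s)\supset D_{c^{-1}e^{t/2+s/3}}(p)$ does not come from the isoperimetric inequality---that controls enclosed volume, not inner radius. The paper gets the exponent $1/3$ from comparison with the radial supersolution $w_{\sup}=3\log|x|$ in the almost-Euclidean rescaled metric (this is the content of \eqref{Eq: annulus estimate scaled}). You in fact already invoked exactly this comparison when you wrote ``the annulus neighborhood persists by comparison with concentric spheres''; just use that same comparison for the inner-radius growth rather than switching to isoperimetry.
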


The proof will be given shortly after we establish several necessary a priori estimates.
Denote $\Psi:N\times (0,\epsilon_0)\to \mathbb R^3$ to be the mean curvature flow with initial data $\Sigma_t^+$ constructed in Lemma \ref{Lem: MCF} and use the notation $\Sigma_{t,\epsilon}=\Psi(N,\epsilon)$.
\begin{lemma}\label{Lem: initial estimate}
The surfaces $\Sigma_{t,\epsilon}$ can satisfy all the estimates \eqref{Eq: eccentricity}-\eqref{Eq: curvature bound} uniformly for small $\epsilon$ after slightly adjusting the values of constants. 
\end{lemma}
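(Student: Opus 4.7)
The plan is to transfer each of the four estimates \eqref{Eq: eccentricity}--\eqref{Eq: curvature bound} from $\Sigma_t^+$ to the mollified surfaces $\Sigma_{t,\epsilon}$ by combining the $C^{1,\beta}$- and $W^{2,p}$-convergence $\Sigma_{t,\epsilon}\to \Sigma_t^+$ supplied by Lemma \ref{Lem: MCF} with the pointwise curvature control already built into that lemma. At each step the only cost is a slight enlargement of the constants $c$ and $\Lambda$ together with a marginally worsened $o(1)$ error, both of which can be absorbed by first fixing $\rho_1$ large enough and then choosing $\epsilon$ small depending on $\rho_1$.

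First I would dispatch the curvature bound \eqref{Eq: curvature bound}: the explicit inequality $\|\mathcal B_\epsilon\|_{\mathcal L^\infty}\leq (1+o_\epsilon(1))\|\mathcal B_0\|_{\mathcal L^\infty}$ from Lemma \ref{Lem: MCF}, combined with $\area(\Sigma_{t,\epsilon})\to \area(\Sigma_t^+)$ (a consequence of $C^{1,\beta}$-convergence of the parametrizations), immediately yields $\|\mathcal B_{\Sigma_{t,\epsilon}}\|_{\mathcal L^\infty}\leq \Lambda'\,\area(\Sigma_{t,\epsilon})^{-1/2}$ with a slightly enlarged $\Lambda'$. The annulus estimate \eqref{Eq: eccentricity} then transfers via the Hausdorff convergence $\Sigma_{t,\epsilon}\to \Sigma_t^+$ implied by $C^{1,\beta}$-convergence, after mildly enlarging $c$. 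The area bound \eqref{Eq: area upper bound} follows at once from area convergence, with the new error absorbed into the $o(1)$ already present.

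The most delicate piece is the Hawking mass estimate \eqref{Eq: Hawking mass lower bound}. For this I would use that $W^{2,p}$-convergence with $p\geq 2$ gives $L^2$-convergence of the mean curvatures $H_\epsilon\to H_0$, while the $C^{1,\beta}$-convergence yields convergence of the induced area elements; together these imply
$$
\int_{\Sigma_{t,\epsilon}} H_\epsilon^2\,\mathrm d\sigma_\epsilon \;\longrightarrow\; \int_{\Sigma_t^+} H_0^2\,\mathrm d\sigma_0
$$
as $\epsilon\to 0$. Combined with area convergence this gives $m_h(\Sigma_{t,\epsilon})\to m_h(\Sigma_t^+)$, and \eqref{Eq: Hawking mass lower bound} then passes to $\Sigma_{t,\epsilon}$ with a slightly worse but still vanishing $o(1)$ remainder.

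The main obstacle is purely bookkeeping of two independent small parameters: one fixes $\rho_1$ first to make the $o(1)$ terms in the estimates for $\Sigma_t^+$ as small as desired, and only then chooses $\epsilon=\epsilon(\rho_1)$ so that the mollification errors from Lemma \ref{Lem: MCF} are dominated by those terms. Since Lemma \ref{Lem: smooth IMCF} is invoked slice by slice and the scaling reduction further allows one to focus on $\Sigma_0^+$, this compatibility causes no genuine difficulty, and the enlarged constants $c,\Lambda$ can be chosen uniformly for all sufficiently small $\epsilon$.
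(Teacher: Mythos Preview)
Your proposal is correct and takes essentially the same approach as the paper, which records only the one-line justification ``This follows from the $C^{1,\alpha}$- and $W^{2,p}$-convergence as well as Lemma~\ref{Lem: MCF}.'' Your write-up is simply a careful unpacking of that sentence, correctly identifying which mode of convergence drives each of the four estimates and isolating the Hawking mass bound as the place where $W^{2,p}$-convergence of the mean curvature is genuinely needed.
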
 
\begin{proof}
    This follows from the $C^{1,\alpha}$- and $W^{2,p}$-convergence as well as Lemma \ref{Lem: MCF}.
\end{proof}

Since $\Sigma_{t,\epsilon}$ is smooth and mean-convex, we can guarantee the existence of a smooth inverse mean curvature flow $\Phi_\epsilon:N\times [0,\tau)\to\mathbb R^3$ with initial data $\Sigma_{t,\epsilon}$, where $\tau$ denotes the maximal existence time. For covenience, we denote $\Sigma_{t,\epsilon,s}:=\Phi_\epsilon(N,s)$.
We are going to show that the maximal existence time of the flow $\Phi_\epsilon$ is always beyond $s_0$ when $\rho_1$ is taken large enough.

Since the smooth inverse mean curvature flow is scaling-invariant, we do scaling $\tilde g=e^{-t}\hat g$ and work in the coordinate chart 
$$\phi:\mathbb R^3\to \mathbb R^3,\,x\mapsto e^{t/2}x+p. $$
Our goal is to establish uniform a priori estimates for the inverse mean curvature flows $\Phi_\epsilon$ under the assumption $\tau\leq s_0$ with respect to the scaled metric $\tilde g$.
\begin{lemma}\label{Lem: area bound scaled}
The $\tilde g$-area of $\Sigma_{t,\epsilon,s}$ is uniformly bounded, i.e. $\area_{\tilde g}(\Sigma_{t,\epsilon,s})\leq \tilde A_0$ for a universal constant $\tilde A_0$ independent of $p$, $t$, $\epsilon$ and $s$.
\end{lemma}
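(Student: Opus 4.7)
The plan is to exploit the scale invariance of inverse mean curvature flow together with the exponential area-growth identity. Because $\tilde g = e^{-t}\hat g$ is a constant conformal rescaling of $\hat g$, the unit normal, the mean curvature, and the area element transform as $\tilde\nu = e^{t/2}\nu$, $\tilde H = e^{t/2}H$, and $d\sigma_{\tilde g} = e^{-t}\,d\sigma_{\hat g}$. In particular the IMCF speed vector satisfies $\tilde H^{-1}\tilde\nu = H^{-1}\nu$, so the very parameterization $\Phi_\epsilon$ constructed in $(\mathbb R^3,\hat g)$ is simultaneously a smooth inverse mean curvature flow in $(\mathbb R^3,\tilde g)$. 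Moreover $\area_{\tilde g}(\Sigma)=e^{-t}\area_{\hat g}(\Sigma)$ for any surface $\Sigma$.

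First I would bound the initial area in the rescaled metric. Combining the outer-minimizing bound \eqref{Eq: area upper bound} with Lemma \ref{Lem: initial estimate}, one has $\area_{\hat g}(\Sigma_{t,\epsilon})\le 5\pi c^2 e^t$ provided $\rho_1$ is sufficiently large and $\epsilon$ sufficiently small, where the slight enlargement of the constant absorbs the $(1+o(1))$ factor. Passing to $\tilde g$ kills the $e^t$ and yields the uniform estimate
\[
\area_{\tilde g}(\Sigma_{t,\epsilon}) \le 5\pi c^2,
\]
independent of $p$, $t$, and $\epsilon$. Since along smooth IMCF one has $\frac{d}{ds}\area(\Sigma_s) = \int_{\Sigma_s} H\cdot H^{-1}\,d\sigma = \area(\Sigma_s)$, the area evolves exactly as $\area_{\tilde g}(\Sigma_{t,\epsilon,s}) = e^s\area_{\tilde g}(\Sigma_{t,\epsilon})$, so for every $s\in[0,s_0]$ we get
\[
\area_{\tilde g}(\Sigma_{t,\epsilon,s}) \le 5\pi c^2 e^{s_0} =: \tilde A_0.
\]

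There is no genuine obstacle here; the lemma is an essentially one-line consequence of the constant-rescaling invariance of IMCF and the standard area-growth identity. It should be regarded as the simplest of the a priori estimates that will feed into the box region $\mathcal R$ in the subsequent lemmas.
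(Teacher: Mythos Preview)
Your proof is correct and follows essentially the same approach as the paper: both combine the initial area bound \eqref{Eq: area upper bound} transferred to $\Sigma_{t,\epsilon}$ via Lemma~\ref{Lem: initial estimate}, the scaling relation $\area_{\tilde g}=e^{-t}\area_{\hat g}$, the exponential area growth along smooth IMCF, and the restriction $s\le s_0$. The paper's proof is simply more terse, writing the conclusion directly as $\area_{\tilde g}(\Sigma_{t,\epsilon,s})=4\pi c^2(1+o(1))e^s$.
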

\begin{proof}
From Lemma \ref{Lem: initial estimate} and \eqref{Eq: area upper bound} as well as the exponential area growth of $\Sigma_{t,\epsilon,s}$, we have
$$\area_{\tilde g}(\Sigma_{t,\epsilon,s})=4\pi c^2 (1+o(1))e^s \mbox{ as }\rho_1\to +\infty.$$
The uniform $\tilde g$-area bound comes from the assumption $s\leq s_0$. 
\end{proof}
\begin{lemma}\label{Lem: eccentricity scaled}
    The eccentricity of $\Sigma_{t,\epsilon,s}$ with respect to the origin $o$ satisfies $\tilde \theta(\Sigma_{t,\epsilon,s},o)\leq \tilde\theta_0$ for some universal constant $\tilde \theta_0$ independent of $p$, $t$ ,$\epsilon$ and $s$.
\end{lemma}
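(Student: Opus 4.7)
The plan is to bound the Euclidean outer and inner radii of $\Sigma_{t,\epsilon,s}$ separately in the scaled chart $\phi:x\mapsto e^{t/2}x+p$, exploiting the fact that the pulled-back scaled metric $\phi^{*}\tilde g$ converges uniformly in $C^2$ to the Euclidean metric on every fixed compact subset of $\mathbb R^3$ as $\rho_1\to\infty$, uniformly for $t\le T$ (since $\hat g$ is asymptotically flat and the image of any bounded region under $\phi$ is pushed far out in the end).

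For the inner-radius bound, I would invoke the monotonicity of smooth inverse mean curvature flow: the open region bounded by $\Sigma_{t,\epsilon,s}$ is non-decreasing in $s$, hence contains the region bounded by $\Sigma_{t,\epsilon}=\Sigma_{t,\epsilon,0}$. By Lemma~\ref{Lem: initial estimate} combined with the annulus estimate \eqref{Eq: eccentricity} for $\Sigma_t^+$ and the $C^{1,\beta}$-closeness of $\Sigma_{t,\epsilon}$ to $\Sigma_t^+$ from Lemma~\ref{Lem: MCF}, the initial surface $\Sigma_{t,\epsilon}$ in the scaled chart encloses the Euclidean ball $D_{(2c)^{-1}}(o)$ once $\rho_1$ is large. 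Hence $\underline r(\Sigma_{t,\epsilon,s},o)\ge (2c)^{-1}$ for every $s\in[0,\tau)$.

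For the outer-radius bound, I would build a smooth outer barrier by running smooth IMCF in the metric $\phi^{*}\tilde g$ starting from the Euclidean sphere $S_{2c}(o)$. In the Euclidean metric this flow is the explicit homothetic family $\{S_{2ce^{s/2}}(o)\}_{s\ge 0}$, which is strictly star-shaped with explicit curvature estimates. Since $\phi^{*}\tilde g$ can be made arbitrarily $C^2$-close to the Euclidean metric on any prescribed Euclidean ball by taking $\rho_1$ large, Gerhardt--Urbas type estimates together with continuous dependence of smooth IMCF on the ambient metric guarantee that, for $\rho_1$ large enough depending only on $c$ and $s_0$, the barrier flow $\{S^{\mathrm{bar}}_s\}_{s\in[0,s_0]}$ exists smoothly, stays strictly mean convex and near-spherical, and lies inside the Euclidean ball $D_{3ce^{s_0/2}}(o)$. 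Because $\Sigma_{t,\epsilon}\subset D_{2c}(o)$ strictly, the smooth comparison principle for IMCF then forces $\Sigma_{t,\epsilon,s}\subset D_{3ce^{s_0/2}}(o)$ for all $s\in[0,\min(s_0,\tau))$, giving $\bar r(\Sigma_{t,\epsilon,s},o)\le 3ce^{s_0/2}$.

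Combining the two estimates yields
\[
\tilde\theta(\Sigma_{t,\epsilon,s},o)\le \frac{3c\,e^{s_0/2}}{(2c)^{-1}}=6c^{2}e^{s_0/2}=:\tilde\theta_0,
\]
a constant depending only on $c$ (hence on $\hat C$) and on $s_0$, but independent of $p$, $t$, $\epsilon$ and $s$. The main obstacle I anticipate is verifying the smoothness and near-sphericity of the outer barrier on the entire interval $[0,s_0]$: one must rule out any singularity of IMCF of a sphere in a small $C^2$-perturbation of Euclidean space over a prescribed finite time. This is manageable because the Euclidean flow is explicit and strictly star-shaped with quantitatively controlled curvature, so a standard bootstrap/continuity argument---using the freedom to enlarge $\rho_1$ to shrink the metric perturbation---extends smoothness, star-shapedness, and the enclosing Euclidean ball control from a small initial time interval to all of $[0,s_0]$.
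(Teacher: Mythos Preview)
Your argument is correct, but it takes a noticeably different route from the paper. The paper does not construct any smooth barrier flow. Instead it observes that in the scaled chart the metric $\phi^*\tilde g$ is $C^2$-close to Euclidean on any fixed compact set once $\rho_1$ is large, and then checks directly that the radial functions $w_{\mathrm{sub}}=\log|x|$ and $w_{\mathrm{sup}}=3\log|x|$ are, respectively, a weak subsolution and a weak supersolution for IMCF in $D_{2e^{s_0}}$ (this is a one-line pointwise inequality: the level spheres have mean curvature $\approx 2/|x|$ while the gradients are $1/|x|$ and $3/|x|$). The weak comparison principle of Huisken--Ilmanen then traps $\Sigma_{t,\epsilon,s}$ in the annulus $\bar D_{ce^s}\setminus D_{c^{-1}e^{s/3}}$, giving $\tilde\theta_0=c^2e^{2s_0/3}$.

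Compared to this, your outer-radius step is heavier: you need long-time existence and quantitative control for smooth IMCF of a sphere in a perturbed metric, which---although feasible via the bootstrap you describe---essentially duplicates part of the machinery the section is building, whereas the paper's radial super/subsolutions avoid that entirely. Your inner-radius step via monotonicity is actually simpler than the paper's, but it only gives a constant lower bound $(2c)^{-1}$, while the paper's supersolution yields the growing bound $c^{-1}e^{s/3}$. For the eccentricity statement alone your constant bound suffices; however, the paper's sharper annulus estimate \eqref{Eq: annulus estimate scaled} is quoted later (e.g.\ in the proof of Lemma~\ref{Lem: star-shape to curvature} and in Lemma~\ref{Lem: smooth IMCF} to show $\Phi^+(N,s)$ encloses $D_{c^{-1}e^{t/2+s/3}}(p)$), so if you keep your approach you will have to supply that growing inner bound separately.
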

\begin{proof}
    In the new coordinate chart, the scaled metric $\tilde g$ has the expression $\tilde g=\tilde g_{kl}\mathrm dx_k\otimes \mathrm dx_l$, where $\tilde g_{kl}(x)=\hat g_{kl}(e^{t/2}x+p)$. From the fact $t\leq T$ and the asymptotically-flat property of $g$, we see 
\begin{equation}\label{Eq: scaled metric}
|\tilde g_{kl}|+|\partial\tilde g_{kl}|+|\partial^2\tilde g_{kl}|+|\partial^3\tilde g_{kl}|=o(1)\mbox{ as }\rho_1\to+\infty
\end{equation}
in any fixed compact subset. In particular, $\rho_1$ can be taken large enough such that functions $w_{sub}=\log |x|$ and $w_{sup}=3\log |x|$ are a weak subsolution and supersolution for inverse mean curvature flow in $D_{2e^{s_0}}$, respectively. Then it follows from Lemma \ref{Lem: initial estimate}, the annulus neighborhood estimate \eqref{Eq: eccentricity}, and a comparison argument that the surface $\Sigma_{t,\epsilon,s}$ satisfies
\begin{equation}\label{Eq: annulus estimate scaled}
\Sigma_{t,\epsilon,s}\subset \bar D_{ce^{s}}\setminus D_{c^{-1}e^{s/3}}.
\end{equation}
This yields $\tilde\theta(\Sigma_{t,\epsilon,s},o)\leq \tilde\theta_0:= c^2 e^{2s_0/3}$, and we complete the proof.
\end{proof}
\begin{lemma}\label{Lem: Hawking mass scaled}
    The Hawking mass of $\Sigma_{t,\epsilon,s}$ satisfies
    $$\tilde m_h(\Sigma_{t,\epsilon,s})\geq o(1)\cdot \area_{\tilde g}(\Sigma_{t,\epsilon,s})^{\frac{1}{2}}\mbox{ as }\rho_1\to+\infty.$$
\end{lemma}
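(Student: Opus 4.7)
The plan is to apply Geroch monotonicity in the scaled metric $\tilde g = e^{-t}\hat g$. Smooth inverse mean curvature flow is scale-invariant, so the flow $\Sigma_{t,\epsilon,s}$ is simultaneously a smooth IMCF with respect to $\tilde g$. Under this scaling, $\tilde m_h(\Sigma) = e^{-t/2} m_h^{\hat g}(\Sigma)$, $\area_{\tilde g} = e^{-t}\area_{\hat g}$, and $R_{\tilde g} = e^{t} R_{\hat g}$, so all the ingredients of Geroch's formula transport cleanly between the two metrics.

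First I would verify that each slice $\Sigma_{t,\epsilon,s}$ is a topological sphere. Combining the area bound \eqref{Eq: area upper bound}, the curvature bound \eqref{Eq: curvature bound}, and the Hawking mass lower bound \eqref{Eq: Hawking mass lower bound}, together with the $W^{2,p}$-convergence $\Sigma_{t,\epsilon} \to \Sigma_t^+$ from Lemma \ref{Lem: MCF} and Lemma \ref{Lem: initial estimate}, one obtains
$$\int_{\Sigma_{t,\epsilon,0}} H_{\hat g}^2 \, \mathrm d\sigma_{\hat g} \le 16\pi + o(1) \quad \text{as } \rho_1 \to \infty.$$
For $\rho_1$ sufficiently large this lies strictly below the Willmore threshold $8\pi^2$, so the resolution of the Willmore conjecture \cite{Marques2014Willmore} forces $\Sigma_{t,\epsilon,0}$ to be a topological sphere. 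Since smooth IMCF preserves the topology of each slice, $\chi(\Sigma_{t,\epsilon,s}) = 2$ for all $s \in [0, s_0]$.

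With $\chi = 2$, Geroch monotonicity in $\tilde g$ gives
$$\tilde m_h(\Sigma_{t,\epsilon,s}) \ge \tilde m_h(\Sigma_{t,\epsilon,0}) + \frac{1}{(16\pi)^{3/2}} \int_0^s \area_{\tilde g}(\Sigma_{t,\epsilon,\tau})^{1/2} \int_{\Sigma_{t,\epsilon,\tau}} R_{\tilde g} \, \mathrm d\tilde\sigma_\tau \, \mathrm d\tau.$$
By the annulus estimate \eqref{Eq: annulus estimate scaled}, once $\rho_1$ exceeds the size of the compact core $K$, the surfaces $\Sigma_{t,\epsilon,\tau}$ all lie in the asymptotically flat end where $\hat g = g$ and $R_{\hat g} \ge 0$. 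Hence $R_{\tilde g} = e^{t} R_{\hat g} \ge 0$ on the flow, the integral is non-negative, and $\tilde m_h(\Sigma_{t,\epsilon,s}) \ge \tilde m_h(\Sigma_{t,\epsilon,0})$.

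Finally I would transfer the initial bound: the scaling identity combined with \eqref{Eq: Hawking mass lower bound}, Lemma \ref{Lem: MCF}, and Lemma \ref{Lem: initial estimate} gives
$$\tilde m_h(\Sigma_{t,\epsilon,0}) = e^{-t/2} m_h^{\hat g}(\Sigma_{t,\epsilon,0}) \ge o(1)\cdot e^{t} \cdot \area_{\tilde g}(\Sigma_{t,\epsilon,0})^{3/2}.$$
Since $t \le T$ is fixed, the exponential area growth along IMCF makes $\area_{\tilde g}(\Sigma_{t,\epsilon,s})$ and $\area_{\tilde g}(\Sigma_{t,\epsilon,0})$ comparable on $[0,s_0]$, and Lemma \ref{Lem: area bound scaled} bounds them uniformly. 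Absorbing the bounded factor $e^{T} \cdot \area_{\tilde g}(\Sigma_{t,\epsilon,0})$ into the $o(1)$ yields the required estimate. The main delicate step is the topological-sphere claim, since it is what legitimizes $\chi = 2$ in the Geroch formula; once that is in hand, the rest is a clean application of Geroch monotonicity in a region of non-negative scalar curvature.
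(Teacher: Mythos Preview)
Your overall strategy---Geroch monotonicity for the scaled IMCF, combined with the initial Hawking mass bound from \eqref{Eq: Hawking mass lower bound}---matches the paper's. Two points deserve comment.

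First, the Willmore detour is unnecessary. The surfaces $\Sigma_{t,\epsilon,s}$ are connected (each bounds a connected region containing $p$), and any connected closed orientable surface has $\chi\le 2$; that is all Geroch monotonicity needs. The paper never invokes \cite{Marques2014Willmore} here.

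Second, and more substantively, you assume $R_{\hat g}\ge 0$ along the flow by appealing to the non-negative scalar curvature of $(M,g)$. That hypothesis is \emph{not} available at this point: Lemma~\ref{Lem: Hawking mass scaled} is a step toward the regularity result Theorem~\ref{main-theorem-IMCF}, which is stated for an arbitrary asymptotically flat metric with no scalar-curvature sign assumption (the assumption $R\ge 0$ enters only later, in Proposition~\ref{Prop: sweep-out} and Lemma~\ref{Lem: sub-Euclidean ratio}). The extended metric $\hat g$ on $\mathbb R^3$ is an arbitrary completion of the end, and even in the end one has no sign on $R_{\hat g}$ in this generality. The paper handles the scalar-curvature term differently: since $|R_{\hat g}|=o(|x|^{-2})$ by asymptotic flatness and $t\le T$, one has $|R_{\tilde g}|=e^{t}|R_{\hat g}|=o(1)$ as $\rho_1\to\infty$ on the region containing the flow; combined with the uniform $\tilde g$-area bound of Lemma~\ref{Lem: area bound scaled} this gives
\[
\tilde m_h(\Sigma_{t,\epsilon,s})\ge \tilde m_h(\Sigma_{t,\epsilon,0})+\area_{\tilde g}(\Sigma_{t,\epsilon,s})^{3/2}\cdot o(1),
\]
and the conclusion follows. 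Your argument would be repaired by this single replacement; as written it proves the lemma only under the additional hypothesis $R_g\ge 0$, which suffices for the main theorem but not for Theorem~\ref{main-theorem-IMCF} as stated.
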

\begin{proof}
    From Lemma \ref{Lem: initial estimate} and estimates \eqref{Eq: area upper bound}-\eqref{Eq: Hawking mass lower bound},  we see
    $$\area_{\tilde g}(\Sigma_{t,\epsilon,0})^{-\frac{1}{2}}\tilde m_h(\Sigma_{t,\epsilon,0})\geq o(1).$$
    From the Geroch monotonicity we have
    $$\tilde m_h(\Sigma_{t,\epsilon,s})\geq \tilde m_h(\Sigma_{t,\epsilon,0})+\area_{\tilde g}(\Sigma_{t,\epsilon,s})^{\frac{3}{2}}\cdot o(1).$$
    Therefore, we obtain
    $$\area_{\tilde g}(\Sigma_{t,\epsilon,s})^{-\frac{1}{2}}\tilde m_h(\Sigma_{t,\epsilon,s})\geq e^{-s/2}\cdot o(1)+\area_{\tilde g}(\Sigma_{t,\epsilon,s})\cdot o(1).$$
    Now the desired estimate follows from the uniform $\tilde g$-area bound of $\Sigma_{t,\epsilon,s}$ from Lemma \ref{Lem: area bound scaled}.
\end{proof}

Denote
$$
\tilde X=\frac{x}{|x|_{\tilde g}}.
$$
In the following, we collect those estimates depending on the a priori star-shapeness assumption.
\begin{lemma}\label{Lem: curvature estimate scaled}
Given a positive constant $\iota_0$, we can take $\rho_1$ large enough and a universal constant $\tilde B_0$ independent of $p$, $t$, $\epsilon$ and $s$ such that if $\Sigma_{t,\epsilon,s}$ satisfies 
\begin{equation}\label{Eq: starshape assumption}
    \tilde g( \tilde X,\tilde \nu(x))\geq \iota_0>0 
\end{equation}
for all $s\in [0,\tau^*]$ with $\tau^*<\tau$, then the second fundamental form $\tilde{\mathcal B}_s$ of $\Sigma_{t,\epsilon,s}$ satisfies
$$
\|\tilde{\mathcal B}_s\|\leq \tilde B_0\cdot \area_{\tilde g}(\Sigma_{t,\epsilon,s})^{-\frac{1}{2}}
$$
for all $s\in [0,\tau^*]$.
\end{lemma}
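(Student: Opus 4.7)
The plan is to transfer the smooth star-shaped curvature estimate of Heidusch \cite{heidusch2001regularitat} (refining Huisken--Ilmanen \cite{huisken2001inverse}) from Euclidean space to the almost-Euclidean scaled ambient. The initial slice already satisfies such a bound by Lemma \ref{Lem: initial estimate}, so the real task is to propagate it along the smooth inverse mean curvature flow for as long as the star-shape hypothesis \eqref{Eq: starshape assumption} persists.

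First I would combine the annulus estimate \eqref{Eq: annulus estimate scaled} with Lemma \ref{Lem: area bound scaled} to confine the entire flow to the fixed compact annulus $\bar D_{c e^{s_0}} \setminus D_{c^{-1}}$, on which \eqref{Eq: scaled metric} guarantees that $\tilde g$ is arbitrarily $C^3$-close to the Euclidean metric as $\rho_1 \to \infty$. The star-shape assumption then realizes each $\Sigma_{t,\epsilon,s}$ as a radial graph over the unit sphere with gradient controlled purely in terms of $\iota_0$, and the $\tilde g$-area is bounded by $\tilde A_0$. In the strictly Euclidean model, the combined argument of Gerhardt, Urbas, and Heidusch supplies the scaling-invariant bound $\|\tilde{\mathcal B}_s\| \leq \tilde B_0 \cdot \area_{\tilde g}(\Sigma_{t,\epsilon,s})^{-1/2}$, with $\tilde B_0$ depending only on $\iota_0$, $\tilde A_0$, and the initial curvature constant from Lemma \ref{Lem: initial estimate}.

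To transfer this to $(\mathbb R^3, \tilde g)$, I would run the maximum principle for the scaling-invariant quantity $\area_{\tilde g}(\Sigma_{t,\epsilon,s}) \cdot \|\tilde{\mathcal B}_s\|^2$ (or equivalently Heidusch's test function) along the smooth IMCF, using the evolution equations from \cite{huisken2001inverse}. The non-Euclidean contribution consists of terms built from the ambient Riemann curvature tensor and its first derivatives contracted with $\tilde{\mathcal B}_s$; by \eqref{Eq: scaled metric}, these are $o(1)$ as $\rho_1 \to \infty$ on the fixed annulus, and can be absorbed into the estimate after slightly enlarging $\tilde B_0$. A short continuity argument on the interval $[0,\tau^*]$ then delivers the uniform bound.

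The main obstacle will be the careful bookkeeping of the non-Euclidean error terms in the evolution equation for the second fundamental form, in particular ensuring that none of them depends on $\|\tilde{\mathcal B}_s\|$ in a way that spoils the maximum principle, and verifying that the final constant $\tilde B_0$ depends only on $\iota_0$ and the universal annulus constants. Because the ambient curvature is uniformly $o(1)$ on the compact region of interest and Heidusch's Euclidean argument is stable under small $C^3$-perturbations of the background metric, this obstacle is handled by simply choosing $\rho_1$ sufficiently large.
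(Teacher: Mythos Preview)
Your proposal is correct and follows essentially the same approach as the paper. The paper's proof is even more terse: it directly invokes \cite[Theorem~3.1]{huisken2001inverse} and \cite[Theorem~5.1]{heidusch2001regularitat} as black boxes to obtain the non-scaling-invariant bound $\|\tilde{\mathcal B}_s\|\leq c_1(1+\|\tilde{\mathcal B}_0\|)$ under the star-shape assumption (noting that the scaled ambient is close to Euclidean on the fixed annulus by \eqref{Eq: scaled metric} and \eqref{Eq: annulus estimate scaled}), and then converts this to the area-normalized form via the exponential area growth and Lemma~\ref{Lem: area bound scaled}. Your plan to rerun the underlying maximum-principle argument and track the $o(1)$ ambient-curvature errors is precisely what justifies those citations in the almost-Euclidean setting, so the two arguments coincide in substance.
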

\begin{proof}

From Lemma \ref{Lem: initial estimate} and the initial curvature estimate \eqref{Eq: curvature bound}, we know that the second fundamental form $\tilde {\mathcal B}_0$ of $\Sigma_{t,\epsilon,0}$ satisfies 
$\|\tilde {\mathcal B}_0\|\leq \Lambda \cdot \area_{\tilde g}(\Sigma_{t,\epsilon,0})$. Using estimates \eqref{Eq: scaled metric} and \eqref{Eq: annulus estimate scaled},
we can conclude from \cite[Theorem 3.1]{huisken2001inverse} and \cite[Theorem 5.1]{heidusch2001regularitat} that there is a positive constant $c_1$, independent of $p$, $t$, $\epsilon$ and $s$, such that after taking $\rho_1$ large enough we have
\begin{equation}\label{Eq: curvature bound scaled}
\|\tilde{\mathcal B}_s\|\leq c_1(1+\|\tilde {\mathcal B}_0\|).
\end{equation}
From the uniform $\tilde g$-area bound of $\Sigma_{t,\epsilon,s}$ in Lemma \ref{Lem: area bound scaled} and the exponential area growth, we have
$$
\area_{\tilde g}(\Sigma_{t,\epsilon,s})^{\frac{1}{2}}\cdot \|\tilde {\mathcal B}_s\|\leq c_1\tilde A_0^{1/2}+c_1\Lambda\cdot e^{s/2}.
$$
The proof is completed from the assumption $s\leq s_0$.
\end{proof}
\begin{lemma}\label{Lem: Sobolev}
Under the starshape assumption \eqref{Eq: starshape assumption}, the constant $\rho_1$ can be taken large enough such that we have the following uniform Sobolev inequality $$
\left(\int_{\Sigma_{t,\epsilon,s}}\zeta^{2}\,\mathrm d\tilde \sigma\right)^{\frac{1}{2}}\leq c_{S}\int_{\Sigma_{t,\epsilon,s}} |\tilde \nabla \zeta|+|\tilde H||\zeta|\,\mathrm d\tilde \sigma
$$
on $\Sigma_{t,\epsilon,s}$, where $c_S$ is a constant independent of $p$, $t$, $\epsilon$ and $s$.
\end{lemma}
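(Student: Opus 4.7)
The plan is to reduce the desired Sobolev inequality to the classical Michael--Simon--Sobolev inequality in Euclidean space, using that the scaled metric $\tilde g$ is $C^2$-close to the Euclidean metric on the bounded region where the flow lives.

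First I would gather the geometric estimates available under the starshape hypothesis. By Lemma \ref{Lem: curvature estimate scaled}, the $\tilde g$-second fundamental form of $\Sigma_{t,\epsilon,s}$ is bounded by $\tilde B_0\cdot \area_{\tilde g}(\Sigma_{t,\epsilon,s})^{-1/2}$, and by Lemma \ref{Lem: area bound scaled} the area $\area_{\tilde g}(\Sigma_{t,\epsilon,s})$ is uniformly bounded above and, from the exponential growth, also uniformly bounded below. By the annulus estimate \eqref{Eq: annulus estimate scaled}, every $\Sigma_{t,\epsilon,s}$ sits inside a fixed compact annulus in the $x$-chart, and by \eqref{Eq: scaled metric} we have $\tilde g_{kl}=\delta_{kl}+o(1)$ in $C^2$ as $\rho_1\to+\infty$ on this annulus. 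Combining these, the Euclidean second fundamental form of $\Sigma_{t,\epsilon,s}$ is also uniformly bounded, say by some $B^{euc}$, once $\rho_1$ is large.

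Next I would apply the classical Michael--Simon--Sobolev inequality in $\mathbb{R}^3$: for any $C^1$ function $\zeta$ on the smooth hypersurface $\Sigma_{t,\epsilon,s}\subset\mathbb R^3$,
\begin{equation*}
\left(\int_{\Sigma_{t,\epsilon,s}}\zeta^2\,\mathrm d\sigma_{euc}\right)^{1/2}\leq C_{MS}\int_{\Sigma_{t,\epsilon,s}}\bigl(|\nabla_{euc}\zeta|+|H_{euc}||\zeta|\bigr)\,\mathrm d\sigma_{euc},
\end{equation*}
with a dimensional constant $C_{MS}$. Using $\tilde g=g_{euc}+o(1)$ in $C^2$, the induced area elements, gradients, and mean curvatures relate as $\mathrm d\sigma_{euc}=(1+o(1))\mathrm d\tilde\sigma$, $|\nabla_{euc}\zeta|\leq(1+o(1))|\tilde\nabla\zeta|$, and $|H_{euc}|\leq|\tilde H|+o(1)(1+B^{euc})$, where each $o(1)$ tends to $0$ as $\rho_1\to+\infty$. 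Substituting these into the Euclidean Michael--Simon inequality, I obtain
\begin{equation*}
\left(\int_{\Sigma_{t,\epsilon,s}}\zeta^2\,\mathrm d\tilde\sigma\right)^{1/2}\leq 2C_{MS}\int_{\Sigma_{t,\epsilon,s}}\bigl(|\tilde\nabla\zeta|+|\tilde H||\zeta|\bigr)\,\mathrm d\tilde\sigma+o(1)\int_{\Sigma_{t,\epsilon,s}}|\zeta|\,\mathrm d\tilde\sigma,
\end{equation*}
for $\rho_1$ sufficiently large.

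Finally I would absorb the extra term by Cauchy--Schwarz and the uniform area bound $\tilde A_0$ from Lemma \ref{Lem: area bound scaled}:
\begin{equation*}
o(1)\int_{\Sigma_{t,\epsilon,s}}|\zeta|\,\mathrm d\tilde\sigma\leq o(1)\tilde A_0^{1/2}\left(\int_{\Sigma_{t,\epsilon,s}}\zeta^2\,\mathrm d\tilde\sigma\right)^{1/2},
\end{equation*}
which is bounded by $\tfrac{1}{2}\bigl(\int\zeta^2\bigr)^{1/2}$ once $\rho_1$ is large. Moving this term to the left side yields the claimed inequality with $c_S=4C_{MS}$, independent of $p,t,\epsilon,s$. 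The one subtle point is that the additive error in $|H_{euc}-\tilde H|$ is genuinely $o(1)$ rather than a multiplicative factor, which relies crucially on the a priori curvature bound of Lemma \ref{Lem: curvature estimate scaled} (and hence on the starshape hypothesis); without it, the comparison of mean curvatures would leave an unbounded error term that cannot be absorbed.
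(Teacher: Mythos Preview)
Your proposal is correct and follows essentially the same route as the paper's proof: apply the Euclidean Michael--Simon--Sobolev inequality, use the $C^2$-closeness of $\tilde g$ to $g_{euc}$ together with the uniform curvature bound (available via Lemma \ref{Lem: curvature estimate scaled} under the starshape assumption) to replace Euclidean quantities by $\tilde g$-quantities up to an additive $o(1)\int|\zeta|$ error, and then absorb that error into the left-hand side via Cauchy--Schwarz and the uniform area bound of Lemma \ref{Lem: area bound scaled}. Your emphasis on why the curvature bound is essential for controlling $|H_{euc}-\tilde H|$ matches the paper's reasoning exactly.
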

\begin{proof}
From \eqref{Eq: curvature bound scaled} we know that $\tilde{\mathcal B}_s$ are uniformly bounded. We use $\bar H$ to denote the mean curvature of $\Sigma_{t,\epsilon,s}$ with respect to the Euclidean metric. Then we have $|\tilde H-\bar H|=|\tilde{\mathcal B}_s|\cdot o(1)$ as $\rho_1\to +\infty$. It is well-known that we have the following Michael-Simon-Sobolev inequality
\begin{equation}\label{Eq: MS}
\left(\int_{\Sigma_{t,\epsilon,s}}\zeta^{2}\,\mathrm d\bar \sigma\right)^{\frac{1}{2}}\leq \bar c_{S}\int_{\Sigma_{t,\epsilon,s}} |\bar \nabla \zeta|+|\bar H||\zeta|\,\mathrm d\bar \sigma,
\end{equation}
where every quantity with a bar is computed with respect to the Euclidean metric, and $\bar c_S$ is a dimensional constant. It is clear that RHS cannot exceed
\begin{equation}\label{Eq: error term}
\bar c_S(1+o(1))\int_{\Sigma_{t,\epsilon,s}}|\tilde\nabla \zeta|+|\tilde H||\zeta|\,\mathrm d\tilde \sigma+o(1)\int_{\Sigma_{t,\epsilon,s}} |\zeta|\,\mathrm d\tilde\sigma,
\end{equation}
where the last term can be handled by
$$
o(1)\int_{\Sigma_{t,\epsilon,s}} |\zeta|\,\mathrm d\tilde\sigma\leq o(1)\cdot \area_{\tilde g}(\Sigma_{t,\epsilon,s})^{\frac{1}{2}}\left(\int_{\Sigma_{t,\epsilon,s}}\zeta^2\,\mathrm d\bar \sigma\right)^{\frac{1}{2}}.
$$
It follows from Lemma \ref{Lem: area bound scaled} that, after taking $\rho_1$ large enough, the last term in \eqref{Eq: error term} can be finally absorbed in the LHS of \eqref{Eq: MS}, and we are done.
\end{proof}
\begin{corollary}\label{Cor: L2 Sobolev}
For any $q>1$ we have the following Sobolev inequality
$$
\left(\int_{\Sigma_{t,\epsilon,s}}|\zeta|^{2q}\,\mathrm d\tilde \sigma\right)^{\frac{1}{q}}\leq c_{S}^2q^2\int_{\Sigma_{t,\epsilon,s}} |\tilde \nabla \zeta|^2+\tilde H^2\zeta^2\,\mathrm d\tilde \sigma
$$
where $c_{S}$ is a positive constant independent of $p$, $t$, $\epsilon$ and $s$ as well as $q$.
\end{corollary}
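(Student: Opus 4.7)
The plan is to deduce this $L^2$-type Sobolev inequality from the $L^1$-type Michael--Simon--Sobolev inequality in Lemma \ref{Lem: Sobolev} by the standard trick of applying it to a power of $\zeta$, followed by Cauchy--Schwarz and Hölder. We may assume $\zeta \geq 0$.

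First, I would substitute $\phi = \zeta^{q}$ into the inequality of Lemma \ref{Lem: Sobolev}. Since $|\tilde\nabla \zeta^{q}| = q\zeta^{q-1}|\tilde\nabla \zeta|$, this yields
\[
\left(\int_{\Sigma_{t,\epsilon,s}} \zeta^{2q}\,\mathrm d\tilde\sigma\right)^{\tfrac{1}{2}} \leq c_S \int_{\Sigma_{t,\epsilon,s}} \zeta^{q-1}\bigl(q|\tilde\nabla \zeta| + |\tilde H|\zeta\bigr)\,\mathrm d\tilde\sigma.
\]
Next, by Cauchy--Schwarz applied to the product $\zeta^{q-1}\cdot (q|\tilde\nabla \zeta| + |\tilde H|\zeta)$, together with $(q|\tilde\nabla\zeta|+|\tilde H|\zeta)^{2}\leq 2q^{2}(|\tilde\nabla\zeta|^{2}+\tilde H^{2}\zeta^{2})$ for $q\geq 1$, the right-hand side is at most
\[
\sqrt{2}\,c_S\, q\left(\int_{\Sigma_{t,\epsilon,s}}\zeta^{2(q-1)}\,\mathrm d\tilde\sigma\right)^{\tfrac{1}{2}}\!\left(\int_{\Sigma_{t,\epsilon,s}}|\tilde\nabla\zeta|^{2}+\tilde H^{2}\zeta^{2}\,\mathrm d\tilde\sigma\right)^{\tfrac{1}{2}}.
\]

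Then I would interpolate the low-power integral by Hölder's inequality:
\[
\int_{\Sigma_{t,\epsilon,s}} \zeta^{2(q-1)}\,\mathrm d\tilde\sigma \;\leq\; \left(\int_{\Sigma_{t,\epsilon,s}}\zeta^{2q}\,\mathrm d\tilde\sigma\right)^{\tfrac{q-1}{q}} \cdot \area_{\tilde g}(\Sigma_{t,\epsilon,s})^{\tfrac{1}{q}}.
\]
Substituting this back, the exponent on $\int\zeta^{2q}$ on the left becomes $\tfrac{1}{2}-\tfrac{q-1}{2q}=\tfrac{1}{2q}$, and rearranging (then squaring and raising to the $1/q$ power) gives
\[
\left(\int_{\Sigma_{t,\epsilon,s}}\zeta^{2q}\,\mathrm d\tilde\sigma\right)^{\tfrac{1}{q}} \leq 2c_S^{2}q^{2}\,\area_{\tilde g}(\Sigma_{t,\epsilon,s})^{\tfrac{1}{q}}\int_{\Sigma_{t,\epsilon,s}}|\tilde\nabla\zeta|^{2}+\tilde H^{2}\zeta^{2}\,\mathrm d\tilde\sigma.
\]

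Finally, by Lemma \ref{Lem: area bound scaled} the areas $\area_{\tilde g}(\Sigma_{t,\epsilon,s})$ are bounded above by the universal constant $\tilde A_0$ independent of $p$, $t$, $\epsilon$, $s$, and the factor $\area_{\tilde g}(\Sigma_{t,\epsilon,s})^{1/q}$ is therefore bounded by $\max(1,\tilde A_0)$, uniformly in $q\geq 1$. Absorbing this constant (and the factor $2$) into a new universal constant, which I still denote $c_S$, gives the claimed inequality. There is no substantive obstacle here; the only point that requires any care is that the $q$-dependence in $\area^{1/q}$ must be controlled uniformly in $q$, which works precisely because the areas are uniformly bounded on the entire family.
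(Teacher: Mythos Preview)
Your proof is correct and follows essentially the same approach as the paper: apply Lemma~\ref{Lem: Sobolev} to $|\zeta|^q$, use Cauchy--Schwarz to separate the $|\zeta|^{q-1}$ factor, interpolate the resulting $\int|\zeta|^{2(q-1)}$ via H\"older against the area, and absorb the area factor using Lemma~\ref{Lem: area bound scaled}. The only differences are cosmetic bookkeeping of constants.
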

\begin{proof}
From Lemma \ref{Lem: Sobolev} we have
\[
\begin{split}
\left(\int_{\Sigma_{t,\epsilon,s}}|\zeta|^{2q}\,\mathrm d\tilde \sigma\right)^{\frac{1}{2}}&\leq c_S \int_{\Sigma_{t,\epsilon,s}} q|\zeta|^{q-1}|\tilde \nabla \zeta|+|\tilde H||\zeta|^q\,\mathrm d\tilde \sigma\\
&\leq 2c_Sq\left(\int_{\Sigma_{t,\epsilon,s}} |\tilde \nabla \zeta|^2+\tilde H^2\zeta^2\,\mathrm d\tilde \sigma
\right)^{\frac{1}{2}}\left(\int_{\Sigma_{t,\epsilon,s}}|\zeta|^{2q-2}\mathrm d\tilde \sigma\right)^{\frac{1}{2}}.
\end{split}
\]
The desired inequality comes from the H\"older inequality
$$
\int_{\Sigma_{t,\epsilon,s}}|\zeta|^{2q-2}\mathrm d\tilde \sigma\leq \left(\int_{\Sigma_{t,\epsilon,s}}|\zeta|^{2q}\mathrm d\tilde\sigma\right)^{\frac{q-1}{q}}\area_{\tilde g}(\Sigma_{t,\epsilon,s})^{\frac{1}{q}}
$$
as well as Lemma \ref{Lem: area bound scaled}.
\end{proof}
\begin{corollary}\label{Cor: Poincare}
For any $q>1$ we have the following Poincar\'e inequality
$$
\int_{\Sigma_{t,\epsilon,s}}\zeta^{2}\,\mathrm d\tilde \sigma\leq c_P\int_{\Sigma_{t,\epsilon,s}}|\tilde \nabla \zeta|^2+\tilde H^2\zeta^2\,\mathrm d\tilde \sigma,
$$
where $c_P$ is a positive constant independent of $p$, $t$, $\epsilon$ and $s$.
\end{corollary}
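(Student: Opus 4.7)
The plan is to derive this Poincaré inequality as a direct consequence of Corollary \ref{Cor: L2 Sobolev} combined with the uniform area bound from Lemma \ref{Lem: area bound scaled}. The statement is really a Poincaré-type interpolation: squeeze the $L^2$ norm between an $L^{2q}$ norm (controlled by the Sobolev inequality) and the total area (controlled uniformly along the flow).

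Concretely, I would fix any $q > 1$ — the simplest choice being $q = 2$ — and start from Hölder's inequality
\[
\int_{\Sigma_{t,\epsilon,s}} \zeta^2 \, \mathrm d\tilde\sigma \leq \left(\int_{\Sigma_{t,\epsilon,s}} |\zeta|^{2q} \, \mathrm d\tilde\sigma\right)^{\frac{1}{q}} \area_{\tilde g}(\Sigma_{t,\epsilon,s})^{\frac{q-1}{q}}.
\]
Next I would apply Corollary \ref{Cor: L2 Sobolev} to bound the first factor on the right by $c_S^2 q^2 \int_{\Sigma_{t,\epsilon,s}} |\tilde\nabla \zeta|^2 + \tilde H^2 \zeta^2 \, \mathrm d\tilde\sigma$, and then use Lemma \ref{Lem: area bound scaled} to replace $\area_{\tilde g}(\Sigma_{t,\epsilon,s})^{(q-1)/q}$ by the universal constant $\tilde A_0^{(q-1)/q}$. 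Setting
\[
c_P = c_S^2 q^2 \tilde A_0^{\frac{q-1}{q}}
\]
then yields the desired inequality with a constant that depends only on $q$ (which we are free to fix) and on the universal quantities already established to be independent of $p$, $t$, $\epsilon$, and $s$.

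There is no real obstacle here: both ingredients — the $L^{2q}$ Sobolev inequality and the uniform area bound — are already in place under the a priori starshapedness assumption inherited from Corollary \ref{Cor: L2 Sobolev}, so the proof is essentially a one-line Hölder interpolation. The only thing to remark is that the phrasing "for any $q > 1$" is slightly misleading since the final inequality does not involve $q$; the $q$ can be any fixed value, and is used merely as an auxiliary parameter to run the Hölder step, with the resulting constant $c_P$ absorbing the dependence on $q$ and $\tilde A_0$.
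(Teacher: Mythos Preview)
Your proof is correct and follows essentially the same approach as the paper: both combine a Sobolev-type inequality with H\"older and the uniform area bound from Lemma \ref{Lem: area bound scaled}. The only minor difference is that the paper invokes the more primitive $L^1$-type inequality of Lemma \ref{Lem: Sobolev} directly and then applies Cauchy--Schwarz to its right-hand side, whereas you route through the already-derived $L^{2q}$ version in Corollary \ref{Cor: L2 Sobolev}; the two are equivalent up to bookkeeping.
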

\begin{proof}
It follows from the H\"older inequality that
$$
\int_{\Sigma_{t,\epsilon,s}} |\tilde \nabla \zeta|+|\tilde H||\zeta|\,\mathrm d\tilde \sigma\leq \sqrt 2\left(\int_{\Sigma_{t,\epsilon,s}}|\tilde \nabla \zeta|^2+\tilde H^2\zeta^2\,\mathrm d\tilde \sigma\right)^{\frac{1}{2}}\area_{\tilde g}(\Sigma_{t,\epsilon,s})^{\frac{1}{2}}.
$$
The desired Poincar\'e inequality comes from Lemma \ref{Lem: Sobolev} as well as Lemma \ref{Lem: area bound scaled}.
\end{proof}

\begin{lemma}\label{Lem: star-shape to curvature}
 Given any positive constant $\iota_0$, we can take $\rho_1$ large enough and a universal constant $\tilde c$, independent of $p$, $t$, $\epsilon$ and $s$, such that under the starshape assumption \eqref{Eq: starshape assumption} we have for all $s\in [0,\tau^*]$ that
\begin{equation}\label{Eq: mean curvature lower bound}
\tilde H_s\geq \tilde c\min\{1,s^{\frac{1}{2}}\}.
\end{equation}
\end{lemma}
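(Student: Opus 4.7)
\textbf{Proof proposal for Lemma \ref{Lem: star-shape to curvature}.} The plan is to follow the Stampacchia iteration scheme of Huisken--Ilmanen \cite{Huisken2008HigherRegularity}, adapted to the non-Euclidean ambient. The target quantity is $\varphi := 1/\tilde H$, which we wish to bound from above on each $\Sigma_{t,\epsilon,s}$. Under IMCF one computes the standard evolution
\[
\partial_s \varphi \;=\; \varphi^2\,\tilde\Delta \varphi \;+\; \bigl(\|\tilde{\mathcal B}_s\|^2 + \widetilde{\mathrm{Ric}}(\tilde\nu,\tilde\nu)\bigr)\varphi^3,\qquad \partial_s \,\mathrm d\tilde\sigma = \mathrm d\tilde\sigma.
\]
By Lemma \ref{Lem: curvature estimate scaled}, under the starshape assumption \eqref{Eq: starshape assumption} we already have $\|\tilde{\mathcal B}_s\|\leq \tilde B_0\area_{\tilde g}(\Sigma_{t,\epsilon,s})^{-1/2}$, which together with the asymptotic flatness estimate \eqref{Eq: scaled metric} makes the coefficient of $\varphi^3$ uniformly bounded once $\rho_1$ is large. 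Thus the evolution has the structure of a parabolic inequality of divergence type on a family of surfaces with uniformly controlled geometry.

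First, I would set up Stampacchia iteration for the truncated quantity $\varphi_k := (\varphi - k)_+$ with $k \geq k_0 > 0$. Multiplying the evolution by $p\varphi_k^{p-1}$, integrating over $\Sigma_{t,\epsilon,s}$, and integrating by parts (noting that on $\{\varphi > k\}$ we have $\tilde\nabla \varphi_k = \tilde\nabla \varphi$) yields
\[
\frac{d}{ds}\int \varphi_k^{p}\,\mathrm d\tilde\sigma \;+\; c_1\int \varphi_k^{p-2}|\tilde\nabla\varphi|^2 \varphi^2\,\mathrm d\tilde\sigma \;\leq\; C\int \varphi_k^{p}\,\mathrm d\tilde\sigma
\]
modulo lower-order terms that can be absorbed using the uniform bound on $\|\tilde{\mathcal B}_s\|$ and the smallness of $\widetilde{\mathrm{Ric}}$ for large $\rho_1$. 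Applying the Sobolev inequality of Corollary \ref{Cor: L2 Sobolev} and the Poincar\'e inequality of Corollary \ref{Cor: Poincare}—both of which were tailor-made to hold uniformly under the starshape assumption—closes the iteration and yields an $L^\infty$ bound of the form $\sup \varphi(s) \leq \Psi\bigl(\|\varphi_{k_0}(s)\|_{L^1}\bigr)$ for a dimensional function $\Psi$.

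The next step handles the $\min\{1,s^{1/2}\}$ scaling. Since the initial surface $\Sigma_{t,\epsilon,0}$ is only weakly mean-convex (coming from the mean curvature flow smoothing of $\Sigma_t^+$), $\tilde H$ may be arbitrarily close to $0$ at $s=0$, so a uniform bound on $\varphi$ at $s=0$ is impossible; the flow must regularize itself. Following the parabolic trick of Huisken--Ilmanen, I weight the Stampacchia functional by a power of $s$ and use the differential inequality above to propagate an upper bound $\sup_{\Sigma_{t,\epsilon,s}} \varphi \leq \tilde c^{-1}\max\{1, s^{-1/2}\}$ on $[0,\tau^*]$. Equivalently, $\tilde H_s \geq \tilde c \min\{1,s^{1/2}\}$.

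The main obstacle is bookkeeping the non-Euclidean corrections: the $\widetilde{\mathrm{Ric}}\,\varphi^3$ term and the Sobolev constant's deviation from the Michael--Simon constant both need to be shown small independently of $p,t,\epsilon,s$. This is exactly where the annulus neighborhood estimate \eqref{Eq: annulus estimate scaled} and the area bound of Lemma \ref{Lem: area bound scaled} are used: they confine the flow to a region where $\tilde g$ is arbitrarily close to the Euclidean metric (after choosing $\rho_1$ large), so that the error terms entering the Stampacchia scheme can be absorbed into the left-hand side.
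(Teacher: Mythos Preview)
There is a genuine gap in the evolution inequality you write down. Multiplying $\partial_s\varphi=\varphi^2\tilde\Delta\varphi+(\|\tilde{\mathcal B}_s\|^2+\widetilde{\Ric}(\tilde\nu,\tilde\nu))\varphi^3$ by $p\varphi_k^{p-1}$ and integrating does \emph{not} yield a right-hand side of the form $C\int\varphi_k^p$. The reaction term contributes $p\int_{\{\varphi>k\}}\|\tilde{\mathcal B}_s\|^2\varphi^3\varphi_k^{p-1}$, and since $\|\tilde{\mathcal B}_s\|^2$ is merely bounded (of order one, not $o(1)$ as $\rho_1\to\infty$), this contains an unavoidable $\int\varphi_k^{p+2}$ piece. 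That is a \emph{higher}-order term, not a lower-order one, and it has the same homogeneity as what the gradient term can control via Poincar\'e---so there is no smallness to exploit for absorption. Your iteration therefore does not close.

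The missing idea, already present in Huisken--Ilmanen and reproduced in the paper, is to pair $\tilde H^{-1}$ with the support function $v=\tilde g(x,\tilde\nu)$. Both satisfy
\[
\Bigl(\partial_s-\tilde H^{-2}\Delta\Bigr)(\cdot)=\frac{\|\tilde{\mathcal B}\|^2}{\tilde H^2}(\cdot)+\text{(small)},
\]
so that for $w=(\tilde H v)^{-1}$ the dangerous $\|\tilde{\mathcal B}\|^2/\tilde H^2$ factor cancels and one is left with
\[
\partial_s w=\Div(\tilde H^{-2}\nabla w)-2\tilde H^{-2}w^{-1}|\nabla w|^2+\tilde H^{-2}w\cdot o(1).
\]
Now the reaction term genuinely has an $o(1)$ coefficient and can be absorbed. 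The paper then proceeds in two steps: first an $L^q$ estimate $\|w\|_{L^q}\lesssim s^{-1/2}$ for small $s$ (which your outline omits but which is the indispensable input), and only then a Stampacchia iteration on the time-weighted quantity $(s-s_1)^{1/4}w$ to upgrade to $L^\infty$. Under the starshape assumption $v$ is comparable to $|x|$, hence bounded above and below, so a bound on $w$ translates directly to the claimed lower bound on $\tilde H$.
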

\begin{proof}
We follow the argument from \cite{Huisken2008HigherRegularity} with slight modifications due to the non-Euclidean ambient manifold. Take $
v=\tilde g(x,\tilde\nu(x))
$ to be the support function and we compute the evolution equation of $v$. Clearly we have
$$\frac{\partial}{\partial s}v=\langle\tilde\nabla_{\partial_s}x,\tilde\nu\rangle+\langle x,\tilde\nabla_{\partial_s}\tilde\nu\rangle=\tilde H^{-1}(1+o(|x|))-\langle x,\nabla \tilde H^{-1}\rangle.$$
Take $\{e_i\}$ to be an orthonormal basis on $\Sigma_{t,\epsilon,s}$. Then we can compute
$$\Delta v=\langle \tilde \nabla_{e_i}\tilde \nabla_{e_i}x,\tilde \nu\rangle+2\langle\tilde\nabla_{e_i}x,\tilde \nabla_{e_i}\tilde\nu\rangle+\langle x,\tilde \nabla_{e_i}\tilde \nabla_{e_i}\tilde\nu\rangle.$$
From \eqref{Eq: scaled metric} we have
$$\langle \tilde \nabla_{e_i}\tilde \nabla_{e_i}x,\tilde\nu\rangle=-\tilde H+o(|x|)+o(|x|\|\tilde {\mathcal B}\|)+o(1),$$
$$\langle\tilde\nabla_{e_i}x,\tilde \nabla_{e_i}\tilde\nu\rangle=\tilde H+o(|x|\|\tilde{\mathcal B}\|),$$
and
$$\langle x,\tilde \nabla_{e_i}\tilde \nabla_{e_i}\tilde\nu\rangle=\widetilde{\Ric}(x^t,\tilde \nu)+\langle x,\nabla\tilde H\rangle-\|\tilde{\mathcal B}\|^2v.$$
Therefore, we have
$$
\left(\frac{\partial}{\partial s}-\tilde H^{-2}\Delta\right) v=\frac{\|\tilde {\mathcal B}\|^2}{\tilde H^2} v+\tilde H^{-2}\left(o(1)+o(|x|)+o(|x|\|\tilde{\mathcal B}\|)\right).
$$
From the assumption \eqref{Eq: starshape assumption} we know that $v$ is equivalent to $|x|$. Moreover, we have $|x|\geq c^{-1}$ from the annulus neighborhood estimate \eqref{Eq: eccentricity}, and we have $\widetilde{\Ric}(x^t,\nu)=o(1)+o(|x|)$. Therefore, we can write
\begin{equation}\label{Eq: evolution support function}
\left(\frac{\partial}{\partial s}-\tilde H^{-2}\Delta\right) v=\frac{\|\tilde {\mathcal B}\|^2}{\tilde H^2} v+\frac{1+\|\tilde{\mathcal B}\|}{\tilde H^2}v\cdot o(1),
\end{equation}
On the other hand, according to the evolution equation (iv) in Lemma \ref{Lem: evolution equation IMCF} we have
$$
\left(\frac{\partial}{\partial s}-\tilde H^{-2}\Delta\right)\tilde H^{-1}=\frac{\|\tilde{\mathcal B}\|^2}{\tilde H^2}\tilde H^{-1}+\frac{\widetilde\Ric(\tilde\nu)}{\tilde H^2}\tilde H^{-1}.
$$
Define $$w=(\tilde Hv)^{-1}.$$ Since $\tilde{\mathcal B}$ is uniformly bounded and $\widetilde\Ric(\tilde\nu)=o(1)$ as $\rho_1\to+\infty$, we obtain through a direct computation that
$$
\frac{\partial}{\partial s}w=\Div\left(\tilde H^{-2}\nabla w\right)-2\tilde H^{-2}w^{-1}|\nabla w|^2+\tilde H^{-2}w\cdot o(1).
$$

The rest arguments will be divided into two steps.

\vspace{2mm}{\it Step 1. The function $w$ satisfies the following $\mathcal L^q$-estimate for any $q>2$
$$
\|w\|_{\mathcal L^q(\Sigma_{t,\epsilon,s})}\leq \sqrt{\frac{5}{2\tilde c}}e^{\frac{5s}{2q}}\left(e^{\frac{5s}{q}}-1\right)^{-\frac{1}{2}},
$$
where $\tilde c$ is a universal constant independent of $p$, $t$, $\epsilon$ and $s$.
}

\vspace{2mm}For any $q>2$ we have
\begin{equation}\label{Eq: integral evolution}
\begin{split}
\frac{\partial}{\partial s}\int_{\Sigma_{t,\epsilon,s}}w^q\,\mathrm d\tilde \sigma\leq&-q(q+1)\int_{\Sigma_{t,\epsilon,s}}\tilde H^{-2}w^{q-2}|\tilde \nabla w|^2\,\mathrm d\tilde \sigma\\
&\qquad+\int_{\Sigma_{t,\epsilon,s}}w^q\,\mathrm d\tilde \sigma+o(1)\int_{\Sigma_{t,\epsilon,s}}v^2 w^{q+2}\,\mathrm d\tilde \sigma.
\end{split}
\end{equation}
From the assumption \eqref{Eq: starshape assumption} we conclude $ \iota_0 |x|_{\tilde g}\leq v\leq |x|_{\tilde g}$. Using the fact $\tilde g=(1+o(1))g_{euc}$ as well as \eqref{Eq: annulus estimate scaled}, we can take $\rho_1$ large enough such that
$$
\frac{\iota_0}{2c}\leq v\leq 2ce^{s_0},
$$
where $c$ is the constant from the annulus neighborhood estimate \eqref{Eq: eccentricity}. As a result, we have
$$
\int_{\Sigma_{t,\epsilon,s}}v^2 w^{q+2}\,\mathrm d\tilde \sigma\leq 4c^2e^{2s_0}\int_{\Sigma_{t,\epsilon,s}} w^{q+2}\,\mathrm d\tilde \sigma
$$
and
\begin{equation*}
\begin{split}
\int_{\Sigma_{t,\epsilon,s}}\tilde H^{-2}w^{q-2}|\tilde \nabla w|^2\,\mathrm d\tilde \sigma&=\int_{\Sigma_{t,\epsilon,s}}v^2w^{q}|\tilde \nabla w|^2\,\mathrm d\tilde \sigma\\
&\geq \frac{\iota_0^2}{c^2(q+2)^2}\int_{\Sigma_{t,\epsilon,s}}|\tilde \nabla w^{\frac{q+2}{2}}|^2\,\mathrm d\tilde \sigma.
\end{split}
\end{equation*}
The Poincar\'e inequality from Corollary \ref{Cor: Poincare} yields 
\begin{equation*}
\begin{split}
\int_{\Sigma_{t,\epsilon,s}}|\tilde \nabla w^{\frac{q+2}{2}}|^2\,\mathrm d\tilde \sigma &\geq c_P^{-1} \int_{\Sigma_{t,\epsilon,s}}w^{q+2}\,\mathrm d\tilde \sigma- \int_{\Sigma_{t,\epsilon,s}}\tilde H^2 w^{q+2}\,\mathrm d\tilde \sigma\\
&=c_P^{-1} \int_{\Sigma_{t,\epsilon,s}}w^{q+2}\,\mathrm d\tilde \sigma- \int_{\Sigma_{t,\epsilon,s}}v^{-2} w^{q}\,\mathrm d\tilde \sigma\\
&\geq c_P^{-1} \int_{\Sigma_{t,\epsilon,s}}w^{q+2}\,\mathrm d\tilde \sigma-\frac{4c^2}{\iota_0^2}\int_{\Sigma_{t,\epsilon,s}} w^q\,\mathrm d\tilde \sigma.
\end{split}
\end{equation*}
Using the H\"older inequality we also have
$$
\int_{\Sigma_{t,\epsilon,s}}|\tilde \nabla w^{\frac{q+2}{2}}|^2\,\mathrm d\tilde \sigma\geq c_P^{-1}\left(\int_{\Sigma_{t,\epsilon,s}}w^{q}\,\mathrm d\tilde \sigma\right)^{\frac{q+2}{q}}\area_{\tilde g}(\Sigma_{t,\epsilon,s})^{-\frac{2}{q}}-\frac{4c^2}{\iota_0^2}\int_{\Sigma_{t,\epsilon,s}} w^q\,\mathrm d\tilde \sigma.
$$
Notice that we have
$$
\frac{q(q+1)}{(q+2)^2}>\frac{3}{8}\mbox{ when }q>2.
$$
From these calculations, Lemma \ref{Lem: area bound scaled}, and  \eqref{Eq: integral evolution}, we conclude that when $\rho_1$ is taken large enough (depending on $c_P$ but not on $q$), there holds
$$
\frac{\partial}{\partial s}\int_{\Sigma_{t,\epsilon,s}}w^q\,\mathrm d\tilde \sigma\leq -\tilde c\left(\int_{\Sigma_{t,\epsilon,s}}w^{q}\,\mathrm d\tilde \sigma\right)^{\frac{q+2}{q}}+\frac{5}{2}\int_{\Sigma_{t,\epsilon,s}} w^q\,\mathrm d\tilde \sigma,
$$
 where $\tilde c$ is a universal constant independent of $p$, $t$, $\epsilon$ and $s$. From this we can derive
$$
\|w\|_{\mathcal L^q(\Sigma_{t,\epsilon,s})}\leq \sqrt{\frac{5}{2\tilde c}}e^{\frac{5s}{2q}}\left(e^{\frac{5s}{q}}-1\right)^{-\frac{1}{2}}.
$$

\vspace{2mm}{\it Step 2. We have the desired estimate \eqref{Eq: mean curvature lower bound}.}

\vspace{2mm}Fix any two constants $s_1, s_2$ such that $0<s_1<s_2\leq \tau^*$ and $s_2-s_1\leq 1$. We let $s$ vary in $[s_1,s_2]$. Define
$$
\hat w=(s-s_1)^{1/4}w.
$$
Then we can compute
\[
\begin{split}
\frac{\partial}{\partial s}\hat w&=\Div\left(\tilde H^{-2}\nabla \hat w\right)-2\tilde H^{-2}\hat w^{-1}|\nabla \hat w|^2\\
&\qquad\qquad\qquad+\tilde H^{-2}\hat w\cdot o(1)+\frac{\hat w}{4(s-s_1)}.
\end{split}
\]
Denote $$\hat w_k=\max\{\hat w-k,0\}$$ and $$\Omega_k=\Sigma_{t,\epsilon,s}\cap \{\hat w\geq k\}\mbox{ for }k>0.$$ 
Multiplying both sides by $\hat w_k$ and using integration by parts, we have
\begin{equation}\label{Eq: integral evolution parabolic}
\begin{split}
\frac{\partial}{\partial s}\int_{\Sigma_{t,\epsilon,s}}\hat w_k^2\,\mathrm d\tilde\sigma \leq& -2\int_{\Omega_k}\tilde H^{-2}|\tilde\nabla \hat w|^2\,\mathrm d\tilde \sigma\\
&+\int_{\Sigma_{t,\epsilon,s}}\tilde H^{-2}\hat w_k\hat w\cdot o(1)\,\mathrm d\tilde \sigma\\
&\quad+\frac{1}{2(s-s_1)}\int_{\Omega_k}\hat w^2\,\mathrm d\tilde \sigma+\int_{\Sigma_{t,\epsilon,s}}\hat w_k^2\,\mathrm d\tilde \sigma.
\end{split}
\end{equation}
Using the Poincar\'e inequality from Corollary \ref{Cor: Poincare}, we have
\[
\begin{split}
\int_{\Omega_k}\tilde H^{-2}|\tilde\nabla \hat w|^2\,\mathrm d\tilde \sigma&\geq  \frac{\iota_0^2}{16c^2}(s-s_1)^{-\frac{1}{2}}\int_{\Sigma_{t,\epsilon,s}}|\tilde \nabla(\hat w_k+k)^2|\,\mathrm d\tilde\sigma\\
&\geq \frac{\iota_0^2}{16c^2c_P}(s-s_1)^{-\frac{1}{2}}\int_{\Omega_k}\hat w^4\,\mathrm d\tilde \sigma-\frac{1}{4}\int_{\Omega_k}\hat w^2\,\mathrm d\tilde \sigma.
\end{split}
\]
On the other hand, we have
$$
\left|\int_{\Sigma_{t,\epsilon,s}}\tilde H^{-2}\hat w_k\hat w\cdot o(1)\,\mathrm d\tilde \sigma\right|\leq o(1)(s-s_1)^{-\frac{1}{2}}\int_{\Omega_k}\hat w^4\,\mathrm d\tilde \sigma.
$$
So we can handle the second term on RHS of \eqref{Eq: integral evolution parabolic} by the first term therein. Furthermore, we can apply the Sobolev inequality from Corollary \ref{Cor: L2 Sobolev} for some fixed $q$ to the first term on RHS of \eqref{Eq: integral evolution parabolic} as following
\[
\begin{split}
\int_{\Omega_k}\tilde H^{-2}|\tilde\nabla \hat w|^2\,\mathrm d\tilde \sigma
\geq& \frac{\iota_0^2 k^2}{4c^2} (s-s_1)^{-\frac{1}{2}}\int_{\Sigma_{t,\epsilon,s}}|\tilde \nabla\hat w_k^2|\,\mathrm d\tilde\sigma\\
\geq& \frac{\iota_0^2k^2}{4c^2c_S^2q^2}(s-s_1)^{-\frac{1}{2}}\left(\int_{\Sigma_{t,\epsilon,s}}\hat w_k^{2q}\,\mathrm d\tilde \sigma\right)^{\frac{1}{q}}-\int_{\Sigma_{t,\epsilon,s}}\hat w_k^2\,\mathrm d\tilde \sigma.
\end{split}
\]
Then we obtain
\[
\begin{split}
\frac{\partial}{\partial s}\int_{\Sigma_{t,\epsilon,s}}\hat w_k^2\,\mathrm d\tilde\sigma \leq&-\tilde c^{-1} k^2q^{-2}(s-s_1)^{-\frac{1}{2}}\left(\int_{\Sigma_{t,\epsilon,s}}\hat w_k^{2q}\,\mathrm d\tilde \sigma\right)^{\frac{1}{q}}\\
&\qquad+ \frac{1}{s-s_1}\int_{\Omega_k}\hat w^2\,\mathrm d\tilde \sigma+2\int_{\Sigma_{t,\epsilon,s}}\hat w_k^2\,\mathrm d\tilde \sigma,
\end{split}
\]
where  $\tilde c$ is denoted to be a universal constant independent of $p$, $t$, $\epsilon$ and $s$, and we use the simple fact
$$
\int_{\Omega_k}\hat w^2\,\mathrm d\tilde \sigma\leq \frac{1}{s-s_1}\int_{\Omega_k}\hat w^2\,\mathrm d\tilde \sigma\mbox{ when }s_1<s\leq s_1+1.
$$

Now we arrive at exactly the same situation as in \cite[(1.7)]{Huisken2008HigherRegularity}. Through a Stampacchia iteration argument based on the previous $\mathcal L^q$-estimate, we can obtain 
$$
\|\hat w\|_{\mathcal L^\infty(\Sigma_{t,\epsilon,s_2})}\leq\tilde c\left( \gamma+\max\{1,s_1^{-1}\}^{\frac{1}{2}}(s_2-s_1)^{\frac{1-q}{4-8q}} \gamma^\frac{q}{1-2q}\right)
$$
for any fixed $s_2$ satisfying $s_1<s_2\leq s_1+1$, where $\gamma$ can be chosen to be any constant no less than $(s_2-s_1)^{1/4}$,
and $\tilde c$ to be a universal constant independent of $p$, $t$, $\epsilon$, $s_1$ and $s_2$. 

We make the following choices for $s_1$ and $s_2$ according to the value of $s_2$. 
\begin{itemize}
\item When $s_2<2$, we choose $s_1=s_2/2$ and $\gamma=(s_2-s_1)^{-1/4}$, then it follows
$$
\| w\|_{\mathcal L^\infty(\Sigma_{t,\epsilon,s_2})}\leq\tilde c s_2^{-1/2}.
$$
\item 
When $s_2\geq 2$, we take $s_1=s_2-1$ and $\gamma=1$, and so we obtain
$$
\| w\|_{\mathcal L^\infty(\Sigma_{t,\epsilon,s_2})}\leq\tilde c.
$$
\end{itemize}
To sum up, we have $$\|w\|_{\mathcal L^\infty(\Sigma_{t,\epsilon,s})} \leq \tilde c\min\{1,s\}^{-1/2}$$ for some universal constant $\tilde c$ independent of $p$, $t$, $\epsilon$ and $s$. In particular, we have 
$$\tilde H\geq \tilde c^{-1} v^{-1}\min\{1,s\}^{\frac{1}{2}}\geq \frac{1}{2c\tilde ce^{s_0}}\min\{1,s\}^{\frac{1}{2}}.$$
This completes the proof.
 \end{proof}
 
 Now we are ready to prove the following lemma with the box argument.

 \begin{lemma}\label{Lem: star-shape all time}
    There are two positive constants $\iota$ and $B$, independent of $p$, $t$, $\epsilon$ and $s$, such that the constant $\rho_1$ can be taken large enough to ensure that $\Sigma_{t,\epsilon,s}$ satisfies $$\tilde g(\tilde X,\tilde \nu(x))\geq \iota\mbox{ and }\|\tilde {\mathcal B}\|< B\area_{\tilde g}(\Sigma_{t,\epsilon,s})^{-\frac{1}{2}}$$
    for all $s\in [0,s_0]$.
 \end{lemma}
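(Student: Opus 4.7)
The plan is to execute the box argument sketched in the introduction, combining Lemma~\ref{Lem: curvature estimate scaled} (starshape yields curvature bound) and Lemma~\ref{Lem: star-shape} (curvature together with the other controls yields starshape) in complementary roles. Concretely, let $\tilde\theta_0$ be the eccentricity constant from Lemma~\ref{Lem: eccentricity scaled} and set
\[
\iota := \bigl[1 + (\tilde\theta_0 - 1)^2\bigr]^{-1/2},
\]
which is the universal starshape output of Lemma~\ref{Lem: star-shape} for any surface of eccentricity $\leq \tilde\theta_0$ (with admissible values of the other inputs). Then invoke Lemma~\ref{Lem: curvature estimate scaled} with $\iota_0 = \iota/4$ to produce a universal curvature constant $\tilde B_0$, and set $B := 2\max\{\tilde B_0,\Lambda\}$, where $\Lambda$ is the initial curvature constant from \eqref{Eq: curvature bound}. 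Define the box
\[
\mathcal R := \bigl\{\Sigma : \tilde g(\tilde X, \tilde\nu) \geq \iota/4 \text{ and } \|\tilde{\mathcal B}\| \leq B \cdot \area_{\tilde g}(\Sigma)^{-1/2}\bigr\},
\]
and take $\rho_1$ to be the maximum of the thresholds required by Lemmas~\ref{Lem: star-shape}, \ref{Lem: curvature estimate scaled}, \ref{Lem: Hawking mass scaled}, \ref{Lem: eccentricity scaled}, and \ref{Lem: area bound scaled} with the above choices of input constants.

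Next I verify that $\Sigma_{t,\epsilon,0}$ lies strictly in the interior of $\mathcal R$. Lemma~\ref{Lem: initial estimate} together with the discussion preceding it supplies the initial area bound, the eccentricity bound, the Hawking mass lower bound $\geq o(1)\,\area^{1/2}$ as $\rho_1\to\infty$, and the curvature bound $\|\tilde{\mathcal B}\| \leq \Lambda\,\area^{-1/2}$. Applying Lemma~\ref{Lem: star-shape} to $\Sigma_{t,\epsilon,0}$ in the unscaled picture (with $A_0 := Ce^{T}$, which controls $\area_{\hat g}(\Sigma_{t,\epsilon,0})$ uniformly in $t\leq T$, and with $B_0 := B$) yields $\tilde g(\tilde X,\tilde\nu) \geq \iota > \iota/4$, while the initial curvature is $\leq \Lambda\,\area^{-1/2} < B\,\area^{-1/2}$ by our choice of $B$.

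Now suppose for contradiction that the flow $\Sigma_{t,\epsilon,s}$ exits $\mathcal R$ for the first time at some $s^\ast \in (0,s_0]$. If the curvature reaches the $B$-edge at $s^\ast$, then the starshape bound $\tilde g(\tilde X,\tilde\nu) \geq \iota/4$ is in force throughout $[0,s^\ast]$, so Lemma~\ref{Lem: curvature estimate scaled} gives $\|\tilde{\mathcal B}\| \leq \tilde B_0\,\area^{-1/2} \leq (B/2)\,\area^{-1/2}$ at $s^\ast$, contradicting the edge hypothesis. If instead the starshape reaches $\iota/4$ at $s^\ast$, then the curvature bound $\|\tilde{\mathcal B}\| \leq B\,\area^{-1/2}$ holds throughout $[0,s^\ast]$, and combined with Lemmas~\ref{Lem: area bound scaled}, \ref{Lem: eccentricity scaled}, \ref{Lem: Hawking mass scaled} the hypotheses of Lemma~\ref{Lem: star-shape} are satisfied at $s^\ast$; this produces $\tilde g(\tilde X,\tilde\nu) \geq \iota > \iota/4$, again a contradiction. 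Therefore $\Sigma_{t,\epsilon,s} \in \mathcal R$ for all $s\in[0,s_0]$, and a final application of Lemma~\ref{Lem: star-shape} (now with starshape $\geq \iota/4$ supplying the curvature hypothesis via Lemma~\ref{Lem: curvature estimate scaled}) upgrades the starshape bound to $\geq \iota$ throughout, as claimed.

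The principal difficulty is the coupled nature of the two estimates: one needs starshape to obtain the curvature bound, and curvature plus the other geometric controls to obtain starshape. The box argument resolves this circularity by propagating a weaker margin ($\iota/4$) and recovering the stronger output ($\iota$) from whichever edge first threatens to fail; the argument is non-circular precisely because $\iota$ is fixed in advance by $\tilde\theta_0$ alone, via the explicit formula in Lemma~\ref{Lem: star-shape}.
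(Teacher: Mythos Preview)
Your argument is the same box argument as the paper's, with one implementation difference: you place the starshape edge of $\mathcal R$ at $\iota/4$ rather than at $\iota$. This lets you derive the starshape-edge contradiction directly at $s^\ast$ via Lemma~\ref{Lem: star-shape}, whereas the paper (with edge at $\iota$) must first invoke Lemma~\ref{Lem: star-shape to curvature} and Krylov regularity to guarantee $s^\ast<\tau$, then pass to a nearby $s^{\ast\ast}$ where the starshape has dropped below $\iota$ but remains above $\iota/4$. Your route is marginally cleaner for that edge.

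There is one point you should not omit. Your contradiction argument only rules out a first exit from $\mathcal R$; it does not exclude the possibility that the smooth flow terminates at some $\tau\le s_0$ while remaining inside $\mathcal R$ throughout $[0,\tau)$, in which case the conclusion ``for all $s\in[0,s_0]$'' is not yet established. The paper's explicit appeal to Lemma~\ref{Lem: star-shape to curvature} plus Krylov's theory is exactly what closes this: the box bounds feed into a positive lower bound for $H$ and uniform higher-order curvature estimates, which force the flow to extend past any such $\tau$. You should insert this observation (it costs one sentence) to make the argument complete.
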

\begin{proof}
As preparation, we can take $\rho_1$ large enough to guarantee the validity of Lemma \ref{Lem: area bound scaled}, Lemma \ref{Lem: eccentricity scaled} and Lemma \ref{Lem: Hawking mass scaled}.
Then it follows from \eqref{Eq: eccentricity}, Lemma \ref{Lem: initial estimate} and Lemma \ref{Lem: eccentricity scaled} that we can pick a constant $\theta$, independent of $p$, $t$, $\epsilon$ and $s$, such that we have
$$\theta(\Sigma,p)\leq \theta\mbox{ for }\Sigma=\Sigma_t^+,\,\Sigma_{t,\epsilon}\mbox{ and }\Sigma_{t,\epsilon,s}.$$ 
Moreover, it follows from \eqref{Eq: area upper bound}, Lemma \ref{Lem: initial estimate} and Lemma \ref{Lem: area bound scaled} that we can pick a constant $A$, independent of $p$, $t$, $\epsilon$ and $s$, such that
$$\area(\Sigma)\leq A\mbox{ for }\Sigma=\Sigma_t^+,\,\Sigma_{t,\epsilon}\mbox{ and }\Sigma_{t,\epsilon,s}.$$
Let us fix
$$
\iota=\left[1+\left(\theta-1\right)^2\right]^{-\frac{1}{2}},$$
which is the starshape constant from Lemma \ref{Lem: star-shape} with the choice $\theta_0=\theta$.
By taking $\rho_1$ large enough we can further guarantee that Lemma \ref{Lem: curvature estimate scaled} holds under the star-shaped assumption \eqref{Eq: starshape assumption} with $$\iota_0=\iota/4.$$  
Then it follows from \eqref{Eq: curvature bound}, Lemma \ref{Lem: initial estimate} and Lemma \ref{Lem: curvature estimate scaled} that we can pick a positive constant $B$ such that, under the starshape assumption \eqref{Eq: starshape assumption} with $\iota_0=\iota/4$, we have 
\begin{equation}\label{Eq: curvature bound for all}
    \|{\mathcal B}_{\Sigma}\|<B\area(\Sigma)^{-\frac{1}{2}}\mbox{ or equivalently }\|\tilde{\mathcal B}_{\Sigma}\|<B\area_{\tilde g}(\Sigma)^{-\frac{1}{2}}
\end{equation}
for $\Sigma=\Sigma_t^+$, $\Sigma_{t,\epsilon}$ and $\Sigma_{t,\epsilon,s}$ with $s\leq \tau^*$.

Set
$$\theta_0=\theta,\, A_0=A\mbox{ and }B_0=B.$$
Then we can take $\rho_1$ large enough and a constant $m_0<0$ such that Lemma \ref{Lem: star-shape} holds with the above choice for $\theta_0$, $A_0$ and $B_0$. 

Define the box region
$$\mathcal R=\{\Sigma:\tilde g(\tilde X,\tilde \nu(x))\geq \iota\mbox{ and }\|\tilde {\mathcal B}_\Sigma\|\leq B \area_{\tilde g}(\Sigma)^{-\frac{1}{2}}\}.$$
We are going to verify that all the surfaces $\Sigma_{t,\epsilon,s}$ with $s\leq s_0$ stay in the box region $\mathcal R$. For the initial surface $\Sigma_{t,\epsilon}$, it follows from \eqref{Eq: area upper bound}, \eqref{Eq: Hawking mass lower bound} and Lemma \ref{Lem: initial estimate} that we can guarantee $$ m_h(\Sigma_{t,\epsilon})\geq m_0\area(\Sigma_{t,\epsilon})^{\frac{1}{2}}$$
after taking $\rho_1$ large enough. Using Lemma \ref{Lem: star-shape} we have
$$\tilde g(\tilde X,\tilde \nu(x))\geq \iota\mbox{ for }\Sigma_{t,\epsilon},$$
and so the initial surface $\Sigma_{t,\epsilon}$ lies in the box region $\mathcal R$. Similarly, it follows from  Lemma \ref{Lem: Hawking mass scaled} and Lemma \ref{Lem: star-shape} that we can take $\rho_1$ large enough such that we have
$$m_h(\Sigma_{t,\epsilon,s})\geq  m_0\area(\Sigma_{t,\epsilon,s})^{\frac{1}{2}},$$ 
and so under the starshape assumption \eqref{Eq: starshape assumption} with $\iota_0=\iota/4$ we have
\begin{equation}\label{Eq: improved starshape for all}
    \tilde g(\tilde X,\tilde \nu(x))\geq \iota\mbox{ for all }\Sigma_{t,\epsilon,s}\mbox{ with }s\leq \tau^*.
\end{equation}
In the following, we consider the first moment $s^*$ when the flow $\Sigma_{t,\epsilon,s}$ passes through the boundary of the box region as shown in Figure \ref{Fig: 2}. 
It suffices to show $s^*\geq s_0$ and let us derive contradictions when $s^*<s_0$: 
\begin{itemize}
\item If $\Sigma_{t,\epsilon,s^*}$ is on the $B$-edge, then we have the starshape assumption \eqref{Eq: starshape assumption} with $\iota_0=\iota/4$ and $\tau^*=s^*$. Recall from \eqref{Eq: curvature bound for all} we have 
$$\|\tilde {\mathcal B}_{\Sigma_{t,\epsilon,s^*}}\|<B\area_{\tilde g}(\Sigma_{t,\epsilon,s^*})^{-\frac{1}{2}},$$
which leads to a contradiction.
\item If $\Sigma_{t,\epsilon,s^*}$ is on the $\iota$-edge, according to Lemma \ref{Lem: star-shape to curvature} and Krylov's regularity theory, we know that $s^*$ is strictly less than the maximal existence time $\tau$. Since the flow $\Sigma_{t,\epsilon,s}$ passes through the boundary of the box region at the moment $s^*$, we can find a nearby moment $s^{**}$ such that $\inf \tilde g(\tilde X,\tilde \nu)<\iota$ for $\Sigma_{t,\epsilon,s^{**}}$. From the continuity we can also guarantee the starshape assumption \eqref{Eq: starshape assumption} with $\iota_0=\iota/4$ and $\tau^*=s^{**}$. Recall from \eqref{Eq: improved starshape for all} we have $\inf \tilde g(\tilde X,\tilde \nu)\geq \iota$ for $\Sigma_{t,\epsilon,s^{**}}$, which leads to a contradiction.
\end{itemize}
We complete the proof.
\end{proof}
\begin{figure}[htbp]
\centering
\includegraphics[width=10cm]{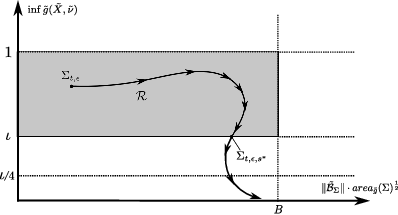}
\caption{The first pass-through moment $s^*$}
\label{Fig: 2}
\end{figure}

\begin{proof}[Proof of Lemma \ref{Lem: smooth IMCF}]
Notice that all the surfaces $\Sigma_{t,\epsilon,s}$ enclose a ball with fixed size, so we have $\area_{\tilde g}(\Sigma_{t,\epsilon,s})\geq c$ for some universal positive constant $c$ independent of $p$, $t$, $\epsilon$ and $s$. Combined with Lemma \ref{Lem: star-shape all time} we see
$$\tilde g(\tilde X,\tilde \nu)\geq \iota\mbox{ and }\|\tilde {\mathcal B}\|\leq Bc^{-\frac{1}{2}}$$
for all $\Sigma_{t,\epsilon,s}$ with $s\in [0,s_0]$.
    Using Lemma \ref{Lem: star-shape to curvature} and also Krylov's regularity theory \cite{Krylov1987NonlineaPDE}, 
    we can derive uniform interior estimates for all derivatives of $\tilde {\mathcal B}$, independent of $p$, $t$, $\epsilon$ but depending on $s$. In particular, all the smooth inverse mean curvature flows $\Phi_\epsilon$ exist on the uniform time interval $[0,s_0]$, and they converge to a limit flow $\Phi^+:N\times (0,s_0]\to \mathbb R^3$. From the uniformly bounded curvature we conclude
    $$d_{\mathcal H}(\Phi^+(N,s),\Sigma_t^+)\to 0 \mbox{ as }s\to 0.$$
    From \eqref{Eq: annulus estimate scaled} we can derive that $\Phi^+(N,s)$ encloses $D_{c^{-1}e^{\frac{t}{2}+\frac{s}{3}}}(p)$ when $\rho_1$ is taken to be large enough.
\end{proof}

\subsubsection{Regularity of weak solution for inverse mean curvature flow}

\begin{lemma}\label{Lem: smooth solution}
The constant $\rho_1$ can be taken large enough such that for any point $p\in \mathbb R^3\setminus \bar D_{\rho_1}$ there is a smooth inverse mean curvature flow $\Phi: N\times (-\infty,T]\to \mathbb R^3$ such that $\Phi(N,t)=\Sigma_t=\Sigma_t^+$.
\end{lemma}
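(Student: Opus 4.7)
The plan is to combine Lemma \ref{Lem: smooth IMCF} with the uniqueness of weak inverse mean curvature flow from \cite[Theorem 2.2]{huisken2001inverse}. Fix $s_0=1$ (any positive value works) and choose $\rho_1$ large enough that Lemma \ref{Lem: smooth IMCF} applies with this $s_0$ for every level $t\in(-\infty,T]$. This yields, for each such $t$, a smooth inverse mean curvature flow $\Phi^+_t:N\times(0,s_0]\to\mathbb{R}^3$ emerging from $\Sigma_t^+$ in Hausdorff distance and enclosing the prescribed Euclidean balls around $p$.

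Next I would invoke uniqueness to identify each local smooth piece with the weak flow. Since $\Sigma_t^+=\partial\{u\le t\}$ is the boundary of the strictly minimizing hull of $\{u\le t\}$ in the weak flow, it is strictly outer-minimizing, and any smooth classical IMCF is also a weak IMCF. The Huisken--Ilmanen uniqueness theorem therefore gives
\[
\Phi^+_t(N,s)=\partial\{u<t+s\}=\partial\{u\le t+s\}\quad\text{for all }s\in(0,s_0].
\]
In particular, $\Sigma_\tau=\Sigma_\tau^+$ is a smooth embedded surface whenever $\tau\in(t,t+s_0]$. Given an arbitrary $\tau\in(-\infty,T]$, applying this with $t=\tau-s_0/2$ and $s=s_0/2$ shows $\Sigma_\tau=\Sigma_\tau^+$ and that $\tau$ is not a jump time. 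Running $\tau$ over $(-\infty,T]$ proves that every level set is smooth and that the weak flow has no jumps on this interval.

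Finally, to assemble the global smooth map $\Phi$, I fix a smooth parameterization $\Phi(\cdot,T):N\to\Sigma_T$ and extend it backward in time using the local smooth flows from the previous step on the overlapping intervals $[t,t+s_0]$. On each overlap, two such flows parameterize the very same one-parameter family of level sets, so by standard ODE theory (or by uniqueness of the tangential reparameterization that converts one to the other) they differ by a smooth diffeomorphism of $N$; fixing these gauge choices consistently produces a single smooth map $\Phi:N\times(-\infty,T]\to\mathbb{R}^3$ satisfying $\Phi(N,t)=\Sigma_t=\Sigma_t^+$ and $\partial_t\Phi=H^{-1}\nu$, since it coincides with a smooth IMCF on every short time window. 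The only real obstacle is verifying the hypotheses of the uniqueness theorem, which reduces to the strict outer-minimizing property of $\Sigma_t^+$ furnished by \cite{huisken2001inverse}; once this is in hand the argument is essentially bookkeeping.
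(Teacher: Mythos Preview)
Your overall strategy matches the paper's, but there is a genuine gap in the step where you invoke ``uniqueness'' with $s_0=1$.

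The Huisken--Ilmanen comparison/uniqueness statement (\cite[Theorem 2.2]{huisken2001inverse}) requires the competing weak solutions to be defined on the \emph{same} open set, with the symmetric-difference set precompact there. Your smooth flow $\Phi^+_t$ only furnishes a weak solution $w$ on the bounded annular region swept out between $\Sigma_t^+$ and $\Phi^+_t(N,s_0)$, while $u$ is a weak solution on all of $\mathbb R^3\setminus\{p\}$. Near the outer boundary $\Phi^+_t(N,s_0)$ one has $w\to t+s_0$, but there is no a priori reason that $u\ge t+s_0$ there; hence $\{w>u\}$ need not be precompact in the common domain, and Theorem~2.2 does not apply. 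In particular, the assertion $\Phi^+_t(N,s)=\partial\{u<t+s\}=\partial\{u\le t+s\}$ presupposes the absence of jumps, which is exactly what must be proved. Only the ``easy'' inclusion (smooth flow enclosed by the weak flow) comes for free.

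The paper closes this gap by exploiting the quantitative enclosure in Lemma~\ref{Lem: smooth IMCF}: since $\Sigma_{t+1}^+\subset D_{ce^{(t+1)/2}}(p)$ while $\Phi^+(N,s)$ encloses $D_{c^{-1}e^{t/2+s/3}}(p)$, one chooses $s_0$ large enough (depending on the annulus constant $c>1$) so that $\Phi^+(N,s_0/2)$ already encloses $\Sigma_{t+1}^+$. This forces the smooth flow to overrun the weak level set $\{u\le t+1\}$, and now the comparison principle can be applied in both directions on $(0,1]$, yielding $\Sigma_{t+s}=\Sigma_{t+s}^+=\Phi^+(N,s)$ there. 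With $s_0=1$ the required enclosure fails (one would need $c\le e^{-1/6}<1$), so ``any positive value works'' is false. The remainder of your argument (gluing the local smooth pieces) is fine once this comparison is done correctly.
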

\begin{proof}
Since we have $\Sigma_t^+\subset D_{ce^{t/2}}(p)$ and $\Phi^+(N,s)$ encloses $D_{c^{-1}e^{\frac{t}{2}+\frac{s}{3}}}(p)$, we can fix $s_0$ large enough such that $\Phi^+(N,s_0/2)$ encloses $\Sigma_{t+1}^+$. Then it follows from the comparison principle \cite[Theorem 2.2 (ii)]{huisken2001inverse} that $\Sigma_{t+s}$ is enclosed by $\Phi^+(N,s)$ for each $s\in (0,1]$. Using the comparison principle in the opposite direction, we know that $\Phi^+(N,s)$ is enclosed by $\Sigma_{t+s}$ for all $s\in (0,s_0)$. In particular, we see 
$$\Sigma_{t+s}=\Sigma^+_{t+s}=\Phi^+(N,s)\mbox{ when }s\in(0,1],$$ 
and so the weak solution for inverse mean curvature is smooth in the region $\{t<u<t+1\}$. The proof is completed by letting $t$ vary in $(-\infty,T]$.
\end{proof}

\begin{proof}[Proof of Theorem \ref{main-theorem-IMCF}]
    This follows from Lemma \ref{Lem: smooth solution} and the fact that $\Sigma_t^+$ with $t\leq T$ are all topological spheres as well as the annulus neighborhood estimate \eqref{Eq: annulus control}.
\end{proof}

\subsection{The sweep-out with sub-Euclidean isoperimetric ratio}
In the following, we work with a complete asymptotically flat manifold $(M^3,g)$ with non-negative scalar curvature and possibly with a compact minimal surface as boundary.
\begin{lemma}\label{Lem: sub-Euclidean ratio}
   Let $\Phi:\mathbb S^2\times (-\infty,T]\to(M,g)$ be the smooth inverse mean curvature flow from a point $p$ constructed in Theorem \ref{main-theorem-IMCF}. If $(M,g)$ is not flat at the point $p$, then we have
    \begin{equation}\label{Eq: isoperimetric ratio}
    \frac{\area(\partial U_t)}{\vol(U_t)^{\frac{2}{3}}}<(36\pi)^{\frac{1}{3}},\end{equation}
    where $U_t$ denotes the region enclosed by $\Sigma_t$.
\end{lemma}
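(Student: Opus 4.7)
The plan is to combine Geroch's Hawking-mass monotonicity with a Cauchy--Schwarz estimate for $\int H^{-1}\,d\sigma$ along the smooth IMCF to obtain a differential inequality for the isoperimetric defect
\[
F(t) := 36\pi\,\vol(U_t)^2 - \area(\Sigma_t)^3,
\]
and to upgrade Y.\ Shi's non-strict inequality $F \geq 0$ \cite{shi2016isoperimetric} to a strict one using the equality-case rigidity together with the hypothesis that $(M,g)$ is not flat at $p$.

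\textbf{Main computation.} Write $A(t) := \area(\Sigma_t)$ and $V(t) := \vol(U_t)$. Along the smooth IMCF, $A'(t) = A(t)$ and $V'(t) = \int_{\Sigma_t} H^{-1}\,d\sigma$. Two applications of Cauchy--Schwarz give
\[
A(t)^2 \;\leq\; \Bigl(\int_{\Sigma_t} H^{-1}\Bigr)\Bigl(\int_{\Sigma_t} H\Bigr) \;\leq\; V'(t)\,\sqrt{A(t)\,\textstyle\int_{\Sigma_t} H^2},
\]
so $V'(t) \geq A(t)^{3/2}/\bigl(\int_{\Sigma_t} H^2\bigr)^{1/2}$. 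The Hawking-mass identity \eqref{Eq: Hawking mass} rewrites $\int_{\Sigma_t} H^2 = 16\pi - (16\pi)^{3/2} m_h(\Sigma_t)/A(t)^{1/2}$, and Geroch's monotonicity, combined with $R\geq 0$ and Lemma \ref{Lem: Hawking mass limit}, yields $m_h(\Sigma_t)\geq 0$, so $\int_{\Sigma_t} H^2 \leq 16\pi$ and
\[
V'(t) \;\geq\; \frac{A(t)^{3/2}}{4\sqrt{\pi}}.
\]
Differentiating $F$ and using $A'=A$ and the factorization $F = (6\sqrt{\pi}V - A^{3/2})(6\sqrt{\pi}V + A^{3/2})$ gives
\[
F'(t) \;\geq\; 3A^{3/2}\bigl(6\sqrt{\pi}V - A^{3/2}\bigr) \;=\; \mu(t)F(t), \qquad \mu(t) := \frac{3A(t)^{3/2}}{6\sqrt{\pi}V(t) + A(t)^{3/2}} \;>\; 0.
\]
In particular $F(t)\exp\bigl(-\int^t \mu\bigr)$ is non-decreasing; combined with Shi's non-strict inequality $F\geq 0$, the function $F$ itself is non-decreasing.

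\textbf{Rigidity and contradiction.} Suppose toward a contradiction that $F(t_*) = 0$ for some $t_* \in (-\infty, T]$. Monotonicity of $F\exp(-\int \mu)$ together with $F \geq 0$ then forces $F \equiv 0$ on $(-\infty, t_*]$. Tracing the equality case backwards: tightness of the Cauchy--Schwarz chain forces $H$ to be constant on each $\Sigma_t$ with $t \leq t_*$, and tightness of the Hawking-mass bound forces $m_h(\Sigma_t) \equiv 0$ on the same interval. Constancy of $m_h$ along the IMCF, together with $R\geq 0$, forces the integrand in Geroch's formula to vanish pointwise, so $R \equiv 0$ and $\mathring{\mathcal B} \equiv 0$ on every leaf $\Sigma_t$. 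The resulting foliation by totally umbilic, constant-mean-curvature leaves with vanishing ambient scalar curvature is rigid: the equality case of Hawking-mass monotonicity along weak IMCF (see \cite{huisken2001inverse}) forces $(M,g)$ to be flat on $\{u<t_*\}\setminus\{p\}$, contradicting the assumption that $(M,g)$ is not flat at $p$. The main obstacle is precisely this final rigidity step: translating the pointwise vanishing of the Geroch integrand on each leaf into genuine ambient flatness of $(M,g)$ throughout a punctured neighborhood of $p$.
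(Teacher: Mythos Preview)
Your argument is correct but takes a longer route than the paper's. Both proofs rest on the same three ingredients: the evolution identities $A'=A$ and $V'=\int_{\Sigma_t} H^{-1}$, the H\"older/Cauchy--Schwarz bound $V' \geq A^{3/2}\big/(\int_{\Sigma_t} H^2)^{1/2}$, and the Hawking-mass bound $\int_{\Sigma_t} H^2 \leq 16\pi$ coming from Geroch monotonicity together with Lemma~\ref{Lem: Hawking mass limit}. The difference is where the rigidity is invoked.

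The paper applies the rigidity \emph{first}: since $(M,g)$ is not flat at $p$, the equality case of Geroch monotonicity \cite[p.\,423]{huisken2001inverse} forces $m_h(\Sigma_t)>0$ strictly for every $t$, hence $\int_{\Sigma_t} H^2 < 16\pi$ strictly. This immediately gives the strict differential inequality $(A^{3/2})' < (36\pi)^{1/2} V'$, and integrating from $-\infty$ (where both sides vanish) yields the conclusion in one line---no auxiliary function $F$, no external citation of Shi's non-strict inequality, and no contradiction argument at the level of the isoperimetric defect.

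Your approach defers the rigidity to the end: you first establish the non-strict bound, package it into $F' \geq \mu F$, import Shi's $F\geq 0$ as a separate black box, and only then run the equality-case analysis on $F$ to reduce to the very same Geroch rigidity. This is valid, but the detour through $F$ and the invocation of \cite{shi2016isoperimetric} are redundant: the paper's computation \emph{is} Shi's computation, so citing Shi for the non-strict version and then redoing the work to upgrade it duplicates effort. The cleaner move is simply to observe that the rigidity already delivers strict positivity of the Hawking mass, after which strictness propagates through the H\"older step and the integration automatically.
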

\begin{proof}
We do the same computation as in \cite{shi2016isoperimetric}. Denote
$$A(t)=\area(\partial U_t),\,V(t)=\vol(U_t)\mbox{ and }m(t)=m_h(\partial U_t).$$
From Lemma \ref{Lem: Hawking mass limit} and the discussion in \cite[Page 423]{huisken2001inverse} concerning the equality case of the Geroch monotonicity, we know that $m(t)>0$ for all $t\in(-\infty,T]$, and in particular, we have
$$\int_{N_t}H^2\,\mathrm d\sigma =16\pi-(16\pi)^{\frac{3}{2}}A^{-\frac{1}{2}}(t)m(t)<16\pi.$$ 
Through a straight-forward computation we have
$$A'(t)=A(t)\mbox{ and }V'(t)=\int_{N_t}H^{-1}\,\mathrm d\sigma.$$
The H\"older inequality yields
$$A(t)\leq \left(\int_{N_t}H^2\,\mathrm d\sigma\right)^{\frac{1}{3}} \left(\int_{N_t}H^{-1}\,\mathrm d\sigma\right)^{\frac{2}{3}},$$
and then we can derive
$$(A^{\frac{3}{2}})'(t)\leq \frac{3}{2} V'(t) \left(\int_{N_t}H^2\,\mathrm d\sigma\right)^{\frac{1}{2}}<V'(t)(36\pi)^{\frac{1}{2}}.$$
Integrating this inequality from $-\infty$ to $t$, we obtain $A^{\frac{3}{2}}(t)<(36\pi)^{\frac{1}{2}} V(t)$, and this yields the desired inequality \eqref{Eq: isoperimetric ratio}.
\end{proof}
\begin{proof}[Proof of Proposition \ref{Prop: sweep-out}]
Take the asymptotically flat end $M\setminus K$. Since $(M,g)$ does not have Euclidean end, we can pick a non-flat point $p\in M\setminus K$ outside any compact subset. Notice that the metric $g$ is equivalent to $g_{euc}$, so we can take $T$ large and $p$ sufficiently near infinity such that the smooth inverse mean curvature flow $\Phi:\mathbb S^2\times (-\infty,T]\to (M,g)$ in Theorem \ref{main-theorem-IMCF} satisfies
$$\vol(U_T)\geq \vol(D_{c^{-1}e^{T/2}}(p))\geq v,$$
where $U_t$ denotes the region enclosed by $\Phi(\mathbb S^2,t)$. The proof is completed by taking a sub-family with reparametrization as well as Lemma \ref{Lem: sub-Euclidean ratio}.
\end{proof}

\section{Sweep-out with sub-hyperbolic isoperimetric ratio in asymptotically hyperbolic manifolds}
In this section, a complete Riemannian manifold $(M^3,g)$ is said to be asymptotically hyperbolic, if there exist a compact set $K\subset M$ and a diffeomorphism $M\setminus K \equiv \R^3 \setminus B_1(0)$ such that, in the polar coordinates induced by this diffeomorphism, the metric $g$ takes the form 
\[
g = dr^2 + \sinh^2 r\, g_{\mathbb S^2} + Q
\]
where $Q$ satisfies 
\begin{equation}\label{Eq: C2 decay AH}
\vert Q\vert_g + \vert \del_h Q\vert_g + \vert \del_h^2 Q\vert_g+ |\nabla_h^3 Q|_g \le C e^{-3r}
\end{equation}
for some constant $C$, with respect to the hyperbolic covariant derivative $\nabla_h$.

The goal of this section is to prove an analogy of Proposition \ref{Prop: sweep-out} in the case of asymptotically hyperbolic manifolds. Namely, we are going to show
\begin{proposition}\label{Prop: sweep-out AH}
    Let $(M^3,g)$ be an asymptotically hyperbolic manifold with scalar curvature
$R\geq -6$, possibly with compact boundary. Assume that $M$ does not have hyperbolic end. Then given any constant $v>1$, there exists a continuously varying family of open sets $\{U_t\}_{t\in[0,1]}$ with $\vol_g(U_0)=\emptyset$ and $\vol(U_T)=v$ satisfying
$$\area_g(\partial U_t)< \mathcal I_{hyp}(\vol_g(U_t))$$
for all $t\in (0,1]$, where $\mathcal I_{hyp}$ is the hyperbolic isoperimetric profile.
\end{proposition}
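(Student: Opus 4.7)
The plan is to adapt the proof of Proposition \ref{Prop: sweep-out} to the asymptotically hyperbolic setting, substituting Theorem \ref{main-theorem-IMCF-AH} for Theorem \ref{main-theorem-IMCF} and using the modified Hawking mass adapted to hyperbolic asymptotics. Since $M$ has no hyperbolic end, I would pick a point $p$ far out in the asymptotically hyperbolic end at which $g$ fails to be locally isometric to $\mathbb H^3$. By Theorem \ref{main-theorem-IMCF-AH}, the weak IMCF emerging from $p$ is smooth up to a prescribed time $T$, yielding a smooth family $\Sigma_t = \partial\{u<t\}$ enclosing regions $U_t$ whose volumes tend to $+\infty$ as $T\to+\infty$.

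The central analytic step is verifying the strict sub-hyperbolic isoperimetric inequality $\area(\partial U_t) < \mathcal I_{hyp}(\vol(U_t))$ for all $t$. For this I would use the modified Hawking mass
\[
m_h(\Sigma) = \frac{\area(\Sigma)^{1/2}}{(16\pi)^{3/2}} \left( 16\pi - \int_{\Sigma} (H^2 - 4)\, d\sigma \right),
\]
which is nondecreasing along smooth IMCF whenever $R \geq -6$, by the standard Geroch monotonicity computation. A direct analog of Lemma \ref{Lem: Hawking mass limit} would show $\liminf_{t\to-\infty} m_h(\Sigma_t) \geq 0$: the surfaces $\Sigma_t$ shrink to $p$ and become $C^{1,\beta}$-close to round geodesic spheres in the tangent space at $p$, so upper semicontinuity of Hawking mass under $C^{1,\beta}\cap W^{2,p}$-convergence forces the limit to be nonnegative. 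Combined with monotonicity and the strict inequality in Geroch's computation coming from non-hyperbolicity at $p$, one obtains $m_h(\Sigma_t) > 0$ along the entire flow.

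Positivity of $m_h$ is equivalent to $\int_{\Sigma_t} H^2 < 16\pi + 4A(t)$, where $A(t) = \area(\Sigma_t)$. Writing $V(t) = \vol(U_t)$, we have $A'(t) = A(t)$ and $V'(t) = \int_{\Sigma_t} H^{-1}\,d\sigma$; H\"older gives $A^3 \leq (\int H^2)(\int H^{-1})^2 < (16\pi + 4A)(V')^2$, so
\[
V'(t) > \frac{A(t)^{3/2}}{2\sqrt{4\pi + A(t)}}.
\]
On the other hand, if $\psi(A)$ denotes the volume of the hyperbolic ball with boundary area $A$, then using $H = 2\coth r$ and $\sinh^2 r = A/(4\pi)$ on coordinate spheres gives $\psi'(A) = A^{1/2}/(2\sqrt{4\pi + A})$, matching the comparison derivative exactly. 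Therefore $(V(t) - \psi(A(t)))' > 0$, and since both $V(t)$ and $\psi(A(t))$ tend to $0$ as $t\to-\infty$, we conclude $V(t) > \psi(A(t))$, i.e. $A(t) < \mathcal I_{hyp}(V(t))$ for every $t\in(-\infty,T]$.

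Choosing $T$ large enough and $p$ sufficiently far out arranges $\vol(U_T)\geq v$; reparametrizing the smooth family $\{U_t\}$ yields the required $\{U_t\}_{t\in[0,1]}$. I expect the main obstacle to be the Hawking mass limit $\liminf_{t\to -\infty}m_h(\Sigma_t)\geq 0$: unlike the flat case, the Geroch integrand now is $R+6\geq 0$ rather than $R\geq 0$, so the sign is preserved, but the asymptotic analysis of the shrinking surfaces and the upper-semicontinuity passage to the limit must be carried through in the hyperbolic ambient geometry. This parallels the argument for Lemma \ref{Lem: Hawking mass limit} and should go through using the $C^{1,\beta}$- and $W^{2,p}$-estimates already established for weak IMCF.
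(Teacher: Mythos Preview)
Your proposal is correct and follows essentially the same approach as the paper: smooth IMCF from a non-hyperbolic point far in the end, strict positivity of the modified Hawking mass via Geroch monotonicity together with its rigidity, and an ODE comparison of $A(t),V(t)$ against the hyperbolic isoperimetric profile. One simplification the paper uses for the step you flagged as the main obstacle: since $m_h^*(\Sigma_t)=m_h(\Sigma_t)+4\bigl(\area(\Sigma_t)/(16\pi)\bigr)^{3/2}$ and $\area(\Sigma_t)\to 0$ as $t\to-\infty$, the hyperbolic Hawking mass limit reduces immediately to the already-proved Lemma~\ref{Lem: Hawking mass limit}, so no separate asymptotic analysis in hyperbolic geometry is needed.
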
\label{Prop: smooth IMCF AH}
 As before, we extend $(M\setminus K,g)$ to be a complete Riemannian manifold $(\mathbb R^3,\hat g)$. We start with the following existence of weak solutions for inverse mean curvature flow.
 \begin{lemma}\label{Lem: weak solution AH}
    For any point $p\in \mathbb R^3$ there is a locally Lipschitz function $u$ in $\mathbb R^3\setminus\{p\}$ such that
    \begin{itemize}
        \item $u(x)\to-\infty$ as $x\to p$ and $u(x)\to+\infty$ as $x\to\infty$;
        \item $u$ is a weak solution for inverse mean curvature flow.
    \end{itemize}
\end{lemma}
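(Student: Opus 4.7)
The plan is to mirror the three-step argument of Lemma \ref{Lem: weak solution}, replacing the Euclidean subsolution $w=\max\{\log|x|,0\}$ by one adapted to the asymptotically hyperbolic geometry. For the first step I would construct, for each $\epsilon>0$, a locally Lipschitz $u_\epsilon$ with $\{u_\epsilon<0\}=B_\epsilon(p)$ that is a weak solution of inverse mean curvature flow on $\mathbb R^3\setminus\bar B_\epsilon(p)$ and tends to $+\infty$ at spatial infinity. By \cite[Theorem 3.1]{huisken2001inverse} this reduces to producing a smooth weak subsolution outside some large ball. In the exact hyperbolic metric $g_h=\mathrm dr^2+\sinh^2 r\,g_{\mathbb S^2}$ the function $w_0=2\log\sinh r$ is literally a classical inverse mean curvature flow solution, so the deformation $w=(2-\delta)\log\sinh r$ satisfies
$$\operatorname{div}_{g_h}\!\left(\frac{\nabla w}{|\nabla w|}\right)-|\nabla w|=\delta\coth r>0$$
for $r\geq r_0$. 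The asymptotic decay \eqref{Eq: C2 decay AH} makes the corresponding quantities for $\hat g=g_h+Q$ differ from the hyperbolic ones by terms of order $e^{-3r}$, so for $r_0$ large enough the strict subsolution inequality persists in $\hat g$, and the verification that $w$ is a weak subsolution is then verbatim as in Step 1 of Lemma \ref{Lem: weak solution}.

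For the second step, I set $c_\epsilon=\min_{\partial B_{\sigma_0}(p)}u_\epsilon$ and $\mathring u_\epsilon=u_\epsilon-c_\epsilon$ for a fixed small $\sigma_0$. The local Huisken–Ilmanen gradient estimate is geometry-agnostic and applies here, so the $\mathring u_\epsilon$ are uniformly bounded and equicontinuous on each compact subset of $\mathbb R^3\setminus\{p\}$; an Arzelà–Ascoli argument produces a subsequence converging locally uniformly to a locally Lipschitz $u$. To show $c_\epsilon\to+\infty$ I would reproduce the asymptotically flat argument: the exponential area growth $\area(N_{\epsilon,t})=A_\epsilon e^t$ with $A_\epsilon=O(\epsilon^2)$, together with the fact that the sublevel sets have no precompact component outside $B_\epsilon(p)$ (so $u_\epsilon\leq c_\epsilon+C$ on all of $B_{\sigma_0}(p)$) and the local Euclidean-type isoperimetric inequality on the fixed compact region containing $B_{\sigma_0}(p)$ (available since $\hat g$ is smooth and hence locally equivalent to the Euclidean metric there), give $\area(N_{\epsilon,c_\epsilon+C})\geq\delta_0>0$ and therefore $c_\epsilon\geq\log(\delta_0/A_\epsilon)-C\to+\infty$.

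For the third step, I verify the two limits. For $u(x)\to+\infty$ as $x\to\infty$ I compare $\mathring u_\epsilon$ with the subsolution $w$ from Step 1: a uniform lower bound $\mathring u_\epsilon\geq\Lambda$ holds on $\partial B_{r_0}$ by the gradient estimate, so the truncated shifted subsolution $\min\{w-w|_{\partial B_{r_0}}+\Lambda,t_0\}$ has precompact superlevel set over $\mathring u_\epsilon$, and \cite[Theorem 2.2]{huisken2001inverse} forces $\mathring u_\epsilon\geq w-w|_{\partial B_{r_0}}+\Lambda$ outside $\bar B_{r_0}$; passing to the limit and letting $t_0\to\infty$ yields $u\to+\infty$ at infinity. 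For $u(x)\to-\infty$ as $x\to p$ I read the exponential area growth backwards: $\area(N^+_{\epsilon,c_\epsilon-T})\geq\delta_0 e^{-T}$, and comparison with the small-sphere asymptotics $\area(\partial B_\rho(p))\leq 5\pi\rho^2$ forces the strictly outer-minimizing surface $N^+_{\epsilon,c_\epsilon-T}$ to escape the ball of radius $\rho_0(T)\sim e^{-T/2}$, giving $\max_{\partial B_{\rho_0(T)}(p)}\mathring u_\epsilon\leq-T+C$; sending $\epsilon\to 0$ and then $T\to\infty$ yields $u\to-\infty$ at $p$.

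The main obstacle is the subsolution construction in Step 1: one must check that the purely hyperbolic subsolution $w$ remains a genuine weak subsolution after perturbing $g_h$ by the asymptotically vanishing term $Q$, i.e.\ that the error terms in the divergence and gradient computations are controlled by the excess margin $\delta\coth r$. This is precisely where the $C^3$-decay hypothesis \eqref{Eq: C2 decay AH} is used; the remaining two steps are essentially mechanical adaptations of the asymptotically flat argument in Lemma \ref{Lem: weak solution}, where the local Euclidean nature of $\hat g$ on any fixed compact set covers the few places that refer to the Euclidean isoperimetric inequality or to area comparisons against small coordinate spheres.
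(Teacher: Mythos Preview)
Your approach is essentially the same as the paper's: repeat the three steps of Lemma~\ref{Lem: weak solution} with a subsolution adapted to hyperbolic geometry. The paper makes the simpler choice $w=\max\{|x|,1\}$ (that is, $w=r$ far out), for which $\Div_{g_{hyp}}(\nabla w/|\nabla w|)-|\nabla w|=2\coth r-1>1$, so the margin over the perturbation $Q$ is uniformly large without any $\delta$. Your choice $w=(2-\delta)\log\sinh r$ is also valid, just slightly less economical.

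There is one genuine imprecision in your Step~2. You invoke ``the local Euclidean-type isoperimetric inequality on the fixed compact region containing $B_{\sigma_0}(p)$'' to obtain $\area(N_{\epsilon,c_\epsilon+C})\geq\delta_0$. But the surface $N_{\epsilon,c_\epsilon+C}$ is the boundary of the \emph{entire} sublevel set $\{u_\epsilon\leq c_\epsilon+C\}$, which is not confined to any fixed compact region independent of $\epsilon$; a local isoperimetric inequality near $B_{\sigma_0}(p)$ does not bound its area. What is actually needed is a \emph{global} isoperimetric lower bound on $(\mathbb R^3,\hat g)$ for sets of volume at least $\vol(B_{\sigma_0}(p))$. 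The paper supplies this by observing that $\hat g$ is globally comparable to $g_{hyp}$ (smooth metrics on the compact core, $C^0$-close on the end) and then invoking the hyperbolic isoperimetric inequality in place of the Euclidean one. With that replacement your argument goes through; the remaining steps are, as you say, mechanical.
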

\begin{proof}
    Take the subsolution to be
    $$w=\max\{|x|,1\}.$$
    Then we can just repeat the proof of Lemma \ref{Lem: weak solution} except that here we use the hyperbolic isoperimetric inequality instead of the Euclidean isoperimetric inequality.
\end{proof}
In the following, we use $ g_{hyp}$ to denote the hyperbolic metric on $\mathbb R^3$ given by 
$$g_{hyp}=\mathrm dr^2+\sinh^2 rg_{S^2},$$
and we always use $D^h_r(p)$ to denote the hyperbolic $r$-ball centered at the point $p$.
For any point $p\in \mathbb R^3$ we use $u$ to denote the weak solution for inverse mean curvature flow from Lemma \ref{Lem: weak solution AH} with the normalization
\begin{equation}\label{Eq: normalization AH}
    \min_{\partial D^h_1(p)}u=0.
\end{equation}

As before, our goal is to show Theorem \ref{main-theorem-IMCF-AH} --- the analogy of Theorem \ref{main-theorem-IMCF} in the case of asymptotically hyperbolic manifolds. In the following, we still use the notation
$$\Sigma_t=\partial\{u<t\}\mbox{ and }\Sigma_t^+=\partial\{u\leq t\}.$$
We point out that Proposition \ref{Prop: sweep-out AH} follows immediately from Theorem \ref{main-theorem-IMCF-AH} with the help of the following lemma.
\begin{lemma}\label{Lem: Hawking mass limit AH}
Denote
$$
m^*_{h}(\Sigma_t)=\frac{\area(\Sigma_t)^{\frac{1}{2}}}{(16\pi)^{\frac{3}{2}}}\left(16\pi-\int_{\Sigma_t}\left(H^2-4\right)\right).
$$
Then the limit
\begin{equation*}
\lim_{t\to-\infty}  m_{h}^*(\Sigma_t)
\end{equation*}
exists and is non-negative.
\end{lemma}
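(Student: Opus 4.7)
The plan is to follow the same outline as the proof of Lemma \ref{Lem: Hawking mass limit}, replacing the classical Hawking mass by its asymptotically hyperbolic counterpart $m_h^*$ and the ordinary Geroch monotonicity formula by its renormalized version. Recall that for a smooth mean convex surface evolving by inverse mean curvature flow in an ambient $3$-manifold, one has
$$
\frac{\mathrm{d}}{\mathrm{d}t}m_h^*(\Sigma_t)\;\geq\;\frac{A(t)^{1/2}}{(16\pi)^{3/2}}\int_{\Sigma_t}\frac{1}{2}\bigl(R(\hat g)+6\bigr)\,\mathrm{d}\sigma
$$
whenever $\chi(\Sigma_t)\leq 2$, the remaining terms on the right-hand side being manifestly non-negative. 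From the normalization \eqref{Eq: normalization AH} and the argument in Step 1 of Lemma \ref{Lem: weak solution}, one checks that $\{u<0\}\subset D^h_1(p)$ and that $\Sigma_t$ is connected for $t<0$, so indeed $\chi(\Sigma_t)\leq 2$ there. This gives monotonicity up to an error controlled by $R(\hat g)+6$.

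Next I would establish existence of the limit. Since $\hat g$ is smooth and agrees with an asymptotically hyperbolic metric outside a compact set, the quantity $R(\hat g)+6$ is uniformly bounded, say $|R(\hat g)+6|\leq \Lambda$. Combined with the exponential area growth $\area(\Sigma_\tau)=A_0 e^\tau$, integrating the above inequality from $s$ to $t<0$ yields
$$
m_h^*(\Sigma_t)\;\geq\; m_h^*(\Sigma_s)\;-\;\frac{\Lambda A_0^{3/2}}{3(16\pi)^{3/2}}\,e^{3t/2},
$$
and in particular the right-hand side has an integrable tail as $s\to -\infty$. Taking limit superior in $s$ first and limit inferior in $t$ afterwards gives existence of $\lim_{t\to-\infty}m_h^*(\Sigma_t)$, exactly as in Lemma \ref{Lem: Hawking mass limit}.

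For non-negativity of the limit I would use the approximating flows $\mathring u_\epsilon$ from the proof of Lemma \ref{Lem: weak solution AH} (constructed in perfect analogy with \eqref{Eq: mathring u epsilon}), together with their level sets $\mathring N_{\epsilon,-t}^+$ as in \eqref{Eq: mathring N epsilon}. On each $\partial B_\epsilon(p)$, the surface is a small geodesic sphere so $H\sim 2/\epsilon$ and $\area\sim 4\pi\epsilon^2$, hence
$$
\int_{\partial B_\epsilon(p)}(H^2-4)\,\mathrm{d}\sigma \;\to\; 16\pi\qquad\text{as }\epsilon\to 0,
$$
which shows $m_h^*(\partial B_\epsilon(p))\to 0$. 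Applying the renormalized Geroch monotonicity to $\mathring u_\epsilon$ between $\partial B_\epsilon(p)$ and $\mathring N_{\epsilon,-t}^+$, using the outer-minimizing area bound $\area(\mathring N_{\epsilon,-t}^+)\leq \area(\partial B_{\sigma_0}(p))$ exactly as in Lemma \ref{Lem: Hawking mass limit}, yields $m_h^*(\mathring N_{\epsilon,-t}^+)\geq -O(e^{-3t/4})$ as $t\to+\infty$, uniformly in $\epsilon$.

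The final step is to pass to the limit $\epsilon\to 0$ using the $C^{1,\beta}$ and $W^{2,p}$ convergence of $\mathring N_{\epsilon,-t}$ to $\Sigma_{-t}$ (which is proved just as in Lemma \ref{Lem: Hawking mass limit}, since the gradient estimates and regularity theory of \cite{huisken2001inverse} apply verbatim once the analog of the interior upper bound \eqref{Eq: interior upper bound from area} is obtained in the AH setting from the hyperbolic isoperimetric inequality). Upper semi-continuity of $m_h^*$ along such convergence gives $m_h^*(\Sigma_{-t})\geq -O(e^{-3t/4})$, and letting $t\to +\infty$ proves non-negativity of the limit. The main technical point to be careful with is the upper semi-continuity of the renormalized Hawking mass under $W^{2,p}$-convergence, but this follows from the same lower-semi-continuity of $\int H^2$ used in \cite{huisken2001inverse}, together with $C^0$-convergence for the area correction $\int 4\,\mathrm{d}\sigma$.
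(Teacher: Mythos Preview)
Your argument is correct, but the paper takes a much shorter route. Instead of rerunning the whole Geroch-monotonicity argument with the renormalized mass and the integrand $R(\hat g)+6$, the paper simply notes the algebraic identity
\[
m_h^*(\Sigma_t)=m_h(\Sigma_t)+4\left(\frac{\area(\Sigma_t)}{16\pi}\right)^{3/2},
\]
observes that $\area(\Sigma_t)\to 0$ as $t\to-\infty$ so that the correction term vanishes in the limit, and then invokes Lemma~\ref{Lem: Hawking mass limit} directly for the $m_h$ part. The point is that the proof of Lemma~\ref{Lem: Hawking mass limit} only uses that $R(\hat g)$ is uniformly bounded (not its sign or any asymptotically flat structure), so it applies verbatim in the asymptotically hyperbolic setting. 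Your approach has the virtue of being self-contained and of displaying explicitly how the quantity $R(\hat g)+6$ controls the defect from monotonicity of $m_h^*$, but at the cost of duplicating the entire approximation-and-limit argument; the paper's reduction avoids all of that.
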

\begin{proof}
    Notice that
    $$ m_{h}^*(\Sigma_t)=m_h(\Sigma_t)+4\left(\frac{\area(\Sigma_t)}{16\pi}\right)^{\frac{3}{2}},$$
    where $m_h(\Sigma_t)$ is the Hawking mass defined in \eqref{Eq: Hawking mass}.
  Then the desired consequence follows from  the fact $
     \area(\Sigma_t)\to 0$ as $t\to-\infty$
    and Lemma \ref{Lem: Hawking mass limit}.
\end{proof}

\begin{proof}[Proof of Proposition \ref{Prop: sweep-out AH} using Theorem \ref{main-theorem-IMCF-AH}]
     Using the assumption that the manifold $(M,g)$ does not have hyperbolic end, we can find a non-hyperbolic point $p$ outside any compact subset. Let us take 
     $$\Phi:\mathbb S^2\times (-\infty,T]\to (M,g)$$ to be the smooth inverse mean curvature flow in Theorem \ref{main-theorem-IMCF-AH}, and we use $U_t$ to denote the region enclosed by the surface $\Phi(\mathbb S^2,t)$. Clearly, we can take $T$ large enough such that $\vol(U_T)\geq v$. Moreover, it follows from the Geroch monotonicity and its rigidity for equality that we have 
    $$ m_{h}^*(\partial U_t)>0\mbox{ for all }t\in(-\infty,T].$$
Denote $A(t)=\area(\partial U_t)$ and $V(t)=\vol(U_t)$. The same computation as in the proof of Lemma \ref{Lem: sub-Euclidean ratio} yields
$$(A^{\frac{3}{2}})'(t)<\frac{3}{2}V'(t)(16\pi+4A(t))^{\frac{1}{2}},$$
or equivalently
$$A'(V)<\left(\frac{16\pi+4A}{A}\right)^{\frac{1}{2}}.$$
The hyperbolic isoperimetric profile satisfies $$\mathcal I_H'=\left(\frac{{16\pi+4\mathcal I_H}}{{\mathcal I_H}}\right)^{\frac{1}{2}}.$$ From a comparison argument we can show $\area(\partial U_t)<\mathcal I_H(\vol(U_t))$ for all $t\in(-\infty,T]$. The proof is then completed by taking a sub-family and doing reparametrization.
\end{proof}

\subsection{Smoothness of weak solution in small scale}
First, we point out that the discussion in Section \ref{Sec: sweep-out AF} actually leads to the following
\begin{proposition}\label{Prop: smoothness approximate Euclidean}
Let $(M_i,g_i,p_i)$ be any sequence of non-compact pointed Riemannian manifolds and $u_i:M_i\setminus\{p_i\}\to (-\infty,+\infty)$ be locally Lipschitz functions. If the following statements are true:
\begin{itemize}
\item $u_i$ is a weak solution for inverse mean curvature flow in $M_i\setminus\{p_i\}$ such that $u_i\to-\infty$ as $x\to p_i$, $u_i\to +\infty$ as $x\to\infty$, and also $$\min_{\partial B_1^{g_i}(p_i)}u_i=0;$$
\item $(M_i,g_i,p_i)$ converge to $(\mathbb R^3,g_{euc},O)$ in $C^3$-sense as $i\to\infty$,
\end{itemize}
then for any constant $T$ there is a constant $i_0\in \mathbb N_+$ such that $u_i$ is smooth up to the moment $T$ for all $i\geq i_0$.
\end{proposition}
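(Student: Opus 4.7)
The plan is to observe that every estimate and argument used in the proof of Theorem \ref{main-theorem-IMCF} in Section \ref{Sec: sweep-out AF} depended on the asymptotically flat hypothesis only through the $C^3$-proximity of the ambient metric to the Euclidean one on a ball around the basepoint of radius controlled by $T$. Under the hypothesis of the proposition, this proximity is provided directly by the assumption $(M_i,g_i,p_i) \to (\mathbb R^3, g_{euc}, O)$ in $C^3$-sense, so the entire Section \ref{Sec: sweep-out AF} argument transcribes to the sequence $u_i$ with $i_0$ chosen large enough that $g_i$ is sufficiently close to $g_{euc}$ on a ball of radius $\sim c e^{T/2}$ about $p_i$.

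Concretely, I would first establish the annulus neighborhood estimate: for $i$ sufficiently large and every $t \in (-\infty, T]$,
$$B^{g_i}_{c^{-1} e^{t/2}}(p_i) \setminus \{p_i\} \; \subset \; \{u_i \leq t\} \; \subset \; B^{g_i}_{c e^{t/2}}(p_i),$$
with $c$ a universal constant. This is a verbatim repetition of Lemmas \ref{Lem: outer ball} and \ref{Lem: inner ball}: the metric-equivalence constant of $g_i$ with Euclidean on the relevant ball tends to $1$, so the Huisken--Ilmanen gradient estimate combined with the almost-Euclidean isoperimetric inequality gives the same conclusion. Next, by repeating Lemma \ref{Lem: Hawking mass limit} and using the uniform scalar-curvature bound on $g_i$ guaranteed by $C^2$-convergence together with exponential area growth, I obtain $m_h(\Sigma^i_t) \geq o_i(1) \cdot \operatorname{Area}(\Sigma^i_t)^{1/2}$ uniformly in $t \in (-\infty,T]$, with $o_i(1) \to 0$ as $i \to \infty$.

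Then for each $t \in (-\infty, T]$, I would smooth $\Sigma^{+}_{i,t}$ by mean curvature flow to get a mean-convex approximation $\Sigma_{t,\epsilon}^i$ (Lemma \ref{Lem: MCF}), and run the smooth inverse mean curvature flow $\Sigma^i_{t,\epsilon,s}$ from it. After the rescaling $\tilde g_i = e^{-t} g_i$ pulled back by $x \mapsto e^{t/2}x + p_i$, the rescaled metrics are still $C^3$-close to Euclidean on the relevant fixed compact set, so Lemmas \ref{Lem: area bound scaled}--\ref{Lem: star-shape to curvature} and Corollaries \ref{Cor: L2 Sobolev}, \ref{Cor: Poincare} apply with absolute constants depending only on $c$ and $s_0$. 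The box argument of Lemma \ref{Lem: star-shape all time} then carries through unchanged to trap $\Sigma^i_{t,\epsilon,s}$ in the box region $\mathcal R$ for all $s \in [0,s_0]$; Krylov's regularity theory supplies all higher derivative estimates, and passing $\epsilon \to 0$ produces a smooth IMCF starting from $\Sigma^{+}_{i,t}$ and existing for time $s_0$. Uniqueness of weak solutions (\cite[Theorem 2.2]{huisken2001inverse}) identifies this smooth flow with the level sets of $u_i$, so $u_i$ is smooth on $\{t < u_i < t+1\}$ for every $t \leq T$ once $s_0 \geq 1$, yielding the claim.

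The main obstacle — and the only real content beyond transcription — is verifying that each quantitative constant appearing in Section \ref{Sec: sweep-out AF} (the annulus constant $c$, the starshape threshold $\iota$, the curvature bound $B$, the mean-curvature lower bound $\tilde c$, and the Sobolev/Poincaré constants $c_S, c_P$) depends only on the $C^3$-norm of $g_i - g_{euc}$ on a ball of size $\sim c e^{T/2}$ around $p_i$, together with $T$ itself, and not on any global feature of the asymptotically flat geometry. Direct inspection of each proof in Section \ref{Sec: sweep-out AF} shows this is the case: the estimates use only the local $C^3$-closeness to Euclidean and the Huisken--Ilmanen theory. Hence the proposition reduces to choosing $i_0$ large enough that the required Euclidean-proximity holds.
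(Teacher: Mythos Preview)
Your proposal is correct and takes essentially the same approach as the paper: the paper's own proof consists of the two-sentence observation that the arguments of Section~\ref{Sec: sweep-out AF} depend on the ambient manifold only through local $C^3$-closeness to Euclidean, and hence transcribe verbatim to the sequence $(M_i,g_i,p_i)$. Your write-up is in fact a more detailed unpacking of exactly this remark, correctly identifying which constants need to be checked and why each lemma carries over.
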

\begin{proof}
In Section \ref{Sec: sweep-out AF} we deal with those pointed Riemannian manifolds given by $(\mathbb R^3,\hat g,p,u)$, where $(\mathbb R^3,\hat g)$ is asymptotically flat. It is not difficult to check that all the arguments still work here, and so the proposition follows.
\end{proof}

In particular, we have the following corollary.
\begin{corollary}\label{Cor: smoothness small scale}
There is a constant $T_s$ such that at any point $p\in \mathbb R^3\setminus D_2$ with $D_2=\{x\in\mathbb R^3:|x|<2\}$ the weak solution $u$ for inverse mean curvature flow is smooth up to the moment $T_s$.
\end{corollary}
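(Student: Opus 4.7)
The plan is a blow-up/contradiction argument that reduces to Proposition \ref{Prop: smoothness approximate Euclidean}. Suppose no such $T_s$ exists. Then there is a sequence of points $p_i \in \mathbb R^3 \setminus \bar D_2$ together with times $t_i \to -\infty$ for which the weak solution $u_i$ emerging from $p_i$ fails to be smooth at $t_i$. The aim is to rescale so that this failure moves to a uniformly bounded time in the rescaled picture, then blow down to Euclidean space, and apply the Proposition to get a contradiction.

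Concretely, I would set $\lambda_i := e^{t_i/2} \to 0$ and introduce the rescaled metrics $g_i := \lambda_i^{-2}\hat g$. Because the weak inverse mean curvature flow equation is invariant under constant rescalings of the metric (both sides of $J_u^K(u) \leq J_u^K(v)$ pick up the same factor $\lambda_i^{2}$), the function $u_i$ is still a weak solution on $(\mathbb R^3, g_i)$ with $u_i \to -\infty$ at $p_i$. Setting $c_i := \min_{\partial B_1^{g_i}(p_i)} u_i$ and $\tilde u_i := u_i - c_i$ produces a weak solution satisfying the normalization $\min_{\partial B_1^{g_i}(p_i)} \tilde u_i = 0$ required by Proposition \ref{Prop: smoothness approximate Euclidean}. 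The uniform $C^3$ bound on the Riemann tensor of $\hat g$, which is furnished by the hyperbolic background together with the decay hypothesis \eqref{Eq: C2 decay AH}, implies that the rescaling $\lambda_i^{-2}\hat g$ flattens in $C^3$ on every fixed $g_i$-ball, uniformly in the basepoint; equivalently, $(\mathbb R^3, g_i, p_i) \to (\mathbb R^3, g_{euc}, O)$ in $C^3_{loc}$. Proposition \ref{Prop: smoothness approximate Euclidean} then says that for any fixed $T$, $\tilde u_i$ is smooth up to moment $T$ once $i$ is large, which in the original time parameter means $u_i$ is smooth up to moment $c_i + T$.

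The hinge of the argument, which I expect to be the main obstacle, is to verify that the shifts $c_i$ satisfy $c_i \geq t_i - C_0$ for a universal constant $C_0$ depending only on $(\mathbb R^3, \hat g)$. Granted this, taking $T := C_0 + 1$ in the previous step forces $u_i$ to be smooth at time $t_i + 1 > t_i$ for $i$ large, contradicting the assumed failure at $t_i$, and picking any $T_s$ smaller than the resulting threshold completes the proof. To establish the lower bound $c_i \geq t_i - C_0$, I would prove a uniform small-scale outer annulus estimate of the form $\{u_i \leq t\} \subset D^{\hat g}_{Ce^{t/2}}(p_i)$, valid for all $t$ sufficiently negative and uniformly in $p_i$. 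Combined with $\lambda_i = e^{t_i/2}$ and applied at $t = t_i - C_0$, this places $\{u_i \leq t_i - C_0\}$ strictly inside the $g_i$-unit ball around $p_i$, so that $u_i > t_i - C_0$ on $\partial B_1^{g_i}(p_i)$, hence $c_i \geq t_i - C_0$.

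The annulus estimate itself would be proved by mirroring Lemma \ref{Lem: outer ball}: at sufficiently small scales around any $p \in \mathbb R^3 \setminus \bar D_2$ the metric $\hat g$ is uniformly close to Euclidean in normal coordinates (because the $C^3$ geometry is universally bounded), so the Huisken--Ilmanen gradient estimate gives uniform oscillation control of $u_i$ on $\hat g$-spheres centered at $p_i$, and the exponential area growth of the level sets together with the nearly-Euclidean isoperimetric inequality then traps each $\{u_i \leq t\}$ inside a $\hat g$-ball of radius comparable to $e^{t/2}$. The universality of these estimates in $p_i$, whether $p_i$ remains in a compact region or wanders off into the asymptotically hyperbolic end, is precisely what the uniform $C^3$ control of $\hat g$ provides, and is the key reason the argument is insensitive to the basepoint.
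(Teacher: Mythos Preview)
Your proposal is correct and follows the same blow-up/contradiction strategy as the paper: assume a sequence of failures at times $t_i\to-\infty$, rescale to flatten the geometry, and invoke Proposition~\ref{Prop: smoothness approximate Euclidean}. The only substantive difference is the choice of rescaling factor. You rescale by $\lambda_i=e^{t_i/2}$, the \emph{expected} radius from the Euclidean model, and this forces you to separately establish the normalization compatibility $c_i\ge t_i-C_0$ via a sharp outer annulus estimate $\{u_i\le t\}\subset B^{\hat g}_{Ce^{t/2}}(p_i)$, which in turn calls on the area-comparison machinery of Lemma~\ref{Lem: outer ball}. The paper instead rescales by $r_i:=\inf\{r>0:u_i>T_i\text{ outside }B_r(p_i)\}$, the \emph{actual} radius at which the $T_i$-sublevel set is trapped; with this choice the normalization is automatic by construction, and one needs only the qualitative statement $r_i\to 0$, which follows directly from the gradient estimate, the original normalization on $\partial D^h_1(p_i)$, and the exterior lower bound \eqref{Eq: exterior lower bound}---no isoperimetric or exponential-area-growth input required. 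Both routes are valid; the paper's is a little more economical because it sidesteps the sharp exponent in the annulus estimate.
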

\begin{proof}
Suppose by contradiction that there is a sequence of triples $(p_i,u_i,T_i)$ with $T_i\to-\infty$ as $i\to\infty$ such that the weak solution $u_i$ cannot be smooth up to the moment $T_i$. Denote
$$r^h_i=\inf\{r>0:u_i> T_i\mbox{ in } \mathbb R^3\setminus D^h_r(p_i)\}.$$

We claim that $r^h_i\to 0$ as $i\to\infty$. Notice that the disks $(D^h_1(p_i),\hat g)$ have uniformly bounded geometry. From \cite[Theorem 3.1]{huisken2001inverse} we have the uniform gradient estimate
$$|\nabla u_i|(x)\leq C\dist(x,p_i)^{-1}\mbox{ in }D^h_1(p_i)\setminus\{p_i\},$$
where $C$ denotes a universal constant independent of $i$. Combined with \eqref{Eq: normalization AH}, for any fixed constant $\rho\in (0,1)$, we have $u_i\geq -\Lambda$ on $\partial D^h_\rho(p_i)$ for some universal constant $\Lambda=\Lambda(\rho)$. From \eqref{Eq: exterior lower bound} we see $u_i\geq -\Lambda$ in $\mathbb R^3\setminus D^h_\rho(p_i)$, and so we have $r^h_i\leq \rho$ when $T_i\leq -\Lambda$, which means $\limsup_{i\to\infty} r^h_i\leq \rho$. Let $\rho\to 0$ and then we obtain $r^h_i\to 0$ as $i\to \infty$.

Denote
$$r_i=\inf\{r>0:u_i> T_i\mbox{ in } \mathbb R^3\setminus B_r(p_i)\},$$
where $B_r(p_i)$ denotes the $\hat g$-geodesic $r$-ball centered at the point $p_i$. In a similar way,
using the uniformly bounded geometry of $B_1(p_i)$, we can derive $r_i\to 0$ as $i\to \infty$ as well. After taking $(M_i,g_i,p_i,u_i)$ to be $$(\mathbb R^3,r_i^{-2}\hat g,p_i,u_i+T_i)$$ in Proposition \ref{Prop: smoothness approximate Euclidean}, we conclude that, for $i$ large enough, $u_i$ is smooth up to the moment $T_i$, which leads to a contradiction.
\end{proof}

\subsection{Smoothness of weak solution in large scale}
The proof follows the line of those arguments in Section \ref{Sec: sweep-out AF}. First, let us introduce some preliminary lemmas.
\begin{lemma}\label{Lem: standard solution hyperbolic}
    Let $u_O:\mathbb R^3\setminus\{O\}\to (-\infty,+\infty)$ be a weak solution for inverse mean curvature flow in the hyperbolic space $\mathbb H^3=(\mathbb R^3,g_{hyp})$ such that $u_O(x)\to -\infty$ as $x\to O$, $u_O(x)\to +\infty$ as $x\to \infty$, and $\min u_O=0$ on $\partial D^h_1(O)$. Then we have $$u_O=2\ln \sinh |x|-2\ln \sinh 1.$$
\end{lemma}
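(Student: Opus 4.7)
The plan is to recognize $v(x) := 2\ln\sinh|x|_h - 2\ln\sinh 1$ as the smooth classical IMCF of concentric hyperbolic geodesic spheres about $O$, and then to invoke uniqueness of weak IMCF emerging from a point to conclude $u_O=v$.

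First, I would verify that $v$ is a smooth (hence weak) solution of IMCF on $\mathbb{H}^3\setminus\{O\}$ satisfying all three hypotheses of the lemma. Since the hyperbolic geodesic sphere of radius $r$ has mean curvature $H=2\coth r$ with respect to the outward unit normal, the radial function $v$ obeys $|\nabla v|=v'(|x|_h)=2\coth|x|_h=H$, whence $\operatorname{div}(\nabla v/|\nabla v|)=H=|\nabla v|$ pointwise. Direct inspection gives $v\to -\infty$ as $x\to O$, $v\to +\infty$ as $|x|_h\to\infty$, and $v\equiv 0$ on $\partial D^h_1(O)$, so $\min_{\partial D^h_1(O)} v=0$ matches the normalization.

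Second, to argue $u_O = v$, I would mimic the approximation construction of Lemma \ref{Lem: weak solution} (or of Lemma \ref{Lem: weak solution AH}) in the ambient space $\mathbb{H}^3$. For each $\epsilon\in(0,1)$, the weak IMCF $u_\epsilon$ emerging from the smooth strictly outer-minimizing set $\bar D^h_\epsilon(O)$ is uniquely determined by Huisken-Ilmanen's uniqueness for weak IMCF from a classical initial surface, and by rotational symmetry about $O$ it must be the radial one, i.e.\ $u_\epsilon(x)=2\ln\sinh|x|_h-2\ln\sinh\epsilon$. The normalization shift $c_\epsilon=2\ln\sinh 1-2\ln\sinh\epsilon$ produces $\mathring u_\epsilon\equiv v$, independently of $\epsilon$, so the construction exhibits $v$ as a weak IMCF emerging from $O$. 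To identify an arbitrary $u_O$ satisfying the stated hypotheses with this $v$, I would apply the Huisken-Ilmanen comparison principle \cite[Theorem 2.2]{huisken2001inverse} on $\mathbb{H}^3\setminus\bar D^h_\epsilon(O)$ to sandwich $u_O$ between the classical solutions $v\pm\operatorname{osc}_{\partial D^h_\epsilon}(u_O-v)$, and then send $\epsilon\to 0$; the gradient estimate of Huisken-Ilmanen forces $\operatorname{osc}_{\partial D^h_\epsilon}(u_O-v)\to 0$, and the shared normalization $\min_{\partial D^h_1(O)}(\cdot)=0$ pins down the limiting additive constant to zero.

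The main obstacle is precisely this last sandwiching step: Huisken-Ilmanen's comparison principle requires precompactness of the set where two weak solutions differ, which is not automatic either at $O$ or at infinity. The control at $O$ is supplied by the gradient estimate, which forces $u_O$ and $v$ to share the same $2\ln|x|_h$-type singularity; at infinity, the common exponential area growth of the level sets with unit rate forces matching $2\ln|x|_h$-asymptotics, making $u_O - v$ uniformly bounded outside any compact set and thereby legitimizing the comparison.
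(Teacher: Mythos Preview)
Your approach has a genuine gap in the sandwiching step. The Huisken--Ilmanen gradient estimate gives $|\nabla u_O|(x)\le C/\dist(x,O)$; on $\partial D^h_\epsilon(O)$ the intrinsic diameter is $\pi\sinh\epsilon\sim\pi\epsilon$, so the oscillation bound you actually obtain is $\osc_{\partial D^h_\epsilon}u_O\le C\pi$, a \emph{uniform constant}, not a quantity tending to zero. Since $v$ is constant on $\partial D^h_\epsilon$, this is exactly $\osc_{\partial D^h_\epsilon}(u_O-v)$, and your sandwich only yields $|u_O-v|\le C$ after sending $\epsilon\to 0$, not $u_O=v$. To get the oscillation to vanish you need a genuine blow-up: rescale by $\epsilon^{-1}$ so that $(\mathbb H^3,O)$ converges to $(\mathbb R^3,O)$, pass the shifted $u_O$ to a weak IMCF from the origin in Euclidean space, and invoke the Euclidean uniqueness (itself an eccentricity argument in \cite{huisken2001inverse}) to identify the limit as radial. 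Your remark on the behavior at infinity is also off: in $\mathbb H^3$ one has $\area(\partial D^h_r)=4\pi\sinh^2 r$, so the standard solution behaves like $2r$ at infinity, not $2\ln|x|_h$; and matching area growth rates alone does not show $u_O-v$ is bounded at infinity, so the precompactness hypothesis of \cite[Theorem~2.2]{huisken2001inverse} remains unverified there.

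The paper takes a cleaner route that sidesteps both issues. It defines the $(\ln\sinh)$-eccentricity $\theta(N_t)=\ln\sinh\bar r(N_t)/\ln\sinh\underline r(N_t)$ of each level set and shows, via comparison of the compact level sets with the smooth radial flow over finite time intervals, that $\theta(N_{t+s})\le\theta(N_t)$ for all $s>0$; no control at spatial infinity is needed. A blow-up at $O$ (the same Euclidean input your approach would ultimately require) gives $\bar r(N_t)/\underline r(N_t)\to 1$ and hence $\theta(N_t)\to 1$ as $t\to-\infty$; monotonicity then forces $\theta(N_t)\equiv 1$, so every $N_t$ is a centered sphere and $u_O$ is radial. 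Your function-level sandwiching is morally the same comparison, but the level-set (eccentricity) formulation is what makes it go through without the two obstacles above.
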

\begin{proof}
    We apply a similar eccentricity argument as in the work \cite{huisken2001inverse}. For any connected closed surface $\Sigma$ in $\mathbb R^3$ enclosing the origin $O$, we define the $(\ln\sinh)$-eccentricity to be
    $$\theta(\Sigma)=\frac{\ln \sinh \bar r(\Sigma)}{\ln \sinh \underline r(\Sigma)}\in[1,+\infty),$$
    where  $\bar r(\Sigma)$ and $\underline r(\Sigma)$ are outer radius and inner radius of $\Sigma$ defined in \eqref{Eq: outer radius} and \eqref{Eq: inner radius}, respectively. Denote $$N_t=\partial\{u_O<t\}.$$ It is clear that $N_t$ is a connected closed surface enclosing the origin $O$. Through a comparison of $u_O$ with the standard solution $2\ln\sinh |x|$, it is easy to derive
    $$\theta(N_{t+s})\leq \frac{2\ln\log \bar r(N_t)+s}{2\ln\log \underline r(N_t)+s}\leq \theta(N_t)\mbox{ for any }s>0.$$
    As in the proof of Corollary \ref{Cor: smoothness small scale} we can show $\bar r(N_t)\to 0$ as $t\to-\infty$. Through a blow-up analysis we also know $\bar r(N_t)/\underline r(N_t)\to 1$ as $t\to-\infty$. From these facts we conclude
    $$\theta(N_t)\to 1\mbox{ as }t\to-\infty.$$
    Combined with the monotonicity of $\theta(N_t)$, we know that $N_t$ are all round spheres in $\mathbb R^3$ centered at the origin. So $u_O$ is the standard solution.
\end{proof}

\begin{lemma}\label{Lem: radius upper bound AH}
    For any constant $T$ there is a constant $\Lambda=\Lambda(T)$ such that $\{u\leq T\}$ is contained in $D^h_{\Lambda}(p)$.
\end{lemma}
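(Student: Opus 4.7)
The plan is to mirror Step~3 of the proof of Lemma \ref{Lem: weak solution}, replacing the Euclidean logarithmic subsolution with a linear subsolution built from the hyperbolic distance function in the asymptotically hyperbolic end. The starting point is the observation that the standard hyperbolic solution $2\log\sinh r$ from Lemma \ref{Lem: standard solution hyperbolic} forces $u$ to grow essentially linearly in the hyperbolic radial direction, so a comparison with a slightly sub-linear function $(2-\varepsilon_0)r$ will pin $u$ from below at infinity and therefore bound $\{u\leq T\}$ inside a hyperbolic ball.

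First I would fix a reference point $O$ lying in the AH coordinate chart $\mathbb R^3\setminus K$ and let $r(x)=\dist_{g_{hyp}}(O,x)$. Using the decay assumption \eqref{Eq: C2 decay AH} one checks that $|\nabla r|_{\hat g}^2 = 1 + O(e^{-3r})$ and that the mean curvature in $\hat g$ of the hyperbolic sphere $\{r=r_0\}$ equals $2\coth r_0 + O(e^{-3r_0})$. Consequently, for any fixed $\varepsilon_0\in(0,1)$, the function $w(x) = (2-\varepsilon_0)\,r(x)$ satisfies
\[
\Div_{\hat g}\!\left(\frac{\nabla w}{|\nabla w|_{\hat g}}\right) - |\nabla w|_{\hat g}
\;=\; 2\coth r \;-\; (2-\varepsilon_0) \;+\; O(e^{-3r}) \;\geq\; \tfrac{\varepsilon_0}{2}
\]
for all $r\geq R_0$, provided $R_0$ is chosen sufficiently large (the exponential error $O(e^{-3r})$ dominates the $\coth r-1 = O(e^{-2r})$ correction). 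Hence $w$ is a smooth weak subsolution for inverse mean curvature flow on $\{r>R_0\}$.

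Next I would control $u$ near the reference sphere $\{r=R_0\}$. From the normalization \eqref{Eq: normalization AH} together with the AH analogue of the exterior lower bound \eqref{Eq: exterior lower bound} (whose proof uses only the weak outer-minimizing property and carries over verbatim), we get $u\geq 0$ outside $D^h_1(p)$, while the Huisken--Ilmanen gradient estimate \cite[Theorem~3.1]{huisken2001inverse}, applied on a fixed ball containing $p$ and $\{r\le R_0\}$, yields an oscillation bound $|u|\leq C_1$ on $\{r=R_0\}$, with $C_1$ depending only on $R_0$, the distance from $p$ to $O$, and the geometry of $\hat g$ on a fixed compact region.

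Finally I would invoke the comparison principle as in Step~3 of Lemma \ref{Lem: weak solution}. For any constant $t_0 > \max_{\{r=R_0\}} w$, the truncation $w_{t_0}=\min\{w,t_0\}$ remains a weak subsolution on $\{r>R_0\}$, and the set $\{w_{t_0} > u - C_1\}\cap\{r>R_0\}$ is precompact since $w_{t_0}\le t_0$ while $u(x)\to+\infty$ as $x\to\infty$ by Lemma \ref{Lem: weak solution AH}. Then \cite[Theorem~2.2]{huisken2001inverse} gives $u - C_1 \geq w_{t_0}$ on $\{r>R_0\}$; letting $t_0\to\infty$ yields $u(x)\geq (2-\varepsilon_0)\,r(x) - C_1$ for all $r(x)\geq R_0$. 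Therefore $\{u\leq T\}$ is contained in the hyperbolic ball $\{r\leq R_0 + (T+C_1)/(2-\varepsilon_0)\}$, which is contained in $D^h_\Lambda(p)$ for some $\Lambda=\Lambda(T)$ since $p$ lies at bounded hyperbolic distance from $O$.

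The main technical obstacle is the explicit verification of the subsolution inequality for $w$: one must expand the induced mean curvature of hyperbolic spheres in the perturbed metric $\hat g = g_{hyp}+Q$ and confirm that the $O(e^{-3r})$ decay of $Q$ genuinely dominates the $\coth r$ correction uniformly on $\{r\geq R_0\}$. Once that estimate is in place, the truncation-and-comparison step is formally identical to the one already carried out in Step~3 of Lemma \ref{Lem: weak solution}.
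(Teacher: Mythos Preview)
Your comparison argument is a different route from the paper's, and for a \emph{fixed} basepoint $p$ it is essentially sound, but it does not deliver the $p$-independent bound $\Lambda=\Lambda(T)$ that the lemma asserts and that is actually used later when $p\to\infty$ (for instance, to show that the limit $u_\infty$ of $u_p\circ\mathcal X_p$ satisfies $u_\infty\to+\infty$ at infinity). Your constant $C_1$ explicitly depends on $\dist_{hyp}(p,O)$, as you acknowledge, and the closing phrase ``since $p$ lies at bounded hyperbolic distance from $O$'' is only meaningful for a single fixed $p$. There is also a concrete obstruction to running the comparison when $p$ is far out: if $p\in\{r>R_0\}$, the singularity of $u$ at $p$ sits inside the comparison domain, so the set $\{w_{t_0}>u-C_1\}$ contains a punctured neighbourhood of $p$ and fails to be precompact in $\{r>R_0\}\setminus\{p\}$; the comparison principle of \cite[Theorem~2.2]{huisken2001inverse} then does not apply. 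Enlarging $R_0$ to swallow $p$ only reintroduces the $p$-dependence through $C_1$.

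The paper sidesteps both issues with a diameter bound rather than a barrier. It observes that $\Sigma_T^+$ is connected, has $\hat g$-area bounded by a constant $A_0$ (from exponential area growth together with the normalization \eqref{Eq: normalization AH} and outer-minimizing), and --- by the curvature estimates of \cite{huisken2001inverse} and \cite{heidusch2001regularitat}, using only that $(\mathbb R^3,\hat g)$ has bounded geometry --- has second fundamental form uniformly bounded on $\mathbb R^3\setminus D^h_1(p)$. Bounded area plus bounded curvature forces a uniform bound on the intrinsic (hence extrinsic) diameter of $\Sigma_T^+$; since $\Sigma_T^+$ encloses $p$, this yields $\{u\le T\}\subset D^h_\Lambda(p)$ with $\Lambda$ depending only on $A_0$ and the curvature bound, both of which are independent of $p$.
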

\begin{proof}
    Recall that $\{u\leq t\}$ cannot have precompact components, so we see that $\Sigma_t^+$ is connected for all $t$. Therefore, it suffices to show that there exists a constant $\Lambda$ such that $\Sigma_t^+\subset D_\Lambda^h(p)$ for all $t\leq T$. Clearly, we have uniform area estimate $\area(\Sigma_t^+)\leq A_0$ for all $t\leq T$. Using the fact that $(\mathbb R^3,\hat g)$ has bounded geometry, from \cite{huisken2001inverse} and \cite{heidusch2001regularitat}  we know that $\Sigma_t^+$ has uniformly bounded second fundamental form in $\mathbb R^3\setminus D^h_1(p)$. Combined with the area estimate we conclude that $\Sigma_t^+$ has uniformly bounded diameter, and so we have $\Sigma_t^+\subset D^h_\Lambda(p)$ for some constant $\Lambda$.
\end{proof}
\begin{lemma}\label{Lem: radius lower bound AH}
    For any constant $T$ there is a constant $\lambda=\lambda(T)>0$ such that $\Sigma_t$ encloses $D^h_\lambda(p)$ for all $t\geq T$.
\end{lemma}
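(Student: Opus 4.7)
The plan is to mimic Lemma~\ref{Lem: inner ball}, with hyperbolic geodesic balls in place of Euclidean ones and the hyperbolic isoperimetric inequality in place of the Euclidean one. By monotonicity of $\{u<t\}$ in $t$, it suffices to produce $\lambda=\lambda(T)>0$ with $D^h_\lambda(p)\subset\{u\le T\}$; once this is in place, $\{u\le T\}\subset\{u<t\}$ for $t>T$ gives the enclosure of $D^h_\lambda(p)$ by $\Sigma_t$ for every $t\ge T$.

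First I would establish a uniform oscillation bound near $p$. The Huisken--Ilmanen gradient estimate \cite[Theorem~3.1]{huisken2001inverse}, combined with the uniformly bounded geometry of $(\mathbb R^3,\hat g)$ near $p$, gives $|\nabla u|(x)\le C\,\dist_{\hat g}(x,p)^{-1}$ on $D^h_1(p)\setminus\{p\}$; together with $\diam(\partial D^h_\rho(p))\lesssim\sinh\rho$, this yields $\osc_{\partial D^h_\rho(p)}u\le C_0$ for a universal $C_0$ and all $\rho\in(0,1]$. Using the normalization $\min_{\partial D^h_1(p)}u=0$ and the analog of \eqref{Eq: interior upper bound} for the weak solution $u$, I obtain $u\le C_0$ on $D^h_1(p)\setminus\{p\}$, so $\Sigma_{C_0}^+$ encloses $D^h_1(p)$. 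The hyperbolic isoperimetric inequality then yields $\area(\Sigma_{C_0}^+)\ge c_1$ for a universal $c_1>0$, and exponential area growth $\area(\Sigma_t^+)=\area(\Sigma_{C_0}^+)e^{t-C_0}$ (see \cite[Lemma~1.6~(i)]{huisken2001inverse}) upgrades this to $\area(\Sigma_t^+)\ge c_1 e^{t-C_0}$.

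To close the argument, I argue by contradiction: suppose $D^h_\lambda(p)\not\subset\{u\le T\}$ for some $\lambda\in(0,1]$. Then some $q_0\in D^h_\lambda(p)$ has $u(q_0)>T$, and because $\{u>T\}$ has no precompact components in $\mathbb R^3\setminus\{p\}$, the component of $\{u>T\}$ through $q_0$ must meet $\partial D^h_\lambda(p)$, so $\max_{\partial D^h_\lambda(p)}u>T$. The oscillation estimate forces $\min_{\partial D^h_\lambda(p)}u>T-C_0$, and the analog of \eqref{Eq: exterior lower bound} promotes this to $u\ge T-C_0$ on $\mathbb R^3\setminus D^h_\lambda(p)$, hence $\Sigma_{T-C_0}^+\subset \bar D^h_\lambda(p)$. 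The strictly outer-minimizing property of $\Sigma_{T-C_0}^+$ yields $\area(\Sigma_{T-C_0}^+)\le \area_{\hat g}(\partial D^h_\lambda(p))\lesssim \sinh^2\lambda$, which contradicts the lower bound $\area(\Sigma_{T-C_0}^+)\ge c_1 e^{T-2C_0}$ once $\lambda$ is taken sufficiently small in terms of $T$. Any such $\lambda=\lambda(T)$ then satisfies the lemma.

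The main obstacle, beyond bookkeeping on the universal constants $C_0$ and $c_1$, is to verify that the \emph{no-precompact-components} property for $\{u>s\}$ and $\{u<s\}$, together with the strictly outer-minimizing property of $\Sigma_t^+$, transfers from the approximations $u_\epsilon$ to the limit $u$; both are standard consequences of the Huisken--Ilmanen framework and are exactly what the asymptotically flat parallel in Lemma~\ref{Lem: inner ball} implicitly relies on.
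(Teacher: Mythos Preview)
Your proposal is correct and follows essentially the same route as the paper. The paper's own proof is a one-line pointer to the estimate \eqref{Eq: interior upper bound from area} (from Step~3 of Lemma~\ref{Lem: weak solution}), observing that the geometry near $p$ is Euclidean-like; you instead model your argument on Lemma~\ref{Lem: inner ball}, but the two are the same mechanism: an area lower bound from exponential growth plus the normalization, the strictly outer-minimizing property to force $\Sigma_{T-C_0}^+$ out of small balls, and the oscillation bound plus the no-precompact-components property to propagate the level-set control inward. Your contradiction phrasing is equivalent to the direct version in the paper. One trivial point: your reduction only yields $D^h_\lambda(p)\subset\{u\le T\}\subset\{u<t\}$ for $t>T$, not $t=T$; apply the argument at $T-1$ to cover the endpoint.
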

\begin{proof}
    In small neighborhoods of $p$ the geometry is Euclidean-like, so we can establish similar estimates as \eqref{Eq: interior upper bound from area}. Then the lemma follows easily.
\end{proof}

In the following, we shall continue our discussion in the Poincar\'e model. At each point $p\in \mathbb R^3$, let us take a coordinate chart 
$$\mathcal X_p:(\mathbb B^3,O)\to (\mathbb R^3,p)$$
such that 
$$\mathcal X_p^*g_{hyp}=g_{poi}:=\frac{4}{(1-|x|^2)^2}g_{euc}.$$
From the asymptotically hyperbolic assumption, we know that $(\mathbb B^3,O,\mathcal X_p^*\hat g)$ converges to the hyperbolic space in $C^3$-sense as $p\to\infty$.

Denote
$$\mathfrak r(t)=\frac{e^{\frac{t}{2}}\sinh 1 }{\sqrt{1+e^t\sinh^21}+1}.$$
We remark that the inverse function of $\mathfrak r(t)$ gives the standard solution of the inverse mean curvature flow in the Poincar\'e model, and so the exponential area growth gives
\begin{equation}\label{Eq: transfer to ball model}
\area_{g_{poi}}(\partial \mathbb B_{\mathfrak r(t)})=4\pi e^t\sinh^2 1,
\end{equation}
where  $\mathbb B_{(\cdot)}$ denotes the Euclidean ball in $\mathbb B^3$ centered at the origin.
\begin{lemma}
    There is a constant $\rho_1>0$ such that, for any point $p$ in $\mathbb R^3\setminus \bar D_{\rho_1}$, the weak solution $u$ satisfies
    \begin{equation}\label{Eq: annulus estimate AH}
         \mathbb B_{\mathfrak r(t-1)}\subset \mathcal X_p^{-1}(\{u<t\})\subset \mathcal X_p^{-1}(\{u\leq t\})\subset \mathbb B_{\mathfrak r(t+1)}
    \end{equation}
    for any $t\in [T_s, T]$.
\end{lemma}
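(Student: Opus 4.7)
The plan is to argue by contradiction using a normalized blow-down in the Poincar\'e ball model and the uniqueness statement of Lemma \ref{Lem: standard solution hyperbolic}. Suppose no such $\rho_1$ exists; then there are sequences $p_i\to\infty$ in $\mathbb R^3$ and $t_i\in[T_s,T]$ at which some inclusion in \eqref{Eq: annulus estimate AH} fails. By compactness of $[T_s,T]$, pass to a subsequence with $t_i\to t_\infty$. Let $g_i:=\mathcal X_{p_i}^*\hat g$ and $u_i:=u\circ\mathcal X_{p_i}$ denote the pulled-back weak solution on $\mathbb B^3\setminus\{O\}$; by \eqref{Eq: normalization AH} it satisfies $\min u_i=0$ on the fixed $g_{poi}$-hyperbolic unit sphere about $O$. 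The asymptotically hyperbolic decay \eqref{Eq: C2 decay AH} yields $g_i\to g_{poi}$ in $C^3_{\mathrm{loc}}(\mathbb B^3)$, while the Huisken-Ilmanen gradient estimate combined with \eqref{Eq: interior upper bound}--\eqref{Eq: exterior lower bound} and the normalization gives uniform Lipschitz and $L^\infty$ bounds on every compact subset of $\mathbb B^3\setminus\{O\}$. After extraction, $u_i\to u_\infty$ locally uniformly, and the convergence of the IMCF functionals $J^K_{u_i}\to J^K_{u_\infty}$ shows that $u_\infty$ is a weak solution of inverse mean curvature flow in $(\mathbb B^3,g_{poi})$.

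To invoke Lemma \ref{Lem: standard solution hyperbolic}, I must verify the asymptotics and normalization of $u_\infty$. The normalization transfers to the limit by uniform convergence on the fixed $g_{poi}$-unit sphere. For the inner singularity, Lemma \ref{Lem: radius lower bound AH} supplies, for every $T'\in\mathbb R$, a uniform $\lambda(T')>0$ such that $\{u_i\le T'\}$ encloses a hyperbolic ball of radius $\lambda$ about $O$, from which $u_\infty(x)\to-\infty$ as $x\to O$. For the behavior at the ideal boundary, Lemma \ref{Lem: radius upper bound AH} furnishes a uniform $\Lambda(T')$ so that $\{u_i\le T'\}$ is contained in the hyperbolic ball of radius $\Lambda$ about $O$; passing to the limit, $u_\infty\ge T'$ off a fixed Euclidean ball of radius strictly less than one, so $u_\infty(x)\to+\infty$ as $|x|\to 1^-$. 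Lemma \ref{Lem: standard solution hyperbolic} then identifies $u_\infty$ as the standard radial solution, whose sub-level set $\{u_\infty<t\}$ equals $\mathbb B_{\mathfrak r(t)}$ in view of \eqref{Eq: transfer to ball model}.

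Finally, the annulus estimate follows from local uniform convergence. Since $u_\infty=t_i\pm 1$ on $\partial\mathbb B_{\mathfrak r(t_i\pm 1)}$, for $i$ large we have $u_i<t_i-\tfrac{1}{2}$ on $\partial\mathbb B_{\mathfrak r(t_i-1)}$ and $u_i>t_i+\tfrac{1}{2}$ on $\partial\mathbb B_{\mathfrak r(t_i+1)}$, uniformly for $t_i\in[T_s,T]$ after passing to the compact shell $\{\tfrac{1}{2}\mathfrak r(T_s-1)\le|x|\le\mathfrak r(T+1)\}$. Because $\{u_i>s\}$ and $\{u_i<s\}$ admit no precompact component in $\mathbb B^3\setminus\{O\}$ (as in the proof of \cite[Lemma 4.2]{huisken2001inverse}), these boundary values propagate to the inclusions in \eqref{Eq: annulus estimate AH}, contradicting the choice of $(p_i,t_i)$. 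The main obstacle is uniformly controlling $u_\infty$ at the two ``boundaries'' of $\mathbb B^3\setminus\{O\}$ so that Lemma \ref{Lem: standard solution hyperbolic} applies; once this is in place, uniqueness converts local convergence into the explicit radial profile, and the annulus estimate drops out of a standard no-precompact-component comparison.
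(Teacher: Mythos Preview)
Your proof is correct and follows essentially the same route as the paper: pull back to the Poincar\'e ball, use the gradient estimate plus Lemmas~\ref{Lem: radius upper bound AH} and~\ref{Lem: radius lower bound AH} to pass to a limit satisfying the hypotheses of Lemma~\ref{Lem: standard solution hyperbolic}, identify the limit as the standard radial solution, and derive a contradiction. The only minor difference is in the last step, where the paper invokes the $C^{1,\beta}$ curvature estimates of \cite{heidusch2001regularitat} to pass the level surfaces to a limit, whereas you work directly with the locally uniform convergence of $u_i$ together with the no-precompact-component property; your version is slightly more elementary.
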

\begin{proof}
    It suffices to show that, for all $t\in [T_s,T]$, the surfaces $\Sigma_t$ and $\Sigma_t^+$ lie in the annulus region $\mathbb B_{\mathfrak r(t+1)}\setminus\bar{ \mathbb B}_{\mathfrak r(t-1)}$. 
    
    First, we show that, as $p$ goes to the infinity, the weak solutions $u_p\circ \mathcal X_p$ converge to the standard solution $u_O\circ \mathcal X_O$  up to a subsequence, where $u_O$ denoted the solution from Lemma \ref{Lem: standard solution hyperbolic}. It is clear that manifolds $(\mathbb B^3,O,\mathcal X_p^*\hat g)$ converge to $(\mathbb B^3,O,g_{poi})$ in locally $C^2$-sense as $p$ goes to the infinity. From \eqref{Eq: normalization AH} and local gradient estimate, we know that the weak solutions $u_p\circ\mathcal X_p$ converge to a weak solution for inverse mean curvature flow in $\mathbb B^3\setminus\{O\}$ up to a subsequence, denoted by $u_\infty$. Using Lemma \ref{Lem: radius lower bound AH} and Lemma \ref{Lem: radius upper bound AH}, we conclude that $u_\infty\to -\infty$ as $x\to O$ and $u_\infty\to +\infty$ as $x\to \infty$. From Lemma \ref{Lem: standard solution hyperbolic} we see $u_\infty=u_O\circ \mathcal X_O$.

    For all $t\in[T_s,T]$, the surfaces $\Sigma_t$ and $\Sigma_t^+$ have uniformly bounded area independent of $p$. Also, $\Sigma_t$ and $\Sigma_t^+$ have a definite distance to $p$ due to Lemma \ref{Lem: radius lower bound AH}. Therefore, it follows from the gradient estimate in \cite{huisken2001inverse} and the curvature estimate in \cite{heidusch2001regularitat} that the surfaces $\Sigma_t$ and $\Sigma_t^+$ have uniformly bounded second fundamental form. Using a standard contradiction-limiting argument, we can finally obtain the desired annulus neighborhood estimate \eqref{Eq: annulus estimate AH}.
\end{proof}

\begin{lemma}
    Given constants $\lambda_0>0$, $\theta_0>0$, $A_0>0$ and $B_0>0$ we can find constants $m_0<0$ and $\rho_1>0$ such that for each point $p\in \mathbb R^3\setminus \bar D_{\rho_1}$ if $\Sigma$ is a connected closed surface satisfying the following properties:
    \begin{itemize}
        \item $\Sigma$ encloses $D^h_{\lambda_0}$ and is contained in $D^h_{{\lambda_0^{-1}}}(p)$;
        \item $\Sigma$ satisfies $\area(\Sigma)\leq A_0$;
        \item $\mathcal X_p^{-1}(\Sigma)$ lies in the annulus region $\mathbb B_{\mathfrak r(t+\theta_0)}\setminus \bar{\mathbb B}_{\mathfrak r(t-\theta_0)}$ with
        $t=\ln\area(\Sigma)$;
        \item the second fundamental form of $\Sigma$ satisfies $\|\mathcal B_\Sigma\|\leq B_0$;
        \item the Hawking mass of $\Sigma$ satisfies
        $m_{h}^*(\Sigma)\geq m_0$,
    \end{itemize}
    then we have 
    \begin{equation}\label{Eq: star AH}
    \hat g(x-p,\nu(x))\geq {\iota_0}\cdot |x-p|_{\hat g}
    \end{equation}
    for all $x\in \Sigma$, where $\iota_0$ is a constant determined by $\lambda_0$.
\end{lemma}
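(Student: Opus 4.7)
The plan is to mirror the contradiction argument of Lemma \ref{Lem: star-shape}, carried out in the Poincar\'e ball chart $\mathcal X_{p_i}$. Supposing the statement fails, extract sequences $m_0^i\to 0^-$, $\rho_1^i\to\infty$, points $p_i\in\mathbb R^3\setminus \bar D_{\rho_1^i}$ and surfaces $\Sigma_i$ satisfying all five hypotheses but with some $q_i\in\Sigma_i$ violating the target star-shape inequality for the (to be chosen) constant $\iota_0=\iota_0(\lambda_0)$. Set $\tilde g_i=\mathcal X_{p_i}^*\hat g$, $\tilde\Sigma_i=\mathcal X_{p_i}^{-1}(\Sigma_i)$ and $\tilde q_i=\mathcal X_{p_i}^{-1}(q_i)$. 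By the asymptotic-hyperbolic decay \eqref{Eq: C2 decay AH}, the pointed manifolds $(\mathbb B^3,O,\tilde g_i)$ converge to $(\mathbb B^3,O,g_{poi})$ in $C^3_{loc}$, and the containment hypothesis transplants to $D^h_{\lambda_0}(O)\subset\mathrm{int}(\tilde\Sigma_i)\subset D^h_{\lambda_0^{-1}}(O)$ up to vanishing error, confining $\tilde\Sigma_i$ to a fixed closed annular region of $\mathbb B^3\setminus\{O\}$.

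Combined with the area bound $A_0$ and the curvature bound $B_0$, this yields uniform $C^{1,\beta}$ and $W^{2,p}$ control on $\tilde\Sigma_i$, so a subsequence converges in $C^{1,\beta}\cap W^{2,p}$ to a closed surface $\tilde\Sigma_\infty\subset\mathbb B^3$, with $\tilde q_i\to\tilde q_\infty\in\tilde\Sigma_\infty$; the violation of the star-shape inequality at $q_i$ passes to $\tilde q_\infty$ by the $C^1$-convergence of the unit normals. Using the area lower bound coming from $D^h_{\lambda_0}(p_i)\subset\mathrm{int}(\Sigma_i)$ and the Hawking mass hypothesis,
\[
\int_{\Sigma_i}(H_i^2-4)\,d\sigma_i\le 16\pi-(16\pi)^{3/2}m_0^i\,\area(\Sigma_i)^{-1/2}=16\pi+o(1)\qquad\text{as }i\to\infty,
\]
and this bound descends to $\tilde\Sigma_\infty$ by convergence of all geometric quantities.

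The rigidity step pins down the shape of $\tilde\Sigma_\infty$. The Gauss equation in hyperbolic space combined with Gauss--Bonnet gives $\int_\Sigma(H^2-4)=8\pi\chi(\Sigma)+2\int_\Sigma|\mathring A|^2$. The Willmore inequality $\int(H^2-4)\ge 8\pi^2$ for embedded tori in $\mathbb H^3$---a consequence of \cite{Marques2014Willmore} by conformal invariance of the Willmore functional in space forms, noting $16\pi<8\pi^2$---rules out higher-genus topology for $\tilde\Sigma_i$ when $i$ is large, so each $\tilde\Sigma_i$ is a topological sphere and in the limit $\int_{\tilde\Sigma_\infty}|\mathring A_\infty|^2=0$, forcing $\tilde\Sigma_\infty$ to be totally umbilical, i.e.\ a geodesic hyperbolic sphere. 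A De Lellis--M\"uller type umbilicity-rigidity result, adapted to $\mathbb H^3$, then upgrades the convergence $\tilde\Sigma_i\to\tilde\Sigma_\infty$ to $C^0$.

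Write $\tilde\Sigma_\infty$ as a hyperbolic sphere of radius $r$ and center $\tilde c\in\mathbb B^3$. The containment conditions force $\lambda_0+d_{hyp}(O,\tilde c)\le r\le\lambda_0^{-1}-d_{hyp}(O,\tilde c)$, giving $d_{hyp}(O,\tilde c)\le(\lambda_0^{-1}-\lambda_0)/2$ and pinching $r$ between two positive constants depending only on $\lambda_0$. The hyperbolic law of cosines then produces a quantitative star-shape constant $\iota_0=\iota_0(\lambda_0)>0$ for $\tilde\Sigma_\infty$ with respect to $O$, which translates back to the original Cartesian chart via $\mathcal X_{p_i}$ and contradicts the limiting violation at $\tilde q_\infty$. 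The main obstacle is the faithful translation of the Cartesian-based inequality $\hat g(x-p,\nu)\ge\iota_0|x-p|_{\hat g}$ into a genuine hyperbolic star-shape statement on $\tilde\Sigma_\infty$; this is handled by noting that the differential $d\mathcal X_{p_i}|_O$ identifies Cartesian directions from $p_i$ with radial Poincar\'e vectors from $O$ up to a universal conformal factor, so the ratio in question is preserved in the limit up to a fixed multiplicative constant and the contradiction closes.
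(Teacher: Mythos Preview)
Your overall strategy mirrors the paper's: argue by contradiction, transport to the Poincar\'e chart $\mathcal X_{p_i}$, extract a $C^{1,\beta}$ subsequential limit from the area and curvature bounds, use the Hawking-mass hypothesis together with Marques--Neves and Gauss--Bonnet to pin down the limit as a sphere, and then read off a definite star-shape lower bound for that sphere contradicting the assumed violation.

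The one substantive gap is in your rigidity step. You invoke ``a De Lellis--M\"uller type umbilicity-rigidity result, adapted to $\mathbb H^3$,'' which is neither cited nor proved and is not directly available in the literature. The paper handles this by transferring to the \emph{Euclidean} ball via the conformal relation $g_{poi}=\frac{4}{(1-|x|^2)^2}\,g_{euc}$: the hyperbolic quantity $\int(H^2-4)\,d\sigma_{hyp}$ becomes the Euclidean Willmore energy $\int\bar H^2\,d\sigma_{euc}$ (the conformal invariance you already exploit for the Willmore step), after which the standard Euclidean theorem of \cite{de2005optimal} applies verbatim to show that $\mathcal X_{p_i}^{-1}(\Sigma_i)$ converges in $C^0$ to a Euclidean round sphere---which, in the Poincar\'e ball, is automatically a hyperbolic geodesic sphere. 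In carrying this out the paper makes explicit use of the third hypothesis (the $\mathfrak r$-annulus condition with $t=\ln\area(\Sigma)$) to bound the \emph{Euclidean} area of $\mathcal X_{p_i}^{-1}(\Sigma_i)$ above and below, a hypothesis you never invoke. Finally, the paper extracts the constant $\iota_0(\lambda_0)$ by a soft compactness argument over the family of Euclidean spheres enclosing $\mathbb B_{\tanh\lambda_0}$ and contained in $\mathbb B_{\tanh\lambda_0^{-1}}$, rather than by an explicit hyperbolic-trigonometry computation; either route is valid here, but the compactness argument avoids the coordinate-translation issues you flag in your last paragraph.
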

\begin{proof}
    We argue by contradiction. Suppose that there is a sequence of pairs $(m_i,\rho_i)$ with $(m_i,\rho_i)\to(0,+\infty)$ as $i\to\infty$ such that there is a closed surface, denoted by $\Sigma_{i}$, enclosing some point $p_i$ in $\mathbb R^3\setminus\bar D_{\rho_i}$, which satisfies all the assumptions but \eqref{Eq: star AH}. Our goal is to show that if we take appropriate value for $\iota_0$ determined by $\lambda_0$, then we can obtain a contradiction.
    
    With $\iota_0$ to be determined later, we shall investigate the convergence of  $$(\mathbb B^3,O,\mathcal X^*_{p_i}\hat g,\mathcal X_{p_i}^{-1}(\Sigma_i)).$$ Recall that $(\mathbb R^3,\hat g)$ is asymptotically hyperbolic, so the ambient manifolds $(\mathbb B^3,O,\mathcal X_{p_i}^*\hat g)$ converge to the Poincar\'e model $(\mathbb B^3,O,g_{poi})$ as $i\to\infty$. Recall that each $\Sigma_i$ is contained in $D^h_{\lambda_0^{-1}}(p_i)$, so all surfaces $\mathcal X_{p_i}^{-1}(\Sigma_i)$ are contained in a fixed compact subset. In particular, we conclude that the surfaces $\mathcal X_{p_i}^{-1}(\Sigma_i)$ have uniformly bounded second fundamental forms and areas in the Poincar\'e model. Therefore, $\mathcal X_{p_i}^{-1}(\Sigma_i)$ must converge to a closed surface $\mathcal S$ in $C^{1,\alpha}$-sense, up to a subsequence. From the $C^{1,\alpha}$-convergence we have
    \begin{equation}\label{Eq: not star shape}
    g_{hyp}(x_0,\nu(x_0))\leq \iota_0\cdot|x_0|_{hyp}\mbox{ at some }x_0\in \mathcal X_O(\mathcal S).
    \end{equation}
    
    In the following discussion, we view $\mathcal X_{p_i}^{-1}(\Sigma_i)$ as surfaces in the Euclidean ball $(\mathbb B^3,g_{euc})$, and do some estimates. Similarly as before, we can derive the estimate
    $$m_{h,hyp}^*(\Sigma_i)=m_h^*(\Sigma_i)+o(1)\mbox{ as }i\to\infty,$$
    where we use $m_{h,hyp}^*(\Sigma_i)$ to denote the Hawking mass of $\Sigma_i$ in the hyperbolic space $(\mathbb R^3,g_{hyp})$. Since $\Sigma_i$ encloses $D^h_{\lambda_0}(p_i)$, $\area(\Sigma_i)$ are bounded from below by a positive constant, which implies
    $$\int_{\Sigma_i}((H_i^h)^2-4)\mathrm d\sigma_{hyp}=16\pi+o(1),$$
    where $H_i^h$ denotes the mean curvature of $\Sigma_i$ in the hyperbolic space. Denote $\bar H_i$ to be the mean curvature of $\mathcal X_{p_i}^{-1}(\Sigma_i)$ in the Euclidean ball. This further implies
    $$\int_{\mathcal X_{p_i}^{-1}(\Sigma_i)}H_i^2\,\mathrm d\sigma_{euc}=16\pi+o(1).$$
    Combined with the resolution of the Willmore conjecture (see \cite[Theorem A]{Marques2014Willmore}), we see that each surface $\mathcal X_{p_i}^{-1}(\Sigma_i)$ is a topological sphere for large $i$. Using the Gauss equation, we can derive
    $$\int_{\mathcal X_{p_i}^{-1}(\Sigma_i)}\|\mathring{\bar{\mathcal B_i}}\|^2_{g_{euc}}\,\mathrm d\sigma_{euc}=o(1),$$
    where $\mathring{\bar{\mathcal B_i}}$ denotes the trace-free part of the second fundamental form of $\mathcal X_{p_i}^{-1}(\Sigma_i)$ in the Euclidean ball. On the other hand, we can obtain Euclidean-area estimates for $\mathcal X_{p_i}^{-1}(\Sigma_i)$. To see this, we compute in the Poincar\'e model that
    $$\area_{g_{hyp}}(\Sigma_i)=\int_{\mathcal X_{p_i}^{-1}(\Sigma_i)}\frac{4}{(1-|x|^2)^2}\,\mathrm d\sigma_{euc}.$$
    Using the first property, the third properties, and also \eqref{Eq: transfer to ball model}, we have
    $$e^{t-\theta_0}\sinh^21\cdot\frac{1}{\tanh^2\lambda_0^{-1}}\leq \frac{4}{(1-|x|^2)^2}\leq e^{t+\theta_0}\sinh^21\cdot \frac{1}{\tanh^2\lambda_0}.$$
    Recall that we have $t=\ln\area(\Sigma_i)$ and
    $$\area_{g_{hyp}}(\Sigma_i)=(1+o(1))\area(\Sigma_i).$$
    We conclude that the Euclidean-areas of $\mathcal X_{p_i}^{-1}(\Sigma_i)$ are uniformly bounded from above and below. It follows from \cite[Theorem 1.1]{de2005optimal} that the surfaces $\mathcal X_{p_i}^{-1}(\tilde \Sigma_i)$ converge to a geodesic sphere in $C^0$-sense, up to a subsequence. In particular, we know that the $C^{1,\alpha}$-limit $\mathcal S$ is a geodesic sphere enclosing $\mathbb B_{\tanh\lambda_0}$ and contained in $\mathbb B_{\tanh \lambda_0^{-1}}$.
    
    From the compactness of those geodesic spheres enclosing $\mathbb B_{\tanh\lambda_0}$ and contained in $\mathbb B_{\tanh \lambda_0^{-1}}$ at the same time, there exists a constant $c$ depending only on $\lambda_0$ such that for any geodesic sphere $\Sigma$ we have
      $$g_{hyp}(x,\nu(x))\geq c\cdot|x|_{hyp}\mbox{ for all }x\in \mathcal X_O(\Sigma).$$   
      Take $\iota_0=c/2$. Then we can obtain a contradiction to \eqref{Eq: not star shape}. This completes the proof.
\end{proof}

It is easy to check that the smoothing argument with mean curvature flow still works in asymptotically hyperbolic case. Namely, we have
\begin{lemma}\label{Lem: MCF AH}
   The statement of Lemma \ref{Lem: MCF} is true when $(M,g)$ is replaced by an asymptotically hyperbolic manifold.
\end{lemma}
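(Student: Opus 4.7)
The plan is to follow the proof of Lemma \ref{Lem: MCF} essentially line by line, since asymptotic flatness of the ambient metric played a substantive role in exactly one place: producing a mean-convex foliation of a neighborhood of $\Sigma_t^+$ to force the smooth approximants to be strictly mean-convex. In the asymptotically hyperbolic setting the hyperbolic geodesic spheres $\partial D_r^h(p)$ centered at $p$ have mean curvature $2\coth r \geq 2$ in $g_{hyp}$, and by the $C^3$-decay condition \eqref{Eq: C2 decay AH} they remain strictly mean-convex with respect to $\hat g$ inside any fixed bounded annulus once $\rho_1$ is chosen large enough. Combined with the annulus neighborhood estimates from Lemma \ref{Lem: radius upper bound AH} and Lemma \ref{Lem: radius lower bound AH}, which pin $\Sigma_t^+$ inside such an annulus, this yields a direct replacement for the foliation by Euclidean spheres used in the AF proof.

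Concretely, I would first invoke the Huisken-Ilmanen regularity together with Allard and Calder\'on-Zygmund theory to conclude $\Sigma_t^+ \in C^{1,\beta}\cap W^{2,p}$ with bounded mean curvature, and then mollify to obtain smooth $\Sigma_i \to \Sigma_t^+$ in $C^{1,\beta}$ and $W^{2,p}$. Running smooth mean curvature flow $\Psi_i:N\times[0,\epsilon_0)\to \mathbb R^3$ from each $\Sigma_i$, the pseudolocality theorem (which only needs uniformly bounded ambient geometry, available on $(\mathbb R^3,\hat g)$) provides a uniform existence time together with the initial bound $\|\mathcal B^i_\epsilon\|^2\leq \alpha\epsilon^{-1}$, and interior parabolic Schauder then supplies all higher estimates analogous to \eqref{Eq: curvature estimate MCF}. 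Next I would reproduce the evolution equations (v) and (vi) of Lemma \ref{Lem: evolution equation MCF}; the additional contractions with the ambient Ricci and Riemann tensors are uniformly bounded, so the same integration-by-parts and Gronwall argument produces an $L^p$ estimate analogous to \eqref{Eq: Lp curvature estimate} and, after letting $i\to\infty$ and $p\to\infty$, the $L^\infty$ bound \eqref{Eq: curvature bound approximation}. The Hausdorff estimate \eqref{Eq: Hausdorff distance} together with $W^{2,p}$-compactness identifies the limit $\Sigma_{t,\epsilon}$ as emerging from $\Sigma_t^+$ in both $C^{1,\beta}$ and $W^{2,p}$, and the same Gronwall argument applied to $\|H^i_{\epsilon,-}\|_{L^2}$ gives $H\geq 0$ in the limit.

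The main (mild) obstacle, exactly as in the AF case, is upgrading $H\geq 0$ to strict mean-convexity, and this is precisely where the hyperbolic-sphere foliation enters: because each $\Sigma_{t,\epsilon}$ lies inside a fixed bounded annulus $D_\Lambda^h(p)\setminus D_\lambda^h(p)$ foliated by strictly $\hat g$-mean-convex hyperbolic spheres, a minimal $\Sigma_{t,\epsilon}$ would violate the standard maximum-principle comparison with this foliation. Hence each $\Sigma_{t,\epsilon}$ must be strictly mean-convex, completing the proof. No genuinely new analytic difficulty arises: the only structural feature of $\hat g$ invoked anywhere beyond the existence of the weak IMCF from $p$ is uniformly bounded geometry, which is built into the asymptotically hyperbolic assumption.
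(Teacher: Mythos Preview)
Your proposal is correct and matches the paper's approach. The paper itself offers no proof of this lemma beyond the preceding remark ``It is easy to check that the smoothing argument with mean curvature flow still works in asymptotically hyperbolic case,'' so your detailed sketch is exactly the intended verification: the pseudolocality, Schauder, and Gronwall steps from Lemma~\ref{Lem: MCF} rely only on bounded ambient geometry, and the single adaptation---replacing the foliation of $D_{ce^{T/2}}(p)\setminus\{p\}$ by mean-convex Euclidean spheres with a foliation by mean-convex $\hat g$-geodesic (equivalently, hyperbolic) spheres centered at $p$---is precisely what is needed to rule out minimal slices and force strict mean-convexity. One small clarification: Lemma~\ref{Lem: radius lower bound AH} gives an inner radius bound only for $t$ bounded below, so the annulus is not uniform over all $t\in(-\infty,T]$; but this is harmless, since for the maximum-principle step you only need $\Sigma_{t,\epsilon}\subset D^h_\Lambda(p)\setminus\{p\}$, and this full punctured ball is foliated by mean-convex spheres (small geodesic spheres are mean-convex in any manifold, and those of radius bounded above by $\Lambda$ have mean curvature close to $2\coth r\ge 2\coth\Lambda>0$ once $\rho_1$ is large).
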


In the following discussion, we are going to construct smooth inverse mean curvature flows from $\Sigma_t^+$ with $t\in [T_s,T]$, where $T_s$ is the constant in Corollary \ref{Cor: smoothness small scale} and $T$ is the prescribed constant in Theorem \ref{main-theorem-IMCF-AH}. First, we list the properties of $\Sigma_t^+$ to be used as follows. From Lemma \ref{Lem: radius lower bound AH} there is a constant $\lambda_0$ such that 
\begin{equation}\label{Eq: c1 AH}
\Sigma_t^+ \mbox{ encloses } D^h_{\lambda_0}(p_i)\mbox{ for all }t\geq T_s.
\end{equation}
From Lemma \ref{Lem: standard solution hyperbolic}, Lemma \ref{Lem: radius upper bound AH}, Lemma \ref{Lem: radius lower bound AH}, and a limiting argument, we can guarantee by taking $\rho_1$ large enough that
\begin{equation}\label{Eq: c2 AH}
\mathcal X_p^{-1}(\Sigma_t^+)\mbox{ lies in the annulus region }\mathbb B_{\mathfrak r(t+1)}\setminus \bar{\mathbb B}_{\mathfrak r(t-1)}
\end{equation}
for all $t\in [T_s,T]$. From Lemma \ref{Lem: radius upper bound AH} and the outer-minimizing property of $\Sigma_t^+$, we have
\begin{equation}\label{Eq: c3 AH}
\area(\Sigma_t^+)\leq A_0 \mbox{ for all }t\in (-\infty,T],
\end{equation}
where $A_0$ is a constant independent of $t$. From the Geroch monotonicity as well as Lemma \ref{Lem: Hawking mass limit AH}, we have
\begin{equation}\label{Eq: c4 AH}
m_h^*(\Sigma_t^+)\geq o(1)\mbox{ for all }t\in (-\infty,T],
\end{equation}
as $\rho_1\to \infty$. It follows from \cite[Theorem 3.1 and Theorem 1.3]{huisken2001inverse}, \cite[Theorem 5.5]{heidusch2001regularitat}, and the annulus neighborhood estimate \eqref{Eq: c2 AH} that, when $\rho_1$ is taken large enough, we have
\begin{equation}\label{Eq: c5 AH}
\|\mathcal B_{\Sigma_t^+}\|_{\mathcal L^\infty}\leq \Lambda\mbox{ for all }t\in[T_s,T],
\end{equation}
where $\Lambda$ denotes a universal constant independent of $p$ and $t$.

From Lemma \ref{Lem: MCF AH} we can take
$$\Psi:N\times(0,\tau)\to \mathbb R^3$$ to be the mean curvature flow in Lemma \ref{Lem: MCF} starting from the surface $\Sigma_t^+$. Recall that $N$ is the diffeomorphism type of $\Sigma_t^+$. Similarly as before, we use the notation $\Sigma_{t,\epsilon}:=\Psi(N,\epsilon)$ and we have
\begin{lemma}\label{Lem: N_t^+ properties AH}
The surfaces $\Sigma_{t,\epsilon}$ satisfy all the estimates \eqref{Eq: c1 AH}-\eqref{Eq: c5 AH} uniformly for small $\epsilon$ after slightly adjusting the constants therein.
\end{lemma}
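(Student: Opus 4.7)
The plan is to transfer each of the five conditions \eqref{Eq: c1 AH}--\eqref{Eq: c5 AH} from $\Sigma_t^+$ to its mean curvature flow smoothing $\Sigma_{t,\epsilon}$ by exploiting the $C^{1,\beta}$- and $W^{2,p}$-convergence $\Sigma_{t,\epsilon}\to\Sigma_t^+$ furnished by Lemma \ref{Lem: MCF AH}, together with the explicit curvature bound \eqref{Eq: curvature bound approximation}. After choosing $\epsilon_0=\epsilon_0(t)$ sufficiently small, we just need to absorb the resulting $o(1)$ errors into tiny adjustments of the constants $\lambda_0$, the annulus thickness, $A_0$, and the upper bound on $\|\mathcal B\|$.

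For the geometric inclusions \eqref{Eq: c1 AH} and \eqref{Eq: c2 AH}: the $C^{1,\beta}$-convergence gives $d_{\mathcal H}(\Sigma_{t,\epsilon},\Sigma_t^+)\to 0$ as $\epsilon\to 0^+$, so after shrinking $\lambda_0$ (say to $\lambda_0/2$) we have $\Sigma_{t,\epsilon}$ enclosing $D^h_{\lambda_0/2}(p)$, and after enlarging the annulus exponent $1$ to $2$ we have $\mathcal X_p^{-1}(\Sigma_{t,\epsilon})\subset \mathbb B_{\mathfrak r(t+2)}\setminus\bar{\mathbb B}_{\mathfrak r(t-2)}$. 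For the area estimate \eqref{Eq: c3 AH}, the $C^{1,\beta}$-convergence implies $\area(\Sigma_{t,\epsilon})\to\area(\Sigma_t^+)$, so $\area(\Sigma_{t,\epsilon})\leq 2A_0$ for small $\epsilon$. For the curvature bound \eqref{Eq: c5 AH}, we invoke \eqref{Eq: curvature bound approximation} from Lemma \ref{Lem: MCF} directly: $\|\mathcal B_\epsilon\|_{\mathcal L^\infty}\leq \|\mathcal B_{\Sigma_t^+}\|_{\mathcal L^\infty}\cdot e^{-C\epsilon-(c_0^2/\beta)\epsilon^\beta}\leq 2\Lambda$ for $\epsilon$ small.

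The nontrivial item is the Hawking-mass lower bound \eqref{Eq: c4 AH}, since $m_h^*$ involves $\int H^2$. Here the relevant ingredient is the $W^{2,p}$-convergence $\Sigma_{t,\epsilon}\to\Sigma_t^+$ for $p=2$, which gives $L^2$-convergence of the mean curvature: $\int_{\Sigma_{t,\epsilon}} H_\epsilon^2\,\mathrm d\sigma_\epsilon\to \int_{\Sigma_t^+} H^2\,\mathrm d\sigma$. Combined with the area convergence, one obtains
\[
m_h^*(\Sigma_{t,\epsilon})=m_h^*(\Sigma_t^+)+o(1)\quad\text{as }\epsilon\to 0^+,
\]
so \eqref{Eq: c4 AH} passes to $\Sigma_{t,\epsilon}$ after replacing the $o(1)$-as-$\rho_1\to\infty$ quantity on the right by a slightly larger one (still $o(1)$). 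The only potential pitfall is that the $W^{2,p}$-convergence in Lemma \ref{Lem: MCF AH} is phrased through the constant-$p$ estimate \eqref{Eq: Lp curvature estimate}, but since \eqref{Eq: c5 AH} already gives uniform pointwise bounds on $H_\epsilon$, the convergence of $\int H^2$ follows from dominated convergence combined with a.e.\ convergence of $H_\epsilon$ along a subsequence, which is the mildest version of the $W^{2,p}$ statement.

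Finally, choose a single $\epsilon_0=\epsilon_0(t)>0$ so small that all four convergence-based statements hold simultaneously for $\epsilon\in(0,\epsilon_0)$. The constants $\lambda_0$, $A_0$, $\Lambda$, and the annulus thickness are then replaced by the slightly weakened universal constants described above, which is the asserted ``slight adjustment.''
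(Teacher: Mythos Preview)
Your proposal is correct and follows exactly the approach the paper has in mind: the paper states this lemma without proof, treating it as the asymptotically hyperbolic analogue of Lemma~\ref{Lem: initial estimate}, whose entire proof is the single sentence ``This follows from the $C^{1,\alpha}$- and $W^{2,p}$-convergence as well as Lemma~\ref{Lem: MCF}.'' Your argument is a careful unpacking of that one line, correctly isolating the $W^{2,p}$-convergence as the ingredient needed for the Hawking mass estimate~\eqref{Eq: c4 AH}; the only remark is that your caveat about the $W^{2,p}$-convergence being ``phrased through the constant-$p$ estimate'' is unnecessary, since Lemma~\ref{Lem: MCF} (and hence Lemma~\ref{Lem: MCF AH}) already asserts $W^{2,p}$-convergence of $\Sigma_{t,\epsilon}\to\Sigma_t^+$ outright for all $p\ge 1$.
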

As before, we consider the smooth inverse mean curvature flow 
$$\Phi_\epsilon:N\times [0,\tau)\to \mathbb R^3$$
with the initial surface $\Sigma_{t,\epsilon}$, and denote $\Sigma_{t,\epsilon,s}=\Phi_\epsilon(N,s)$ for short. Our goal is to show
\begin{lemma}\label{Lem: smooth IMCF AH}
For any constant $s_0>0$, the constant $\rho_1$ can be taken large enough such that for each $\Sigma_t^+$ with $t\in[T_s,T]$ there is a smooth inverse mean curvature flow $\Phi^+:N\times (0,s_0]\to \mathbb R^3$ such that
$$d_{\mathcal H}(\Phi^+(N,s),\Sigma_t^+)\to 0\mbox{ as }s\to 0^+,$$
and that $\mathcal X_p^{-1}(\Phi^+(N,s))$ encloses $\mathbb B_{\mathfrak r(t-1+s/2)}$.
\end{lemma}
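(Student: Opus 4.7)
The plan is to replicate the box argument used in the proof of Lemma~\ref{Lem: smooth IMCF}, substituting the hyperbolic comparison geometry in place of the Euclidean one. Fix $t \in [T_s, T]$ and work with the smooth approximating flows $\Sigma_{t,\epsilon,s} = \Phi_\epsilon(N,s)$ with initial data $\Sigma_{t,\epsilon}$ from Lemma~\ref{Lem: MCF AH}. Let $\tau = \tau(\epsilon)$ denote the maximal smooth existence time. The goal is to show $\tau > s_0$ uniformly in $\epsilon$ (after taking $\rho_1$ large), establish uniform $C^k$ estimates on $[0, s_0]$, and then pass $\epsilon \to 0$ by Krylov's regularity theory.

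First I would record the a priori running estimates that survive along the flow. Using the standard comparison with the explicit hyperbolic solutions $r = r(t)$ in the Poincaré model together with the asymptotic hyperbolicity \eqref{Eq: C2 decay AH}, the same argument that produced \eqref{Eq: annulus estimate AH} shows that for $\rho_1$ large,
\[
\mathcal X_p^{-1}(\Sigma_{t,\epsilon,s}) \subset \mathbb B_{\mathfrak r(t+s+2)} \setminus \bar{\mathbb B}_{\mathfrak r(t+s/2 - 1)}
\]
on $[0,\tau)$; in particular $\Sigma_{t,\epsilon,s}$ lies in a fixed compact annulus around $p$, so it has a uniform area upper bound and encloses a ball of definite hyperbolic radius $\lambda_0$. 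The exponential area growth combined with Geroch monotonicity and Lemma~\ref{Lem: Hawking mass limit AH} gives $m^*_h(\Sigma_{t,\epsilon,s}) \geq o(1)$ as $\rho_1 \to \infty$, uniformly in $s \in [0,s_0]$.

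Next I would set up the box. By the previous star-shape lemma (the one established just before Lemma~\ref{Lem: MCF AH}), once we fix $\lambda_0$, $\theta_0 = s_0 + 3$, $A_0$, and $B_0$, there exist $m_0 < 0$ and $\rho_1$ large so that any surface satisfying the five listed hypotheses is star-shaped with respect to $p$ with parameter $\iota_0 = \iota_0(\lambda_0)$. Fix $\iota = \iota_0$ and define the box
\[
\mathcal R = \bigl\{ \Sigma : \hat g(x-p, \nu) \geq \tfrac{\iota}{4}\,|x-p|_{\hat g},\ \|\mathcal B_\Sigma\| \leq B \bigr\},
\]
where $B$ is the universal curvature bound output by \cite{huisken2001inverse} and \cite{heidusch2001regularitat} under the quarter star-shape condition, combined with the uniform annular location and area bound. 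Lemma~\ref{Lem: N_t^+ properties AH} together with the star-shape lemma guarantees that $\Sigma_{t,\epsilon} \in \mathcal R$ (with slack) once $\rho_1$ is large.

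The main step is then the box-exit argument. Suppose for contradiction that $\Sigma_{t,\epsilon,s}$ first leaves $\mathcal R$ at some time $s^* \leq s_0$. If the exit is through the $B$-edge, then the quarter star-shape condition held on $[0,s^*]$, so the curvature estimate yields $\|\mathcal B_{\Sigma_{t,\epsilon,s^*}}\| < B$, a contradiction. If the exit is through the $\iota/4$-edge, first note the flow is still smooth at $s^*$: under the quarter star-shape condition, the hyperbolic analogue of Lemma~\ref{Lem: star-shape to curvature} gives a positive lower bound on $H$, which via Krylov's theory propagates beyond $s^*$. Then by continuity there is a nearby $s^{**} \in (s^*, \tau)$ with $\inf \hat g(x-p,\nu)/|x-p|_{\hat g} < \iota/2$ on $\Sigma_{t,\epsilon,s^{**}}$, yet the star-shape lemma applied at $s^{**}$ (whose five hypotheses are all verified by our running estimates) forces the full star-shape bound $\geq \iota$, contradiction. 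The main obstacle is verifying the hypotheses of the star-shape lemma along the flow --- in particular the annulus location in the third hypothesis (the annulus in the Poincaré model centered at the correct height $t = \ln\area(\Sigma)$), which must be maintained as $s$ grows; this is where the choice $\theta_0 = s_0 + 3$ and the slack in the initial annulus estimate \eqref{Eq: c2 AH} are consumed.

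Once the box is not exited on $[0, s_0]$, the mean-curvature lower bound and Krylov's interior parabolic regularity give uniform $C^k$ estimates on $\Sigma_{t,\epsilon,s}$ for $s$ bounded away from $0$, independent of $\epsilon$. Passing to a subsequence $\epsilon_j \to 0$ yields a smooth limit flow $\Phi^+: N \times (0, s_0] \to \mathbb R^3$. The Hausdorff convergence $d_{\mathcal H}(\Phi^+(N,s), \Sigma_t^+) \to 0$ as $s \to 0^+$ follows from the uniform curvature bound together with the $C^{1,\beta}$-approximation property of $\Sigma_{t,\epsilon}$ in Lemma~\ref{Lem: MCF AH}. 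Finally, the enclosure of $\mathbb B_{\mathfrak r(t-1+s/2)}$ by $\mathcal X_p^{-1}(\Phi^+(N,s))$ is a direct consequence of the annulus estimate propagated above.
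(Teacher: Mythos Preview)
Your proposal is correct and takes essentially the same approach as the paper: the paper's own proof simply says ``similar to that of Lemma~\ref{Lem: smooth IMCF}'', and you have expanded this out by folding the box argument (which the paper records separately as Lemma~\ref{Lem: star-shape all time AH}) together with the annulus estimate (Lemma~\ref{Lem: Sigma_s annulus}), the mean-curvature lower bound (Lemma~\ref{Lem: star-shape to curvature AH}), Krylov's interior regularity, and the passage to the limit $\epsilon\to 0$. One cosmetic point: your box threshold is $\iota/4$ whereas the paper's is $\iota$, and the order in which you fix $B_0$ versus define $B$ is slightly tangled; setting the box at $\iota$ and then taking $B_0=B$ in the star-shape lemma (possible because its output $\iota_0$ depends only on $\lambda_0$) makes the $\iota$-edge contradiction clean, since at a nearby $s^{**}$ you still have star-shape $\ge \iota/4$ and hence the curvature bound needed to feed back into the star-shape lemma.
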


Clearly, we have the following lemmas for surfaces $\Sigma_{t,\epsilon,s}$ with $s\leq s_0$.
\begin{lemma}\label{Lem: Sigma_s area bound AH}
The areas of surfaces $\Sigma_{t,\epsilon,s}$ are uniformly bounded from above by a universal constant $\tilde A_0$ independent of $p$, $t$, $\epsilon$ and $s$.
\end{lemma}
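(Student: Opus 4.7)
The plan is to reduce the bound directly to the exponential area growth along smooth inverse mean curvature flow. Along any smooth IMCF one has the evolution equation
\[
\frac{\mathrm d}{\mathrm d s}\area(\Sigma_{t,\epsilon,s})=\int_{\Sigma_{t,\epsilon,s}}H\cdot H^{-1}\,\mathrm d\sigma=\area(\Sigma_{t,\epsilon,s}),
\]
so that $\area(\Sigma_{t,\epsilon,s})=e^{s}\area(\Sigma_{t,\epsilon,0})=e^{s}\area(\Sigma_{t,\epsilon})$. Thus it suffices to bound $\area(\Sigma_{t,\epsilon})$ uniformly in $p$, $t$ and $\epsilon$.

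First I would recall that by estimate \eqref{Eq: c3 AH} (which says $\area(\Sigma_t^+)\leq A_0$ for all $t\leq T$, coming from Lemma \ref{Lem: radius upper bound AH} together with the outer-minimizing property of $\Sigma_t^+$) and by Lemma \ref{Lem: N_t^+ properties AH}, the smoothed initial surfaces $\Sigma_{t,\epsilon}$ satisfy $\area(\Sigma_{t,\epsilon})\leq A_0'$ for some constant $A_0'$ arbitrarily close to $A_0$, uniformly in $p$, $t$ and $\epsilon$. Combining with the identity above and using $s\in[0,s_0]$, I obtain
\[
\area(\Sigma_{t,\epsilon,s})\leq e^{s_0}A_0'=:\tilde A_0,
\]
which is the required universal bound.

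There is essentially no obstacle here: the statement is a direct consequence of two previously established inputs — the exponential area growth law for smooth IMCF, and the uniform area bound on the mollified initial surfaces from Lemma \ref{Lem: N_t^+ properties AH}. The only mild subtlety is to note that one must use the area bound on $\Sigma_{t,\epsilon}$ (rather than on $\Sigma_t^+$ directly), but this is handled by Lemma \ref{Lem: N_t^+ properties AH} which guarantees that all the bounds \eqref{Eq: c1 AH}--\eqref{Eq: c5 AH} transfer to the approximations after an arbitrarily small adjustment of constants.
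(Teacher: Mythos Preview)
Your proof is correct and follows essentially the same approach as the paper: the paper's proof simply cites the exponential area growth, Lemma~\ref{Lem: N_t^+ properties AH}, and \eqref{Eq: c3 AH}, which is exactly the chain of reasoning you spell out. There is no meaningful difference between the two.
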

\begin{proof}
This follows from the exponential area growth, Lemma \ref{Lem: N_t^+ properties AH} and also \eqref{Eq: c3 AH}.
\end{proof}
\begin{lemma}\label{Lem: Sigma_s annulus}
For each surface $\Sigma_{t,\epsilon,s}$, $\mathcal X_p^{-1}(\Sigma_{t,\epsilon,s})$ lies in the annulus region $\mathbb B_{\mathfrak r(t+1+2s)}\setminus\bar{\mathbb B}_{\mathfrak r(t-1+s/2)}$.
\end{lemma}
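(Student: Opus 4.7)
The plan is to compare the smooth inverse mean curvature flow $\mathcal X_p^{-1}(\Sigma_{t,\epsilon,s})$ against explicit spherical barriers obtained from the standard hyperbolic IMCF solution in $(\mathbb B^3, g_{poi})$, using the fact that $\mathcal X_p^*\hat g\to g_{poi}$ in $C^2$ on compact subsets of $\mathbb B^3$ as $p\to\infty$ (a consequence of the asymptotically hyperbolic condition \eqref{Eq: C2 decay AH}).

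Following Lemma \ref{Lem: standard solution hyperbolic}, set $w(x)=2\log\sinh d_{g_{poi}}(O,x) - 2\log\sinh 1$ on $\mathbb B^3\setminus\{O\}$; its level sets are the Euclidean balls $\partial\mathbb B_{\mathfrak r(\tau)}=\{w=\tau\}$, with $|\nabla w|_{g_{poi}}=2\coth\rho$ matching the mean curvature of the level set in $g_{poi}$ (where $\rho=d_{g_{poi}}(O,\cdot)$), so $w$ solves IMCF exactly in $g_{poi}$. Now define the rescaled barriers
\[
w_+(x)=\tfrac{1}{2}\bigl(w(x)-(t+1)\bigr),\qquad w_-(x)=2\bigl(w(x)-(t-1)\bigr),
\]
whose $s$-level sets are exactly $\partial\mathbb B_{\mathfrak r(t+1+2s)}$ and $\partial\mathbb B_{\mathfrak r(t-1+s/2)}$ respectively. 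In $g_{poi}$ the level-set mean curvature equals $2\coth\rho$ for both $w_\pm$, while $|\nabla w_+|_{g_{poi}}=\coth\rho$ and $|\nabla w_-|_{g_{poi}}=4\coth\rho$, so $w_+$ is a strict subsolution and $w_-$ a strict supersolution of IMCF with a definite margin. Since $\mathcal X_p^*\hat g=g_{poi}(1+o(1))$ in $C^2$ uniformly on the bounded region containing the flow, both $|\nabla w|_{\mathcal X_p^*\hat g}$ and the mean curvatures of level sets in $\mathcal X_p^*\hat g$ differ from their $g_{poi}$-values only by $o(1)$, so the strict sub/supersolution properties for $w_\pm$ persist in $\mathcal X_p^*\hat g$ after taking $\rho_1$ sufficiently large.

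Next, \eqref{Eq: c2 AH} combined with Lemma \ref{Lem: N_t^+ properties AH} gives $\mathcal X_p^{-1}(\Sigma_{t,\epsilon})\subset \mathbb B_{\mathfrak r(t+1)}\setminus\bar{\mathbb B}_{\mathfrak r(t-1)}$, so writing $U$ for the level-set function of the smooth flow (defined by $U(x)=s$ whenever $x\in \mathcal X_p^{-1}(\Sigma_{t,\epsilon,s})$), the initial containment $\{w_+\leq 0\}\supset\{U\leq 0\}\supset\{w_-\leq 0\}$ holds. Applying the weak comparison principle \cite[Theorem 2.2]{huisken2001inverse} (or equivalently a classical first-touching argument for smooth flows: any hypothetical first moment $s^*\in(0,s_0]$ of tangential contact with a barrier would force equality of mean curvatures at the contact point, but the barrier's outward normal velocity $\tanh\rho$ strictly exceeds the flow's velocity $1/H=\tanh\rho/2$ at that point by a definite margin up to $o(1)$, producing a contradiction) preserves these inclusions for all $s\in[0,s_0]$, yielding $\bar{\mathbb B}_{\mathfrak r(t-1+s/2)}\subset\overline{\{U\leq s\}}\subset\bar{\mathbb B}_{\mathfrak r(t+1+2s)}$ and therefore the desired annulus containment.

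The main obstacle is executing the comparison step cleanly --- one must ensure that the flow remains in the fixed compact annulus where the metric closeness $|\mathcal X_p^*\hat g-g_{poi}|_{C^2}=o(1)$ is uniform; this is automatic via a continuity argument since the barriers themselves confine the flow precisely to such an annulus as long as the estimate holds, so the first failure moment would violate the strict margin. The remaining steps --- computing $|\nabla w|_{\mathcal X_p^*\hat g}$ and the perturbed level-set mean curvatures up to $o(1)$ from \eqref{Eq: C2 decay AH} --- are routine tensor manipulations.
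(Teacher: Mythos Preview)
Your proof is correct and follows essentially the same approach as the paper: you use the initial annulus estimate from \eqref{Eq: c2 AH} via Lemma \ref{Lem: N_t^+ properties AH}, then construct spherical sub/supersolutions by rescaling the standard hyperbolic IMCF solution (which is exactly what the paper means by ``$\mathfrak r(t/2)$ and $\mathfrak r(2t)$ induce a subsolution and a supersolution''), and apply comparison. One tiny inaccuracy: at first tangential contact you get $H_{\mathrm{flow}}\ge H_{\mathrm{sphere}}$, not equality, but this only strengthens the inequality you need, and in any case your appeal to \cite[Theorem 2.2]{huisken2001inverse} is already sufficient.
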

\begin{proof}
This follows from Lemma \ref{Lem: N_t^+ properties AH}, the annulus neighborhood estimate \eqref{Eq: c2 AH}, and also the fact that functions $\mathfrak r(t/2)$ and $\mathfrak r(2t)$ induce a subsolution and a supersolution for inverse mean curvature flow around the infinity in the Poincar\'e model $(\mathbb B^3,\mathcal X_p^*\hat g)$, when $\rho_1$ is taken large enough.
\end{proof}
\begin{lemma}
The Hawking mass of $\Sigma_{t,\epsilon,s}$ satisfies
$$m^*_h(\Sigma_{t,\epsilon,s})\geq o(1)\mbox{ as }\rho_1\to+\infty.$$
\end{lemma}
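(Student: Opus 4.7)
The plan is to run the exact AH analogue of Lemma~\ref{Lem: Hawking mass scaled}: apply the Geroch-type monotonicity for $m_h^*$ along the smooth inverse mean curvature flow $s\mapsto\Sigma_{t,\epsilon,s}$, combined with the initial Hawking mass estimate for $\Sigma_{t,\epsilon,0}$ and the ambient scalar curvature bound $R(\hat g)\geq -6$ inherited from the end.

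First, I would check that Geroch's formula applies. By Lemma~\ref{Lem: MCF AH} the surfaces $\Sigma_{t,\epsilon}$ approximate $\Sigma_t^+$ in $C^{1,\beta}\cap W^{2,p}$; together with the annulus estimate \eqref{Eq: c2 AH} and the uniform curvature bound \eqref{Eq: c5 AH}, the pulled-back surfaces $\mathcal X_p^{-1}(\Sigma_{t,\epsilon})$ are $C^{1,\alpha}$-close in the Poincar\'e model to a geodesic sphere once $\rho_1$ is large, hence topological spheres. Smooth IMCF preserves diffeomorphism type, so each $\Sigma_{t,\epsilon,s}$ with $s\in[0,s_0]$ is a topological sphere. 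Moreover, by Lemma~\ref{Lem: Sigma_s annulus}, for $\rho_1$ large enough all the surfaces $\Sigma_{t,\epsilon,s}$ stay inside the asymptotically hyperbolic end $M\setminus K$, where $\hat g=g$ and therefore $R(\hat g)\geq -6$ by hypothesis.

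Next I apply the Geroch monotonicity for the modified Hawking mass. In any Riemannian $3$-manifold, along smooth IMCF on topological spheres, a direct calculation using the Gauss equation, the IMCF evolution equations, and $\chi(\Sigma_s)=2$ yields
$$
\frac{d}{ds}\,m_h^*(\Sigma_s)\;\geq\;\frac{|\Sigma_s|^{1/2}}{2\,(16\pi)^{3/2}}\int_{\Sigma_s}\bigl(R(\hat g)+6\bigr)\,\mathrm d\sigma,
$$
up to additional non-negative gradient and trace-free second fundamental form terms. By the previous paragraph the integrand is non-negative for every $s\in[0,s_0]$, so integrating gives
$$
m_h^*(\Sigma_{t,\epsilon,s})\;\geq\;m_h^*(\Sigma_{t,\epsilon,0}).
$$
The initial bound comes from Lemma~\ref{Lem: N_t^+ properties AH} together with \eqref{Eq: c4 AH}, which provide $m_h^*(\Sigma_{t,\epsilon,0})\geq o(1)$ as $\rho_1\to+\infty$ (the $o(1)$ being inherited from $m_h^*(\Sigma_t^+)\geq o(1)$ under the $C^{1,\beta}$/$W^{2,p}$ convergence). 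Combining the two estimates yields the conclusion.

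The main obstacle, as in the asymptotically flat case, is the Geroch step: one must confirm both the topological-sphere condition and the sign condition on the scalar curvature along the entire flow. Neither is free — the sphere condition relies on the smooth flow starting from a $C^{1,\alpha}$-close-to-round initial surface (which in turn uses the uniform area, curvature, annulus and Hawking mass estimates), and the sign condition requires taking $\rho_1$ large enough that the whole flow lives in the end. Once both are secured, the $R\geq -6$ hypothesis converts the Geroch identity into a clean monotonicity, and the proof reduces to an initial-data estimate already available from Lemma~\ref{Lem: N_t^+ properties AH}.
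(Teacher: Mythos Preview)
Your overall strategy---Geroch monotonicity for $m_h^*$ combined with the initial bound from Lemma~\ref{Lem: N_t^+ properties AH} and \eqref{Eq: c4 AH}---is the same as the paper's. The gap is the step where you invoke $R(\hat g)\geq -6$ ``by hypothesis''. This lemma is part of the proof of Theorem~\ref{main-theorem-IMCF-AH}, which is a pure regularity statement for IMCF in an asymptotically hyperbolic end and carries \emph{no} scalar curvature hypothesis; the assumption $R\geq -6$ belongs only to Proposition~\ref{Prop: sweep-out AH} and is not in force here. The decay condition \eqref{Eq: C2 decay AH} yields merely $R(\hat g)+6=O(e^{-3r})$, which may well be negative, so you do not get clean monotonicity of $m_h^*$.

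The paper proceeds exactly as in the asymptotically flat analogue (Lemma~\ref{Lem: Hawking mass scaled}): Geroch on connected slices gives
\[
m_h^*(\Sigma_{t,\epsilon,s})\ \ge\ m_h^*(\Sigma_{t,\epsilon,0})+\frac{1}{(16\pi)^{3/2}}\int_0^s\area(\Sigma_{t,\epsilon,\tau})^{1/2}\!\int_{\Sigma_{t,\epsilon,\tau}}\!\bigl(R(\hat g)+6\bigr)\,\mathrm d\sigma\,\mathrm d\tau,
\]
and since $|R(\hat g)+6|=o(1)$ on the flow region as $\rho_1\to\infty$, the error term is bounded by $o(1)\cdot s_0\cdot \tilde A_0^{3/2}=o(1)$ via the uniform area bound of Lemma~\ref{Lem: Sigma_s area bound AH}. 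This is precisely why the paper's one-line proof cites that area lemma in addition to the Geroch monotonicity. Your argument becomes correct once you replace ``$R+6\ge 0$, hence $m_h^*$ is nondecreasing'' by ``$R+6=o(1)$, hence $m_h^*$ is nondecreasing up to an $o(1)$ error controlled by the area bound''.
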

\begin{proof}
This follows from Lemma \ref{Lem: N_t^+ properties AH}, the Hawking mass lower bound \eqref{Eq: c4 AH}, the Geroch monotonicity, and Lemma \ref{Lem: Sigma_s area bound AH}.
\end{proof}
Define
$$\tilde X=\frac{x}{|x|_{g_{poi}}}\mbox{ with }x\in \mathbb B_1.$$
\begin{lemma}
Given a positive constant $\iota_0$, we can take $\rho_1$ large enough and a universal constant $\tilde B_0$ independent of $p$, $t$, $\epsilon$ and $s$ such that if $\mathcal X_p^{-1}(\Sigma_{t,\epsilon,s})$ satisfies
\begin{equation}\label{Eq: starshape assumption AH}
\mathcal X_p^*\hat g(\tilde X,\nu(x))\geq \iota_0>0
\end{equation}
for all $s\in [0,\tau^*]$ with $\tau^*<\tau$, then the second fundamental form $\mathcal B_s$ of $\Sigma_{t,\epsilon,s}$ satisfies
$$\|\mathcal B_s\|\leq \tilde B_0$$
for all $s\in[0,\tau^*]$.
\end{lemma}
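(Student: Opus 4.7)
The plan is to mirror the proof of Lemma \ref{Lem: curvature estimate scaled}, replacing the scaling argument used in the asymptotically flat setting with the pullback by the Poincar\'e chart $\mathcal X_p$. First I would pull the entire flow back to $\mathbb B^3$ via $\mathcal X_p^{-1}$. By the decay \eqref{Eq: C2 decay AH} of the asymptotically hyperbolic metric, the pulled-back ambient metric $\mathcal X_p^*\hat g$ converges to the Poincar\'e metric $g_{poi}$ locally in $C^3$ on $\mathbb B^3$ as $p\to\infty$. Combined with Lemma \ref{Lem: Sigma_s annulus}, for $t\in[T_s,T]$ and $s\in[0,s_0]$ the pulled-back surfaces $\mathcal X_p^{-1}(\Sigma_{t,\epsilon,s})$ all lie inside the fixed compact annulus
\[
K=\bar{\mathbb B}_{\mathfrak r(T+1+2s_0)}\setminus \mathbb B_{\mathfrak r(T_s-1)},
\]
whose closure is disjoint from both $O$ and $\partial\mathbb B^3$; hence the geometry of $\mathcal X_p^*\hat g$ on $K$ is uniformly controlled, independently of $p$, $t$, $\epsilon$ and $s$.

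Second, Lemma \ref{Lem: N_t^+ properties AH} together with the initial curvature bound \eqref{Eq: c5 AH} yields a universal upper bound $\|\mathcal B_{\Sigma_{t,\epsilon}}\|\leq \Lambda'$ on the second fundamental form of the mean-curvature-flow smoothing $\Sigma_{t,\epsilon}$, independent of $p$, $t$ and $\epsilon$. Under the star-shape hypothesis \eqref{Eq: starshape assumption AH}, the Huisken-Ilmanen interior curvature estimate \cite[Theorem 3.1]{huisken2001inverse}, together with Heidusch's maximum-principle-based bound \cite[Theorem 5.1]{heidusch2001regularitat}, would then deliver an a priori estimate of the form
\[
\|\mathcal B_s\|\leq c_1\bigl(1+\|\mathcal B_0\|\bigr)\qquad\text{for all }s\in[0,\tau^*],
\]
in which $c_1$ depends only on the star-shape constant $\iota_0$, on uniform bounds for the ambient curvature and its first derivatives on $K$, on the length of the time interval $[0,s_0]$, and on the uniform upper and lower radius bounds for the surfaces coming from Lemma \ref{Lem: Sigma_s annulus}.

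Third, taking $\rho_1$ large enough so that $\mathcal X_p^*\hat g$ is as $C^3$-close to $g_{poi}$ on $K$ as needed, the constant $c_1$ becomes independent of $p$, $t$, $\epsilon$ and $s$, and setting $\tilde B_0=c_1(1+\Lambda')$ finishes the proof. The main obstacle I foresee is the careful bookkeeping required to verify that the constants appearing in the Huisken-Ilmanen and Heidusch estimates depend only on ambient geometric data that is actually controlled by the $C^3$-convergence $\mathcal X_p^*\hat g\to g_{poi}$ on $K$; since both of these estimates are proved using only curvature-type bounds and ambient Sobolev inequalities, this stability under $C^3$ perturbations of the ambient metric is expected to be automatic, but it does require tracing through each hypothesis. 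Once this is done, the rest of the argument is a direct transcription of the asymptotically flat Lemma \ref{Lem: curvature estimate scaled}.
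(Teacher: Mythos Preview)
Your proposal is correct and follows essentially the same approach as the paper: pull back via $\mathcal X_p$, use Lemma \ref{Lem: Sigma_s annulus} to confine the surfaces to a fixed compact subset, invoke \cite[Theorem 3.1]{huisken2001inverse} and \cite[Theorem 5.1]{heidusch2001regularitat} to obtain $\|\mathcal B_s\|\le c_1(1+\|\mathcal B_0\|)$, and then conclude from the initial bound in Lemma \ref{Lem: N_t^+ properties AH} and \eqref{Eq: c5 AH}. Your additional bookkeeping about the $C^3$-closeness of $\mathcal X_p^*\hat g$ to $g_{poi}$ on $K$ simply makes explicit what the paper encodes in the phrase ``after taking $\rho_1$ large enough.''
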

\begin{proof}
From Lemma \ref{Lem: Sigma_s annulus} we know that all surfaces $\mathcal X_p^{-1}(\Sigma_{t,\epsilon,s})$ are contained in a fixed compact subset. Then we can conclude from \cite[Theorem 3.1]{huisken2001inverse} and \cite[Theorem 5.1]{heidusch2001regularitat} that there is a positive constant $c_1$, which is independent of $p$, $t$, $\epsilon$ and $s$, such that after taking $\rho_1$ large enough we have
$$\|\mathcal B_s\|\leq c_1(1+\|\mathcal B_0\|).$$
The conclusion follows from Lemma \ref{Lem: N_t^+ properties AH} and also \eqref{Eq: c5 AH}.
\end{proof}
\begin{lemma}\label{Lem: star-shape to curvature AH}
 Given any positive constant $\iota_0$, we can take $\rho_1$ large enough and a universal constant $c$, independent of $p$, $t$, $\epsilon$ and $s$, such that under the assumption \eqref{Eq: starshape assumption AH} we have for all $s\in [0,\tau^*]$ that
\begin{equation}\label{Eq: mean curvature lower bound AH}
H_s\geq c\min\{1,s^{\frac{1}{2}}\}.
\end{equation}
\end{lemma}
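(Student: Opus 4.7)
The plan is to mimic the proof of Lemma \ref{Lem: star-shape to curvature}, substituting the Euclidean support function with its hyperbolic analog so that the ambient curvature contributions cancel cleanly. I would work in the Poincar\'e ball $(\mathbb B^3, \mathcal X_p^*\hat g)$, which is $C^3$-close to $g_{poi}$ uniformly on the compact region containing $\mathcal X_p^{-1}(\Sigma_{t,\epsilon,s})$ guaranteed by Lemma \ref{Lem: Sigma_s annulus}. Let $r$ denote the hyperbolic distance from the origin and consider the support function
\[
v = \sinh r \cdot \mathcal X_p^*\hat g(\partial_r, \nu),
\]
which in the exactly hyperbolic background is the canonical conformal-Killing support function. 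By the starshape hypothesis \eqref{Eq: starshape assumption AH} together with the two-sided annulus enclosure of Lemma \ref{Lem: Sigma_s annulus}, one has $v \asymp 1$ with constants depending only on $\iota_0$ and $s_0$.

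The first computational step is to derive evolution equations for $v$ and for $H^{-1}$ under the smooth inverse mean curvature flow. In pure hyperbolic space, a direct calculation using $\nabla^2(\cosh r) = \cosh r \cdot g$ yields
\[
\left(\partial_s - H^{-2}\Delta\right) v = \frac{\|\mathcal B\|^2 - 2}{H^2}\, v,
\]
while the standard evolution equation (see Lemma \ref{Lem: evolution equation IMCF}) combined with $\mathrm{Ric}(\nu) = -2$ gives
\[
\left(\partial_s - H^{-2}\Delta\right) H^{-1} = \frac{\|\mathcal B\|^2 - 2}{H^2}\, H^{-1}.
\]
In the asymptotically hyperbolic setting, the $C^3$-decay \eqref{Eq: C2 decay AH} of $Q$ forces the discrepancy from these equalities to be of order $H^{-2}(1+\|\mathcal B\|)\cdot o(1)$ as $\rho_1 \to +\infty$, using the uniform curvature bound already established. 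The key observation is that the coefficients of $v$ and $H^{-1}$ match up to these small errors, so the product $w := (Hv)^{-1}$ satisfies exactly the same divergence-form parabolic inequality as in the Euclidean case:
\[
\partial_s w = \mathrm{div}\bigl(H^{-2}\nabla w\bigr) - 2 H^{-2} w^{-1}|\nabla w|^2 + H^{-2} w \cdot o(1).
\]

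From this point I would copy the Stampacchia iteration from Steps 1 and 2 of the proof of Lemma \ref{Lem: star-shape to curvature} with only notational changes. The Michael--Simon--Sobolev and Poincar\'e inequalities on $\Sigma_{t,\epsilon,s}$ hold in the present hyperbolic setting by the same perturbative argument as in Lemma \ref{Lem: Sobolev}, Corollary \ref{Cor: L2 Sobolev}, and Corollary \ref{Cor: Poincare}, because we already have a uniform area bound (Lemma \ref{Lem: Sigma_s area bound AH}) and a uniform second fundamental form bound under \eqref{Eq: starshape assumption AH}; the ambient metric being close to $g_{poi}$ rather than Euclidean only changes the comparison by $1+o(1)$ factors. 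The two-case Stampacchia argument ($s_2 < 2$ versus $s_2 \geq 2$) then yields $\|w\|_{L^\infty(\Sigma_{t,\epsilon,s})} \leq \tilde c\min\{1,s\}^{-1/2}$, and unwinding $w = (Hv)^{-1}$ with the upper bound on $v$ produces the desired lower bound \eqref{Eq: mean curvature lower bound AH}.

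The main obstacle is the identification of the correct hyperbolic support function so that the cancellation between $\mathrm{Ric}(\nu) = -2$ and the conformal factor $\sinh r$ produces the matching $\|\mathcal B\|^2 - 2$ coefficient in both evolution equations; if one used $\langle x, \nu\rangle$ directly (as in the Euclidean proof), the resulting $w$-equation would carry an uncontrolled zeroth-order term of order $H^{-2} w$ that would spoil the Stampacchia argument. Once this choice is in place, the remaining work is routine error-term bookkeeping of the type already carried out in Section \ref{Sec: sweep-out AF}, together with verifying that the auxiliary functional inequalities pass perturbatively from the hyperbolic model to $\mathcal X_p^*\hat g$ for $\rho_1$ large.
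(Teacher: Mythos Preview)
Your choice of support function $v=\sinh r\,\langle\partial_r,\nu\rangle$ agrees with the paper's, but the evolution equation you state for it is incorrect. In pure hyperbolic space the conformal-Killing identity $\bar\nabla_Y X=\cosh r\cdot Y$ gives $\Delta v=\cosh r\cdot H+\langle X^T,\nabla H\rangle-\|\mathcal B\|^2 v$ (the term $\overline{\Ric}(X^T,\nu)$ vanishes since $X^T\perp\nu$), and the $\cosh r$ contributions cancel against $\partial_s v$ exactly as in the Euclidean case. Hence
\[
\left(\partial_s-H^{-2}\Delta\right)v=\frac{\|\mathcal B\|^2}{H^2}\,v,
\]
not $(\|\mathcal B\|^2-2)/H^2\cdot v$. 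The coefficients for $v$ and for $H^{-1}$ therefore do \emph{not} match; when you form $w=(Hv)^{-1}$, the discrepancy survives and the correct equation is
\[
\partial_s w=\Div(H^{-2}\nabla w)-2H^{-2}w^{-1}|\nabla w|^2+H^{-2}w\bigl(\Ric(\nu)+o(1)\bigr),
\]
with $\Ric(\nu)=-2+o(1)$. So the zeroth-order term is $-2H^{-2}w$ up to errors, not $o(1)\cdot H^{-2}w$.

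This is exactly the point where the paper's argument departs from the Euclidean template you propose to copy. The favorable sign of $-2H^{-2}w$ lets the paper avoid the Stampacchia iteration and the Sobolev/Poincar\'e machinery altogether: setting $\hat w=(s-s_1)^{1/2}w$ (note the exponent $1/2$, not $1/4$) and choosing the single level $k=(\sinh\lambda_0\,\iota_0)^{-1}$, one gets $\partial_s\int\hat w_k^2\le\int\hat w_k^2$ directly, whence $\hat w\le k$ by Gronwall and the desired bound follows. The paper never states or proves analogues of Lemma~\ref{Lem: Sobolev} or Corollaries~\ref{Cor: L2 Sobolev}--\ref{Cor: Poincare} in the hyperbolic section, because they are not needed. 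Your Stampacchia route would still succeed once the $w$-equation is corrected (the extra good-sign term only helps every inequality), but it is substantially longer and requires you to supply those auxiliary functional inequalities in the asymptotically hyperbolic setting, which is additional work the paper simply sidesteps.
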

\begin{proof}
According to the evolution equation (iv) in Lemma \ref{Lem: evolution equation IMCF}, we have
$$
\left(\frac{\partial}{\partial s}- H^{-2}\Delta\right) H^{-1}=\frac{\|{\mathcal B}\|^2}{ H^2} H^{-1}+\frac{\Ric(\nu)}{ H^2} H^{-1}.
$$
Define
$$v=\langle X,\nu\rangle\mbox{ where }X=\sinh|x-p|\cdot\frac{x-p}{|x-p|}.$$
As in the proof of Lemma \ref{Lem: star-shape to curvature} we can compute
$$
\left(\frac{\partial}{\partial s}- H^{-2}\Delta\right) v=\frac{\|{\mathcal B}\|^2}{ H^2} v+ H^{-2}\left(o(\cosh|x-p|)+o(|X|)+o(|X|\|{\mathcal B}\|)\right).
$$
From Lemma \ref{Lem: Sigma_s annulus} and the assumption \eqref{Eq: starshape assumption AH} we can write
$$\left(\frac{\partial}{\partial s}- H^{-2}\Delta\right) v=\frac{\|\mathcal B\|^2}{H^2}v+\frac{1+\|{\mathcal B}\|}{ H^2}v\cdot o(1).$$
Define
$$w=(Hv)^{-1}.$$
Then $w$ satisfies the evolution equation
$$\frac{\partial}{\partial s}w=\Div\left(H^{-2}\nabla w\right)-2H^{-2}w^{-1}|\nabla w|^2+H^{-2}w\left(\Ric(\nu)+o(1+\|{\mathcal B}\|)\right).$$
Since $\mathcal B$ is uniformly bounded and $\Ric(\nu)=-2+o(1)$, we have
$$\frac{\partial}{\partial s}w\leq\Div\left(H^{-2}\nabla w\right)-2H^{-2}w^{-1}|\nabla w|^2-H^{-2}w$$
when $\rho_1$ is taken to be large enough. 

Fix any two constants $s_1, s_2$ such that $0<s_1<s_2\leq \tau^*$ and $s_2-s_1\leq 1$. We let $s$ vary in $[s_1,s_2]$. Define
$$\hat w=(s-s_1)^{\frac{1}{2}}w\mbox{ and }\hat w_k=\max\{w-k,0\}.$$
Then $\hat w$ satisfies
$$
\frac{\partial}{\partial s}\hat w\leq\Div\left(H^{-2}\nabla \hat w\right)-2H^{-2}\hat w^{-1}|\nabla \hat w|^2-H^{-2}\hat w+\frac{1}{2}(s-s_1)^{-1} \hat w.
$$
From this we can compute
\[
\begin{split}
\frac{\partial}{\partial s}\int_{\Sigma_{t,\epsilon,s}}\hat w_k^2\,\mathrm d\sigma \leq& -2\int_{\Sigma_{t,\epsilon,s}} H^{-2}\hat w\cdot\hat w_k\,\mathrm d \sigma\\
&\quad+\frac{1}{(s-s_1)}\int_{\Sigma_{t,\epsilon,s}}\hat w\cdot\hat w_k\,\mathrm d \sigma+\int_{\Sigma_{t,\epsilon,s}}\hat w_k^2\,\mathrm d \sigma.
\end{split}
\]
From \eqref{Eq: c1 AH} and the assumption \eqref{Eq: starshape assumption AH} we have
\[
\begin{split}
\int_{\Sigma_{t,\epsilon,s}}H^{-2}\hat w\cdot\hat w_k\,\mathrm d \sigma&=(s-s_1)^{-1}\int_{\Sigma_{t,\epsilon,s}}v^2\hat w^3\cdot \hat w_k\,\mathrm d\sigma\\
&\geq (s-s_1)^{-1}\sinh^2\lambda_0 \iota_0^2k^2\int_{\Sigma_{t,\epsilon,s}}\hat w\cdot\hat w_k\,\mathrm d\sigma.
\end{split}\]
Take
$$k=\frac{1}{\sinh\lambda_0\iota_0}.$$
Then we obtain
$$\frac{\partial}{\partial s}\int_{\Sigma_{t,\epsilon,s}}\hat w_k^2\,\mathrm d\sigma \leq\int_{\Sigma_{t,\epsilon,s}}\hat w_k^2\,\mathrm d\sigma.$$
Notice that $\hat w=0$ when $s=s_1$, so we have
$$\int_{\Sigma_{t,\epsilon,s_1}}\hat w_k^2\,\mathrm d\sigma=0,\mbox{ which further yields }\int_{\Sigma_{t,\epsilon,s}}\hat w_k^2\,\mathrm d\sigma=0.$$
In particular, we have $\hat w\leq k$ on $\Sigma_{t,\epsilon,s_2}$, which implies
$$H_{s_2}\geq \sinh\lambda_0\iota_0v^{-1}(s_2-s_1).$$
From Lemma \ref{Lem: Sigma_s annulus} we know that all $\Sigma_{t,\epsilon,s}$ are contained in a compact subset, so $v$ has a uniform upper bound. With the same choice of $s_1$ as in the proof of Lemma \ref{Lem: star-shape to curvature}, we obtain the desired estimate.
\end{proof}
 \begin{lemma}\label{Lem: star-shape all time AH}
    There are two positive constants $\iota$ and $B$, independent of $p$, $t$, $\epsilon$ and $s$, such that the constant $\rho_1$ can be taken large enough to ensure that $\Sigma_{t,\epsilon,s}$ satisfies 
    $$\mathcal X_p^*\hat g(\tilde X,\nu(x))\geq \iota\mbox{ and }\|\mathcal B\|<B$$
 for all $s\in [0,s_0]$.
 \end{lemma}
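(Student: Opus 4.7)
The plan is to mimic the box argument used in Lemma \ref{Lem: star-shape all time}, substituting the hyperbolic-flavored lemmas established above. First, I would fix the ``universal'' geometric constants that control the family $\Sigma_t^+$, $\Sigma_{t,\epsilon}$ and $\Sigma_{t,\epsilon,s}$. Using \eqref{Eq: c1 AH}, Lemma \ref{Lem: radius upper bound AH}, the annulus estimate \eqref{Eq: c2 AH} together with Lemma \ref{Lem: Sigma_s annulus}, Lemma \ref{Lem: N_t^+ properties AH} and Lemma \ref{Lem: Sigma_s area bound AH}, I pick constants $\lambda_0,\theta_0,A_0$ (independent of $p,t,\epsilon,s$) so that every surface in the above family lies in $D^h_{\lambda_0^{-1}}(p)\setminus D^h_{\lambda_0}(p)$, has $\mathcal X_p^{-1}$-image inside $\mathbb B_{\mathfrak r(\tau+\theta_0)}\setminus\bar{\mathbb B}_{\mathfrak r(\tau-\theta_0)}$ with $\tau=\ln\area(\cdot)$, and satisfies the area bound $\area(\cdot)\leq A_0$.

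Next I apply the star-shape lemma (the one preceding Lemma \ref{Lem: MCF AH}) with the above data to produce the target star-shape constant $\iota=\iota(\lambda_0)>0$. Taking $\iota_0=\iota/4$ in the curvature estimate under the starshape assumption \eqref{Eq: starshape assumption AH}, I obtain a universal upper bound $\tilde B_0$ for $\|\mathcal B\|$, and combining with \eqref{Eq: c5 AH} and Lemma \ref{Lem: N_t^+ properties AH} I fix $B>0$ so that
\[
\|\mathcal B_\Sigma\|<B\quad\text{for } \Sigma=\Sigma_t^+,\,\Sigma_{t,\epsilon},\,\Sigma_{t,\epsilon,s}\text{ (with }s\leq\tau^*)
\]
whenever the starshape assumption \eqref{Eq: starshape assumption AH} with $\iota_0=\iota/4$ holds. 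Feeding $\theta_0$, $A_0$ and $B_0:=B$ back into the star-shape lemma, I fix a Hawking-mass threshold $m_0<0$; then taking $\rho_1$ large enough makes \eqref{Eq: c4 AH} and the Hawking-mass estimate for $\Sigma_{t,\epsilon,s}$ (the unnamed lemma just before Lemma \ref{Lem: star-shape to curvature AH}) both beat $m_0$.

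With these constants in hand, I define the box region
\[
\mathcal R=\{\Sigma : \mathcal X_p^*\hat g(\tilde X,\nu)\geq \iota\text{ and }\|\mathcal B_\Sigma\|\leq B\}.
\]
The initial surface $\Sigma_{t,\epsilon}$ lies in $\mathcal R$ by one application of the star-shape lemma to $\Sigma_{t,\epsilon}$. Suppose now, for contradiction, that $s^*\in(0,s_0]$ is the first instant at which $\Sigma_{t,\epsilon,s}$ leaves $\mathcal R$. If $\Sigma_{t,\epsilon,s^*}$ sits on the $B$-edge, then the starshape bound \eqref{Eq: starshape assumption AH} with $\iota_0=\iota/4$ still holds throughout $[0,s^*]$ by the definition of $s^*$, so the curvature estimate forces $\|\mathcal B\|<B$ at $s^*$, a contradiction. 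If $\Sigma_{t,\epsilon,s^*}$ sits on the $\iota$-edge, then Lemma \ref{Lem: star-shape to curvature AH} together with Krylov regularity shows $s^*<\tau$, so by continuity of the flow I can find $s^{**}$ slightly past $s^*$ with starshape value strictly less than $\iota$ while \eqref{Eq: starshape assumption AH} with $\iota_0=\iota/4$ still holds on $[0,s^{**}]$; the star-shape lemma then produces the improved bound $\mathcal X_p^*\hat g(\tilde X,\nu)\geq \iota$ at $s^{**}$, again a contradiction. Hence $\Sigma_{t,\epsilon,s}\in\mathcal R$ for all $s\in[0,s_0]$.

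The main obstacle is bookkeeping: one must verify that every ``universal'' constant really only depends on the family data $\lambda_0,\theta_0,A_0$ and not on $p,t,\epsilon,s$, and in particular that increasing $\rho_1$ can simultaneously realize the hypotheses of the star-shape lemma, the curvature lemma, the Hawking-mass lemma, and Lemma \ref{Lem: star-shape to curvature AH}. Since each of these lemmas individually only requires $\rho_1$ to be large enough, one can take the maximum of the four thresholds; this is the analogue of the bookkeeping performed in the proof of Lemma \ref{Lem: star-shape all time} and poses no new difficulty.
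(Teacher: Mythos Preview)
Your proposal is correct and follows essentially the same approach as the paper: the paper's proof simply states ``The proof is similar to that of Lemma \ref{Lem: star-shape all time} and here we just omit the details,'' and what you have written is precisely the natural transcription of that box argument into the asymptotically hyperbolic setting, using the hyperbolic star-shape lemma, the curvature estimate under \eqref{Eq: starshape assumption AH}, the Hawking-mass lower bound, and Lemma \ref{Lem: star-shape to curvature AH} in place of their flat counterparts. Your bookkeeping of the constants $\lambda_0,\theta_0,A_0,B,\iota,m_0$ and the order in which they are fixed mirrors the logic of Lemma \ref{Lem: star-shape all time}.
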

 \begin{proof}
 The proof is similar to that of Lemma \ref{Lem: star-shape all time} and here we just omit the details.
 \end{proof}
 \begin{proof}[Proof of Lemma \ref{Lem: smooth IMCF AH}]
 The proof is similar to that of Lemma \ref{Lem: smooth IMCF}.
 \end{proof}
 \begin{lemma}\label{Lem: smooth solution AH}
The constant $\rho_1$ can be taken large enough such that for any point $p\in \mathbb R^3\setminus \bar D_{\rho_1}$ there is a smooth inverse mean curvature flow $\Phi: N\times (-\infty,T]\to \mathbb R^3$ such that $\Phi(N,t)=\Sigma_t=\Sigma_t^+$.
\end{lemma}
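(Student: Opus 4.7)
The plan is to mimic the proof of Lemma \ref{Lem: smooth solution} from the asymptotically flat case, splitting the time interval $(-\infty, T]$ into two regimes where different tools apply. For $t \in (-\infty, T_s]$, Corollary \ref{Cor: smoothness small scale} already guarantees that the weak solution $u$ emerging from $p$ is smooth up to the moment $T_s$ (once $\rho_1 \geq 2$), so no further argument is needed in this range.

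For $t \in [T_s, T]$, I would apply Lemma \ref{Lem: smooth IMCF AH} with a uniform choice of $s_0 \geq 12$ to produce a smooth inverse mean curvature flow $\Phi_t^+: N \times (0, s_0] \to \mathbb R^3$ starting from $\Sigma_t^+$. The enclosure bound from that lemma gives $\mathcal X_p^{-1}(\Phi_t^+(N, s)) \supset \mathbb B_{\mathfrak r(t-1+s/2)}$, while the annulus estimate \eqref{Eq: annulus estimate AH} yields $\mathcal X_p^{-1}(\Sigma_{t+1}^+) \subset \mathbb B_{\mathfrak r(t+2)}$. With $s_0 = 12$, evaluating at $s = s_0/2 = 6$ shows that $\mathcal X_p^{-1}(\Phi_t^+(N, s_0/2)) \supset \mathbb B_{\mathfrak r(t+2)} \supset \mathcal X_p^{-1}(\Sigma_{t+1}^+)$, so $\Phi_t^+(N, s_0/2)$ encloses $\Sigma_{t+1}^+$.

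With this enclosure established, the comparison principle \cite[Theorem 2.2 (ii)]{huisken2001inverse}, applied in both directions exactly as in the proof of Lemma \ref{Lem: smooth solution}, yields that $\Sigma_{t+s}$ is enclosed by $\Phi_t^+(N,s)$ for $s \in (0,1]$, and conversely $\Phi_t^+(N,s)$ is enclosed by $\Sigma_{t+s}$ for $s \in (0, s_0)$. Together these inclusions force $\Sigma_{t+s} = \Sigma_{t+s}^+ = \Phi_t^+(N, s)$ for $s \in (0,1]$, so $u$ is smooth on the slab $\{t < u < t+1\}$. Letting $t$ range over $[T_s, T]$ covers $(T_s, T+1)$, and patching with the small-time regime delivers the desired smooth flow $\Phi: N \times (-\infty, T] \to \mathbb R^3$ with $\Phi(N, t) = \Sigma_t = \Sigma_t^+$.

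The only subtlety I anticipate is making sure the constants $s_0$ and $\rho_1$ can be chosen uniformly in $t \in [T_s, T]$; this is immediate since both Lemma \ref{Lem: smooth IMCF AH} and the annulus estimate \eqref{Eq: annulus estimate AH} are formulated with constants independent of $t$ in that range. The substantive work has all been done in Section 2 and in the preceding lemmas of this section, so this proof is essentially a bookkeeping exercise in patching the small-scale result (Corollary \ref{Cor: smoothness small scale}) with the uniform existence statement (Lemma \ref{Lem: smooth IMCF AH}) via the comparison principle.
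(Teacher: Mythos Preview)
Your proposal is correct and follows essentially the same approach as the paper, which simply states that the proof is ``similar to that of Lemma \ref{Lem: smooth solution}.'' You have correctly spelled out the one additional ingredient needed in the asymptotically hyperbolic setting: because Lemma \ref{Lem: smooth IMCF AH} is stated only for $t\in[T_s,T]$, the small-time regime $(-\infty,T_s]$ must be handled separately via Corollary \ref{Cor: smoothness small scale}, after which the enclosure and comparison argument proceeds exactly as in the flat case.
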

\begin{proof}
The proof is similar to that of Lemma \ref{Lem: smooth solution}.
\end{proof}
\begin{proof}[Proof of Theorem \ref{main-theorem-IMCF-AH}]
This follows directly from Lemma \ref{Lem: smooth solution AH} and the fact that $\Sigma_t^+$ with $-\infty<t\leq T$ are all topological spheres as well as  the annulus neighborhood estimate \eqref{Eq: annulus estimate AH}.
\end{proof}

 \section{Min-Max Theory}

\label{section-min-max}

In \cite{mazurowski2022prescribed}, the first named author proved the existence of closed constant mean curvature surfaces in asymptotically flat manifolds with no boundary which satisfy a certain technical condition. To begin this section, we recall the result of \cite{mazurowski2022prescribed}, and then we give the proof of our first main result Theorem \ref{main-theorem-flat}. Then we discuss the adaptation to the case of asymptotically hyperbolic manifolds.   

\subsection{Asymptotically Flat Manifolds} Let $(M^3,g)$ be an asymptotically flat manifold with no boundary. Let $\mathcal C(M)$ denote the space of Caccioppoli sets in $M$ with compact support. Given a constant $c > 0$, the functional $A^c\colon \mathcal C(M)\to \R$ is defined by 
\[
A^c(\Omega) = \M(\bd \Omega) - c\vol(\Omega). 
\]
A continuous family of Caccioppoli sets $\{\Omega_t\}_{t\in[0,1]}$ is called a {\it mountain pass path} for $A^c$ provided $\Omega_0 = \emptyset$ and $A^c(\Omega_1)<0$. Finally, we define the one-parameter min-max value 
\[
\omega_c(M) = \inf_{\{\Omega_t\}_{t\in [0,1]}} \left[\sup_{t\in [0,1]} A^c(\Omega_t)\right],
\]
where the infimum is taken over all mountain pass paths. Then we have the following result. 

\begin{theorem}[Theorem 1.2 in \cite{mazurowski2022prescribed}] 
\label{asy-flat-min-max} Let $(M^3,g)$ be an asymptotically flat manifold with no boundary. Fix a constant $c > 0$ and assume that $\omega_c(M) < \omega_c(\R^3)$. Then $M$ contains a closed, almost-embedded surface $\Sigma$ with constant mean curvature $c$. 
\end{theorem}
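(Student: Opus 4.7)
The plan is to adapt the min-max theory for the $A^c$-functional developed by Zhou and Zhu on closed manifolds to the asymptotically flat setting, using the strict inequality $\omega_c(M) < \omega_c(\R^3)$ to prevent escape of mass to infinity. First I would exhaust $M$ by compact subdomains $M_i$ obtained by truncating the end at radii $R_i \to \infty$, and verify that $\omega_c(M_i) \to \omega_c(M)$ from above by truncating admissible sweepouts of $M$ (this uses only that the $A^c$-cost of a thin annular collar goes to zero).

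Next, on each $M_i$ I would run the discrete mountain-pass construction of Zhou-Zhu for the functional $A^c$, together with the combinatorial pull-tight and interpolation machinery from the Almgren-Pitts framework adapted by Zhou-Zhu, to produce an almost-embedded CMC hypersurface $\Sigma_i \subset M_i$ with mean curvature $c$ and $A^c(\Sigma_i)$ approximating $\omega_c(M_i)$. The relevant regularity and almost-embeddedness conclusions transfer essentially verbatim from the closed case, provided one treats the boundary of $M_i$ as a convex barrier (for instance by picking the truncating spheres to have mean curvature much larger than $c$ with respect to the inward normal), so that the min-max critical set never touches $\bd M_i$.

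The essential step is to show that the $\Sigma_i$ remain in a fixed compact region $K \subset M$ independent of $i$, and this is where the hypothesis enters. I would argue by contradiction: if a positive portion of the mass of $\Sigma_i$ escaped further and further into the asymptotically flat end, then centering the escaping component at the origin of the standard chart and using the decay of $g$ toward $\delta_{ij}$, that piece would subconverge to a nontrivial $c$-stationary varifold in $\R^3$. A cut-and-paste surgery then joins a near-optimal sweepout of $\R^3$ (placed deep in the end, where the metric is nearly Euclidean) with a sweepout of the compact complement of the end; this produces an admissible mountain-pass path in $M$ whose maximal $A^c$-value converges to something strictly less than $\omega_c(\R^3)$ once the escaping piece is accounted for, and conversely forces $\omega_c(M) \geq \omega_c(\R^3)$, contradicting the hypothesis. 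Once $\Sigma_i \subset K$ uniformly, the curvature estimates available for $c$-CMC varifolds (since $c$ is fixed) yield subsequential convergence, in the almost-embedded sense, to a limit $\Sigma$ which is the desired closed, almost-embedded CMC surface in $M$.

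The main obstacle is precisely the concentration-compactness argument in the third step: converting the soft strict inequality $\omega_c(M) < \omega_c(\R^3)$ into a quantitative non-escape statement for the min-max critical sets $\Sigma_i$. The rest of the argument, including the discrete-to-continuous interpolation, the pull-tight, and the local regularity theory at the level of $A^c$-critical varifolds, is a direct adaptation of \cite{zhou2019min}; the non-compactness of $M$ is the only genuinely new phenomenon and the hypothesis is tailored to defeat it.
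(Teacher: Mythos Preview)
Your proposal has a concrete gap at the barrier step. In an asymptotically flat end the coordinate spheres $S_r$ have mean curvature approximately $2/r \to 0$, so for any fixed $c>0$ and large $r$ one has $H(S_r) \ll c$, not $H(S_r) \gg c$. Large coordinate spheres therefore do \emph{not} serve as barriers for the $A^c$ functional, and running Zhou--Zhu min-max for $A^c$ on the truncated region $M_i$ gives no reason for the critical surface to avoid $\partial M_i$. Without this you have no closed CMC surface $\Sigma_i$ to feed into the compactness step.

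The paper's approach (from \cite{mazurowski2022prescribed}, spelled out in Section~\ref{section-min-max} for the hyperbolic analog) fixes this by replacing the constant $c$ with a cut-off prescription $\zeta_R$ that equals $c$ on $B_R$ but decreases toward the asymptotic sphere mean curvature outside, so that large spheres \emph{are} barriers for $A^{\zeta_R}$. Min-max on the compact piece then yields a surface $\Sigma_R = \partial\Omega_R$ with (non-constant) prescribed mean curvature $\zeta_R$ and $A^{\zeta_R}(\Omega_R) = \omega_R \to \omega_c(M)$. The substantive step is to show that for large $R$ some component of $\Sigma_R$ lies in $B_R$, where $\zeta_R \equiv c$. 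This is done by a pointed blow-down: if every component met $M\setminus B_R$, a diameter bound coming from the index-one condition traps $\Sigma_R$ in a shell $B_{R+1}\setminus B_{R-d}$; passing to a lamination in the model space one classifies the leaves (round $c$-spheres or a plane/horosphere), rules out the planar case since $\omega_R > 0$, and shows the spherical case forces $\liminf \omega_R \ge \omega_c(\R^3)$, contradicting $\omega_c(M) < \omega_c(\R^3)$. Your concentration-compactness paragraph is aiming at the same contradiction, but as written the surgery sentence simultaneously asserts the sweepout value is strictly below $\omega_c(\R^3)$ and that $\omega_c(M) \ge \omega_c(\R^3)$; this needs to be untangled, and in any case the argument cannot start until the barrier issue is resolved.
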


As noted in \cite[Remark 4.3]{mazurowski2022prescribed}, the proof can be adapted to handle the case of asymptotically flat manifolds with boundary by using the free boundary min-max theory of \cite{sun2024multiplicity}. We will explain carefully how this adaptation works below in the case of asymptotically hyperbolic manifolds and leave the details of the  asymptotically flat case to the reader. Thus we have the following theorem. 

\begin{theorem}
\label{min-max-flat-boundary}
Let $(M^3,g)$ be a complete, asymptotically flat manifold, possibly with boundary. Fix a constant $c > 0$ and assume that $\omega_c(M) < \omega_c(\R^3)$. Then there exists a compact, almost-embedded, free boundary hypersurface $\Sigma$ in $M$ with constant mean curvature $c$.    
\end{theorem}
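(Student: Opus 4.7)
My plan is to adapt the proof of Theorem \ref{asy-flat-min-max} from \cite{mazurowski2022prescribed} by replacing the closed min-max machinery with the free boundary min-max theory of Sun--Wang--Zhou \cite{sun2024multiplicity}. The two modifications are essentially independent of one another: the non-compactness of $M$ is handled exactly as in \cite{mazurowski2022prescribed} via the strict gap $\omega_c(M) < \omega_c(\R^3)$, while the presence of $\partial M$ is accommodated by replacing each closed-surface statement in the $A^c$ framework of Zhou--Zhu \cite{zhou2019min} with its free boundary counterpart from \cite{sun2024multiplicity}. Since both ingredients already exist in the literature, the proof should be a careful dovetailing rather than a genuinely new argument.

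\textbf{Production of the critical set.} First I would fix an exhaustion $K_1 \subset K_2 \subset \cdots$ of $M$ by compact domains with corners meeting $\partial M$ transversally. On $M$ the functional $A^c$ is defined on compactly supported Caccioppoli sets, and its smooth critical points are precisely the compact, almost-embedded, free boundary CMC hypersurfaces of mean curvature $c$. Given a minimizing sequence of mountain pass paths $\{\Omega^{(i)}_t\}_{t\in[0,1]}$ with $\sup_t A^c(\Omega^{(i)}_t) \to \omega_c(M)$, I would apply the free boundary discretization and interpolation lemmas of \cite{sun2024multiplicity}, combined with the $A^c$ pull-tight of \cite{zhou2019min}, to extract in the limit a varifold $V$ which is stationary for the prescribed mean curvature $c$ problem with free boundary on $\partial M$, and whose total mass is controlled by $\omega_c(M)$. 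The regularity theory of \cite{sun2024multiplicity}, adapted to the prescribed mean curvature setting exactly as in \cite{zhou2019min}, would then yield that $V = \sum_k n_k |\Sigma_k|$ where each $\Sigma_k$ is a compact, almost-embedded, free boundary CMC hypersurface of mean curvature $c$; the one-parameter min-max structure guarantees that at least one component appears with multiplicity one.

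\textbf{Main obstacle: ruling out escape at infinity.} The delicate step is showing that $V$ is actually supported in a fixed compact region of $M$, rather than concentrating along the asymptotically flat end. Following \cite{mazurowski2022prescribed}, I would argue by contradiction: if a definite amount of the boundary mass $\M(\partial \Omega^{(i)}_{t_i})$ at near-maximum times $t_i$ were to escape into $M \setminus K_j$ as $j \to \infty$, then pulling the end into Euclidean coordinates via asymptotic flatness would realize that escaping piece as a nontrivial mountain pass family in $\R^3$. Decomposing the sweepout into its contribution inside $K_j$ and its contribution in the end and estimating $A^c$ on each piece gives, in the limit,
\[
\omega_c(M) \;=\; \lim_i \sup_t A^c(\Omega^{(i)}_t) \;\geq\; \omega_c(\R^3),
\]
contradicting the hypothesis $\omega_c(M) < \omega_c(\R^3)$. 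Hence $V$ is supported in some fixed $K_{j_0}$, and any of its multiplicity-one CMC components provides the desired compact, almost-embedded, free boundary constant mean curvature hypersurface $\Sigma$ of mean curvature $c$. The main technical work will be checking that the free boundary pull-tight and regularity steps from \cite{sun2024multiplicity} transfer cleanly to the $A^c$ functional in a non-compact ambient space; I expect this is essentially bookkeeping once the anti-escape estimate above is in place.
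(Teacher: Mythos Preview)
Your proposal has a genuine gap in how the non-compactness is handled. The free boundary min-max theory of \cite{sun2024multiplicity}, like that of \cite{zhou2019min}, is formulated on a \emph{compact} ambient manifold; pull-tight, the combinatorial argument, and varifold compactness all rely on this. You cannot simply run the machinery directly on $M$ and hope to extract a limiting varifold. What \cite{mazurowski2022prescribed} actually does---and what the paper does in detail for the hyperbolic case---is to introduce a family of cut-off prescribing functions $\zeta_R$ which agree with $c$ on $B_R$ and decay near infinity so that large coordinate spheres become barriers for $A^{\zeta_R}$. This reduces each approximate problem to a compact min-max on $B_{R+1}$, where \cite{sun2024multiplicity} applies and produces an almost-embedded free boundary surface $\Sigma_R=\partial\Omega_R$ with prescribed mean curvature $\zeta_R$, index at most one, and $A^{\zeta_R}(\Omega_R)=\omega_R$.

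Your anti-escape argument is also at the wrong level. Decomposing a sweepout $\{\Omega_t\}$ into $\Omega_t\cap K_j$ and $\Omega_t\setminus K_j$ does not split $A^c$ additively, since cutting along $\partial K_j$ creates new boundary; and the ``escaping piece'' need not form a mountain pass path in $\R^3$ at all (there is no reason it starts at $\emptyset$ or ends with negative $A^c$). The paper's argument works instead with the \emph{solutions} $\Sigma_R$: using the index bound and curvature estimates one shows $\Sigma_R\subset B_{R+1}\setminus B_{R-d}$ for a uniform $d$, takes pointed limits in the model space, classifies the resulting lamination (round spheres or a flat plane in the asymptotically flat case), and shows that either possibility forces $\liminf \omega_R\ge\omega_c(\R^3)$ or $A^{\zeta_R}(\Omega_R)<0$, contradicting $\omega_R\to\omega_c(M)\in(0,\omega_c(\R^3))$. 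Once some $\Sigma_R$ is trapped in $B_R$, it has constant mean curvature $c$ and you are done. Your outline should be rewritten around this approximate-problem-plus-pointed-limit scheme.
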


As observed in \cite{mazurowski2022prescribed}, it is possible to verify the assumption $\omega_c(M) < \omega_c(\R^3)$ by constructing a continuous family of open sets with favorable isoperimetric ratio. We have shown above that this is always possible when $M$ has non-negative scalar curvature and has a non-Euclidean end. Hence we can now give the proof of our first main theorem. 

\begin{theorem}
Let $(M^3,g)$ be a complete, asymptotically flat manifold, possibly with non-empty boundary.   Assume that $M$ has non-negative scalar curvature. Then, for every constant $c > 0$, there exists a compact, almost-embedded, free boundary, constant mean curvature surface $\Sigma$ in $M$ with mean curvature $c$.   
\end{theorem}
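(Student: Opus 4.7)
The plan is to reduce to Theorem \ref{theorem-min-max} by verifying its hypothesis $\omega_c(M) < \omega_c(\mathbb{R}^3)$. I would split on whether $(M,g)$ has a Euclidean end, i.e., whether $g$ is isometric to the flat metric outside some compact set.

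If $M$ has a Euclidean end, the theorem is trivial: far out in the flat region one can place the round sphere $\Sigma$ of radius $2/c$, which is compact, smoothly embedded, disjoint from $\partial M$ and from any compact region where $g$ is non-flat, and has constant mean curvature $c$. Since $\partial \Sigma = \emptyset$, it is trivially a free boundary CMC surface in $M$, and no min-max is needed in this case.

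Otherwise $M$ has no Euclidean end and Proposition \ref{Prop: sweep-out} applies. I would first record that $\omega_c(\mathbb{R}^3) = 16\pi/(3c^2)$: by the Euclidean isoperimetric inequality, any mountain pass path in $\mathbb{R}^3$ from $\emptyset$ to a set with $A^c < 0$ must pass through a set of volume $32\pi/(3c^3)$, at which $A^c \geq 16\pi/(3c^2)$, and this bound is realized by the family of concentric balls passing through the sphere of radius $2/c$. Now fix $v > 36\pi/c^3$, so that $(36\pi)^{1/3}v^{2/3} - cv < 0$, and apply Proposition \ref{Prop: sweep-out} to obtain a continuously varying family $\{U_t\}_{t\in[0,1]}$ with $U_0 = \emptyset$, $\vol(U_1) = v$, and $\area(\partial U_t) < (36\pi)^{1/3}\vol(U_t)^{2/3}$ for every $t \in (0,1]$. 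Since the function $w \mapsto (36\pi)^{1/3}w^{2/3} - cw$ has maximum exactly $16\pi/(3c^2)$ (attained at $w = 32\pi/(3c^3)$), one gets $A^c(U_t) < 16\pi/(3c^2)$ pointwise on $(0,1]$, together with $A^c(U_0) = 0$ and $A^c(U_1) < 0$. Compactness of $[0,1]$ and continuity of $t \mapsto A^c(U_t)$ then upgrade this to $\sup_{t\in[0,1]} A^c(U_t) < 16\pi/(3c^2) = \omega_c(\mathbb{R}^3)$, so $\{U_t\}$ is a valid mountain pass path witnessing $\omega_c(M) < \omega_c(\mathbb{R}^3)$. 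Theorem \ref{theorem-min-max} then produces the desired compact, almost-embedded, free boundary CMC-$c$ surface.

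The serious analytic difficulty has already been absorbed into Proposition \ref{Prop: sweep-out} — whose proof rests on the smooth inverse mean curvature flow from a point deep in the end (Theorem \ref{main-theorem-IMCF}) together with Shi's sub-Euclidean isoperimetric inequality — and into Theorem \ref{theorem-min-max} (the free boundary $A^c$ min-max). So the only genuine task in this final assembly is the pointwise-to-sup upgrade of the strict inequality; this is routine but is the one place requiring care, since in principle the values $A^c(U_t)$ could accumulate to $16\pi/(3c^2)$ along a mountain pass path, and one relies on the attainment of the supremum on the compact parameter interval to exclude this.
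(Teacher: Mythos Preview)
Your proof is correct and follows essentially the same route as the paper: split on whether the end is Euclidean, handle the Euclidean case by placing a round sphere of radius $2/c$, and in the non-Euclidean case feed the sweep-out from Proposition \ref{Prop: sweep-out} into Theorem \ref{theorem-min-max}. The only difference is that the paper outsources the verification that $\sup_t A^c(U_t) < \omega_c(\mathbb{R}^3)$ to Proposition 4.29 of \cite{mazurowski2022prescribed}, whereas you carry out that elementary computation directly (and correctly).
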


\begin{proof}
    Let $(M^3,g)$ be as in the statement of the theorem and fix a constant $c > 0$. If the end of $M$ is Euclidean, then the result follows by taking $\Sigma$ to be a ball of radius $2/c$ in the Euclidean end. Therefore, we can assume that $M$ has a non-Euclidean end.  In this case, it follows from Proposition \ref{Prop: sweep-out} that for any $v > 0$ we can find a continuous family $\{\Omega_t\}_{t\in [0,1]}$ with $\Omega_0 = \emptyset$ and $\vol(\Omega_1) = v$ and 
    \[
    \frac{\operatorname{Area}(\bd \Omega_t)}{\vol(\Omega_t)^{2/3}} < (36\pi)^{1/3}
    \]
    for all $t\in [0,1]$. Assuming $v$ is large enough, it follows from Proposition 4.29 in \cite{mazurowski2022prescribed} that this family is a mountain pass path with 
    \[
    \sup_{t\in [0,1]} A^c(\Omega_t) < \frac{4\pi}{3}\left(\frac 2 c\right)^2 = \omega_c(\R^3). 
    \]
    Therefore we have $\omega_c(M)<\omega_c(\R^3)$ and the result now follows by Theorem \ref{min-max-flat-boundary}.
\end{proof}

\subsection{Asymptotically Hyperbolic Manifolds} In the remainder of this section, we will explain how to prove the analog of Theorems \ref{asy-flat-min-max} and \ref{min-max-flat-boundary} in the asympotically hyperbolic setting. The main idea is the same as in \cite{mazurowski2022prescribed}. We first define a family of cut-off min-max problems which admit barriers near infinity. Then we show that the solution to these cut-off problems cannot drift to infinity. In this step, we need to argue somewhat differently from \cite{mazurowski2022prescribed}, as it does not seem straightforward to prove a uniform upper bound on the area of the approximate solutions. 

\subsubsection{Definitions}

We now establish the basic set-up for the min-max problem in an asymptotically hyperbolic manifold. 

\begin{definition}
A Riemannian manifold $(M^3,g)$ is called asymptotically hyperbolic if there is a compact set $K\subset M$, and a diffeomorphism $M\setminus K \equiv \R^3 \setminus B_1(0)$, and in the polar coordinates induced by this diffeomorphism the metric $g$ takes the form 
\[
g = dr^2 + \sinh^2 r\, g_{S^2} + Q
\]
where $\vert Q\vert + \vert \del Q\vert + \vert \del^2 Q\vert \le C e^{-3r}$.  Here $g_{S^2}$ denotes the round metric on $S^2$ and the norms and derivatives of $Q$ are taken with respect to the hyperbolic metric $\overline g = dr^2 + \sinh^2 r\, g_{S^2}$.  We further require that $Q\to 0$ smoothly as $\vert x\vert \to \infty$, although with no assumption on the rate. 
\end{definition}

Fix an asymptotically hyperbolic manifold $(M^3,g)$ and a constant $c > 2$. 

\begin{definition}
Let $\mathcal C(M)$ denote the set of Caccioppoli sets in $M$ with compact support. The flat distance on $\mathcal C(M)$ is defined by $\mathcal F(\Omega_1,\Omega_2) = \vol(\Omega_1 \Delta \Omega_2)$.  We will also need the $\mathbf F$-topology on $\mathcal C(M)$ defined in \cite{zhou2020existence}.  Given $\Omega\in \mathcal C(M)$, let $\bd \Omega$ denote the reduced boundary of $\Omega$ restricted to the interior of $M$. In particular, we have $\bd \Omega \llcorner \bd M = \emptyset$. 
\end{definition}

\begin{definition}
Let $h\colon M\to \R$ be a smooth function.  The $A^h$ functional on $\mathcal C(M)$ is 
\[
A^h(\Omega) = \M(\bd \Omega) -  \int_\Omega h.
\] 
In the case where $h\equiv c$ is constant, this reduces to $A^c(\Omega) = \M(\bd \Omega) - c \vol(\Omega)$. 
\end{definition}

\begin{definition} 
\label{min-max-value}
A mountain pass path for $A^c$ is an $\mathbf F$-continuous family $\{\Omega_t\}_{t\in [0,1]}$ in $\mathcal C(M)$ such that $\Omega_0 = \emptyset$ and $A^c(\Omega_1) < 0$.  Let $\mathcal P_c$ denote the set of all mountain pass paths. The $A^c$ min-max value is defined by
\[
\omega_c(M) = \inf_{\{\Omega_t\}\in \mathcal P_c}\left(\sup_{t\in [0,1]} A^c(\Omega_t)\right).
\]
\end{definition} 

To continue, we will require some elementary facts about hyperbolic geometry.  In what follows, we adopt the convention that objects decorated with a bar are associated to hyperbolic space. Thus $\bh_r$ denotes a hyperbolic ball of radius $r$, $\overline g$ denotes the hyperbolic metric, and so on.  

\begin{proposition}
The area, enclosed volume, and mean curvature of $\bd \bh_r$ are given by 
\begin{gather*}
\areah(\bd \bh_r) = 4\pi \sinh^2 r, \\ \volh(\bh_r) = \pi(\sinh(2r)-2r), \\ \overline H(\bd \bh_r) = 2\coth r. 
\end{gather*}
\end{proposition}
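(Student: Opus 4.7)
The plan is to work directly in the geodesic polar coordinate expression $\overline g = dr^2 + \sinh^2 r\, g_{S^2}$ for the hyperbolic metric, and to read off each quantity in turn from this formula.

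First I would compute the area. The induced metric on the sphere $\partial \bh_r$ is $\sinh^2 r\, g_{S^2}$, so its area form is $\sinh^2 r$ times the round area form on $S^2$. Integrating over $S^2$ (which has area $4\pi$) immediately yields $\areah(\partial \bh_r) = 4\pi \sinh^2 r$.

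Next, for the volume, I would integrate in polar coordinates using the coarea formula:
\[
\volh(\bh_r) = \int_0^r \areah(\partial \bh_s)\, ds = 4\pi \int_0^r \sinh^2 s\, ds.
\]
Applying the identity $\sinh^2 s = \tfrac{1}{2}(\cosh(2s)-1)$ and integrating gives $\pi(\sinh(2r) - 2r)$.

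Finally, for the mean curvature I would use the first variation of area. Since the outward unit normal to $\partial \bh_r$ is $\partial_r$, the first variation formula gives
\[
\overline H(\partial \bh_r) \cdot \areah(\partial \bh_r) = \frac{d}{dr}\areah(\partial \bh_r) = 8\pi \sinh r \cosh r,
\]
so $\overline H(\partial \bh_r) = 2\coth r$. (Equivalently, since the shape operator of $\partial \bh_r$ equals $\coth r$ times the identity on the tangent space of the sphere, both principal curvatures are $\coth r$ and their sum is $2\coth r$.) No step is genuinely difficult here; the statement is a standard computation included for reference before the min-max construction that follows.
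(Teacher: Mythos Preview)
Your proposal is correct and is precisely the elementary calculation the paper alludes to; the paper's own proof simply states ``This is an elementary calculation'' without further detail.
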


\begin{proof}
This is an elementary calculation. 
\end{proof} 

\begin{proposition}
Fix a constant $c > 2$.  The function $r\mapsto \overline{A^c}(\bh_r)$ is concave on $[0,\infty)$.  It attains a unique maximum $\alpha_c = \overline{A^c}(\bh_{\operatorname{arctanh(2/c)}})$. 
\end{proposition}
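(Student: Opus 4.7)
The plan is to substitute the three formulas from the preceding proposition into $\overline{A^c}$ and analyze the resulting scalar function of $r$ by elementary calculus. Writing
\[
\overline{A^c}(\bh_r) = 4\pi \sinh^2 r - c\pi(\sinh(2r) - 2r)
\]
and differentiating, a short manipulation using $\sinh(2r) = 2\sinh r \cosh r$ and $\cosh(2r)-1 = 2\sinh^2 r$ would yield
\[
\frac{d}{dr}\overline{A^c}(\bh_r) = 4\pi \sinh r\,\bigl(2\cosh r - c\sinh r\bigr).
\]
On $(0,\infty)$ this vanishes precisely when $\tanh r = 2/c$; since $c>2$ forces $2/c\in(0,1)$, there is a unique positive root $r^{*}:=\operatorname{arctanh}(2/c)$. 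The factor $2\cosh r - c\sinh r$ has the same sign as $2/c - \tanh r$, so after multiplication by the positive quantity $4\pi\sinh r$ the derivative is positive on $(0,r^{*})$ and negative on $(r^{*},\infty)$. This already identifies $r^{*}$ as the unique global maximizer and gives $\alpha_c = \overline{A^c}(\bh_{r^{*}})$.

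For the concavity statement, the cleanest route is to reparametrize by enclosed volume. Setting $V = \volh(\bh_r)$, the first variation formula supplies $dV/dr = \areah(\partial\bh_r)=4\pi\sinh^2 r$ and $d\areah/dV = \overline H = 2\coth r$. Defining $\varphi(V):= \overline{A^c}(\bh_{r(V)})$, I would then compute $\varphi'(V) = 2\coth r - c$ and
\[
\varphi''(V) = \frac{d(2\coth r)}{dr}\cdot \frac{dr}{dV} = -\frac{2}{\sinh^2 r}\cdot \frac{1}{4\pi\sinh^2 r} = -\frac{1}{2\pi\sinh^4 r} < 0,
\]
so $\varphi$ is strictly concave in $V$ on $(0,\infty)$. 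This gives a second proof that the maximum is unique and, I suspect, captures the substantive concavity content the proposition is meant to convey.

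The whole argument is routine; there is no real obstacle. The only subtlety I anticipate is interpretive: in the $r$-parametrization the second derivative equals $8\pi\cosh(2r) - 4c\pi\sinh(2r)$, which is $8\pi>0$ at $r=0$ and only becomes negative for $r>\tfrac{1}{2}\operatorname{arctanh}(2/c)$, so strict concavity on all of $[0,\infty)$ as a function of $r$ is literally false. The proposition should therefore be read in the volume parametrization (where concavity is genuine and uniform), or else restricted to $r\ge\tfrac{1}{2}\operatorname{arctanh}(2/c)$. Either way, the existence, uniqueness, and location of $r^{*}$ and the value $\alpha_c=\overline{A^c}(\bh_{r^{*}})$ follow immediately from the first-derivative sign analysis above, and this is the only output needed for the subsequent barrier constructions.
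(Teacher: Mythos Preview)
Your argument is correct and is precisely the routine computation the paper gestures at with ``this follows easily from the previous proposition.'' You have also correctly caught that the concavity claim, read literally as concavity in the variable $r$ on all of $[0,\infty)$, is false (the second derivative at $r=0$ is $8\pi>0$); the paper is imprecise here, and only the existence, uniqueness, and location of the maximum are used downstream, all of which your first-derivative sign analysis establishes cleanly.
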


\begin{proof}
This follows easily from the previous proposition. 
\end{proof} 

\begin{proposition}
The min-max value of hyperbolic space is given by $\omega_c(\mathbb H^3) = \alpha_c$. 
\end{proposition}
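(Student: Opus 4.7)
The plan is to prove the two matching inequalities $\omega_c(\mathbb{H}^3) \leq \alpha_c$ and $\omega_c(\mathbb{H}^3) \geq \alpha_c$ separately.

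For the upper bound I would test the min-max against the one-parameter family of concentric hyperbolic balls $\{\bh_t\}$, with $t$ ranging from $0$ to some large $R$. Since $c > 2$, the previous proposition shows $\overline{A^c}(\bh_r) \to -\infty$ as $r \to \infty$ (the leading $e^{2r}$ coefficient is $\pi - c\pi/2 < 0$), so for $R$ large enough this family, after reparametrization to $[0,1]$, is an $\mathbf F$-continuous mountain pass path. By the concavity statement just proved, its supremum of $\overline{A^c}$ is exactly $\alpha_c$, which gives $\omega_c(\mathbb{H}^3) \leq \alpha_c$.

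For the matching lower bound I would invoke the hyperbolic isoperimetric inequality. Let $\mathcal I_H$ denote the hyperbolic isoperimetric profile, so $\M(\bd \Omega) \geq \mathcal I_H(\vol \Omega)$ for every $\Omega \in \mathcal C(\mathbb{H}^3)$, with equality on geodesic balls. Consequently $A^c(\Omega) \geq f(\vol \Omega)$ where $f(V) := \mathcal I_H(V) - cV$. Differentiating along the sphere sweep-out gives $f'(V) = 2\coth r - c$, which decreases strictly from $+\infty$ to $2-c < 0$; hence $f$ rises from $f(0)=0$ to its unique maximum $\alpha_c$ at $V_c := \volh(\bh_{\operatorname{arctanh}(2/c)})$ and then decreases strictly to $-\infty$, crossing zero at a unique $V_0 > V_c$.

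Next I would combine this with an intermediate value argument. The flat metric $\mathcal F(\Omega_1,\Omega_2) = \vol(\Omega_1 \Delta \Omega_2)$ dominates $|\vol \Omega_1 - \vol \Omega_2|$, so $\mathbf F$-continuity of a mountain pass path $\{\Omega_t\}$ forces $t \mapsto \vol(\Omega_t)$ to be continuous. Since $\vol \Omega_0 = 0$ while $A^c(\Omega_1) < 0$ forces $f(\vol \Omega_1) < 0$ and therefore $\vol \Omega_1 > V_0 > V_c$, the IVT produces $t^\ast \in (0,1)$ with $\vol(\Omega_{t^\ast}) = V_c$. At that parameter $A^c(\Omega_{t^\ast}) \geq f(V_c) = \alpha_c$, giving $\sup_t A^c(\Omega_t) \geq \alpha_c$, and hence $\omega_c(\mathbb{H}^3) \geq \alpha_c$ after taking the infimum over $\mathcal P_c$.

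I expect no substantial obstacle here: this is the standard one-parameter isoperimetric lower bound. The only fine print is the continuity of enclosed volume along the sweep-out, but that follows immediately from the definition of the flat (and hence $\mathbf F$) topology on $\mathcal C(\mathbb{H}^3)$.
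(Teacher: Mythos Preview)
Your proof is correct and follows essentially the same approach as the paper: the upper bound via concentric balls is identical, and the lower bound via the hyperbolic isoperimetric inequality is the same idea, though you spell out the intermediate-value step on volume that the paper leaves implicit.
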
 

\begin{proof}
Consider $(\mathbb H^3,\overline g)$. Fix a very large number $R$. Then the path $t \in [0,1] \mapsto \overline B_{Rt}$ is a mountain pass path and 
\[
\sup_{t\in [0,1]} \overline {A^c}(\overline B_{Rt}) = \alpha_c
\]
by the previous proposition. This proves that $\omega_c(\mathbb H^3) \le \alpha_c$. On the other hand, it is well-known that geodesic balls in $\mathbb H^3$ are isoperimetric. Therefore, given any Caccioppoli set $\Omega$ in $(\mathbb H^3,\overline g)$, one has $\overline{A^c}(\Omega) \ge \overline{A^c}(\overline B)$ where $\overline B$ is a hyperbolic ball with the same volume as $\Omega$. This implies that $\omega_c(\mathbb H^3) \ge \alpha_c$. 
\end{proof}

\begin{proposition}
Let $(M^3,g)$ be an asymptotically hyperbolic manifold and fix a constant $c > 2$. The min-max value $\omega_c(M)$ satisfies $0 < \omega_c(M) \le \omega_c(\mathbb H^3)$. 
\end{proposition}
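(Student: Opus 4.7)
The plan is to prove the two inequalities $\omega_c(M) \le \alpha_c$ and $\omega_c(M) > 0$ independently. The upper bound will come from transporting the standard hyperbolic sweep-out $t \mapsto \bh_{Rt}$ into the asymptotically hyperbolic end of $M$ via the end diffeomorphism, and using the $C^2$-decay of $Q$ to show that the $A^c$-values converge uniformly to their hyperbolic counterparts. The strict positivity will come from a small-volume Euclidean-type isoperimetric inequality combined with an intermediate value argument to force every mountain pass path to cross through a set on which $A^c$ is uniformly positive.

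For the upper bound, first fix $R > \operatorname{arctanh}(2/c)$ large enough that $\overline{A^c}(\bh_R) < 0$; by the concavity of $r\mapsto\overline{A^c}(\bh_r)$ established in the preceding propositions, the hyperbolic path $t\mapsto \bh_{Rt}$ attains supremum exactly $\alpha_c$. For a point $p$ in the asymptotically hyperbolic end of $M$ with radial coordinate $r_p$, let $\Omega_t(p) \subset M$ denote the $\bar g$-geodesic ball of radius $Rt$ centered at $p$, viewed as a subset of $M$ via the end diffeomorphism. Since $\Omega_1(p) \subset \{r \ge r_p - R\}$ and $|Q|_{\bar g} + |\nabla_h Q|_{\bar g} + |\nabla_h^2 Q|_{\bar g} \le Ce^{-3r}$, the metrics $g$ and $\bar g$ agree on $\Omega_1(p)$ with a $C^2$-error that tends to zero as $r_p \to \infty$. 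Standard metric-perturbation estimates then give
\[
\sup_{t\in[0,1]} \bigl| A^c(\Omega_t(p)) - \overline{A^c}(\bh_{Rt}) \bigr| \longrightarrow 0 \quad \text{as} \quad r_p \to \infty.
\]
Hence for $r_p$ large enough, $\{\Omega_t(p)\}_{t\in[0,1]}$ is an $\mathbf F$-continuous mountain pass path (balls vary continuously) whose supremum is arbitrarily close to $\alpha_c$, which yields $\omega_c(M) \le \alpha_c$.

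For the lower bound, I first claim a uniform small-volume isoperimetric inequality: there exist constants $V_1, C_0 > 0$ such that $\M(\bd \Omega) \ge C_0 \vol(\Omega)^{2/3}$ for every $\Omega \in \mathcal C(M)$ with $\vol(\Omega) \le V_1$. This is standard given the bounded geometry of an asymptotically hyperbolic manifold with compact boundary: one covers the compact core by finitely many quasi-Euclidean charts, uses the asymptotically Euclidean small-volume isoperimetric inequality in the hyperbolic end, and handles the compact boundary $\partial M$ via a doubling (reflection) argument, which only costs a constant factor. Since only some positive $C_0$ is required—not the sharp Euclidean constant—this step can be carried out quite roughly. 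Next, choose $V_0 \in (0, V_1)$ small enough that $\delta := C_0 V_0^{2/3} - c V_0 > 0$. For any mountain pass path $\{\Omega_t\}_{t\in[0,1]}$, the $\mathbf F$-continuity implies that $t \mapsto \vol(\Omega_t)$ is continuous with $\vol(\Omega_0) = 0$; on the other hand, the isoperimetric inequality forces $\vol(\Omega_1) > V_1 > V_0$ since $A^c(\Omega_1) < 0$. The intermediate value theorem then yields some $t^*$ with $\vol(\Omega_{t^*}) = V_0$, at which
\[
A^c(\Omega_{t^*}) \ge C_0 V_0^{2/3} - c V_0 = \delta > 0,
\]
so $\omega_c(M) \ge \delta > 0$.

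The main technical point I expect will be verifying the uniform small-volume isoperimetric inequality, particularly near the compact boundary $\partial M$, where the functional $A^c$ does not penalize $\Omega$ for touching $\partial M$. However, because the bound needs to hold only for very small volumes and no sharp constant is required, a local Michael--Simon Sobolev inequality combined with a doubling argument across $\partial M$ should suffice, and the rest of the argument is a straightforward combination of perturbation estimates and the intermediate value theorem.
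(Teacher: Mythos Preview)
Your proposal is correct and follows essentially the same approach as the paper: the paper also proves the upper bound by transplanting a family of balls $B_{Rt}(p)$ far out in the asymptotically hyperbolic end and letting $p\to\infty$, and proves the lower bound via a small-volume isoperimetric inequality together with the intermediate value theorem applied to $t\mapsto \vol(\Omega_t)$. Your version is slightly more careful in justifying why $\vol(\Omega_1)$ must exceed the chosen threshold (the paper simply asserts that some $t_0$ with $\vol(\Omega_{t_0})=v_0$ exists), and in discussing the boundary case via doubling; one small imprecision is that $A^c(\Omega_1)<0$ only directly forces $\vol(\Omega_1)>V_0$ (not necessarily $>V_1$), but that is all you need.
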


\begin{proof}
The fact that $\omega_c(M) > 0$ is a consequence of the isoperimetric inequality.  Since $M$ is asymptotically hyperbolic, there are constants $C>0$ and $v > 0$ such that 
\[
\area(\bd \Omega) \ge C \vol(\Omega)^{2/3}
\]
for all $\Omega\in \mathcal C(M)$ with $\vol(\Omega) \le v$.  Let $\{\Omega_t\}_{t\in[0,1]}$ be a mountain pass path.  Choose a small number $v_0 \le v$.  Since $\Omega_t$ is  continuous, there is a $t_0$ such that $\vol(\Omega_{t_0}) = v_0$. Thus 
\[
A^c(\Omega_{t_0}) = \area(\bd \Omega_{t_0}) - c \vol(\Omega_{t_0}) \ge C v_0^{2/3} - cv_0.
\]
This is strictly positive provided $v_0$ is chosen small enough. This proves that $\omega_c(M) > 0$. 

To show $\omega_c(M) \le \omega_c(\mathbb H^3)$, we construct explicit mountain pass paths near infinity.  Since $M$ is asymptotically hyperbolic, we can identify $M\setminus K$ with $\R^3 \setminus B$. Given a point $p\in \R^3\setminus B$, let $B_r(p)$ denote the ball of radius $r$ centered at $p$.  Assuming $\|p\|$ and $R$ are chosen sufficiently large, the path $t\in [0,1]\mapsto  B_{Rt}(p)$ is a mountain pass path. Moreover, since $g$ is asymptotically hyperbolic, it follows that 
\[
\sup_{t\in [0,1]} A^c(B_{Rt}(p)) \to \omega_c(\mathbb H^3)
\]
as $\|p\|\to \infty$. This proves that $\omega_c \le \omega_c(\mathbb H^3)$. 
\end{proof}

We now want to prove Theorem \ref{asy-hyp-min-max}, which we restate below for convenience. 

\begin{theorem}
\label{min-max-hyp}
    Let $(M^3,g)$ be a complete, asymptotically hyperbolic manifold, possibly with boundary. Fix a constant $c > 2$ and assume that $\omega_c(M) < \omega_c(\mathbb H^3)$. Then there is a compact, almost-embedded, free boundary hypersurface $\Sigma$ in $M$ with constant mean curvature $c$.
\end{theorem}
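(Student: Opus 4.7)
The plan is to adapt the cut-off min-max argument of \cite{mazurowski2022prescribed} to the asymptotically hyperbolic setting with possibly non-empty boundary, replacing the uniform area bound used there (which is not obviously available here) by a rigidity argument in $\mathbb H^3$. Since $c>2$ and coordinate spheres $\partial B_R$ in the end have mean curvature $2\coth R + O(e^{-3R})$, which is less than $c$ for all sufficiently large $R$, each such sphere serves as an outer barrier for CMC-$c$ surfaces. Let $M_R\subset M$ be the compact subdomain bounded by $\partial M$ together with $\partial B_R$. First I would apply the free-boundary PMC min-max theory of Sun-Wang-Zhou \cite{sun2024multiplicity}, in the form adapted to $A^c$ in \cite{mazurowski2022prescribed}, on the compact manifold $M_R$. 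This produces, for each large $R$, a compact, almost-embedded, free-boundary CMC surface $\Sigma_R = \partial\Omega_R \subset M_R$ with mean curvature $c$, free boundary contained in $\partial M$ (not on $\partial B_R$, by the barrier), and with $A^c(\Omega_R) = \omega_c(M_R)$.

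Any mountain pass path for $A^c$ on $M$ has compactly supported sweep-out, hence lies in $M_R$ for $R$ large, while any path in $M_R$ extends trivially to $M$; therefore $\omega_c(M_R) \to \omega_c(M)$ as $R\to\infty$, so $A^c(\Omega_R) < \alpha_c - \varepsilon$ for some fixed $\varepsilon > 0$ and all large $R$. The isoperimetric lower bound $\omega_c(M) > 0$ also yields $A^c(\Omega_R) > c_0 > 0$. The main step is to show that $\Sigma_R$ does not escape every compact subset of $M$. Suppose it does; pulling $\Sigma_R$ back through a suitable translation in the asymptotic coordinates $\R^3\setminus B_1$, and using the decay $|Q|+|\nabla_h Q|+|\nabla_h^2 Q|+|\nabla_h^3 Q| \le Ce^{-3r}$, I obtain a sequence of closed, almost-embedded CMC-$c$ surfaces $\widetilde\Sigma_R$ in ambient metrics converging smoothly on compact sets to $\overline g$ (with the free boundary in $\partial M$ pushed off to infinity in the translated picture). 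Compactness for almost-embedded PMC surfaces with $A^c \in [c_0, \alpha_c - \varepsilon]$ produces a non-trivial subsequential limit $\Sigma_\infty\subset \mathbb H^3$ with $A^c(\Sigma_\infty) \in [c_0, \alpha_c - \varepsilon]$. By the Alexandrov-type theorem for almost-embedded closed CMC surfaces in hyperbolic space, each component of $\Sigma_\infty$ is a geodesic sphere of radius $\operatorname{arctanh}(2/c)$, so $A^c(\Sigma_\infty) = k\alpha_c$ for some integer $k\ge 1$, contradicting $A^c(\Sigma_\infty) \le \alpha_c - \varepsilon$.

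Once drift to infinity is ruled out, all $\Sigma_R$ lie in a fixed compact subset $K\subset M$. Standard compactness and regularity for almost-embedded free-boundary CMC surfaces, with the uniform area and curvature bounds provided by the min-max construction inside $K$, then yield a subsequential smooth limit $\Sigma\subset M$ which is compact, almost-embedded, free-boundary, with constant mean curvature $c$, completing the proof. The principal obstacle is the drift-to-infinity step: the asymptotically flat argument of \cite{mazurowski2022prescribed} relies on a uniform area bound for the $\Sigma_R$, which does not obviously extend here because the hyperbolic isoperimetric profile grows linearly in volume at large scales. The replacement above uses instead the rigid $A^c$-quantization of almost-embedded CMC-$c$ surfaces in $\mathbb H^3$ together with the strict inequality $\omega_c(M) < \omega_c(\mathbb H^3)$.
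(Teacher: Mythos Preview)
Your barrier claim in the first paragraph is backwards, and this undermines the entire construction. For the coordinate sphere $\partial B_R$ to serve as an outer barrier for the $A^c$ functional (equivalently, for $A^c(\Omega\cap B_R)\le A^c(\Omega)$ to hold, so that the min-max surface cannot meet $\partial B_R$), one needs the mean curvature of $\partial B_R$ to be \emph{at least} $c$, not at most $c$. By the divergence theorem the relevant inequality is $\operatorname{div}(N)\ge c$ outside $B_R$, where $N$ is the outward unit normal to the foliation by spheres; but $\operatorname{div}(N)=H(\partial B_R)\approx 2<c$. Geometrically, if a CMC-$c$ surface bounding $\Omega\subset B_R$ touches $\partial B_R$ from inside, the comparison at the contact point gives $c=H_\Sigma\ge H_{\partial B_R}\approx 2$, which is no contradiction. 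This is precisely why the paper does \emph{not} run $A^c$ min-max on $M_R$: it instead introduces cut-off prescribing functions $\zeta_R$ that equal $c$ on $B_R$ and decrease to $2$ near $S_{R+1}$, so that the spheres $S_r$ with $r$ close to $R+1$ (which have mean curvature slightly above $2$) become genuine barriers for $A^{\zeta_R}$. The approximate solutions $\Sigma_R$ therefore have \emph{non-constant} prescribed mean curvature $\zeta_R$, and the whole point of the subsequent analysis is to show that some component of $\Sigma_R$ lies in the region $B_R$ where $\zeta_R\equiv c$.

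Your drift-to-infinity argument also has a gap, even granting the correct approximate problem. You invoke ``compactness for almost-embedded PMC surfaces with $A^c\in[c_0,\alpha_c-\varepsilon]$,'' but a bound on $A^c=\operatorname{Area}-c\operatorname{Vol}$ does not bound the area: area and volume can both diverge with their difference bounded, since the hyperbolic isoperimetric profile is asymptotically linear. Without an area bound the limit need not be a closed surface in $\mathbb H^3$; a priori it is only a lamination, possibly with non-compact leaves. The paper handles this by exploiting the index-one bound to obtain curvature estimates away from a single point, then using de Lima's intrinsic diameter estimate for stable CMC-$c$ surfaces to confine $\Sigma_R$ to an annulus $B_{R+1}\setminus B_{R-d}$ of fixed width $d$, and finally analyzing pointed limits as laminations of $\mathbb H^3$. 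The leaf classification requires both Alexandrov (compact leaves are round spheres) and a do~Carmo--Lawson moving-planes argument adapted to the prescribing function $h$ (any non-compact leaf must be the horosphere $\{z=e^{-1}\}$); the horosphere alternative is then excluded by showing it forces $A^{\zeta_R}(\Omega_R)<0$. Your outline, by passing directly to an Alexandrov-type conclusion, misses the horosphere possibility entirely.
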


The crucial assumption required for Theorem \ref{min-max-hyp} is that $\omega_c(M) < \omega_c(\mathbb H^3)$. We will assume this throughout the remainder of this section. 

\subsubsection{The Approximate Problems} 

We now define and solve a sequence of approximate problems that admit barriers near infinity. First we need to study the mean curvature of large coordinate spheres in asymptotically hyperbolic manifolds.  Fix an asymptotically hyperbolic manifold $(M,g)$. Let $\R^3\setminus B$ be the asymptotically hyperbolic end of $M$ and let $S_r$ be the sphere of Euclidean radius $r$ centered at the origin in $\R^3\setminus B$. 

\begin{proposition}
\label{mcs} 
The mean curvature of $S_r$ satisfies $H = 2 + 4e^{-2r} + O(e^{-3r})$ as $r\to \infty$. In particular, the mean curvature of $S_r$ is at least $2(1+e^{-r})$ when $r$ is large enough. 
\end{proposition}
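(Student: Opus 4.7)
The strategy is to treat $(M, g)$ near infinity as a perturbation of hyperbolic space $(\R^3, \bar g)$ with $\bar g = dr^2 + \sinh^2 r\, g_{S^2}$, and to decompose $H = \bar H + (H - \bar H)$, where $\bar H$ is the hyperbolic mean curvature of $S_r$.

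In $\bar g$, the sphere $S_r$ is a geodesic sphere of radius $r$, so its mean curvature (with respect to the outward normal $\partial_r$) is $\bar H = 2 \coth r$. Expanding at infinity,
\[
\coth r = \frac{1 + e^{-2r}}{1 - e^{-2r}} = 1 + 2e^{-2r} + O(e^{-4r}) \quad \text{as } r \to \infty,
\]
so $\bar H = 2 + 4 e^{-2r} + O(e^{-4r})$.

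For the perturbation $H - \bar H$, I would work in a $\bar g$-orthonormal frame adapted to $S_r$. In such a frame, the assumption \eqref{Eq: C2 decay AH} implies that the components of $Q$ and of its first hyperbolic covariant derivative $\nabla_h Q$ are bounded by $Ce^{-3r}$. Consequently: the outward unit $g$-normal $\nu$ differs from $\partial_r$ by a tangential term of size $O(e^{-3r})$; the induced metric on $S_r$ in $g$ and its inverse differ from their $\bar g$-counterparts by $O(e^{-3r})$; and the extrinsic Christoffel terms contributing to the second fundamental form differ by $O(e^{-3r})$. Since the leading second fundamental form $\bar A$ has components of size $\coth r = O(1)$ in this orthonormal frame, assembling $H = g^{ij} A_{ij}$ yields $|H - \bar H| \le Ce^{-3r}$.

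Combining the two estimates and absorbing $O(e^{-4r})$ into $O(e^{-3r})$ gives the expansion $H = 2 + 4e^{-2r} + O(e^{-3r})$, from which the pointwise lower bound claimed in the proposition follows for all $r$ sufficiently large. The main technical point is verifying that the $C^2$-decay of $Q$ with respect to the hyperbolic metric transfers to the extrinsic geometry operations without losing decay; the $\bar g$-orthonormal frame makes this transparent because traces and inverse-taking act on matrices whose entries are all $O(1)$, so the $O(e^{-3r})$ perturbations are not amplified by the $\sinh^{-2} r$ that would appear in a coordinate frame.
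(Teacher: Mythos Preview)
Your derivation of the expansion $H = 2 + 4e^{-2r} + O(e^{-3r})$ is correct and is precisely the computation the paper has in mind (the paper's proof consists only of the sentence ``This is a straightforward calculation''). The decomposition $H = \bar H + (H - \bar H)$, the expansion $2\coth r = 2 + 4e^{-2r} + O(e^{-4r})$, and the use of a $\bar g$-orthonormal frame to bound $|H - \bar H|$ by $|Q|_{\bar g} + |\nabla_h Q|_{\bar g} = O(e^{-3r})$ are all standard and sound.

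There is, however, a genuine gap in your final step. You assert that the lower bound $H \ge 2(1+e^{-r}) = 2 + 2e^{-r}$ ``follows'' from the expansion, but it does not: for large $r$ one has $2e^{-r} \gg 4e^{-2r}$, so the expansion actually gives $H < 2(1+e^{-r})$ once $r$ is large. This is not a flaw in your method but rather what appears to be a typo in the paper's statement; the intended bound is presumably $H \ge 2(1+e^{-2r})$, which \emph{does} follow immediately from the expansion and which suffices for the barrier construction that follows (one need only adjust the choice of $\eps_R$ so that $\zeta(1-6\eps_R) \le 2 + 2e^{-2(R+1)}$, which is still possible since $\zeta$ is continuous with $\zeta(1)=2$). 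In short: your proof of the asymptotic formula is complete and matches the paper's intent, but you should not have claimed the stated inequality as a consequence without checking it.
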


\begin{proof}
This is a straightforward calculation. 
\end{proof}

\begin{definition}
Fix a constant $c > 2$. Let $\zeta\colon \R\to \R$ be a smooth, decreasing function such that $\zeta(r) \equiv c$ for $r \le 0$ and $\zeta(r)\equiv 2$ for $r \ge 1$. 
\end{definition}

\begin{definition}
Let $B_R$ denote the compact piece of $M$ enclosed by $S_R$.  For $R$ sufficiently large, define a function $\zeta_R\colon M\to \R$ by setting $\zeta_R \equiv c$ on $B_R$ and setting $\zeta_R(x) = \zeta( \|x\|-R)$ on the asymptotically hyperbolic end $\R^3 \setminus B$. 
\end{definition}

Next we prove that the spheres $S_r$ serve as a barrier to the $A^{\zeta_R}$ functional when $r$ is sufficiently large. This allows us to confine the min-max problem for $A^{\zeta_R}$ to the compact region $B_{R+1}$.  

\begin{proposition}
Let $N$ be the $g$-unit normal vector to the spheres $S_r$.  Then $\Div(N) \ge 2(1+e^{-r})$ when $r$ is sufficiently large.
\end{proposition}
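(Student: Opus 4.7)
The plan is to reduce the divergence estimate to the mean curvature estimate already recorded in Proposition \ref{mcs}, using the classical identity that the divergence of a unit vector field normal to a hypersurface equals the mean curvature of the hypersurface (with respect to that normal). The only real content beyond this identity is bookkeeping: extending $N$ smoothly to an open set so that $\operatorname{Div}(N)$ is defined, and matching sign conventions so that $H\ge 2(1+e^{-r})$ (outward) gives $\operatorname{Div}(N)\ge 2(1+e^{-r})$.

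First I would extend $N$ off of $\bigcup_r S_r$. For $r$ large enough the level sets $\{S_\rho\}$ of the radial coordinate $r$ foliate a neighborhood of infinity in $M$, so the $g$-unit outward normal to this foliation gives a smooth vector field $N$ on that neighborhood (concretely, $N=\partial_r/|\partial_r|_g$, which is smooth and well-defined because $|\partial_r|_g\to 1$ as $r\to\infty$ thanks to the decay $|Q|_{\bar g}\le Ce^{-3r}$). This is the vector field for which $\operatorname{Div}(N)$ makes sense and whose restriction to each $S_r$ is the $g$-unit normal.

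Second, fix $p\in S_r$ and pick an orthonormal frame $\{e_1,e_2,e_3\}$ at $p$ with $e_1,e_2$ tangent to $S_r$ and $e_3=N(p)$. Then
\[
\operatorname{Div}(N)(p)=\sum_{i=1}^{2}\langle\nabla_{e_i}N,e_i\rangle+\langle\nabla_N N,N\rangle.
\]
Because $|N|_g\equiv 1$ on the whole neighborhood, differentiating $\langle N,N\rangle=1$ in the $N$-direction yields $\langle\nabla_N N,N\rangle=0$. The tangential piece $\sum_{i=1}^{2}\langle\nabla_{e_i}N,e_i\rangle$ is the trace of the shape operator of $S_r$ with respect to the outward normal $N$, i.e.\ the mean curvature of $S_r$ at $p$ in our sign convention (the one that makes round Euclidean spheres have positive mean curvature). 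Hence
\[
\operatorname{Div}(N)(p)=H(S_r)(p).
\]

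Finally, Proposition \ref{mcs} gives $H(S_r)\ge 2(1+e^{-r})$ pointwise on $S_r$ for all $r$ sufficiently large, and combining with the identity above finishes the proof. There is no serious obstacle: the argument is essentially a one-line application of a standard identity plus the asymptotic mean curvature estimate, and the only thing to be careful about is the sign convention for the mean curvature and the smooth extension of $N$, both of which are handled by taking $N=\partial_r/|\partial_r|_g$ in the end of $M$.
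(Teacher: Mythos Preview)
Your proof is correct and follows exactly the same approach as the paper: identify $\operatorname{Div}(N)$ with the mean curvature of $S_r$ and invoke Proposition \ref{mcs}. The paper's proof is the one-line version of what you wrote; your extra details on extending $N$ and checking the identity are fine but not strictly required.
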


\begin{proof}
The divergence of $N$ is equal to the mean curvature of $S_r$. Hence this follows from Proposition \ref{mcs}. 
\end{proof}

\begin{definition}
For each sufficiently large $R > 0$ select a number $\eps_R$ so that $\zeta(1-6\eps_R) \le 2+2e^{-R-1}$.  This choice ensures that $\zeta_R \le \operatorname{div}(N)$ on $M\setminus B_{R+1-6\eps_R}$.  
\end{definition}

\begin{proposition}
\label{barrier} 
Fix a large number $R > 0$ and let $\Omega \in \mathcal C(M)$. Then $\Omega \cap B_{R+1-6\eps_R}$ also belongs to $\mathcal C(M)$ and $A^{\zeta_R}(\Omega\cap B_{R+1-6\eps_R})\le A^{\zeta_R}(\Omega)$. 
\end{proposition}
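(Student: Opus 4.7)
The plan is to directly establish the inequality by applying the divergence theorem to a suitable vector field on $\Omega_2 := \Omega\setminus \bar B_{R+1-6\eps_R}$, exploiting that the coordinate spheres have enough positive mean curvature to serve as a barrier. Writing $E = B_{R+1-6\eps_R}$, $\Omega_1 = \Omega \cap E$, and $\Omega_2 = \Omega \setminus \bar E$, I first note that $\Omega_1 \in \mathcal{C}(M)$ by standard finite perimeter theory, since $E$ is precompact with smooth boundary. Rearranging $A^{\zeta_R}(\Omega_1) \le A^{\zeta_R}(\Omega)$ and using the standard perimeter decomposition $\M(\bd \Omega_1) = \M(\bd \Omega \cap E) + \M(\Omega \cap S_{R+1-6\eps_R})$, the claim reduces to the single inequality
\[
\M(\Omega \cap S_{R+1-6\eps_R}) \le \M(\bd \Omega \cap (M \setminus \bar E)) - \int_{\Omega_2} \zeta_R.
\]

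To establish this, I would apply the generalized divergence theorem for Caccioppoli sets to the vector field $N$ (the outward unit normal to the spheres $S_r$, extended smoothly to all of $M$) on the set $\Omega_2$. The reduced boundary $\bd^* \Omega_2$ decomposes, up to an $\mathcal{H}^2$-null set, into the piece $\bd^* \Omega \cap (M\setminus \bar E)$ with outward unit normal $\nu$ inherited from $\Omega$, together with the piece $\Omega \cap S_{R+1-6\eps_R}$ whose outward unit normal relative to $\Omega_2$ is $-N$ (since it points into $E$). This yields
\[
\int_{\Omega_2} \Div(N) \, dV = \int_{\bd \Omega \cap (M \setminus \bar E)} \langle N, \nu\rangle \, d\mathcal{H}^2 - \M(\Omega \cap S_{R+1-6\eps_R}).
\]
Applying the barrier condition $\Div(N) \ge \zeta_R$ on $M\setminus E$ to the left-hand side and the pointwise bound $\langle N, \nu\rangle \le 1$ to the first term on the right, then rearranging, immediately produces the required inequality.

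The main obstacle I expect is that the perimeter decomposition of $\Omega_1$ used above tacitly assumes $\mathcal{H}^2(\bd^* \Omega \cap S_{R+1-6\eps_R}) = 0$, which need not hold at the specific radius named in the statement. I would handle this by the usual slicing trick: the coarea formula guarantees $\mathcal{H}^2(\bd^* \Omega \cap S_r) = 0$ for almost every $r$, and for any such good $r \in (R+1-6\eps_R,\, R+1-5\eps_R)$ the inclusion $M\setminus B_r \subset M\setminus E$ preserves the bound $\Div(N) \ge \zeta_R$, so the argument runs cleanly for $\Omega\cap B_r$. One then takes $r \searrow R+1-6\eps_R$: the sets $\Omega\cap B_r$ converge to $\Omega_1$ in the flat topology, so lower semicontinuity of mass and continuity of $\int \zeta_R$ under this convergence give $A^{\zeta_R}(\Omega_1) \le \liminf_r A^{\zeta_R}(\Omega\cap B_r) \le A^{\zeta_R}(\Omega)$, as desired.
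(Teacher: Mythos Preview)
Your proof is correct and follows essentially the same approach as the paper: apply the divergence theorem to the unit normal field $N$ on the exterior piece $\Omega\setminus B_{R+1-6\eps_R}$, using $\Div(N)\ge \zeta_R$ there and $\langle N,\nu\rangle\le 1$ on the outer portion of the reduced boundary. The paper's proof is a two-line sketch of exactly this argument; you have simply filled in the standard measure-theoretic details (the perimeter decomposition and the slicing/limit argument to avoid the bad radius), which the paper omits.
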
 

\begin{proof}
The intersection of two Caccioppoli sets is a Caccioppoli set.  Thus $\Omega\cap B_{R+1-6\eps_R}$ and $\Omega \setminus B_{R+1-6\eps_R}$ are both Caccioppoli sets.  Note that $\operatorname{div}(N) \ge  \zeta_R$ in the set $M\setminus B_{R+1-6\eps_R}$. Hence the result follows by the divergence theorem. 
\end{proof} 

Next we set up the approximate problems.  Consider a large number $R > 0$. 

\begin{definition}
An $R$-mountain pass path is an $\mathbf F$-continuous family $\{\Omega_t\}_{t\in[0,1]}$ in $\mathcal C(M)$ such that $\Omega_0=\emptyset$ and $A^{\zeta_R}(\Omega_1) < 0$. Let $\mathcal P_{R}$ denote the set of all $R$-mountain pass paths. The $R$-min-max value is 
\[
\omega_R = \inf_{\{\Omega_t\}\in \mathcal P_R}\left(\sup_{t\in[0,1]} A^{\zeta_R}(\Omega_t)\right).
\]
\end{definition}

We will now prove some elementary properties of the $R$-min-max values $\omega_R$. 

\begin{proposition}
\label{mono}
The function $R\mapsto \omega_R$ is decreasing. 
\end{proposition}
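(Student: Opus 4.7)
The plan is to show the pointwise inequality $\zeta_{R_1}\le \zeta_{R_2}$ on $M$ whenever $R_1\le R_2$, and then deduce the monotonicity of $\omega_R$ by comparing $A^{\zeta_{R_1}}$ and $A^{\zeta_{R_2}}$ on mountain pass paths.

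First I would fix $R_1\le R_2$ and compare $\zeta_{R_1}(x)$ with $\zeta_{R_2}(x)$ in three regimes. For $x\in B_{R_1}$ both functions equal $c$. For $x\in B_{R_2}\setminus B_{R_1}$ we have $\zeta_{R_2}(x)=c$ while $\zeta_{R_1}(x)=\zeta(\|x\|-R_1)\le c$ since $\zeta\le c$ everywhere. For $x\notin B_{R_2}$ we have $\zeta_{R_i}(x)=\zeta(\|x\|-R_i)$; because $\|x\|-R_1\ge \|x\|-R_2$ and $\zeta$ is decreasing, we get $\zeta_{R_1}(x)\le \zeta_{R_2}(x)$. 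In all cases $\zeta_{R_1}\le \zeta_{R_2}$ pointwise.

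Next, for any $\Omega\in \mathcal C(M)$, integrating this inequality gives
\[
A^{\zeta_{R_2}}(\Omega)=\M(\bd\Omega)-\int_\Omega \zeta_{R_2}\le \M(\bd\Omega)-\int_\Omega \zeta_{R_1}=A^{\zeta_{R_1}}(\Omega).
\]
Therefore if $\{\Omega_t\}_{t\in[0,1]}\in \mathcal P_{R_1}$, then $\Omega_0=\emptyset$ and $A^{\zeta_{R_2}}(\Omega_1)\le A^{\zeta_{R_1}}(\Omega_1)<0$, so $\{\Omega_t\}\in \mathcal P_{R_2}$. Taking the supremum over $t$ and then the infimum over mountain pass paths yields
\[
\omega_{R_2}\le \sup_{t\in[0,1]}A^{\zeta_{R_2}}(\Omega_t)\le \sup_{t\in[0,1]}A^{\zeta_{R_1}}(\Omega_t),
\]
and passing to the infimum over $\{\Omega_t\}\in \mathcal P_{R_1}$ gives $\omega_{R_2}\le \omega_{R_1}$, as desired.

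There is essentially no obstacle here: the statement is a formal consequence of the monotonicity of $\zeta$ (which pushes $\zeta_R$ up pointwise as $R$ grows) and the definition of $\omega_R$ as an infimum over the same class of sweepouts. The only point worth checking carefully is that the pointwise comparison is genuinely valid in the transition region $B_{R_2}\setminus B_{R_1}$, which uses only that $\zeta\le c$ by construction.
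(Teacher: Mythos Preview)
Your proof is correct and follows essentially the same approach as the paper: both establish the pointwise inequality $\zeta_{R_1}\le\zeta_{R_2}$, deduce $A^{\zeta_{R_2}}\le A^{\zeta_{R_1}}$ on Caccioppoli sets, observe that every $R_1$-mountain pass path is also an $R_2$-mountain pass path, and compare the min-max values. Your case-by-case verification of the pointwise inequality is more detailed than the paper's (which simply asserts $\zeta_{R_2}\ge\zeta_{R_1}$), and you take the infimum directly rather than via an $\eta$-approximation, but these are cosmetic differences.
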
 

\begin{proof}
Fix some $R_1 < R_2$ and let $\eta > 0$.  Choose an $R_1$-mountain pass path $\{\Omega_t\}_{t\in[0,1]}$ such that 
\[
\sup_{t\in[0,1]} A^{\zeta_{R_1}}(\Omega_t) \le \omega_{R_1}+\eta. 
\]
Note that $\zeta_{R_2} \ge \zeta_{R_1}$ and therefore $A^{\zeta_{R_2}}(\Omega) \le A^{\zeta_{R_1}}(\Omega)$ for all $\Omega\in \mathcal C(M)$.  It follows that 
\[
\sup_{t\in[0,1]} A^{\zeta_{R_2}}(\Omega_t) \le \sup_{t\in[0,1]} A^{\zeta_{R_1}}(\Omega_t)\le  \omega_{R_1} + \eta.
\]
Moreover, $\Omega_0 = \emptyset$ and $A^{\zeta_{R_2}}(\Omega_1) \le A^{\zeta_{R_1}}(\Omega_1) < 0$. Thus $\{\Omega_t\}_{t\in[0,1]}$ is also an $R_2$-mountain pass path and so 
\[
\omega_{R_2} \le \sup_{t\in[0,1]} A^{\zeta_{R_2}}(\Omega_t) \le \omega_{R_1} + \eta.
\] 
Since $\eta$ was arbitrary, it follows that $\omega_{R_2}\le \omega_{R_1}$, as required.
\end{proof}

\begin{proposition}
\label{limit}
We have $\omega_R \to \omega_c(M)$ as $R\to \infty$. 
\end{proposition}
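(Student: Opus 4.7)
The plan is to combine the monotonicity from Proposition \ref{mono} with two one-sided estimates: $\omega_R \geq \omega_c(M)$ for every $R$, and $\omega_R \leq \omega_c(M) + \eta$ for $R$ sufficiently large given any $\eta > 0$. Together these pin the monotone sequence down to the limit $\omega_c(M)$.

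For the lower bound I would use only that $\zeta_R \leq c$ pointwise, so $A^{\zeta_R} \geq A^c$ on all of $\mathcal C(M)$. Any $R$-mountain pass path $\{\Omega_t\}$ is then automatically a $c$-mountain pass path, since $A^c(\Omega_1) \leq A^{\zeta_R}(\Omega_1) < 0$, and
\[
\sup_{t} A^{\zeta_R}(\Omega_t) \;\geq\; \sup_{t} A^c(\Omega_t) \;\geq\; \omega_c(M);
\]
infimizing over $\mathcal P_R$ gives $\omega_R \geq \omega_c(M)$.

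For the upper bound I would fix $\eta > 0$, select a $c$-mountain pass path $\{\Omega_t\}_{t\in[0,1]}$ with $\sup_t A^c(\Omega_t) \leq \omega_c(M) + \eta/2$, and set $\alpha := A^c(\Omega_1) < 0$. Since $\zeta_R \equiv c$ on $B_R$ and $\zeta_R \geq 2$ off $B_R$, one obtains the pointwise estimate
\[
0 \;\leq\; A^{\zeta_R}(\Omega_t) - A^c(\Omega_t) \;=\; \int_{\Omega_t} (c - \zeta_R) \;\leq\; (c-2)\,\vol(\Omega_t \setminus B_R).
\]
If I can arrange that $\vol(\Omega_t \setminus B_R) < \delta$ holds uniformly in $t$ once $R$ is large, then choosing $\delta$ with $(c-2)\delta < \min(\eta/2,\, -\alpha)$ will simultaneously preserve negativity at the endpoint (so the path remains an $R$-mountain pass path) and yield $\sup_t A^{\zeta_R}(\Omega_t) \leq \omega_c(M) + \eta$, whence $\omega_R \leq \omega_c(M) + \eta$.

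The main obstacle is precisely this uniform-in-$t$ tail control, since $\mathbf F$-continuity of the path does not a priori rule out the supports $\operatorname{supp}(\Omega_t)$ from drifting to infinity with $t$. I would handle it by a compactness argument: the functional $\Omega \mapsto \vol(\Omega \setminus B_R)$ is Lipschitz in the flat metric $\mathcal F$ (hence $\mathbf F$-continuous); each $\Omega_t \in \mathcal C(M)$ has compact support, so $\vol(\Omega_t \setminus B_{R_t}) = 0$ for some $R_t$; the Lipschitz bound then promotes this to $\vol(\,\cdot\,\setminus B_{R_t}) < \delta$ on an $\mathbf F$-open neighborhood of $\Omega_t$; and pulling these neighborhoods back via the $\mathbf F$-continuous map $t \mapsto \Omega_t$ produces an open cover of the compact interval $[0,1]$. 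Extracting a finite subcover corresponding to parameters $t_1,\dots,t_n$ yields the uniform radius $R_\delta := \max_i R_{t_i}$ beyond which the desired tail estimate holds for every $t$.
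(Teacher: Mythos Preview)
Your proposal is correct and follows essentially the same approach as the paper: both establish $\omega_R \ge \omega_c(M)$ from the pointwise inequality $\zeta_R \le c$, and both obtain the upper bound by fixing a near-optimal $c$-mountain pass path and showing the tails $\vol(\Omega_t\setminus B_R)$ are uniformly small in $t$ via compactness of $[0,1]$. The only cosmetic differences are that the paper phrases the tail estimate as a sequential contradiction argument (rather than your open-cover argument) and handles the endpoint by enlarging $R$ so that $\Omega_1\subset B_R$ outright (rather than absorbing the discrepancy into $\delta$); both variants are equally valid.
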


\begin{proof}
Note that $c \ge \zeta_R$ for all $R$. Therefore, the same argument as in the previous proof shows that $\omega_c(M) \le \omega_R$ for all $R$.  Since $\omega_R$ is decreasing, it therefore suffices to prove the following: for every $\eta > 0$ there is an $R$ such that $\omega_R \le \omega_c(M) + 2\eta$.  

Fix some $\eta > 0$. By definition, there exists a path $\{\Omega_t\}_{t\in [0,1]} \in \mathcal P$ such that 
\[
\sup_{t\in[0,1]} A^c(\Omega_t) \le \omega_c(M) + \eta. 
\]
We claim that there is a very large number $r > 0$ such that $\vol(\Omega_t\setminus B_r) < \eta c\inv$ for all $t\in [0,1]$. Suppose to the contrary that this is not the case. Then there is a sequence $r_k\to \infty$ and a sequence $t_k\in [0,1]$ such that $\vol(\Omega_{t_k}\setminus B_{r_k}) \ge \eta c\inv$ for all $k$. After passing to a subsequence, we can suppose that $t_k \to t_\infty \in [0,1]$. Then by continuity, we have $\Omega_{t_k} \to \Omega_{t_\infty}$. But $\Omega_{t_\infty}$ is contained in $B_{r_\infty}$ for some $r_\infty > 0$. It follows that $$\vol(\Omega_{t_\infty} \operatorname{\Delta} \Omega_{t_k}) \ge \vol(\Omega_{t_k} \setminus B_{r_k}) \ge \eta c\inv$$ for $k$ sufficiently large, and this is a contradiction. 

Now select some $R \ge r$.  Observe that 
\begin{align*}
A^{\zeta_R}(\Omega_t) &=  A^c(\Omega_t)-\int_{\Omega_t}(\zeta_R - c) \\ &= A^c(\Omega_t) - \int_{\Omega_t\cap B_R} (\zeta_R-c) - \int_{\Omega_t\setminus B_R} (\zeta_R - c) \\
&\le A^c(\Omega_t) + c\vol(\Omega_t\setminus B_R)
\end{align*}
and so $A^{\zeta_R}(\Omega_t) \le A^c(\Omega_t) + \eta$ for all $t\in [0,1]$. By choosing $R$ larger if necessary, we can ensure that $\Omega_1 \subset B_R$ and hence that $A^{\zeta_R}(\Omega_1) = A^c(\Omega_1) < 0$.  For this $R$, the path $\{\Omega_t\}_{t\in[0,1]}$ is an $R$-mountain pass path and so 
\[
\omega_R \le \sup_{t\in[0,1]} A^{\zeta_R}(\Omega_t) \le \sup_{t\in[0,1]}A^c(\Omega_t)+\eta \le \omega_c(M) + 2\eta.
\]
This proves the proposition. 
\end{proof}

\begin{definition}
Let $\mathcal Q_R$ be the set of all $R$-mountain pass paths $\{\Omega_t\}_{t\in[0,1]}$ such that $\Omega_t$ is supported in $B_{R+1-5\eps_R}$ for all $t\in[0,1]$. 
\end{definition}

\begin{proposition}
\label{confine} 
The $R$-min-max value does not change if we restrict to $R$-mountain pass paths with support in $B_{R+1-5\eps_R}$. In other words,
\[
\omega_R = \inf_{\{\Omega_t\}\in \mathcal Q_R} \left(\sup_{t\in[0,1]} A^{\zeta_R}(\Omega_t)\right).
\]
\end{proposition}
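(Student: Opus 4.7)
The easy inclusion $\omega_R \le \inf_{\{\Omega_t\}\in\mathcal Q_R}\sup_t A^{\zeta_R}(\Omega_t)$ is immediate from $\mathcal Q_R\subset \mathcal P_R$. For the reverse, I would fix $\eta>0$, take an $R$-mountain pass path $\{\Omega_t\}_{t\in[0,1]}\in \mathcal P_R$ with $\sup_t A^{\zeta_R}(\Omega_t)\le \omega_R+\eta$, and modify it into a path supported inside $B_{R+1-5\eps_R}$ without essentially raising its maximum value.

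The natural candidate is the radial truncation $\tilde\Omega_t:=\Omega_t\cap B_s$ for some $s$ in the annular gap $(R+1-6\eps_R,\,R+1-5\eps_R)$. Repeating the divergence-theorem calculation that proved Proposition \ref{barrier} (which only required $\Div(N)\ge \zeta_R$ on $M\setminus B_{R+1-6\eps_R}$) gives
\[
A^{\zeta_R}(\Omega_t\cap B_s)\le A^{\zeta_R}(\Omega_t)
\]
for every $t$ and every $s\ge R+1-6\eps_R$; choosing $s$ in the gap additionally forces the support inclusion $\tilde\Omega_t\subset B_{R+1-5\eps_R}$. Together with the obvious $\tilde\Omega_0=\emptyset$ and $A^{\zeta_R}(\tilde\Omega_1)\le A^{\zeta_R}(\Omega_1)<0$, everything reduces to verifying $\mathbf F$-continuity of $\{\tilde\Omega_t\}$.

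Flat continuity is immediate, since $\vol((\Omega_t\cap B_s)\Delta(\Omega_{t'}\cap B_s))\le \vol(\Omega_t\Delta\Omega_{t'})$. Varifold continuity of the boundary measures at a time $t_0$ holds when $|\partial\Omega_{t_0}|(\partial B_s)=0$, since then the decomposition $|\partial\tilde\Omega_t|=|\partial\Omega_t|\llcorner B_s+|\Omega_t\cap \partial B_s|$ passes continuously to the limit. By a Fubini argument applied to the set $\{(s,t):|\partial\Omega_t|(\partial B_s)>0\}$---each $t$-slice of which is countable because $|\partial\Omega_t|$ is a finite Radon measure---one can pick $s$ in the gap so that the exceptional set $E_s:=\{t:|\partial\Omega_t|(\partial B_s)>0\}$ has Lebesgue measure zero.

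For such $s$, the truncated family is $\mathbf F$-continuous at every $t\notin E_s$. To upgrade to continuity at all $t$, I would discretize $[0,1]$ into a fine partition $0=t_0<t_1<\cdots<t_N=1$ chosen outside $E_s$, retain the truncations $\tilde\Omega_{t_i}=\Omega_{t_i}\cap B_s$, and interpolate across the intervals $[t_{i-1},t_i]$ inside $\overline B_{R+1-5\eps_R}$ using the standard Almgren--Pitts interpolation lemma. Because $\{\Omega_t\}$ is $\mathbf F$-continuous and the interpolation preserves the mass bounds, the additional error in $\sup_t A^{\zeta_R}$ vanishes as $N\to\infty$, giving a path in $\mathcal Q_R$ with maximum at most $\omega_R+2\eta$. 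The main obstacle here is ensuring the interpolating sets do not escape $B_{R+1-5\eps_R}$---this follows by applying the barrier Proposition \ref{barrier} to the interpolation itself to squeeze it back into the compact region at the cost of a further decrease in $A^{\zeta_R}$.
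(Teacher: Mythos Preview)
Your approach follows the same outline as the paper's---truncate, then interpolate to restore $\mathbf F$-continuity---but the final ``squeeze back'' step is circular. After interpolating between the $\tilde\Omega_{t_i}$, you propose to apply Proposition~\ref{barrier} to the interpolating family to force it back inside $B_{R+1-5\eps_R}$. But intersecting with a ball is exactly the operation that can break $\mathbf F$-continuity (mass jumps on $\partial B_s$), which is the very defect the interpolation was meant to repair. So you would need to interpolate again, and the argument does not close.

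The paper avoids this by using an interpolation theorem with localization already built in. It truncates once at the fixed radius $R+1-6\eps_R$, cites \cite{dey2023existence} for the fact that the truncated family $\{\Theta_t\}$ is flat continuous with \emph{no concentration of mass}, and then applies Zhou's interpolation theorem \cite{zhou2020multiplicity} directly. That theorem takes such a family and produces an $\mathbf F$-continuous one whose support only grows by an arbitrarily small amount---here, from $B_{R+1-6\eps_R}$ to $B_{R+1-5\eps_R}$---because the interpolation is implemented via small local perturbations. The support control is a feature of the theorem, not something imposed afterward. Your Fubini argument for picking a good slicing radius $s$ is correct as far as it goes, but it is not needed once you invoke the right interpolation result; the ``no concentration of mass'' hypothesis does the real work.
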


\begin{proof} Since $\mathcal Q_R\subset \mathcal P_R$, it follows that 
\[
\omega_R \le \inf_{\{\Omega_t\}\in \mathcal Q_R} \left(\sup_{t\in[0,1]} A^{\zeta_R}(\Omega_t)\right).
\]
To prove the reverse inequality, fix a small $\eta > 0$. By definition, there is an $R$-mountain pass path $\{\Omega_t\}_{t\in[0,1]} \in \mathcal P_R$ such that 
\[
\sup_{t\in[0,1]} A^{\zeta_R}(\Omega_t) \le \omega_R + \eta. 
\]
Define $\Theta_t = \Omega_t \cap B_{R+1-6\eps_R}$ for $t\in[0,1]$.  By Proposition \ref{barrier}, we have $A^{\zeta_R}(\Theta_t)\le A^{\zeta_R}(\Omega_t)$ for all $t\in[0,1]$.  

According to \cite{dey2023existence}, the family $\{\Theta_t\}_{t\in[0,1]}$ is flat continuous with no concentration of mass.  Therefore, by an interpolation theorem of Zhou \cite{zhou2020multiplicity}, there exists an $\mathbf F$-continuous family such that 
\begin{itemize}
\item[(i)] $\Xi_0 = \Theta_0$ and $\Xi_1 = \Theta_1$,
\item[(ii)] $\sup_{t\in[0,1]} A^{\zeta_R}(\Xi_t) \le \sup_{t\in[0,1]} A^{\zeta_R}(\Theta_t)+\eta$,
\item[(iii)] $\Xi_t$ is supported in $B_{R+1-5\eps_R}$ for all $t\in[0,1]$. 
\end{itemize}
Thus $\{\Xi_t\}_{t\in[0,1]}$ belongs to $\mathcal Q_R$ and satisfies 
\[
\sup_{t\in[0,1]} A^{\zeta_R}(\Xi_t) \le \omega_R + 2\eta.
\]
This proves that 
\[
 \inf_{\{\Omega_t\}\in \mathcal Q_R} \left(\sup_{t\in[0,1]} A^{\zeta_R}(\Omega_t)\right) \le \omega_R + 2\eta.
\]
The result follows since $\eta$ was arbitrary. 
\end{proof}

\subsubsection{Solutions to the Approximate Problems}

We are now set up to apply the min-max theory developed in \cite{sun2024multiplicity}. The goal is to prove the following proposition.

\begin{proposition}
Fix a large number $R > 0$. There exists a smooth, compact, almost-embedded, free boundary surface $\Sigma_R = \bd \Omega_R$ in $M$ with mean curvature prescribed by $\zeta_R$. Moreover, $\Sigma_R$ is contained in $B_{R+1}$, and $\bd \Sigma_R \cap \bd B_{R+1} = \emptyset$, and $A^{\zeta_R}(\Omega_R) = \omega_R$, and the index of $\Omega_R$ as a critical point of $A^{\zeta_R}$ is at most one. 
\end{proposition}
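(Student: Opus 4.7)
The plan is to apply the free boundary prescribed mean curvature min-max theorem of Sun, Wang, and Zhou~\cite{sun2024multiplicity} to the compact manifold-with-boundary $W := \bh_{R+1}$, whose boundary is $\bd W = (\bd M \cap \bh_{R+1}) \cup S_{R+1}$, using the functional $A^{\zeta_R}$ restricted to $\mathcal C(W)$. By Proposition~\ref{confine}, $\omega_R$ equals the min-max value computed over the restricted class $\mathcal Q_R$, each of whose slices is supported in $B_{R+1-5\eps_R} \Subset W$. Hence $\omega_R$ coincides with the natural one-parameter mountain pass width for $A^{\zeta_R}$ on $\mathcal C(W)$, and these sweepouts feed directly into the free boundary machinery.

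The min-max theorem of~\cite{sun2024multiplicity}, adapted to the PMC functional $A^{\zeta_R}$ in the spirit of Zhou-Zhu~\cite{zhou2019min} and~\cite{mazurowski2022prescribed}, then produces a Caccioppoli set $\Omega_R \in \mathcal C(W)$ whose reduced boundary $\Sigma_R = \bd \Omega_R$ is a smooth, almost-embedded, free boundary surface in $W$ with interior mean curvature prescribed by $\zeta_R$ and achieving $A^{\zeta_R}(\Omega_R) = \omega_R$. Because the sweepout space is one dimensional, $\Omega_R$ carries Morse index at most one as a critical point of $A^{\zeta_R}$, exactly as in the closed and boundary-free setting. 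The smoothness is standard since $\zeta_R > 0$ is smooth and $\bd W$ is smooth, so regular PMC and free boundary regularity theories apply on the respective pieces.

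The only remaining point is to confine $\Sigma_R$ strictly inside $B_{R+1}$, so that $\bd \Sigma_R$ is disjoint from the artificial boundary $S_{R+1}$ and the free boundary condition is only active on the original $\bd M$. This is enforced by the barrier baked into the construction of $\zeta_R$. On $M \setminus \bh_{R+1-6\eps_R}$ the outward unit normal $\vec n$ to the foliation $\{S_r\}$ satisfies $\Div(\vec n) > \zeta_R$, so Proposition~\ref{barrier} yields $A^{\zeta_R}(\Omega_R \cap B_{R+1-6\eps_R}) \le A^{\zeta_R}(\Omega_R)$, with strict inequality unless $\Omega_R \subset \bh_{R+1-6\eps_R}$. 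Since $\Omega_R$ is a critical point realizing $\omega_R$, this forces $\Omega_R \Subset B_{R+1}$ and hence $\Sigma_R \cap S_{R+1} = \emptyset$. The main obstacle I foresee is precisely this final comparison step: because $\Sigma_R$ is only almost-embedded and may carry tangential self-intersections, the naive maximum principle comparison with the foliation $\{S_r\}$ must be handled carefully at any first touching point; it is the strict inequality $\Div(\vec n) > \zeta_R$ throughout the barrier slab, rather than mere monotonicity, that produces the contradiction and rules out touching.
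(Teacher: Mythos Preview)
Your overall plan---run the free boundary PMC min-max of \cite{sun2024multiplicity} on a compact piece, then use the barrier to peel the surface off the artificial boundary---is the right shape, and it is essentially the paper's strategy. However, the two shortcuts you take both contain genuine gaps.

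\textbf{A posteriori confinement does not work.} Your argument that ``$A^{\zeta_R}(\Omega_R\cap B_{R+1-6\eps_R})\le A^{\zeta_R}(\Omega_R)$ with strict inequality, and $\Omega_R$ is a critical point realizing $\omega_R$, so $\Omega_R\Subset B_{R+1}$'' is not valid: $\Omega_R$ is an index-one saddle, not a minimizer, so the existence of a direction (truncation) along which $A^{\zeta_R}$ strictly decreases is exactly what one expects and yields no contradiction. The fallback you sketch via the maximum principle also does not close the gap: the free boundary min-max on $W=\bh_{R+1}$ may produce $\Sigma_R$ meeting $S_{R+1}$ \emph{orthogonally}, not tangentially, and a first-touching comparison with the foliation $\{S_r\}$ does not rule out an orthogonal free boundary component. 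This is why the paper does \emph{not} argue a posteriori. Instead it works on $N=B_{R+1-\eps_R}$, restricts to sweepouts supported in $B_{R+1-5\eps_R}$ (your Proposition~\ref{confine}), and then carefully checks that every deformation in the min-max scheme---pull-tight, the combinatorial replacement argument, and the deformation theorem in Step~D of \cite{sun2024multiplicity}---can be performed while keeping supports inside successively larger balls $B_{R+1-4\eps_R},\,B_{R+1-3\eps_R},\,B_{R+1-2\eps_R}$, all compactly contained in $N$. The confinement is thus baked into the construction, and the limit surface never sees $S_{R+1-\eps_R}$ at all.

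\textbf{Direct application of \cite{sun2024multiplicity} is not automatic.} The theorem there is stated for prescribing functions in a generic class $\mathcal S(g)$, and $\zeta_R$ need not belong to it. The paper handles this by embedding $N$ in a closed manifold, choosing a sequence of good pairs $(\widetilde g_k,\widetilde h)$ with $\widetilde g_k\to\widetilde g$, running min-max for each $k$ (showing the widths $\alpha_k\to\omega_R$), and passing the resulting surfaces $\Sigma_k$ to a limit using curvature estimates. Your proposal elides this approximation step entirely.

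Both points are repairable along the lines the paper follows, but as written your proof is incomplete at exactly the places you flagged as delicate.
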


\begin{proof}
This essentially follows from Theorem 3.11 in \cite{sun2024multiplicity}. We now discuss the minor modifications that are required in the proof.  
We let $(N,\bd N,g) = (B_{R+1-\eps_R},\bd M\cup S_{R+1-\eps_R},g)$.  Let $h = \zeta_R$. Note that although $h$ does not belong to the class $\mathcal S(g)$ defined in \cite{sun2024multiplicity}, all the constructions in \cite{sun2024multiplicity} still work with $h$ since $h$ is positive.  We can suppose that $(N,\bd N,g)$ is isometrically embedded in a closed manifold $(\widetilde N^3,\widetilde g)$. Let $(\widetilde g_k,\widetilde h)$ be a sequence of good pairs on $\widetilde N$ such that $\widetilde g_k$ converges smoothly to $\widetilde g$ and $\widetilde h$ is positive. Let $g_k = \widetilde g_k|_N$.

Choose a sequence of mountain pass paths $\{\Omega^k_t\}$ which are supported in $B_{R+1-5\eps_R}$ for which 
\[
\sup_{t\in [0,1]} A^h(\Omega^k_t,g) \to \omega_R
\]
as $k\to \infty$.  Now for each $k\in \N$, let $\Pi_k$ be the collection of all $\mathbf F$-continuous maps $\Phi\colon [0,1]\to \mathcal C(N)$ such that 
\begin{itemize}
\item[(i)] $\Phi(0) = \emptyset$,
\item[(ii)] $\Phi(1) = \Omega^k_1$, 
\item[(iii)] $\supp(\Phi(t)) \cap S_{R+1-\eps_R} = \emptyset$ for all $t\in[0,1]$. 
\end{itemize}
Observe that the class $\Pi_k$ is non-empty since it contains $\{\Omega^k_t\}$. 
For each $k\in \N$, consider 
\[
\alpha_{k} = \inf_{\Phi\in \Pi_k} \left[\sup_{t\in[0,1]} A^{h}(\Phi(t),g_k)\right]. 
\]
We claim that $\lim_{k\to \infty} \alpha_k = \omega_R$. Note that there are constants $\eta_k > 1$ such that $\eta_k \to 1$ as $k\to \infty$ and 
\begin{gather*}
\eta_k^{-2} \area(\bd \Omega,g) \le \area(\bd \Omega, g_k) \le \eta_k^2 \area(\bd \Omega,g),\\
\eta_k^{-3} \vol(\Omega,g) \le \vol(\Omega,g_k) \le \eta_k^3 \vol(\Omega,g).  
\end{gather*}
Now observe that 
\[
\alpha_k \le \sup_{t\in [0,1]} A^h(\Omega^k_t,g_k).
\]
We know that $\sup_{t\in [0,1]} A^h(\Omega^k_t,g)\to \omega_R$ as $k\to \infty$. It follows that 
\[
\area(\bd \Omega^k_t,g) \le \omega_R + c \vol(B_{R+1}) + 1
\]
for all $t\in [0,1]$ provided $k$ is sufficiently large. Since the area and volume of $\Omega^k_t$ with respect to $g$ are uniformly bounded for all $t\in [0,1]$ and all $k$ sufficiently large, it follows that $\sup_{t\in [0,1]} A^h(\Omega^k_t,g_k) \to \omega_R$ as $k\to \infty$. Hence we have $\lim_{k\to \infty} \alpha_k \le \omega_R$. 

Now suppose for contradiction that $\lim_{k\to\infty} \alpha_k < \omega_R - 3\eps$ for some $\eps > 0$. Then for each large enough $k\in \N$ there exists an $\mathbf F$-continuous map $\Phi_k\colon [0,1]\to \mathcal C(N)$ such that $\Phi(0) = \emptyset$ and $\Phi(1) = \Omega^k_1$ and $\supp(\Phi_k(t)) \cap S_{R+1-\eps_R}=\emptyset$ for all $t\in [0,1]$ and 
\[
\sup_{t\in[0,1]} A^h(\Phi_k(t),g_k) \le \omega_R - 2\eps. 
\]
Note that $\area(\bd \Phi_k(t),g_k)$ and $\vol(\Phi_k(t),g_k)$ are uniformly bounded. Therefore, we have 
\[
\sup_{t\in [0,1]} A^h(\Phi_k(t),g) \le \omega_R - \eps
\]
provided $k$ is large enough. However, $\Phi_k$ is a mountain pass path in $M$, and so this is a contradiction.

We will apply min-max to find a compact surface $\Sigma_k = \bd \Omega_k$ in $(N,\bd N,g_k)$ with prescribed mean curvature $h$, index at most one, $A^{h}(\Omega_k,g_k) = \alpha_k$, and $\supp(\Omega_k) \cap S_{R+1-\eps_R} = \emptyset$. 
Assume $k\in \N$ is very large so that $\tilde g_k$ is very close to $\tilde g$. First, note that there is a min-max sequence $\{\Phi_j^*\}_{j\in \N}$ for $\Pi_k$ with the property that $\supp(\Phi_j^*(t)) \subset B_{R+1-4\eps_R}$ for all $t\in [0,1]$ and all $j\in \N$. Indeed, this follows exactly as in the proof of Proposition \ref{confine}, using the fact that the spheres $S_r$ with $r \ge R+1-5\eps_R$ serve as barriers for $A^h$ with respect to $g_k$ provided $k$ is large enough. 

Next we follow Step A in \cite[Theorem 3.11]{sun2024multiplicity} to construct a pulled-tight min-max sequence $\{\Phi_j\}_{j\in \N}$. Observe that since $\Phi_j$ is obtained from $\Phi_j^*$ by following the flow of $C^1$ isotopies, we can further ensure that $\supp(\Phi_j(t))\subset B_{R+1-3\eps_R}$ for all $t\in [0,1]$ and $j\in \N$. 
We then follow the combinatorial argument as in Part B with no changes. Here, note that if no element of the critical set is almost-minimizing in annuli, then the new maps produced by the combinatorial argument can be chosen to have support in $B_{R+1-2\eps_R}$. Hence the new maps still belong to the class $\Pi_k$, and it follows that some element of the critical set must be almost minimizing in annuli. The regularity argument in Step C goes through with no changes, noting as above that $h$ is positive. The argument in Step D also goes through essentially unchanged, noting that because all the maps $\Phi_j$ are supported in $B_{R+1-3\eps_R}$, the maps produced by the deformation theorem can be taken to have support in $B_{R+1-2\eps_R}$.  

Combining steps A-D, it follows that there exists a compact, almost-embedded surface $\Sigma_k = \bd \Omega_k$ in $(N,\bd N,g_k)$ with prescribed mean curvature $h$. The surface $\Sigma_k$ may have free boundary on $\bd M$ but is disjoint from $S_{R+1-\eps_R}$. Moreover, we have $A^h(\Omega_k,g_k) = \alpha_k$ and $\ind_{A^h}(\Omega_k,g_k) \le 1$. Finally, by the curvature estimates for prescribed mean curvature surfaces, we know that $\Sigma_k = \bd \Omega_k$ converges locally smoothly with multiplicity one away from at most one point to a limit $\Sigma_R=\bd \Omega_R$ in $(N,\bd N,g)$. This limit satisfies $A^h(\Omega_R) = \omega_R$ and $\ind_{A^h}(\Omega_R) \le 1$ and $\supp(\Omega_R)\subset B_{R+1-\eps_R}$.  Therefore, $\Omega_R$ is as required. 
\end{proof}

\subsubsection{Behavior at Infinity}

The goal of this section is to prove that some connected component of $\Sigma_R = \bd \Omega_R$ must be contained in $B_{R}$ when $R$ is sufficiently large. This will complete the proof of Theorem \ref{min-max-hyp}.  For the rest of this section, suppose to the contrary that for every large $R > 0$,  there is no component of $\Sigma_R = \bd \Omega_R$ contained in $B_{R}$. 

\begin{proposition}
There is a constant $d > 0$ which does not depend on $R$ such that $\Sigma_R \subset B_{R+1}\setminus B_{R-d}$.  
\end{proposition}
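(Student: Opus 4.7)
The plan is to argue by contradiction. Suppose the proposition fails: then there exist sequences $R_k\to\infty$, $d_k\to\infty$, and points $x_k\in\Sigma_{R_k}$ with $\|x_k\|=R_k-d_k$. By Propositions \ref{mono} and \ref{limit}, the sequence $\omega_{R_k}$ decreases to $\omega_c(M)$, and the hypothesis $\omega_c(M)<\omega_c(\mathbb H^3)$ of Theorem \ref{min-max-hyp} gives a uniform deficit $\omega_{R_k}\le \omega_c(\mathbb H^3)-2\delta$ for some $\delta>0$ and all $k$ large. In particular, $A^{\zeta_{R_k}}(\Omega_{R_k})=\omega_{R_k}$ is trapped in a bounded positive range.

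Since $d_k\to\infty$, we have $\zeta_{R_k}\equiv c$ on the geodesic ball $B_{d_k-1}(x_k)$, so $\Sigma_{R_k}$ is a smooth, almost-embedded CMC surface of mean curvature exactly $c$ in a large neighborhood of $x_k$. Combining the index bound $\ind_{A^{\zeta_{R_k}}}(\Omega_{R_k})\le 1$ with curvature estimates for stable CMC hypersurfaces (as in \cite{sun2024multiplicity}), I obtain a uniform bound on $\|\mathcal B\|$ on $\Sigma_{R_k}\cap B_\rho(x_k)$ away from at most one point, for every fixed $\rho>0$. The monotonicity formula supplies matching local area upper bounds, since the mean curvature satisfies $|H|\le c$ uniformly.

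Next, I would blow up at $x_k$: because $(M,g)$ is asymptotically hyperbolic and $\|x_k\|\to\infty$, suitable pointed charts identify large balls around $x_k$ with balls in $\mathbb H^3$ such that the pulled-back metrics converge smoothly to the hyperbolic metric. After passing to a subsequence, the translated surfaces $\Sigma_{R_k}$ converge smoothly (away from at most one point) to a nonempty almost-embedded limit $\Sigma_\infty\subset\mathbb H^3$ of constant mean curvature $c$ and index at most $1$. A bubble-extraction / rigidity step---combining almost-embeddedness, the index budget of $1$, and Alexandrov-type uniqueness for closed CMC hypersurfaces in $\mathbb H^3$---should isolate inside $\Sigma_\infty$ a closed geodesic sphere $\Sigma^*$ of mean curvature $c$ enclosing a region $\Omega^*$ with $\overline{A^c}(\Omega^*)=\omega_c(\mathbb H^3)$.

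To conclude, I would use the bubble $\Sigma^*$ to contradict $\omega_{R_k}\le \omega_c(\mathbb H^3)-2\delta$. Concretely, $\Sigma^*$ is approximated near $x_k$ by a component of $\Sigma_{R_k}$ bounding a small ball-like region $B_k$ that is either contained in or disjoint from $\Omega_{R_k}$. Setting $\tilde\Omega_k=\Omega_{R_k}\Delta B_k$ yields a Caccioppoli set satisfying $A^{\zeta_{R_k}}(\tilde\Omega_k)\le \omega_{R_k}-\omega_c(\mathbb H^3)+o(1)\le -\delta$. Interpolating from $\Omega_{R_k}$ to $\tilde\Omega_k$ through a one-parameter family of shrinking bubbles, splicing this with the given min-max path producing $\Omega_{R_k}$, and applying Zhou's $\mathbf F$-interpolation theorem \cite{zhou2020multiplicity}, I produce an element of $\mathcal Q_{R_k}$ whose maximum $A^{\zeta_{R_k}}$-value is strictly below $\omega_{R_k}$, contradicting the definition of $\omega_{R_k}$. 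The hardest step will be the bubble-extraction rigidity in the previous paragraph: a priori $\Sigma_\infty$ could be noncompact, multi-sheeted, or fail to contain a round embedded component, and ruling this out will require careful use of the one-parameter index budget together with the bounded $A^c$-budget from the first step.
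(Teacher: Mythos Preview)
Your approach is genuinely different from the paper's, and considerably more elaborate. The paper gives a short direct argument using De Lima's diameter estimate \cite{de2003surfaces}: any stable CMC surface $(\Gamma,\partial\Gamma)$ with $H=c>2$ in hyperbolic space satisfies $\dist(p,\partial\Gamma)\le \rho(c):=\tfrac{4\pi}{3\sqrt{c^2-4}}$ for every $p\in\Gamma$, and this persists (with a possibly larger $\rho$) in the asymptotically hyperbolic end. Set $d=8\rho$. If a component $\Gamma_R$ of $\Sigma_R\cap B_R$ (which has CMC $c$, boundary on $S_R$, and index $\le 1$) reached into $B_{R-d}$, then cutting it into the two slabs $\Gamma_R\cap(B_R\setminus B_{R-4\rho})$ and $\Gamma_R\cap(B_{R-4\rho}\setminus B_{R-8\rho})$ forces one slab to be stable; but that slab contains a point at distance $\ge 2\rho$ from its boundary, contradicting De Lima. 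That is the entire proof.

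Your proposal, by contrast, attempts a blow-up/bubble-extraction argument and then a competitor-path construction. There are two genuine gaps. First, the ``bubble-extraction rigidity'' step is not justified: a complete almost-embedded CMC-$c$ surface of index $\le 1$ in $\mathbb H^3$ need not contain a closed round sphere, and you have no classification to appeal to. You yourself flag this as the hard step, but in fact it is the whole difficulty, and the paper's De Lima-based argument sidesteps it entirely. Second, the phrase ``splicing this with the given min-max path producing $\Omega_{R_k}$'' is ill-posed: $\Omega_{R_k}$ is produced by a min-max procedure, not as a specific point on a specific sweepout that you can locally modify; there is no canonical path through $\Omega_{R_k}$ to splice into. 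Even granting the sphere, turning the symmetric-difference modification into a bona fide element of $\mathcal Q_{R_k}$ with strictly smaller sup would require substantial additional argument. The De Lima diameter bound plus the index-splitting trick is the clean way in.
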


\begin{proof}
Let $(\Gamma,\bd \Gamma)$ be a stable constant mean curvature surface with mean curvature $c > 2$ in hyperbolic space. De Lima \cite[Corollary 2]{de2003surfaces} proved that 
\[
\dist(p,\bd \Gamma) \le \frac{4}{3}\frac{\pi}{\sqrt{c^2 - 4}}
\]
for all $p\in \Gamma$. 
 Since $M$ is asymptotically hyperbolic, it follows that there is a constant $\rho = \rho(c) > 0$ such that any stable constant mean curvature $c > 2$ surface $(\Gamma,\bd \Gamma)$ contained far out in the end of $M$ satisfies $\dist(p,\bd \Gamma) \le \rho$ for all $p\in \Gamma$. 
 
Let $d = 8\rho$.  Let $\Gamma_R$ be a connected component of $\Sigma_R \cap B_R$. Then $\Gamma_R$ has constant mean curvature $c$ and boundary contained in $S_R$. Moreover, the index of $\Gamma_R$ for the $A^c$ functional is at most one.  Suppose for contradiction that $\Gamma_R$ intersects $B_{R-d}$. Then $\Gamma_R$ intersects every sphere $S_r$ with $R-d\le r \le R$. Let $\Gamma_R^1 = \Gamma_R \cap (B_R \setminus B_{R-4\rho})$ and let $\Gamma_R^2 = \Gamma_R \cap (B_{R-4\rho}\setminus B_{R-8\rho})$. Since $\Gamma_R$ has index at most one, it follows that $\Gamma_R^i$ must be stable for some choice of $i=1,2$. This implies that $\dist(p,\bd \Gamma_R^i) \le \rho$ for all $p\in \Gamma_R^i$. However, it is clear that there is a point $p\in \Gamma_R^i$ with $\dist(p,\bd \Gamma_R^i) \ge 2\rho$. This is a contradiction, and the result follows. 
\end{proof}

Next we analyze the pointed limits of the surfaces $\Sigma_R$ as $R\to \infty$. Choose a sequence $R_j\to \infty$ and let $p_j$ be any point in $S_{R_j}$.  Let $x,y$ be geodesic normal coordinates for $S_{R_j}$ in a neighborhood of $p_j$.  Let $\nu$ be the {\it inward} pointing normal to $S_{R_j}$. For any fixed $r > 0$ and $\eps > 0$ we can define a map 
\begin{gather*}
\phi_j\colon \{(x,y): x^2+y^2 < r\} \times (\eps,\eps\inv) \to M,\\
\phi_j(x,y,z) = \exp_{(x,y)}\left(\left[\int_{1}^z s\inv \, ds\right] \nu\right).
\end{gather*}
Then $\phi_j$ is a diffeomorphism onto a neighborhood of $p_j$ for $j$ large enough. Moreover, 
\[
\phi_j^* g \to \frac{dx^2 + dy^2 + dz^2}{z^2}
\]
smoothly as $j\to \infty$. The image of $S_{R_j}$ is the slice $\{z=1\}$ and the image of $S_{R_j+1}$ is the slice $\{z=e\inv\}$. 

Note that $M$ has uniform geometry at infinity, and that the functions $\zeta_R$ have uniformly bounded $C^3$ norm. Hence there is a uniform curvature estimate for stable surfaces with prescribed mean curvature $\zeta_R$.  Since the ambient dimension is 3, this curvature estimate does not depend on an area upper bound for the surfaces.   As the surfaces $\Sigma_{R_j}$ have index at most 1, after passing to a subsequence, we can use the above construction to take a pointed limit of $(\Sigma_{R_j},p_j)$. More precisely, by a standard argument, we can find points $q_j \in M$ so that 
\[
\vert A_{\Sigma_{R_j}}(x)\vert \min\{1, \dist(x,q_j)\} \le C
\]
for a universal constant $C$. Using the maps $\phi_j$ as above on a sequence of larger and larger domains, in view of the curvature estimates, we can pass the surfaces $\phi_j^{-1}(\Sigma_{R_j})$ to a limit in $\mathbb H^3 \setminus \{q\}$. Here, by $\mathbb H^3$ we mean the upper half space model and $q$ is the limit of $\phi_j^{-1}(q_j)$.

This limit is a weak lamination $\mathcal L$ of $\mathbb H^3\setminus\{q\}$.  Each leaf $\Lambda\in \mathcal L$ is almost-embedded with mean curvature prescribed by the function 
\[
h(x,y,z) = \zeta\left(-\int_1^z s\inv\, ds\right). 
\]
Moreover, $\mathcal L$ is contained in a strip $\{e\inv \le z \le e^d\}$. The surfaces $\Sigma_{R_j}$ converge to $\mathcal L$ locally smoothly away from $q$. 

\begin{proposition} Each leaf in $\mathcal L$ is complete and proper away from $\{q\}$. 
\end{proposition}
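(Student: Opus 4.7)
The plan is to deduce both assertions from the pointwise curvature estimate
\[
|A_{\Sigma_{R_j}}|(x)\cdot \min\{1,\mathrm{dist}(x,q_j)\}\le C,
\]
available because $\ind_{A^{\zeta_R}}(\Omega_{R_j})\le 1$, together with the smooth convergence $\phi_j^{-1}(\Sigma_{R_j})\to \mathcal L$ on $\mathbb H^3\setminus\{q\}$ and the strong maximum principle for the prescribed mean curvature equation $H_\Lambda=h$. Passing the curvature bound to the limit yields the analogous estimate on every leaf $\Lambda\subset \mathcal L$, so on each compact $K\subset \mathbb H^3\setminus\{q\}$ the second fundamental form of $\Lambda\cap K$ is uniformly bounded. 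Schauder theory for the quasilinear equation $H_\Lambda=h$ then upgrades this to uniform $C^{2,\alpha}$ graphical control of definite size around every point of $\Lambda\cap K$.

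For completeness, fix a leaf $\Lambda$ and a unit-speed intrinsic geodesic $\gamma:[0,T)\to\Lambda$ that fails to extend. Ambient completeness of $\mathbb H^3$ forces $\gamma(t)\to p_\infty\in\mathbb H^3$ as $t\to T$; if $p_\infty\ne q$, the graphical $C^{2,\alpha}$-piece of $\Lambda$ near $p_\infty$ would prolong $\gamma$ past $T$, contradicting the choice of $T$. Hence $p_\infty=q$, so $\gamma$ extends for as long as it avoids $q$.

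For properness, I would suppose by contradiction that some sequence $p_k\in\Lambda$ ambient-converges to $p_\infty\in\mathbb H^3\setminus\{q\}$ without an intrinsic subsequential limit in $\Lambda$. The local graphical picture around $p_\infty$ exhibits $\mathcal L$ in a small ball $B$ as a closed disjoint union of $C^{2,\alpha}$ sheets solving $H=h$; after passing to a subsequence, the sheet $G_k\subset\Lambda$ containing $p_k$ converges in $C^{2,\alpha}$ to a sheet $G_\infty$ belonging to a leaf $\Lambda'\in\mathcal L$. The main obstacle is to rule out $\Lambda'\ne\Lambda$. Because leaves of a lamination are pairwise disjoint, for large $k$ we may view $G_k$ as a one-signed normal graph $u_k>0$ over $G_\infty$, and the difference of $H_{u_k}=h$ and $H_0=h$ shows that $u_k$ satisfies a uniformly elliptic linear equation with bounded coefficients (the linearization of $H=h$). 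Harnack's inequality then gives $\sup_B u_k\le C\inf_B u_k$, so the normalized functions $v_k:=u_k/\sup_B u_k$ subconverge in $C^{2,\alpha}$ to a positive Jacobi field $v$ on $G_\infty$. Combining this with almost-embeddedness of the $\Sigma_{R_j}$ and the strong maximum principle applied to the Jacobi operator on the connected leaf $\Lambda'$ forces $\Lambda=\Lambda'$, whence the $C^{2,\alpha}$-convergence $G_k\to G_\infty$ furnishes an intrinsic limit of $p_k$ in $\Lambda$, the desired contradiction. I expect this final Harnack/maximum-principle step to be the main technical difficulty, since it requires carefully setting up the linearization of $H=h$ (which has a nontrivial first-order term from the non-constant $h$) and combining it with the lamination structure inherited from almost-embeddedness.
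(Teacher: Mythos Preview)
Your completeness argument is fine and matches the paper's one-line ``standard fact.''

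For properness, your route diverges from the paper and has a real gap at the end. Suppose you succeed in showing $\Lambda'=\Lambda$. You then claim that the $C^{2,\alpha}$ convergence $G_k\to G_\infty\subset\Lambda$ ``furnishes an intrinsic limit of $p_k$ in $\Lambda$.'' But this does not follow: the sheets $G_k$ are \emph{distinct} local pieces of $\Lambda$ in the ball $B$ (otherwise $p_k$ would already have an intrinsic limit), and distinct sheets of the same leaf can be intrinsically very far apart even while converging ambiently. So the contradiction is not reached. Your Harnack/Jacobi-field machinery, even if it can be made to work, only tells you that the limit sheet lies on $\Lambda$; it does not prevent $\Lambda$ from spiraling back infinitely often through $B$.

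The paper bypasses all of this with a much simpler observation that you are missing: each $\Sigma_{R_j}$ is the \emph{boundary} of a region $\Omega_{R_j}$, so its mean curvature vector points into $\Omega_{R_j}$. Hence, between any two consecutive accumulating sheets $\Gamma_k,\Gamma_{k+1}$ of $\Lambda$ (whose mean curvature vectors point the same way), there must lie another sheet $\Gamma_k'$ of $\mathcal L$ whose mean curvature points the \emph{opposite} way. Passing these to limits gives two tangent surfaces $\Gamma,\Gamma'$ through $x$ solving $H=h$ with opposite orientations, and $\Gamma'$ lies on the mean-convex side of $\Gamma$; the strong maximum principle for $H=h$ then gives an immediate contradiction. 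No Harnack, no Jacobi fields, no identification of $\Lambda'$ with $\Lambda$ is needed.
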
 

\begin{proof}
Let $\Lambda$ be a leaf in $\mathcal L$. It is a standard fact that $\Lambda$ is complete away from $q$. Suppose for contradiction that $\Lambda$ is not proper in $\mathbb H^3\setminus \{q\}$. Then there is a sequence of points $x_k\in \Lambda$ such that $x_k$ diverges to infinity in the intrinsic Riemannian topology on $\Lambda$ but $x_k \to x\in \mathbb H^3\setminus \{q\}$. Note that $\Lambda$ has uniform curvature estimates away from $q$. Hence there is an $r > 0$ such that the connected component $\Gamma_k$ of $\Lambda \cap B_r(x_k)$ passing through $x_k$ is uniformly graphical over $T_{x_k}\Lambda$. After passing to a subsequence, the graphs $\Gamma_k$ converge to a limiting graph $\Gamma$ with prescribed mean curvature $h$ passing through $x$. 

However, because each $\Sigma_{R_j}$ is a boundary, between any two sheets $\Gamma_k$ and $\Gamma_{k+1}$, there must be another sheet $\Gamma_k'$ in $\mathcal L$
whose mean curvature points in the opposite direction.  Passing to a further subsequence, we can suppose that $\Gamma_k'$ converges to a limiting graph $\Gamma'$ which has prescribed mean curvature $h$ with the opposite orientation. Since $\Gamma'$ must lie to the mean convex side of $\Gamma$, this contradicts the maximum principle. 
\end{proof} 

\begin{proposition}
Each leaf in $\mathcal L$ extends smoothly across $q$. 
\end{proposition}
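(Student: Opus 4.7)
The plan is to argue by contradiction, showing that the curvature of each leaf $\Lambda \in \mathcal L$ that accumulates at $q$ must in fact be bounded near $q$, and then use standard elliptic regularity for the prescribed mean curvature equation to conclude smoothness of the extension across $q$. The main tool is a rescaling argument that exploits the index-at-most-one condition carried over from $\Sigma_R = \partial \Omega_R$.

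First I would suppose for contradiction that some leaf $\Lambda$ has unbounded second fundamental form near $q$. Using the standard point-picking procedure (as in Huisken--Ilmanen or White), select $x_j \in \Sigma_{R_j}$ converging to $q$ in the $\phi_j$-coordinates with $\lambda_j := |A_{\Sigma_{R_j}}|(x_j) \to \infty$ and $|A_{\Sigma_{R_j}}| \le 2\lambda_j$ on an increasingly large $\lambda_j^{-1}$-neighborhood of $x_j$. Rescale the ambient metric by $\lambda_j$ centered at $x_j$: the rescaled ambient metrics converge smoothly to the Euclidean metric on $\mathbb R^3$, while the prescribed mean curvature of the rescaled surface $\tilde \Sigma_j$ equals $\zeta_{R_j}/\lambda_j \to 0$. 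By the curvature bound and smooth compactness, a subsequence of $\tilde \Sigma_j$ converges in $C^\infty_{\mathrm{loc}}$ to a complete, two-sided, almost-embedded minimal surface $\tilde \Sigma_\infty \subset \mathbb R^3$ with $|A_{\tilde \Sigma_\infty}|(0) = 1$.

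Next I would use the index bound. Since each $\Sigma_{R_j} = \partial \Omega_{R_j}$ has Morse index at most one for $A^{\zeta_{R_j}}$, any pointed smooth blow-up limit inherits index at most one as a minimal surface in $\mathbb R^3$. By Fischer--Colbrie's theorem, $\tilde \Sigma_\infty$ then has finite total curvature, and the classification of complete, two-sided, orientable minimal surfaces in $\mathbb R^3$ with index at most one (do Carmo--Peng, L\'opez--Ros) forces $\tilde \Sigma_\infty$ to be either a plane or a catenoid. The plane is immediately ruled out since $|A_{\tilde \Sigma_\infty}|(0) = 1$. The catenoid case is excluded as follows: a catenoidal blow-up corresponds in the original picture to two approximately planar sheets of $\Sigma_{R_j}$ approaching $q$ and joined by a neck of size $\lambda_j^{-1}$; carrying the coherent orientation of $\Sigma_{R_j} = \partial \Omega_{R_j}$ through the neck shows that $\Omega_{R_j}$ would have to lie simultaneously on both sides of the asymptotic direction of the neck, contradicting that the mean curvature vector of $\Sigma_{R_j}$ points in the single fixed direction of the outward normal to $\Omega_{R_j}$ and that this prescribed mean curvature $\zeta_{R_j}$ is strictly positive (so that the strong maximum principle applies to the two tangentially touching sheets in the limit).

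Once curvature is bounded near $q$, we obtain uniform local area bounds for $\Lambda$ near $q$ (locally, $\Lambda$ is a finite union of graphs over its tangent plane), and the prescribed mean curvature equation
\[
H_\Lambda = h
\]
with $h$ smooth and bounded, combined with Allard's regularity theorem, yields $C^{1,\alpha}$ regularity of $\Lambda$ across $q$. Schauder theory applied to the quasilinear PMC equation then upgrades this to smoothness. I expect the main obstacle to be Step~3, namely the careful exclusion of the catenoidal tangent at $q$; an alternative route, which may be cleaner in practice, is to work at the varifold level directly, showing via the boundary structure and the positivity of $h$ that the density of the limiting varifold at $q$ equals one, and then appealing to Allard's theorem.
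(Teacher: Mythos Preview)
Your approach is substantially more involved than the paper's, and the catenoid exclusion step contains a real gap.

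The paper's proof is two lines: since the previous proposition shows each leaf $\Lambda$ is complete and proper in $\mathbb H^3\setminus\{q\}$, and since $\Lambda$ has \emph{bounded mean curvature} $|H_\Lambda|=|h|\le c$, the result of Harvey and Lawson \cite[Theorem~3.1]{harvey1975extending} gives that $\Lambda$ has finite mass in a neighborhood of $q$; removing the point singularity is then standard. No bound on the full second fundamental form is ever needed --- bounded mean curvature alone suffices, and that is already in hand.

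Your route (bound $|A_\Lambda|$ near $q$ via point-picking, blow-up to a minimal surface in $\mathbb R^3$, and the classification of complete index-$\le 1$ minimal surfaces) could in principle be made to work, but your exclusion of the catenoid is not correct as written. A catenoidal neck is entirely compatible with $\Sigma_{R_j}=\partial\Omega_{R_j}$ and with the mean curvature vector pointing into $\Omega_{R_j}$ on both asymptotic sheets: if $\tilde\Omega_\infty$ is the component of $\mathbb R^3\setminus(\text{catenoid})$ containing the axis, the inward normal points toward the axis on both ends, with no contradiction. The two ends of the catenoid are asymptotic to \emph{distinct parallel planes}, not tangentially touching sheets, so the maximum-principle argument you sketch does not apply. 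Ruling out a catenoidal blow-up typically requires either a uniform local area bound (unavailable here --- the paper explicitly remarks that uniform area bounds for $\Sigma_R$ are not established in the hyperbolic setting) or a separate stability/topological argument which you have not supplied. Your alternative suggestion via Allard with density one is closer in spirit to the paper, but still does more than necessary: the Harvey--Lawson theorem needs only finite mass, not density one.
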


\begin{proof}
Consider a leaf $\Lambda$ in $\mathcal L$. Since $\Lambda$ is complete and proper away from $\{q\}$, it follows that $\Lambda$ defines a locally rectifiable current in $\mathbb H^3\setminus \{q\}$. Since the mean curvature of $\Lambda$ is bounded, a result of Harvey and Lawson \cite[Theorem 3.1]{harvey1975extending} implies that $\Lambda$ has finite mass in a neighborhood of $q$.  It is now standard that the singularity at $q$ can be removed. 
\end{proof}

We now know that all the leaves in $\mathcal L$ are complete, smooth, and proper, even across $q$. Next, we classify the possible leaves that can appear. 

\begin{proposition}
Every compact leaf in $\mathcal L$ is a sphere with constant mean curvature $c$. 
\end{proposition}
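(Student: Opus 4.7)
The plan is to establish two facts in sequence: (i) every compact leaf $\Lambda$ of $\mathcal L$ is rotationally symmetric about some vertical geodesic of the upper half-space model of $\mathbb H^3$; and (ii) the only compact rotationally symmetric almost-embedded surfaces with prescribed mean curvature $h(x,y,z)=\zeta(-\ln z)$ are the CMC-$c$ geodesic spheres of $\mathbb H^3$. Conclusion then follows by combining the two.

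For (i), the key observation is that although $h$ is not invariant under vertical translations of $\mathbb H^3$, it is invariant under every horizontal translation, every rotation about a vertical axis, and every reflection across a vertical half-plane $\Pi_a=\{x=a\}$. The planes $\Pi_a$ are totally geodesic in $\mathbb H^3$, and the reflections $R_a$ are hyperbolic isometries. Since $\Lambda$ is compact, the Alexandrov moving-plane argument applies: start with $a$ large so $\Pi_a\cap\Lambda=\emptyset$, slide $a$ down until the reflected cap $R_a(\Lambda\cap\{x>a\})$ first touches $\Lambda\cap\{x<a\}$ (either tangentially in the interior or perpendicularly at $\Pi_a$), and apply the strong maximum principle (respectively the Hopf boundary lemma) to deduce that $\Lambda$ is symmetric across some $\Pi_{a^*}$. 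The almost-embedded hypothesis is handled as in Korevaar--Kusner--Meeks--Solomon, using that $\Lambda=\partial\Omega_\Lambda$ for a Caccioppoli set with a well-defined outward orientation inherited from the approximating $\Sigma_{R_j}=\partial\Omega_{R_j}$. Running the same argument in the $y$-direction (and any other horizontal direction) produces a second reflection plane; the composition of two such reflections is a hyperbolic rotation, and one concludes that $\Lambda$ is invariant under the full circle of rotations about the vertical geodesic $\ell$ in which the two symmetry planes meet.

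For (ii), the symmetry reduces the problem to an ODE. Parameterizing the profile $\gamma$ in a half-plane through $\ell$ by hyperbolic arclength, the condition $H=h(z)$ becomes a second-order ODE for $\gamma$ in the two-dimensional half-plane whose phase portrait can be analyzed. Two sub-cases must be handled: if $\gamma$ stays entirely within $\{z\ge 1\}$, then $h\equiv c$, the ODE is autonomous, and its closed orbits (meeting the axis at two poles) are exactly the profiles of CMC-$c$ geodesic spheres of $\mathbb H^3$; if instead $\gamma$ enters the transition region $\{z<1\}$, then a first-integral/Pohozaev-type identity, exploiting the strict monotonicity of $\zeta$ and the resulting strict positivity of the mean curvature deficit $c-h(z)$, rules out closure of $\gamma$ into a compact profile. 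Combined with the fact that any compact leaf entirely in $\{z\le e^{-1}\}$ would be a closed CMC-$2$ surface (which is impossible in $\mathbb H^3$ by a standard strong maximum principle comparison against horospheres), we conclude $\Lambda\subset\{z\ge 1\}$, and $\Lambda$ is a CMC-$c$ geodesic sphere.

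The main obstacle is the ODE analysis in step (ii), specifically ruling out closed profile orbits that dip into the transition region $\{e^{-1}<z<1\}$. This is where one must use the precise monotonicity of $\zeta$, not merely the bounds $2\le h\le c$. A clean way to organize this is to define a Pohozaev-type functional along the orbit whose monotonicity (ensured by $\zeta'<0$ in the transition region) is incompatible with the orbit closing up on the axis of revolution.
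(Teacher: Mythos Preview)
Your outline would work, but it is far more elaborate than the paper's argument. The paper observes that the dilation field $X=(x,y,z)$ is a Killing field in the upper half-space model. If $\Lambda=\partial\Theta$ is a compact leaf, then $\Theta$ is a critical point of $A^h$, and flowing $\Theta$ by $X$ gives
\[
0=\frac{d}{dt}\Big|_{t=0}A^h(\phi_t(\Theta)) = -\int_\Theta \langle \nabla h, X\rangle,
\]
the area term vanishing because $X$ is Killing and the volume term reducing via $\operatorname{div}(X)=0$. Since $h(x,y,z)=\zeta(-\ln z)$ is nondecreasing in $z$ and the $z$-component of $X$ is positive, the integrand is nonnegative; the identity therefore forces $\Theta$ to miss the transition strip $\{e^{-1}<z<1\}$ entirely, so $\Theta\subset\{z\ge 1\}$ where $h\equiv c$. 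The classical Alexandrov theorem in $\mathbb H^3$ then says $\Lambda$ is a round sphere.

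In other words, the Pohozaev-type identity you propose to deploy in step~(ii) after reducing to an ODE already holds in the full three-dimensional setting, with no need for rotational symmetry. Your step~(i) is redundant: moving planes are invoked only at the very end, in their off-the-shelf form for genuine CMC surfaces, after the Killing-field identity has already pinned down the mean curvature. Your route would additionally require you to carry out the phase-plane analysis you yourself flag as the main obstacle, whereas the paper's argument disposes of the entire question in a single integral identity.
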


\begin{proof}
Let $\Lambda$ be a compact leaf of $\mathcal L$.  Observe that $\Lambda$ bounds a compact region $\Theta$ and that $\Theta$ is a critical point of the $A^h$ functional.  The result now follows from the monotonicity of $h$ in the $z$ direction. More precisely, the vector field $X = (x,y,z)$ is a Killing field in the upper half-space model. Hence, if $\psi_t$ denotes the flow of $X$, it follows that 
\begin{align*}
0 = \frac{d}{dt}\eval_{t=0} A^h(\phi_t(\Theta)) &= \frac{d}{dt}\eval_{t=0} \operatorname{Area}(\phi_t(\Lambda)) - \frac{d}{dt}\eval_{t=0} \int_{\phi_t(\Theta)} h\\
&=\int_{\Lambda} h \la X,\nu\ra = \int_\Theta h\operatorname{div}(X) + \int_\Theta \la \grad h, X\ra = \int_\Theta \la \grad h,X\ra.
\end{align*}
Since $\la \grad h,X\ra \ge 0$, it follows that $\Omega$ is entirely contained in the region where $h\equiv c$. Thus $\Lambda$ has constant mean curvature $c$. Finally, it is well-known that a compact, embedded surface in hyperbolic space with constant mean curvature $c$ must be a sphere. 
\end{proof}

Do Carmo and Lawson \cite[Theorem A]{do1983alexandrov} proved that if $\Lambda$ is a complete surface in $\mathbb H^3$ with constant mean curvature and $\Lambda$ lies above a horosphere $\{z=z_0\}$ in the upper half-space model then $\Lambda$ must also be a horosphere.  In the following proposition, we note that their argument still applies to surfaces with prescribed mean curvature $h$, since $h$ is monotone in the $z$ direction. 

\begin{proposition}
If $\Lambda$ is a non-compact leaf in $\mathcal L$, then $\Lambda$ must be the horosphere $\{z = e\inv\}$. 
\end{proposition}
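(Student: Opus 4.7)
The plan is to prove this in two steps. First, I would show that $\Lambda$ must be a horizontal horosphere $\{z = z_0\}$ by adapting Do Carmo and Lawson's Alexandrov moving plane argument to the prescribed mean curvature setting. The key observation is that $h(x,y,z) = \zeta(-\ln z)$ depends only on $z$, so every hyperbolic isometry which preserves the horizontal foliation $\{z = \mathrm{const}\}$---namely horizontal Euclidean translations $(x,y) \mapsto (x+a, y+b)$, rotations about vertical lines, and reflections across vertical Euclidean half-planes---also preserves $h$. Consequently, the strong maximum principle for the prescribed mean curvature equation applies to $\Lambda$ and to any of its images under such isometries.

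To run the moving plane argument, I would fix a horizontal direction, say $\partial_x$, and slide the family of vertical totally geodesic half-planes $P_a = \{x = a\}$. Because $\Lambda$ is complete, smooth, properly embedded, and confined to the horizontal slab $\{e^{-1} \le z \le e^d\}$, one can follow the Do Carmo--Lawson strategy to produce a first-contact plane at which $\Lambda$ coincides with its reflection $\sigma_a(\Lambda)$. The strong and boundary maximum principles then force $\Lambda$ to be symmetric under $\sigma_a$. Applying this to every parallel family of vertical planes in every horizontal direction, $\Lambda$ is invariant under the full group of horizontal Euclidean translations and rotations, hence is a horizontal plane $\{z = z_0\}$ for some $z_0 \in [e^{-1}, e^d]$.

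For the second step, a horosphere $\{z = z_0\}$ in the upper half-space model has hyperbolic mean curvature equal to $2$ with respect to its upward unit normal, and $-2$ with respect to the downward normal. The orientation of $\Lambda$ is inherited from the boundaries $\Sigma_{R_j} = \bd \Omega_{R_j}$ and gives $\Lambda$ the prescribed mean curvature $h(z_0) > 0$ with respect to some specified normal. Since $h(z_0) \ge 2 > 0$, the inherited normal must be the upward one, and hence $h(z_0) = 2$, i.e., $\zeta(-\ln z_0) = 2$. Because $\zeta$ takes the value $2$ exactly on $[1,\infty)$, this forces $-\ln z_0 \ge 1$, i.e., $z_0 \le e^{-1}$. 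Combined with the slab constraint $z_0 \ge e^{-1}$, we conclude $z_0 = e^{-1}$.

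The main obstacle is the non-compactness of $\Lambda$ in Step 1. Unlike the classical compact Alexandrov setting, $\Lambda$ may extend to horizontal infinity, so one must verify that the reflected piece makes first interior contact with $\Lambda$ rather than drifting to infinity, and also handle asymptotic contact carefully. The vertical slab confinement together with the completeness of $\Lambda$ (including smoothness through the possibly singular point $q$, established earlier) are both essential here. Additionally, the monotonicity of $h$ in $z$, as highlighted in the excerpt, is what allows the Do Carmo--Lawson argument to extend from constant to prescribed mean curvature without losing the rigidity of the tangential comparison at first contact.
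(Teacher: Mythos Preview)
Your Step 2 is fine and matches the paper. The gap is in Step 1: the moving-plane family you choose---vertical half-planes $P_a=\{x=a\}$---cannot initialize the reflection argument for a non-compact leaf. A surface $\Lambda$ that is complete, proper, and trapped in the slab $\{e^{-1}\le z\le e^d\}$ will in general meet \emph{every} vertical plane $P_a$, so there is no parameter $a$ at which the reflected cap $\sigma_a(\Lambda_-)$ is empty or lies strictly to one side. Without that starting configuration the usual first-touching argument never gets off the ground; you flag this as an ``obstacle'' but do not resolve it, and the slab confinement alone does not supply a workaround.

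This is exactly why Do~Carmo--Lawson (and the paper) do \emph{not} reflect across vertical planes. They use the totally geodesic hemispheres $s(q,t)=\{(x-x_0)^2+(y-y_0)^2+z^2=t^2,\ z>0\}$ centered at boundary points $q=(x_0,y_0)$. For $t<e^{-1}$ these hemispheres lie entirely below the slab, so $s(q,t)\cap\Lambda=\emptyset$ and the process can begin; one then increases $t$. The price is that reflection across $s(q,t)$ does not preserve $z$, so $h$ is not invariant under it. Here the \emph{monotonicity} of $h$ in $z$ is essential: for points outside $s(q,t)$ the reflection $r_{q,t}$ decreases $z$, hence $h\circ r_{q,t}\le h$ there, and the reflected piece has mean curvature $\le$ that of the original. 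This inequality is precisely what the maximum principle needs at a first contact. In your write-up you invoke the monotonicity of $h$ while using reflections that preserve $h$ exactly, which suggests a conflation of the two pictures; once you switch to the hemisphere family, the monotonicity becomes the crucial ingredient and the argument goes through as in the paper.
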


\begin{proof}
Assume $\Lambda$ is a non-compact leaf in $\mathcal L$. Let $\Theta$ be the unique region in $\mathbb H^3$ such that $\bd \Theta = \Lambda$ and $\inf_\Theta z > 0$. Given $q = (x_0,y_0)\in \R^2$ and $t > 0$, let 
\[
s(q,t) = \{(x,y,z): (x-x_0)^2+(y-y_0)^2+z^2 = t^2,\, z > 0\}.
\]
Then $s(q,t)$ is a totally geodesic surface in the upper half-space model.  

Fix a point $q\in \R^2$. Then $s(q,t) \cap \Lambda = \emptyset$ for sufficiently small $t$. Let $t_0(q)$ be the first time that $s(q,t) \cap \Lambda \neq \emptyset$. Then for $t > t_0(q)$, let $\Lambda_-(q,t)$ be the portion of $\Lambda$ lying inside $s(q,t)$, and let $\Lambda_+(q,t)$ be the portion of $\Lambda$ lying outside $s(q,t)$.  Finally, let $r_{q,t}$ be the hyperbolic reflection across $s(q,t)$, and let $\Lambda_-'(q,t)$ be the image of $\Lambda_-(q,t)$ under the reflection $r_{q,t}$. Suppose that there is a time $t > t_0(q)$ for which $\Lambda_-(q,t)'$ is not contained in the closure of $\Theta$. Then there is a first time $t_1 > t_0(q)$ at which the surfaces $\Lambda_-'(q,t_1)$ and $\Lambda_+(q,t_1)$ share a common point of tangency (possibly at the boundary), and moreover at this time $\Lambda_-'(q,t_1)$ lies within the closure of $\Theta$.  Now observe that $h(x,y,z) \ge h(r_{q,t_1}(x,y,z))$ for all $(x,y,z)$ lying outside $s(q,t_1)$. This is because $h$ is monotone in the $z$ direction and the hyperbolic reflection decreases the $z$ coordinate of points $(x,y,z)$ lying outside $s(q,t_1)$. Since  $\Lambda_-'(q,t_1)$ has prescribed mean curvature $h(r_{q,t_1}(x,y,z))$, and $\Lambda_+(q,t_1)$ has prescribed mean curvature $h(x,y,z)$, and $\Lambda_-'(q,t_1)$ lies to the mean convex side of $\Lambda_+(q,t_1)$, and $h(r_{q,t_1}(x,y,z)) \le h(x,y,z)$, it now follows by the maximum principle that $\Lambda_-'(q,t_1)$ coincides with $\Lambda_+(q,t_1)$.  But this implies that $\Lambda$ is compact, contrary to assumption. 

Hence $\Lambda_-'(q,t)$ is contained within the closure $\Theta$ for all $t > t_0(q)$. Since this holds for all $q$, it now follows as in the proof of \cite[Theorem A]{do1983alexandrov} that $\Lambda$ must be a horosphere $\{z = z_0\}$ for some choice of $z_0$. As $\Lambda$ has mean curvature prescribed by $h$ and $\Lambda$ lies above $\{z = e\inv\}$ the only possibility is that $\Lambda = \{z=e\inv\}$. This proves the proposition. 
\end{proof}

Next we show that if the lamination $\mathcal L$ contains the horosphere $\{z = e\inv\}$, then this is the only leaf of $\mathcal L$. 

\begin{proposition}
If $\{z=e\inv\}$ is a leaf in $\mathcal L$, then this is the only leaf in $\mathcal L$. 
\end{proposition}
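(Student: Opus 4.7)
The plan is to analyze the limit Caccioppoli set $\Omega_\infty\subset\mathbb H^3$ whose reduced boundary is $\mathcal L$, and to show that the existence of a leaf other than $\{z=e^{-1}\}$ forces $\chi_{\Omega_\infty}$ to fail to jump across that leaf, contradicting its membership in $\partial^*\Omega_\infty$. First I would reduce to a definite case: suppose for contradiction that $\mathcal L$ contains a leaf $\Lambda$ distinct from the horosphere $\{z=e^{-1}\}$. By the two preceding classification propositions every leaf of $\mathcal L$ is either a sphere of constant mean curvature $c$ or a horosphere, and the only horosphere admissible in $\mathcal L$ is $\{z=e^{-1}\}$; hence $\Lambda$ must be a compact sphere of constant mean curvature $c$, and in particular $\Lambda\subset\{z>e^{-1}\}$.

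Next I would pin down the orientation of each leaf. Since $\mathcal L$ arises as a local smooth multiplicity-one limit of the boundaries $\Sigma_{R_j}=\partial\Omega_{R_j}$, BV-compactness gives a (locally) Caccioppoli limit $\Omega_\infty$ of $\Omega_{R_j}$ in the rescaled upper half-space with $\partial^*\Omega_\infty=\mathcal L$, and on each leaf the equation $H=h$ is taken with respect to the outward unit normal of $\Omega_\infty$. A direct calculation in the upper half-space shows that $\{z=e^{-1}\}$ has mean curvature $+2$ with the downward unit normal $-z\partial_z$, so $\Omega_\infty$ lies locally on the side $\{z>e^{-1}\}$ at the horosphere leaf; for the compact sphere $\Lambda$ the mean curvature $+c$ is attained with the unit normal pointing away from the center, so $\Omega_\infty$ lies locally on the interior side of $\Lambda$. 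Equivalently, $\Omega_\infty$ agrees locally with the region $\Theta$ constructed in the two preceding propositions.

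With the orientations in hand the contradiction is purely topological and measure-theoretic. Since $\mathcal L\subset\{z\ge e^{-1}\}$, $\chi_{\Omega_\infty}$ is locally constant on $\{z<e^{-1}\}$ and on $\{z>e^{-1}\}\setminus\mathcal L$; the horosphere's orientation forces $\chi_{\Omega_\infty}\equiv 0$ on $\{z<e^{-1}\}$ and $\chi_{\Omega_\infty}\equiv 1$ in a one-sided neighborhood of $\{z=e^{-1}\}$ from above. The open set $V=\{z>e^{-1}\}\setminus\bigcup\{\text{compact sphere leaves of }\mathcal L\}$ is the disjoint union of the bounded interiors of the compact sphere leaves together with a single unbounded component $V_0$, and $V_0$ meets the one-sided neighborhood of $\{z=e^{-1}\}$, so by local constancy $V_0\subset\Omega_\infty$. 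On the other hand, the interior-side orientation of each compact sphere leaf forces the interior of every compact sphere leaf to lie in $\Omega_\infty$ as well. Therefore $\chi_{\Omega_\infty}$ does not jump across $\Lambda$, contradicting $\Lambda\subset\partial^*\Omega_\infty$.

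The step I expect to require the most care is the identification of $\partial^*\Omega_\infty$ with $\mathcal L$ together with the matching of orientations on each leaf; this relies on the multiplicity-one smooth convergence of $\Sigma_{R_j}$ to $\mathcal L$ away from the at-most-one bad point $q$ and on BV-compactness for the $\Omega_{R_j}$ in the rescaled upper half-space. Once that identification is in place, the connectedness and orientation-counting argument above yields the claim immediately.
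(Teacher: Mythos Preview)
Your proof is correct and follows essentially the same approach as the paper: both pass to the limiting region (the paper's $\Theta$, your $\Omega_\infty$) bounded by $\mathcal L$, use the sign of the mean curvature to determine on which side of each leaf the region lies, and then derive a topological contradiction from the coexistence of the horosphere leaf and a compact sphere leaf. The paper packages the final step as a parity count of intersections along a path from the center of the enclosed ball to a point below $\{z=e^{-1}\}$, while you package it as a connectedness argument showing $\chi_{\Omega_\infty}$ fails to jump across $\Lambda$; these are equivalent formulations of the same orientation argument.
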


\begin{proof}
Suppose $\{z=e\inv\}$ is a leaf in $\mathcal L$. Recall that the pointed surfaces $(\Sigma_{R_j},p_j)$ converge locally smoothly to $\mathcal L$ away from at most one point. Since $\Sigma_{R_j}$ bounds a compact region $\Omega_{R_j}$ in $M$, it follows that $\mathcal L$ is the boundary of a region $\Theta$ in the upper half-space with $\inf_\Theta z > 0$.  Moreover, the mean curvature of every leaf in $\mathcal L$ points into $\Theta$. 

Note that any other leaf of $\mathcal L$ must be a sphere of mean curvature $c$ contained in the region $\{1\le z \le e^d\}$.  Suppose for contradiction that $\mathcal L$ contains such a sphere $S$. Since the mean curvature of this sphere points into $\Theta$, it follows that $\Theta$ contains the ball $B$ enclosed by $S$. However, the intersection number of a generic path $\gamma$ connecting the center of $B$ to the point $(0,0,(2e)\inv)$ with the lamination $\mathcal L$ is even. Therefore $\Theta$ must contain $\{z < e\inv\}$, and this is a contradiction. 
\end{proof}

Finally, we use the above results to analyze the behavior of $\Sigma_R = \bd \Omega_R$ as $R\to \infty$. Choose a sequence $R_j\to \infty$ and sequence of points $p_j\in S_{R_j}$ so that the surfaces $(\Sigma_{R_j},p_j)$ converge in the pointed sense to a limiting lamination $\mathcal L(\{R_j\},\{p_j\})$. 

\begin{proposition}
Assume that the limiting lamination $\mathcal L(\{R_j\},\{p_j\})$ is the horosphere $\{z=e\inv\}$. Then for every other choice of points $x_j\in S_{R_j}$, the surfaces $(\Sigma_{R_j},x_j)$ converge in the pointed sense to a limiting lamination $\mathcal L(\{R_j\},\{x_j\})$ which is also the horosphere $\{z=e\inv\}$. 
\end{proposition}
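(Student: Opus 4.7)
The plan is to argue by contradiction. Passing to a subsequence, suppose $\mathcal L(\{R_j\},\{x_j\})$ exists but is not the horosphere $\{z=e^{-1}\}$. By the preceding classification of leaves, it is then a finite (possibly empty) union of CMC-$c$ spheres contained in the strip $\{1\le z\le e^d\}$. Let $\Theta'\subset\mathbb H^3$ denote the pointed $L^1_{\mathrm{loc}}$-limit of the Caccioppoli sets $\phi_{j,x_j}^{-1}(\Omega_{R_j})$; its reduced boundary is $\mathcal L(\{R_j\},\{x_j\})$. The strategy is to derive two incompatible containments for $\Theta'$. The outer one is immediate: since $\Omega_{R_j}\subset B_{R_j+1-\eps_{R_j}}$ and $S_{R_j+1}$ corresponds under $\phi_{j,x_j}$ to the horosphere $\{z=e^{-1}\}$, we obtain $\Theta'\subset\{z>e^{-1}\}$.

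The inner containment is propagated from $p_j$ and is the heart of the argument. Using that $\phi_{j,p_j}^{-1}(\Omega_{R_j})\to\{z>e^{-1}\}$ in $L^1_{\mathrm{loc}}$, I would show that for any fixed $R>d+2$, the set $\Omega_{R_j}$ is $L^1$-close to $B_{R_j+1}$ on the geodesic ball of radius $R$ about $p_j$ in $(M,g)$. Because $(M,g)$ is asymptotically hyperbolic, such a ball penetrates inward from $S_{R_j}$ by a distance nearly $R$ and therefore meets $B_{R_j-d}$ in a set of positive volume. This forces $\Omega_{R_j}\cap B_{R_j-d}$ to have positive measure for all large $j$. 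Combined with the shell confinement $\Sigma_R\subset B_{R+1}\setminus B_{R-d}$ and the connectedness of $B_{R_j-d}$, $\mathbf 1_{\Omega_{R_j}}$ must be a.e.\ constant on $B_{R_j-d}$; hence $\Omega_{R_j}\supset B_{R_j-d}$. Translating this back under $\phi_{j,x_j}^{-1}$ --- which converges to the standard half-space chart on bounded hyperbolic balls --- yields $\Theta'\supset\{z>e^{d+1}\}$.

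Finally, because $\partial^*\Theta'$ consists only of bounded spheres contained in $\{1\le z\le e^d\}$, the set $\Theta'$ is, up to null sets, a union of connected components of $\mathbb H^3\setminus\mathcal L(\{R_j\},\{x_j\})$. The unique unbounded component $U$ contains all of $\{z>e^d\}$ as well as points with $z$ arbitrarily small. The inclusion $\Theta'\supset\{z>e^{d+1}\}\subset U$ forces $\Theta'\supset U$; but $U$ extends into $\{z<e^{-1}\}$, contradicting the outer containment. The principal obstacle is the inner filling step, namely transferring the horosphere convergence at $p_j$ into the deep inclusion $\Omega_{R_j}\supset B_{R_j-d}$. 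This rests on uniform control of the exponential map based at points of $S_{R_j}$ on bounded hyperbolic scales as $R_j\to\infty$, which is supplied by the asymptotically hyperbolic decay on $g$.
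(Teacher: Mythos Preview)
Your proposal is correct and follows essentially the same approach as the paper. The paper's proof is terser: it passes to a subsequential limit $\mathcal L(\{R_{j_k}\},\{x_{j_k}\})$, notes in one line that the horosphere limit at $p_j$ forces $\Omega_{R_j}\supset B_{R_j-d}$, and concludes that the lamination at $x_{j_k}$ must separate $\{z=(2e)^{-1}\}$ from $\{z=2e^d\}$, which rules out unions of spheres. Your inner-filling step (positive measure in $B_{R_j-d}$ plus constancy on the boundary-free connected region) is exactly the content of that one-line claim, and your component argument with $\Theta'$ and the unbounded component $U$ is just an explicit unpacking of the separation statement.
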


\begin{proof}
Choose a sequence $x_j\in S_{R_j}$. After passing to a subsequence $x_{j_k}$, we obtain a limiting lamination $\mathcal L(\{R_{j_k}\},\{x_{j_k}\})$. The fact that $\mathcal L(\{R_{j}\},\{p_{j}\})$ is a horosphere implies $\Omega_{R_{j}}$ contains $B_{R_{j}-d}$. Thus the lamination $\mathcal L(\{R_{j_k}\},\{x_{j_k}\})$ must separate $\{z= (2e)\inv\}$ from $\{z = 2e^d\}$. This is only possible if $\mathcal L(\{R_{j_k}\},\{x_{j_k}\})$ is the horosphere $\{z=e\inv\}$.  Since this is true no matter which subsequence we pick, it follows that $\mathcal L(\{R_j\},\{x_j\})$ exists and is also the horosphere $\{z=e\inv\}$. 
\end{proof}

Finally, we can derive a contradiction. 
First, we rule out the possibility of seeing a horosphere.  Indeed, suppose that for $R_j$ and $p_j$ as above, the limiting lamination $\mathcal L(\{R_j\},\{p_j\})$ is a horosphere. Then we know that for all other choices of points $q_j\in S_{R_j}$ the limiting lamination is also a horosphere. This implies that $\Sigma_{R_j}$ is the graph of a function $u_j$ over $S_{R_j+1}$ with small $C^1$ norm. It follows that, for any $\eps > 0$, we have $\area(\Sigma_{R_j}) \le (1+\eps)\area(S_{R_j+1})$ for sufficiently large $j$. Moreover, $\Omega_{R_j}$ contains  $B_{R_j}$. Since $M$ is asymptotically hyperbolic and $c > 2$, this implies that 
$
A^c(\Omega_{R_j}) < 0,
$
for sufficiently large $j$. This contradicts that $A^c(\Omega_{R_j}) = \omega_R \to \omega_c(M) > 0$. 

Hence, for every choice of $R_j$ and $p_j$ for which the limiting lamination $\mathcal L(\{R_j\},\{p_j\})$ exists, the lamination $\mathcal L(\{R_j\},\{p_j\})$ is either empty or a collection of spheres with mean curvature $c$. It is straightforward to see that this implies $\liminf_{R\to \infty} A^c(\Omega_R) \ge \omega_c(\mathbb H^3)$, which is a contradiction.  Hence, for some large $R$, the surface $\Sigma_R$ is contained in $B_R$. It follows that $\Sigma_R$ is a compact, almost-embedded, free boundary surface in $M$ with constant mean curvature $c$. This completes the proof of Theorem \ref{min-max-hyp}. 

\subsubsection{Proof of Main Theorem}
Finally, we can give the proof of our second main theorem. 

\begin{theorem}
    Let $(M^3,g)$ be a complete, asymptotically hyperbolic manifold, possibly with non-empty boundary. Assume that $M$ has scalar curvature at least $-6$. Then, for every constant $c > 2$, there exists a compact, almost-embedded, free boundary, constant mean curvature surface $\Sigma$ in $M$ with mean curvature $c$. 
\end{theorem}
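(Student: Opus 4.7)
The plan is to mirror the asymptotically flat argument by combining Proposition \ref{Prop: sweep-out AH} (sweep-out with sub-hyperbolic isoperimetric ratio) with Theorem \ref{asy-hyp-min-max} (min-max existence from a strict gap). First I would dispose of the trivial case: if $M$ already contains a region isometric to a piece of $\mathbb{H}^3$, then for any $c > 2$ the geodesic sphere of radius $r^* = \operatorname{arctanh}(2/c)$ sitting inside such a region is a closed, embedded CMC surface with mean curvature exactly $c$, and the theorem holds with empty boundary.

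Assume therefore that $M$ has no hyperbolic end, so Proposition \ref{Prop: sweep-out AH} applies. The plan is to invoke it to obtain, for any $v > 1$, a continuous family $\{U_t\}_{t\in[0,1]}$ of open sets with $U_0 = \emptyset$, $\vol(U_1) = v$, and the pointwise strict inequality
\[
\area(\partial U_t) < \mathcal{I}_{hyp}(\vol(U_t))\quad\text{for all } t \in (0,1].
\]
I would then choose $v$ large enough that $\mathcal{I}_{hyp}(v) < cv$ — this is possible because $\mathcal{I}_{hyp}(v)/v \to 2$ as $v \to \infty$ and $c > 2$ — which forces $A^c(U_1) < 0$, so $\{U_t\}$ is a mountain pass path for $A^c$.

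To compare with the hyperbolic min-max value, for each $t \in (0,1]$ I let $r(t)$ satisfy $\volh(\bh_{r(t)}) = \vol(U_t)$, so that
\[
A^c(U_t) < \mathcal{I}_{hyp}(\vol(U_t)) - c\vol(U_t) = \overline{A^c}(\bh_{r(t)}) \le \alpha_c = \omega_c(\mathbb{H}^3).
\]
Because $t \mapsto A^c(U_t)$ is continuous on the compact interval $[0,1]$, its supremum is attained at some $t_0$; at $t_0 = 0$ the value is $0 < \omega_c(\mathbb{H}^3)$, and for $t_0 > 0$ the displayed strict inequality gives the same conclusion. Therefore $\omega_c(M) \le \sup_{t} A^c(U_t) < \omega_c(\mathbb{H}^3)$, and Theorem \ref{asy-hyp-min-max} immediately produces the desired compact, almost-embedded, free boundary CMC surface.

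The conceptual weight of the argument has already been absorbed by Proposition \ref{Prop: sweep-out AH} (which rests on the full inverse mean curvature flow regularity theory in Theorem \ref{main-theorem-IMCF-AH} together with Geroch monotonicity under $R \ge -6$) and by Theorem \ref{asy-hyp-min-max} (the free-boundary min-max machinery). The only genuinely new bookkeeping is the upgrade from the pointwise strict isoperimetric inequality to a strict inequality for the supremum; this is where compactness of $[0,1]$ and continuity of $A^c$ along the family are essential, since a pointwise strict inequality alone would only yield $\omega_c(M) \le \omega_c(\mathbb{H}^3)$.
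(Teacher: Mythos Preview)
Your proposal is correct and follows the same approach as the paper---dispose of the hyperbolic-end case trivially, then combine Proposition~\ref{Prop: sweep-out AH} with Theorem~\ref{asy-hyp-min-max}---and you helpfully spell out the compactness argument for the strict sup inequality and the choice of $v$ making $\{U_t\}$ a mountain pass path, both of which the paper leaves implicit. One minor phrasing issue in the trivial case: you should say ``$M$ has a hyperbolic end'' rather than ``contains a region isometric to a piece of $\mathbb{H}^3$'', since only the former guarantees the hyperbolic region is large enough to contain a geodesic sphere of radius $\operatorname{arctanh}(2/c)$, and it is exactly the negation of ``hyperbolic end'' that Proposition~\ref{Prop: sweep-out AH} requires.
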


\begin{proof}
    Let $M$ be as in the statement of the theorem and fix a constant $c > 2$. If $M$ has a hyperbolic end, then we can choose $\Sigma$ be a hyperbolic sphere of constant mean curvature $c$ in the end.  Hence we can assume that some end of $M$ is not identically hyperbolic. Then according to Proposition \ref{Prop: sweep-out AH}, for any $v > 0$ there is a continuous family of open sets $\{\Omega_t\}_{t\in [0,1]}$ with $\Omega_0 = \emptyset$ and $\vol(\Omega_1) = v$ and 
    \[
    \operatorname{Area}(\bd \Omega_t) < \mathcal I_{hyp}(\vol(\Omega_t)). 
    \]
    This implies that 
    \[
    \sup_{t\in [0,1]} A^c(\Omega_t) < \omega_c(\mathbb H^3). 
    \]
    Hence we can apply Theorem \ref{min-max-hyp} and the result follows. 
\end{proof}

\appendix
\section{Evolution equations}
\subsection{Evolution equations along inverse mean curvature flow}
Let $(\mathbb R^n,\bar g)$ be a Riemannian manifold and $\Phi:N\times [0,T)\to (\mathbb R^n,\bar g)$ be a smooth inverse mean curvature flow satisfying $\partial_t\Phi=H^{-1}\nu$, where $H$ is the mean curvature of $\Phi(N\times\{t\})$ with respect to the unit normal vector field $\nu$.
\begin{lemma}\label{Lem: evolution equation IMCF}
    Denote $g$ and $h$ to be the induced metric and the second fundamental form with respect to $\nu$ of $\Phi(N\times\{t\})$ respectively. Then we have the following evolution equations
    \begin{itemize}
        \item[(i)] $\partial_tg_{ij}=2H^{-1}h_{ij}$;
        \item[(ii)] $\partial_t\nu=-\nabla H^{-1}$;
        \item[(iii)] $\partial_t\mu_t=\mu_t$;
        \item[(iv)] $\partial_t{H^{-1}}=H^{-2}\Delta H^{-1}+H^{-3}(|h|^2+\overline\Ric(\nu))$; 
    \end{itemize}
\end{lemma}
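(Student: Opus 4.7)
The plan is to establish (i)--(iv) by a direct computation in a coordinate frame $e_i = \partial_i\Phi$, which satisfies $[\partial_t, e_i]=0$. I would adopt the sign convention $h_{ij} = -\bar g(\nu, \bar\nabla_{e_i}e_j)$ (so that $H>0$ on the outward-oriented round sphere in Euclidean space), which gives the Weingarten relation $\bar\nabla_{e_i}\nu = h_i^{\ k} e_k$. The key preliminary computation is
\[
\bar\nabla_{\partial_t}e_i \;=\; \bar\nabla_{e_i}(H^{-1}\nu) \;=\; e_i(H^{-1})\,\nu \,+\, H^{-1}h_i^{\ k}\, e_k,
\]
obtained from $[\partial_t,e_i]=0$ and the Weingarten identity.

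Granted this, (i) is immediate from differentiating $g_{ij}=\bar g(e_i,e_j)$ in $t$: the $\nu$-component drops out upon pairing with tangent vectors, leaving $\partial_t g_{ij}=2H^{-1}h_{ij}$. For (ii), the normalization $|\nu|^2=1$ forces $\partial_t\nu$ to be tangent, and differentiating the orthogonality $\bar g(\nu,e_i)=0$ in $t$ together with the formula for $\bar\nabla_{\partial_t}e_i$ yields $\bar g(\partial_t\nu,e_i)=-e_i(H^{-1})$, hence $\partial_t\nu=-\nabla H^{-1}$. For (iii), I would write $\mu_t=\sqrt{\det g_{ij}}\,\mathrm{d}x$ and use (i) to compute $\partial_t\log\det g_{ij}=g^{ij}\partial_t g_{ij}=2H^{-1}h^i_{\ i}=2$, which integrates to $\partial_t\mu_t=\mu_t$.

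The substantive step is (iv). The plan is to apply the general evolution law, valid for any normal-speed deformation $\partial_t\Phi = f\nu$ of a hypersurface in an ambient manifold,
\[
\partial_t H \;=\; -\Delta f \,-\, f\bigl(|h|^2 + \overline{\Ric}(\nu,\nu)\bigr),
\]
and then specialize to $f=H^{-1}$, combining with the scalar chain rule $\partial_t H^{-1}=-H^{-2}\partial_t H$ to obtain (iv). The main obstacle is establishing this general evolution law with correct signs. It is derived by expanding $H=g^{ij}h_{ij}$ and tracking both factors: (i) implies $\partial_t g^{ij}=-2fh^{ij}$, while $\partial_t h_{ij}$ is computed from $h_{ij}=-\bar g(\nu,\bar\nabla_{e_i}e_j)$ using $\partial_t\nu=-\nabla f$ (derived as in (ii)), the commutation $[\partial_t,e_i]=0$, and the definition of the ambient curvature tensor. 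Contracting with $g^{ij}$ and invoking the Codazzi identity to convert mixed second derivatives of $f$ into the Laplacian $\Delta f$ produces the $-\Delta f$ term, while the curvature tensor terms organize into $|h|^2$ and $\overline{\Ric}(\nu,\nu)$ via the Gauss identity. Although standard, this is the computation where sign conventions must be tracked most carefully to match the stated formula in (iv).
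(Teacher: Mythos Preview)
Your proposal is correct and is exactly the ``direct computation'' the paper's proof refers to; the paper provides no further details. One minor remark: in deriving the general law $\partial_t H=-\Delta f-f(|h|^2+\overline{\Ric}(\nu,\nu))$, the Hessian $\nabla_i\nabla_j f$ already appears directly in $\partial_t h_{ij}$ (so no Codazzi is needed upon tracing), and the ambient curvature term contracts straight to $\overline{\Ric}(\nu,\nu)$ without invoking the Gauss equation.
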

\begin{proof}
  This follows from a direct computation.
\end{proof}

\subsection{Evolution equations along mean curvature flow}
Let $(\mathbb R^n,\bar g)$ be a Riemannian manifold and $\Phi:N\times [0,T)\to (\mathbb R^n,\bar g)$ be a smooth mean curvature flow satisfying $\partial_t\Phi=-H\nu$, where $H$ is the mean curvature of $\Phi(N\times\{t\})$ with respect to the unit normal vector field $\nu$.
 
\begin{lemma}\label{Lem: evolution equation MCF}
Denote $g$ and $h$ to be the induced metric and the second fundamental form with respect to $\nu$ of $\Phi(N\times\{t\})$ respectively.  Then we have
    \begin{itemize}
        \item[(i)] $\partial _tg_{ij}=-2Hh_{ij}$;
        \item[(ii)] $\partial_t\nu=\nabla H$;
        \item[(iii)] $\partial_t\mu_t=-H^2\mu_t$;
        \item[(iv)] $$\partial_t h_{ij}=\Delta h_{ij}-2Hh_{il}h^l_j+|h|^2h_{ij}+h\ast \overline{\Rm}+\bar\nabla\overline{\Rm};$$
        \item[(v)] $\partial_t H=\Delta H+H(|h|^2+\overline\Ric(\nu))$;
        \item[(vi)] \[
\begin{split}\partial_t|h|^2&=\Delta|h|^2-2|\nabla h|^2+2|h|^2(|h|^2+\overline\Ric(\nu))\\
        &\qquad+\overline\Rm\ast h\ast h+\bar\nabla\overline\Rm\ast h.\end{split}\]
    \end{itemize}
\end{lemma}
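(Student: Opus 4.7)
The plan is to derive all six evolution equations directly from the flow equation $\partial_t\Phi=-H\nu$ via standard differential-geometric calculations. This is Huisken's original computation adapted to a general ambient $(\mathbb R^n,\bar g)$; the only new feature compared with the Euclidean case is that commutators of covariant derivatives produce extra ambient curvature terms, which are the source of the schematic $h\ast\overline{\Rm}+\bar\nabla\overline{\Rm}$ contributions.

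First I would establish (i) by differentiating $g_{ij}=\bar g(\partial_i\Phi,\partial_j\Phi)$ in $t$. Since $\partial_t\partial_i\Phi=-\partial_i(H\nu)=-(\partial_iH)\nu-H\,\bar\nabla_{\partial_i\Phi}\nu$, and $\bar\nabla_{\partial_i\Phi}\nu$ is tangential by Weingarten, pairing against $\partial_j\Phi$ immediately yields $\partial_tg_{ij}=-2Hh_{ij}$. Then (iii) is a consequence of (i) via $\partial_t\log\sqrt{\det g}=\tfrac{1}{2}g^{ij}\partial_tg_{ij}=-H^2$. For (ii), the normalization $|\nu|_{\bar g}=1$ forces $\partial_t\nu$ to be tangential, and pairing against $\partial_i\Phi$ gives $\bar g(\partial_t\nu,\partial_i\Phi)=-\bar g(\nu,\partial_t\partial_i\Phi)=\bar g(\nu,\partial_i(H\nu))=\partial_iH$, so $\partial_t\nu=\nabla H$.

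The substantial step is (iv), which is the evolving analogue of Simons' identity. I would compute $\partial_th_{ij}$ directly from the Weingarten relation $h_{ij}=-\bar g(\bar\nabla_{\partial_i\Phi}\nu,\partial_j\Phi)$, using (i), (ii) and $\partial_t\Phi=-H\nu$; the result has the form $\nabla_i\nabla_jH-Hh_{il}h^l_j+(\text{ambient curvature on }\nu)$. I would then invoke the Simons-type identity
\[
\Delta h_{ij}=\nabla_i\nabla_jH+Hh_{il}h^l_j-|h|^2h_{ij}+h\ast\overline{\Rm}+\bar\nabla\overline{\Rm},
\]
proved by commuting covariant derivatives in $\nabla_k\nabla_lh_{ij}$ and applying the Codazzi equation $\nabla_kh_{ij}=\nabla_ih_{kj}+\overline{\Rm}(\nu,\partial_k,\partial_i,\partial_j)$ together with the Gauss equation. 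Eliminating $\nabla_i\nabla_jH$ between the two expressions produces (iv), with the intrinsic-to-extrinsic curvature conversions absorbed into the schematic terms. The main obstacle here is purely bookkeeping: in the Euclidean case the ambient contributions vanish and one gets the classical identity, while here one must carefully track the tensorial positions where $\overline{\Rm}$ and $\bar\nabla\overline{\Rm}$ enter.

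Finally, (v) and (vi) follow by tracing (iv) against $g^{ij}$ and $h^{ij}$ respectively. For (v), one uses $\partial_tg^{ij}=2Hh^{ij}$ (from (i)) to get $\partial_tH=2H|h|^2+g^{ij}\partial_th_{ij}$; the trace of (iv) contributes $\Delta H-2H|h|^2+H|h|^2+H\overline\Ric(\nu)$, where the Ricci term arises from the $g^{ij}(h\ast\overline{\Rm})$ contraction, and summing gives (v). For (vi), differentiating $|h|^2=g^{ik}g^{jl}h_{ij}h_{kl}$ and substituting (i) and (iv), together with the Bochner-type identity $\Delta|h|^2=2h^{ij}\Delta h_{ij}+2|\nabla h|^2$, leads to cancellation of the $H\,\mathrm{tr}(h^3)$ terms and yields (vi). These last two steps are purely algebraic once (iv) is in hand.
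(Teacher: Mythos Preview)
Your outline is correct and follows the standard derivation due to Huisken; the paper itself does not carry out the computation but simply cites \cite{huisken1986MCF}, so your approach is precisely what that reference contains.
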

\begin{proof}
    See \cite{huisken1986MCF}.
\end{proof}

\end{document}